\newtheorem{construction}[theorem]{Construction}
\tikzset{
  jumpdot/.style={mark=*,solid},
  exclr/.append style={jumpdot,color=red,fill=white},
  exclb/.append style={jumpdot,color=blue,fill=white},
  incl/.append style={jumpdot},
}
\tikzset{viewport/.style 2 args={
    x={({cos(-#1)*1cm},{sin(-#1)*sin(#2)*1cm})},
    y={({-sin(-#1)*1cm},{cos(-#1)*sin(#2)*1cm})},
    z={(0,{cos(#2)*1cm})}
}}
\pgfplotsset{only foreground/.style={
    restrict expr to domain={rawx*\CameraX + rawy*\CameraY + rawz*\CameraZ}{-0.05:100},
}}
\pgfplotsset{only background/.style={
    restrict expr to domain={rawx*\CameraX + rawy*\CameraY + rawz*\CameraZ}{-100:0.05}
}}
\def\addFGBGplot[#1]#2;{
    \addplot3[#1,only background, opacity=0.25] #2;
    \addplot3[#1,only foreground] #2;
}
\newcommand{\ViewAzimuth}{-30}
\newcommand{\ViewElevation}{10}
\newcommand{\cvspace}{.6em} 
\newcommand{\bigslant}[2]{{\raisebox{.2em}{$#1$}\left/\raisebox{-.2em}{$#2$}\right.}}
\newcommand{\rhot}{{\tilde{\rho}}}
\newcommand{\pta}{{\tilde{p}_1}}
\newcommand{\ptb}{{\tilde{p}_2}}
\newcommand{\qta}{{\tilde{q}_1}}
\newcommand{\qtb}{{\tilde{q}_2}}
\newcommand{\tj}{\tilde{J}}
\newcommand{\ba}{\bar{a}}
\newcommand{\vt}{\boldsymbol{t}}
\DeclareMathOperator{\area}{Area}
\DeclareMathOperator{\cindex}{index}
\DeclareMathOperator{\img}{img}
\newcommand{\ho}{\textup{H}}
\newcommand{\aditcont}{1} 
\newtheorem{exmp}{Example}[section]
\renewcommand{\boldsymbol}[1]{\bm{#1}}
\newcommand{\spL}{\boldsymbol{l}}
\newcommand{\spR}{\boldsymbol{r}}
\newcommand{\retr}{\mathfrak{R}}
\newcommand{\rot}{\boldsymbol{R}}
\newcommand{\mI}{{\boldsymbol{I}}}
\newcommand{\mQ}{{\boldsymbol{Q}}}
\newcommand{\mP}{{\boldsymbol{P}}}
\newcommand{\cLspacea}{\bar{\mathcal{L}}_\rho(\mI,\mQ)}
\newcommand{\Lspace}{\mathcal{L}_{\rho_0}(\mQ)}
\newcommand{\cLspace}{\bar{\mathcal{L}}_{\rho_0}(\mQ)}
\newcommand{\imspace}{\mathcal{I}(\mI,\mQ)}
\newcommand{\mat}[1]{\boldsymbol{#1}}
\newcommand{\vet}[1]{\boldsymbol{#1}}
\begin{document}

\begin{abstract}
While the topology of the space of all smooth immersed curves on the $2$-sphere $\mathbb{S}^2$ that start and end at given points in given directions is well known, it is an open problem to understand the homotopy type of its subspaces consisting of the curves whose geodesic curvatures are constrained to a prescribed proper open interval. In this article we prove that, under certain circumstances for endpoints and end directions, these subspaces are not homotopically equivalent to the whole space. Moreover, we give an explicit construction of exotic generators for some homotopy and cohomology groups. It turns out that the dimensions of these generators depend on endpoints and end directions.
A version of the h-principle is used to prove these results.
\end{abstract}

\maketitle

\tableofcontents

\newpage

\begin{table}[!htb]\caption{Recurrent Notations and Terms}
\begin{center}
\begin{tabular}{r c p{10cm} }
\toprule
$\mathbb{N}$ & $\quad$ & Non-negative integers \\[5pt]
$\mathbb{S}^2$ & $\quad$ & The unit sphere of center $0$ in the Euclidean space $\mathbb{R}^3$ \\[5pt]
$e_1$, $e_2$, $e_3$ & $\quad$ & Canonical basis of $\mathbb{R}^3$ \\[5pt]
$r$, $\rho$ & $\quad$ & Positive numbers, used to denote radius\\[5pt]
$a$, $b$, $p_1$, $p_2$, $q_1$, $q_2$ & $\quad$ & Points in $\mathbb{S}^2$ \\[5pt]
$\alpha$, $\beta$, $\gamma$ & $\quad$ & Curves in $\mathbb{S}^2$ \\[5pt]
$s$, $t$ & $\quad$ & Parameters of a curve \\[5pt]
$\vt_\gamma$, $\vet{n}_\gamma$ & $\quad$ & The tangent and the normal vectors of curve $\gamma$ \\[5pt]
$\mathfrak{F}_\gamma$ & $\quad$ & Frenet frame \\[5pt]
$x$, $y$, $z$ & $\quad$ & Real numbers \\[5pt]
$i$, $j$, $k$, $l$, $n$, $m$ & $\quad$ & Usually represent an integer or a natural number \\[5pt]
$u$, $v$, $w$ & $\quad$ & Vectors of $\mathbb{R}^3$ or $\textup{T}\mathbb{S}^2$ \\[5pt]
$\mI$ & $\quad$ & Identity matrix \\[5pt]
$\mP$, $\mQ$ & $\quad$ & Matrices in $\SO$\\[5pt]
$\kappa_0$ & $\quad$ & $\kappa_0\in (0,+\infty]$ represents the curvature constraint that appears on the definition of $\Lspace$\\[5pt]
$\rho_0$ & $\quad$ & $\rho_0\in \left[0,\frac{\pi}{2}\right)$ represents the radius that appears on the definition of $\Lspace$\\[5pt]
$\Lspace$, $\mathcal{L}_{-\kappa_0}^{+\kappa_0}(\mQ)$ & $\quad$ & Space of curves with geodesic curvature in $(-\kappa_0,+\kappa_0)$ with start frame $\mI$ and end frame $\mQ$\\[5pt]
$\cLspace$, $\bar{\mathcal{L}}_{-\kappa_0}^{+\kappa_0}(\mQ)$ & $\quad$ & Space of curves with geodesic curvature in $[-\kappa_0,+\kappa_0]$ with start frame $\mI$ and end frame $\mQ$\\[5pt]
$\mathcal{I}(\mI,\mQ)$, $\mathcal{I}(\mQ)$ & $\quad$ & Space of $C^1$ immersed curves in $\mathbb{S}^2$ with start frame $\mI$ and end frame $\mQ$\\[5pt]
$\mathcal{C}$, $\mathcal{C}_0$ & $\quad$ & Subsets of $\cLspace$ \\[5pt]
$\rot_\theta(v)$ & $\quad$ & Rotation matrix of angle $\theta$ with axis $v$ \\[5pt]
$\zeta_{p,r}$ & $\quad$ & Circle on sphere with intrinsic radius $r$ and center at $p$ \\[5pt]
$\circlearrowleft$, $\circlearrowright$ & $\quad$ & Used to represent the orientation of a circle \\[5pt]
$J$, $K$ & $\quad$ & Subsets of $\mathbb{R}$, $J$ is often used to denote the domain of the map $\gamma$ \\[5pt]
$f$, $g$, $F$, $G$ & $\quad$ & Applications between spaces \\[5pt]
$B_r(v)$, $\bar{B}_r(v)$ & $\quad$ & Open ball and closed ball in $\mathbb{S}^2$ with intrinsic radius $r$ centered at $v$ \\[5pt]
$\mathcal{H}_v$ & $\quad$ & Hemisphere of $\mathbb{S}^2$ given by $\mathcal{H}_v=\{u\in\mathbb{S}^2;\langle u,v\rangle >0\}$ \\[5pt]
\bottomrule
\end{tabular}
\end{center}
\label{tab:notations}
\end{table}

\begin{table}[!htb]\caption{Recurrent Notations and Terms}
\begin{center}
\begin{tabular}{r c p{10cm} }
\toprule
$(\theta,\varphi)$ & $\quad$ & Parallel and meridian coordinates with axis in direction of a vector $v\in\mathbb{S}^2$ \\[5pt]
$d(p,q)$ & $\quad$ & When $p,q\in\mathbb{S}^2$, it denotes the distance from $p$ to $q$ measured on $\mathbb{S}^2$ \\[5pt]
$\exp$ & $\quad$ & The matrix exponential on anti-symmetric matrices $\text{so}_3(\mathbb{R})=\textup{T}_\mI\SO$ or the Exponential map on the sphere $\mathbb{S}^2$\\[5pt]
$\img$ & $\quad$ & Image of an application\\[5pt]
$ ^*$, $ ^\dagger$, $ ^\ddagger$ & $\quad$ & Footnote marker symbols\\[5pt]
curvature of a curve in $\mathbb{S}^2$ & $\quad$ & Means the \emph{geodesic} curvature of the curve in $\mathbb{S}^2$ \\[5pt]
\bottomrule
\end{tabular}
\end{center}
\label{tab:notations2}
\end{table}

\newpage
\section{Introduction}

This section is an overview of the background and the history of the problem which we study in this article. We also present some related topics. 

\subsection{Topology of the space of curves in $\mathbb{S}^2$}


\noindent The topology of the space of curves on differential manifolds is a very interesting topic for research. 
Let $\mathbb{S}^n$ denote the standard $n$-dimensional unit sphere in the $(n+1)$-dimensional Euclidean space $\mathbb{R}^{n+1}$. Here we introduce previous works for the case of immersed regular curves on the two dimensional unit sphere $\mathbb{S}^2$ in the $3$-dimensional Euclidean space $\mathbb{R}^3$. In 1956, S. Smale proved that the space of $C^r$ ($r\geq 1$) immersions $\mathbb{S}^1\to\mathbb{S}^2$, i.e., $C^r$ regular closed curves on $\mathbb{S}^2$, has only two connected components. Each of them are homotopically equivalent to $\SO\times\Omega\mathbb{S}^3$, where $\Omega\mathbb{S}^3$ denotes the space of all continuous closed curves in $\mathbb{S}^3$ with the $C^0$ topology. This result is a consequence of a much more general theorem (\cite{smale}, thm. A) by him. The properties of $\Omega\mathbb{S}^3$ are well understood (see \cite{botttu}, \textsection 16 and \cite{milnor} \textsection 17). Later in 1970, J. A. Little proved the following theorem.

\begin{theorem}[J. A. Little \cite{little}]\label{little}
There are exactly $6$ second order non-degenerate\footnote{We call a closed curve in $\mathbb{S}^2$ second order non-degenerate when its geodesic curvature is continuous and different from $0$. A regular homotopy of curves on $\mathbb{S}^2$, $h:\mathbb{S}^1\times [0,1]\to\mathbb{S}^2$, is called nondegenerate if each curve $h_t:\mathbb{S}^1\to\mathbb{S}^2$, $t\in [0,1]$, is nondegenerate and if the geodesic curvature is continuous on $\mathbb{S}^1\times [0,1]$.} regular homotopy classes of closed curves on $\mathbb{S}^2$. Moreover, the following $6$ curves on the sphere, denoted by $\gamma_j:[0,1]\to\mathbb{S}^2$, for $j\in\{-3, -2, -1, 1, 2, 3\}$ are in different non-degenerate homotopy classes (see the Figure \ref{fig:little}):
$$
\gamma_j(t)\coloneqq \frac{\sqrt{2}}{2}(1,0,0)+\frac{\sqrt{2}}{2}\big[\sin(2j\pi t)\cdot(0,1,0)-\cos(2j\pi t)\cdot(0,0,1)\big].
$$
\end{theorem}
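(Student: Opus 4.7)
The plan is twofold: (i) exhibit invariants of nondegenerate regular homotopy that take six distinct values on the curves $\gamma_j$, and (ii) show that every nondegenerate closed curve on $\mathbb{S}^2$ is nondegenerately homotopic to one of the $\gamma_j$.

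For part (i), the first invariant is the \emph{sign} of the geodesic curvature. Since it is continuous and nowhere zero along any nondegenerate curve, it is constant on each curve and preserved under nondegenerate homotopy, separating $\{\gamma_1,\gamma_2,\gamma_3\}$ from $\{\gamma_{-1},\gamma_{-2},\gamma_{-3}\}$. The second invariant is provided by Smale's theorem: lifting the Frenet frame $\mathfrak{F}_\gamma\colon\mathbb{S}^1\to\SO$ along the double cover $\mathbb{S}^3\to\SO$ gives a class in $\pi_1(\SO)\cong\mathbb{Z}/2$, invariant under ordinary (hence nondegenerate) regular homotopy; this distinguishes the even-wound $\gamma_{\pm 2}$ from the odd-wound $\gamma_{\pm 1},\gamma_{\pm 3}$. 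It remains to separate $\gamma_{\pm 1}$ from $\gamma_{\pm 3}$. Here I would use an integer-valued rotation invariant counting how many times the tangent $\vt_\gamma$ revolves around a fixed axis, measured in a frame where the curve stays away from a small disk. Such a count is only defined modulo $2$ for general regular homotopy, but once the sign of the geodesic curvature is fixed the tangent cannot reverse its angular motion, so the count promotes to a genuine integer and takes the values $1$ and $3$ respectively on $\gamma_1$ and $\gamma_3$.

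For part (ii), I would proceed by normal-form reduction. Given an arbitrary nondegenerate closed curve $\gamma$, first apply Smale's theorem to put its underlying (unconstrained) immersion class into that of an appropriate multiply covered geodesic circle. Next, within a fixed sign of curvature, I would use a version of the h-principle for the open differential relation ``geodesic curvature nonvanishing of prescribed sign''---a convex integration argument in the spirit of the one used elsewhere in the article---to deform $\gamma$ through nondegenerate curves to a curve of \emph{constant} geodesic curvature, i.e.\ a geodesic circle traversed some integer number $n$ of times. Finally, a combinatorial step shows that when $|n|\geq 4$ there is enough room on $\mathbb{S}^2$ to realize an explicit nondegenerate homotopy that shrinks the winding, collapsing every integer $n$ to one of the six cases $|n|\in\{1,2,3\}$ with two sign choices, which by part (i) exhaust the classes.

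The main obstacle is the last combinatorial step: producing explicit nondegenerate homotopies from highly wound circles to less wound ones. This is a genuinely global statement relying on the finite diameter of $\mathbb{S}^2$; no such collapse occurs on $\mathbb{R}^2$, where Whitney--Graustein yields infinitely many classes even under a nonvanishing-curvature constraint. The required deformation threads a thin bubble of compensating curvature through the sphere while keeping the geodesic curvature uniformly bounded away from zero along the entire family, and executing it explicitly is the crux of Little's original argument. Combined with the invariants from part (i), which prevent any further collapse, this completes the count of six nondegenerate regular homotopy classes.
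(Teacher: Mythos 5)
The paper does not prove this theorem; it is a cited background result from Little's 1970 article, so there is no internal argument to compare against. I will instead assess the correctness of your sketch on its own terms.

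Your proposal has a genuine gap that is in fact an internal contradiction. In part (i) you posit a third invariant: an ``integer-valued rotation invariant'' that, because the sign of the geodesic curvature is fixed, ``promotes to a genuine integer'' and separates $\gamma_1$ from $\gamma_3$ via the values $1$ and $3$. If such an integer-valued invariant of nondegenerate homotopy existed, it would take the value $j$ on every $\gamma_j$ and hence produce infinitely many classes among $\gamma_1,\gamma_3,\gamma_5,\ldots$, contradicting the theorem you are trying to prove. The paper itself records the correct phenomenon in the locally convex case (see the discussion surrounding Theorems \ref{sald} and \ref{saldzuhl}): $\gamma_3$ is nondegenerately homotopic to $\gamma_5,\gamma_7,\ldots$, and $\gamma_2$ to $\gamma_4,\gamma_6,\ldots$. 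Your own part (ii) then asserts exactly this collapse for $|n|\geq 4$, which is incompatible with the integer invariant you introduced two paragraphs earlier. The intuition that ``the tangent cannot reverse its angular motion, so the count promotes to an integer'' is borrowed from the plane and does not survive on $\mathbb{S}^2$: the tangent indicatrix lives on a sphere, not a plane, there is no globally consistent ``fixed axis,'' and the compensating-bubble move you describe in part (ii) is precisely the mechanism by which two turns of winding are absorbed without the geodesic curvature ever vanishing.

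What actually separates $\gamma_1$ from $\gamma_3$ is not an integer winding count but a binary geometric dichotomy. Within the positive-curvature, odd $\pi_1(\SO)$ class, there are exactly two components: the contractible one containing the \emph{disconjugate} curves (those meeting every great circle at most twice), represented by $\gamma_1$, and the non-disconjugate one represented by $\gamma_3$ and all higher odd multiples. The corresponding statement appears in the paper as Theorem \ref{shakhe} (Shapiro--Khesin) and, in the quaternionic formulation, as the ``convexity'' of the Frenet lift $\bsm{z}\in\mathbb{S}^3$ in Theorem \ref{sald2}. Disconjugacy is preserved under nondegenerate homotopy for a locally convex curve but is not quantized by an integer; it is a yes/no condition. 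Replacing your integer invariant with this dichotomy, and keeping the sign of $\kappa_\gamma$ and the $\pi_1(\SO)\cong\mathbb{Z}/2$ lift obstruction, gives exactly $2\times 2\times 2=8$ a priori boxes, of which two (disconjugate but even parity) are vacuous, recovering the count of six. The normal-form reduction in your part (ii) is in the right spirit, but it must be paired with this corrected third invariant, and the collapse argument must be formulated so as to show that once $|n|\geq 4$ the curve is automatically non-disconjugate, so the collapse to $|n|\in\{2,3\}$ does not lose information.
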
 

 In other words, there are a total of $6$ connected components in the space of non-degenerate curves in $\mathbb{S}^2$. Each one contains exactly one of the curves $\gamma_j$ given above. The components that contain $\gamma_{\pm 1}$ are known to be contractible.

\begin{figure}[htb] \label{fig:little}
\centering
\begin{tikzpicture}
\begin{scope}[xshift = -4cm, decoration={markings, mark=at position 0.13 with {\arrow {Latex}}, mark=at position 0.63 with {\arrow {Latex}}}]
\draw [postaction={decorate}] plot [smooth cycle, tension = 1.2] coordinates {(0.7,0.2) (0.65,1.35) (-0.65,1.35) (-0.7,0.2)};
\draw[thick,dashed] (0,0.8) circle (1.7cm);
\end{scope}
\begin{scope}[decoration={markings, mark=at position 0.1 with {\arrow {Latex}}, mark=at position 0.6 with {\arrow {Latex}}}]
\draw [postaction={decorate}] plot [smooth cycle, tension = 1.2] coordinates {(0.5,0) (0.75,1.3) (-0.75,1.3) (-0.5,0) (0.55,0.55) (0,1.3) (-0.55,0.55)};
\draw[thick,dashed] (0,0.8) circle (1.7cm);
\end{scope}
\begin{scope}[xshift = 4cm, decoration={markings, mark=at position 0.08 with {\arrow {Latex}}, mark=at position 0.43 with {\arrow {Latex}}, mark=at position 0.58 with {\arrow {Latex}}, mark=at position 0.74 with {\arrow {Latex}}}]
\draw [postaction={decorate}] plot [smooth cycle, tension = 1.2] coordinates {(0.8,0) (0.75,1.3) (-0.75,1.3) (-0.8,0) (-0.10,0.35) (-0.4,1.0) (-0.70,0.35) (0,-0.2) (0.70,0.35) (0.4,1.0) (0.10,0.35)};
\draw[thick,dashed] (0,0.8) circle (1.7cm);
\end{scope}
\end{tikzpicture}
\caption{Figure above illustrates three different curves on a hemisphere of $\mathbb{S}^2$ with positive geodesic curvature. These three curves, from left to right, lie in different connected components containing $\gamma_1$, $\gamma_2$ and $\gamma_3$, respectively.}
\end{figure}
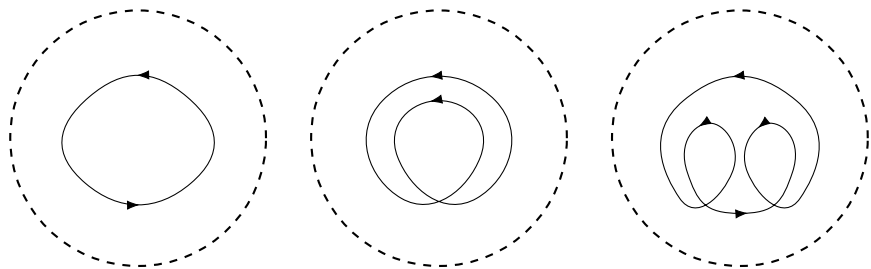

By reflecting each curve in $\mathbb{S}^2$ across a plane passing through the origin, we see that it defines a homeomorphism from each component of the set of curves with positive geodesic curvature into each component of the set of curves with negative geodesic curvature. Thus the topologies of the connected component that contains $\gamma_j$ and the connected component that contains $\gamma_{-j}$ are exactly the same for $j = 1,2,3$. So, to fully understand the topology of the set of non-degenerate curves, it is enough to understand the topology of the set of curves with positive geodesic curvature. We also call a curve \emph{locally convex} if its geodesic curvature is always positive. 

In 1999, B. Z. Shapiro and B. A. Khesin \cite{shakhe} studied the topology of the space of all smooth immersed locally convex curves (not necessarily closed) in $\mathbb{S}^2$ which start and end at given points and given directions. They found the number of connected components of this space. 
More precisely, 
\begin{theorem}[B. Z. Shapiro, B. A. Khesin]\label{shakhe}
The space of locally convex curves on $\mathbb{S}^2$ with given initial and final frames consists of $3$ connected components if there exists a disconjugate curve connecting them. Otherwise the space consists of $2$ connected components. 

Here a curve $\gamma:[0,1]\to\mathbb{S}^2$ is called \emph{conjugate} if there exists a great circle on $\mathbb{S}^2$ having at least $3$ transversal intersections with $\gamma$. Otherwise it is called \emph{disconjugate}.
\end{theorem}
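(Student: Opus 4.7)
The plan is to combine a topological invariant coming from the Frenet frame with a geometric analysis that isolates the disconjugate curves as a separate connected component.

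\textbf{Step 1: A $\mathbb{Z}/2$ invariant from $\pi_1(\mathrm{SO}(3))$.} To each locally convex curve $\gamma$ in the space $\mathcal{L}$ I attach its Frenet frame $\mathfrak{F}_\gamma:[0,1]\to\mathrm{SO}(3)$, which is a continuous path from $\mI$ to $\mQ$. Since $\pi_1(\mathrm{SO}(3))\cong\mathbb{Z}/2$, the homotopy class rel endpoints of $\mathfrak{F}_\gamma$ defines a continuous invariant $\Phi:\mathcal{L}\to\mathbb{Z}/2$. Inserting a small locally convex circular loop at an interior regular point of any curve flips $\Phi$, so both values are realized; this immediately yields at least two connected components.

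\textbf{Step 2: Disconjugate curves form an extra component.} Assume a disconjugate curve $\gamma_0$ from $\mI$ to $\mQ$ exists. By definition $\gamma_0$ meets every great circle in at most two transversal points. I would argue that the component of $\gamma_0$ in $\mathcal{L}$ is strictly separated from the two ``generic'' components of Step 1. The idea is to fix a great circle $C$ disjoint from $\gamma_0$ (which exists by disconjugacy after a small perturbation) and to track the signed count of transversal intersections of $\gamma\in\mathcal{L}$ with $C$: the positive-curvature constraint forces this count to vary in restricted ways along any path in $\mathcal{L}$, and in particular rules out the shrinking back to zero of a pair of arcs created by loop insertion. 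This distinguishes $\gamma_0$ from any curve reached from the generic representatives, producing the third component.

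\textbf{Step 3: Matching upper bound via an h-principle.} To prove that there are at most three components (respectively two, when no disconjugate curve exists), I would apply a parametric h-principle for paths in $\mathrm{SO}(3)$ whose tangent vectors lie in the open cone cut out by positive geodesic curvature. After a loop-insertion/removal normalization, two curves sharing the same value of $\Phi$ and the same ``disconjugacy type'' can be decomposed into short arcs, each ambient-isotoped into a local model in which the admissible velocity set is geodesically convex. Gluing the local homotopies via a partition of unity in frame space then yields a global path in $\mathcal{L}$ between them.

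\textbf{Main obstacle.} The main difficulty lies in Step 3. The admissible tangent directions form an open hemisphere of $\mathbb{S}^2$ at each instant rather than a linear half-space, so naive convex combination of velocity fields typically exits the constraint set. The standard workaround --- condensation of each curve onto a common reference curve by inserting and removing small locally convex loops --- is precisely what fails in the disconjugate case, because removing a loop from a disconjugate curve would force the Frenet frame through a configuration violating the openness of the positive-curvature constraint. This obstruction is exactly what creates the extra component, making the lower and upper bounds match.
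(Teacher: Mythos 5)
The paper does not prove this theorem; it cites it as background from Shapiro--Khesin \cite{shakhe}, so there is no in-paper proof to compare against. Evaluating your sketch on its own terms, there is a genuine gap at the heart of Step~2, and Step~3 remains a wish list.

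The problem with Step~2 is that neither of your two invariants can actually detect the third component. Adding a small positively-curved \emph{double} loop to a disconjugate curve $\gamma_0$ at an interior point produces a curve $\gamma_1$ that is (a) still locally convex, (b) no longer disconjugate (near the loop it meets nearby great circles four or more times), and (c) according to the theorem, in a \emph{different} component from $\gamma_0$. Yet this operation adds a $4\pi$ rotation to the Frenet frame, which is trivial in $\pi_1(\mathrm{SO}(3))$, so $\Phi(\gamma_1)=\Phi(\gamma_0)$; and if the double loop is inserted away from your fixed great circle $C$, the transversal intersection count with $C$ is also unchanged. Thus $\gamma_0$ and $\gamma_1$ agree on both invariants but must be separated. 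The actual separation in Little's and Shapiro--Khesin's arguments uses the \emph{global} rigidity of disconjugacy (equivalently, of the ``convex'' lifted frame in $\mathbb{S}^3$, cf.\ Theorem~\ref{sald2}), not a local intersection number. There is also an unsubstantiated side claim: it is not obvious that a disconjugate arc avoids some great circle after small perturbation; disconjugacy bounds the number of transversal intersections with each great circle by two, which is weaker than disjointness.

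Step~3 correctly identifies the right tool (loop insertion and spreading, as in Lemmas~\ref{lem61}--\ref{lemspre} and Proposition~\ref{prop64} of this paper, following Saldanha and Gromov), and correctly observes that the obstruction to loop removal is exactly what isolates the disconjugate component. But as written it is an outline, not a proof: the reduction to ``short arcs ambient-isotoped into a local model'' must be made precise, and the convexity of the admissible velocity cone fails globally (as you note), so a genuine parametric $h$-principle argument with the loop-normalization step carried out is still required for the upper bound of three (resp.\ two) components.
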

Their work extends Theorem \ref{little} by Little because the space of closed curves is a particular case in which initial and final points and directions coincide respectively.

During 2009-2012, in \cite{sald1}, \cite{sald2} and \cite{sald}, N. C. Saldanha did several further works on the higher homotopy properties of the space of locally convex curves on $\mathbb{S}^2$. 
More precisely, he proved the following result:

\begin{theorem}[N. C. Saldanha]\label{sald} Under the same notations of Theorem \ref{little}, 
the component that contains the curve $\gamma_2$ is homotopically equivalent to $(\Omega\mathbb{S}^3)\vee\mathbb{S}^2\vee\mathbb{S}^6\vee\mathbb{S}^{10}\cdots$. The component that contains the curve $\gamma_3$ is homotopically equivalent to $(\Omega\mathbb{S}^3)\vee\mathbb{S}^4\vee\mathbb{S}^8\vee\mathbb{S}^{12}\cdots$.
\end{theorem}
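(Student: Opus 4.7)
The plan is to analyze the components containing $\gamma_2$ and $\gamma_3$ by lifting curves to paths in $\SO$ via the Frenet frame, producing the $\Omega\mathbb{S}^3$ summand through an ``add small loops'' construction, and then locating finite-dimensional exotic generators coming from families of multiply-covered great circles. This is the general shape of the argument one expects after Theorems~\ref{little} and \ref{shakhe}.

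First, I would identify a locally convex curve $\gamma$ with its Frenet frame $\mathfrak{F}_\gamma\colon [0,1]\to\SO$; positivity of the geodesic curvature is equivalent to $\mathfrak{F}_\gamma^{-1}\mathfrak{F}_\gamma'$ taking values in a fixed open convex cone inside the Lie algebra $\mathfrak{so}_3(\mathbb{R})$. Passing to the universal cover $\mathbb{S}^3$ of $\SO$ refines the component structure and gives a tractable model. Forgetting the curvature constraint and invoking Smale's theorem that components of $\mathcal{I}(\mI,\mQ)$ are homotopy equivalent to $\Omega\mathbb{S}^3$ provides a natural forgetful map which I would show admits a homotopy section.

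Second, I would construct that section $\Phi\colon\Omega\mathbb{S}^3\to\mathcal{L}$ (restricted to the $\gamma_j$-component) as an add-loops map: given a loop $\sigma$ in $\mathbb{S}^3$, attach along a fixed base curve a one-parameter family of small locally convex loops whose tangent lifts trace out $\sigma$. This map can be arranged so that its composition with the forgetful map is homotopic to the identity on $\Omega\mathbb{S}^3$, exhibiting $\Omega\mathbb{S}^3$ as a wedge summand up to homotopy. Since $\Omega\mathbb{S}^3$ admits a standard CW structure with cells in every even dimension $2,4,6,\dots$, the remaining task is to identify what is missing, i.e.\ to understand the cofiber of $\Phi$.

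Third, I would produce the exotic spheres from strata consisting of curves close to a great circle traversed $k$ times, then slightly perturbed so that the geodesic curvature stays strictly positive. The subgroup of $\SO$ stabilizing such a configuration (taking into account the fixed initial and final frames $\mI$ and $\mQ$) determines the dimension of the orbit and hence of the sphere contributed: compatibility of frames yields $\mathbb{S}^{4k-2}$ for $k=1,2,\dots$ in the $\gamma_2$-component and $\mathbb{S}^{4k}$ in the $\gamma_3$-component, the shift of $2$ reflecting the parity of the winding. Assembling these strata by induction on $k$ via a Morse-type argument on an energy counting ``full turns'' along a fixed direction, and checking that each attaching map to the previously built piece is null-homotopic by an explicit curvature-preserving perturbation, gives the claimed wedge decomposition.

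The main obstacle I expect is showing that the attaching maps of the exotic cells are genuinely null-homotopic, so that the space splits as an honest wedge rather than merely having the correct rational homology. Producing these null-homotopies requires small, carefully controlled perturbations along the boundary of each multiply-covered-circle stratum which preserve positivity of the geodesic curvature throughout; this is the delicate technical heart of the proof. A secondary difficulty is ruling out further generators beyond the $\mathbb{S}^{4k-2}$ and $\mathbb{S}^{4k}$ families, which amounts to proving that the filtration by multiplicity exhausts the component up to homotopy, and here one would want to use the h-principle flavored comparison with $\mathcal{I}(\mI,\mQ)$ to bound the size of the cofiber.
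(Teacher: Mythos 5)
This theorem is cited, not proved, in the paper under review: it is Saldanha's result from \cite{sald}, \cite{sald1}, \cite{sald2}, and the present article uses it as background (and later adapts some of its loop-adding lemmas, e.g.\ Lemmas~\ref{lem61}--\ref{lemspre} and Proposition~\ref{prop64}). So there is no in-paper proof to compare against, and your sketch has to stand on its own.

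Your broad outline does echo the shape of Saldanha's actual argument: lift to the Frenet frame, compare with the unconstrained immersion space via an h-principle--flavored argument, realize $\Omega\mathbb{S}^3$ as a retract by an add-loops construction, and locate the extra generators among curves built from multiply covered circles. But the mechanism you offer for the dimensions $4k-2$ and $4k$ does not work: you attribute them to orbit dimensions of stabilizer subgroups of $\SO$, yet $\SO$ is a $3$-dimensional group and cannot produce orbits of dimension $6$, $10$, $\dots$. In Saldanha's work the dimension count arises instead from a stratification of the curve space by the number and type of critical arcs (the same phenomenon this paper packages into the ``index'' $n_\mQ$ and the maps $F$ and $G$), not from a group action, and the ``Morse-type argument on an energy counting full turns'' gestures at such a filtration without supplying a cell structure. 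Most importantly, the two steps you yourself flag as the hard ones --- null-homotopy of the attaching maps of the exotic cells, and exhaustion of the filtration so that no further generators appear --- are precisely the technical substance of \cite{sald} and are left entirely unaddressed; without them the proposed wedge decomposition does not follow, and the argument as sketched would not close.
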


\noindent Moreover N. C. Saldanha gave an explicit homotopy for space of curves with prescribed initial and final Frenet frames: 

\begin{theorem}[N. C. Saldanha]\label{sald2} The space of locally convex curves on $\mathbb{S}^2$ with prescribed initial and final frames consists of connected components of the following types, which depend on its lifted Frenet frame $\bsm{z} \in \mathbb{S}^3$ with basepoint $\mathbf{1}$\footnote{Here we are viewing $\mathbb{S}^3$ as the subset of Quaternions, $\mathbf{1}$ denotes the identity of multiplication of Quaternions.}:
\begin{itemize}
\item $(\Omega\mathbb{S}^3)\vee\mathbb{S}^0\vee\mathbb{S}^4\vee\mathbb{S}^8\vee\mathbb{S}^{12}\vee\cdots$ if $\bsm{z}$ is convex;
\item $(\Omega\mathbb{S}^3)\vee\mathbb{S}^2\vee\mathbb{S}^6\vee\mathbb{S}^{10}\vee\mathbb{S}^{14}\vee\cdots$ if $-\bsm{z}$ is convex;
\item $\Omega\mathbb{S}^3$ if neither $\bsm{z}$ nor $-\bsm{z}$ is convex.
\end{itemize}
\end{theorem}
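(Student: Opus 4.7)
The plan is to reduce the proof to three independent case analyses indexed by the lifted Frenet frame $\bsm{z}\in\mathbb{S}^3=\operatorname{Spin}(3)$, and in each case to assemble the homotopy type one wedge summand at a time. First I would reconcile the trichotomy of the statement with the dichotomy of Theorem~\ref{shakhe}: one calls $\bsm{z}$ convex when $\mathbf{1}$ and $\bsm{z}$ are joined by a convex arc in $\operatorname{Spin}(3)$, and a lifting argument shows that this is equivalent to the existence of a disconjugate locally convex curve realising the prescribed boundary frame. Thus the conditions ``$\bsm{z}$ convex'' and ``$-\bsm{z}$ convex'' each pick out the extra component predicted by Theorem~\ref{shakhe} in the disconjugate case, while ``neither'' corresponds to the two-component conjugate case, whose components will both turn out to have type $\Omega\mathbb{S}^3$.

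Next I would construct the $\Omega\mathbb{S}^3$ wedge summand common to all three cases. Fix a base curve $\gamma_{0}\in\Lspace$; Smale's theorem identifies the space of closed immersed curves on $\mathbb{S}^2$ with $\SO\times\Omega\mathbb{S}^3$ up to homotopy, and concatenating $\gamma_{0}$ at an interior point with a family of such closed immersions whose loops are chosen small enough to remain locally convex produces a continuous map $\Omega\mathbb{S}^3\to\Lspace$ landing in the prescribed component. A standard regular-homotopy argument, paired with a retraction that shrinks the inserted loops back to a point, shows this map is a homotopy equivalence onto a wedge summand.

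The heart of the proof lies in producing the exotic sphere generators. In the convex case I would start with a convex representative and spin an interior segment around the sphere $k$ additional times, parametrising each extra spiral by an $\mathbb{S}^2$ of possible axes together with an $\mathbb{S}^2$ of phase couplings to the previous spiral; after collapsing the parameter at the boundary when a spiral degenerates, this yields a map $\mathbb{S}^{4k}\to\Lspace$. In the anti-convex case the frame orientation forces an initial half-spiral that costs two extra parameter dimensions, shifting the sphere dimensions to $4k+2$. Non-triviality of these classes would be established by filtering $\Lspace$ according to the number of inserted spirals and reading the generators off the induced cofibration sequences.

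The main obstacle will be upgrading the individual wedge summands into a genuine homotopy equivalence with the full wedge of spheres. This step demands global control of the homotopy type, which I would obtain via a Gromov-type h-principle applied to the open condition $\kappa>0$, combined with an explicit description of how $\Lspace$ embeds in the immersion space $\imspace$. Once the attaching data is identified, a fibration-sequence or spectral-sequence argument assembles the summands into a single wedge, and the $4k$-versus-$(4k+2)$ parity emerges from the observation that inserting a full spiral multiplies the lifted Frenet frame by $-\mathbf{1}\in\operatorname{Spin}(3)$, thereby encoding the two possible signs of $\bsm{z}$.
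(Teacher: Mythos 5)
This theorem is quoted in the present paper from \cite{sald} without proof, so there is no internal argument to compare against; what follows is an assessment of your sketch on its own. Your toolkit is the right one---adding small loops, passing by an h-principle to the ambient immersion space, detecting wedge summands---and it matches what the present paper itself imports from \cite{sald} for its main theorem (the operation $f\mapsto f^{[t_0\# 2n]}$, Lemmas \ref{lem61}--\ref{lemcont}, Proposition \ref{prop64}). Your reconciliation with Theorem \ref{shakhe} is sound: for a fixed boundary-frame pair, the lifts $z$ and $-z$ index the two components of $\mathcal{I}(\mQ)\simeq\Omega\mathbb{S}^3\sqcup\Omega\mathbb{S}^3$, at most one is convex, and the convex one contributes the disjoint contractible piece (that is the role of the $\mathbb{S}^0$ in the wedge) accounting for the extra component in the disconjugate case. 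One caveat: your proposed ``retraction that shrinks the inserted loops back to a point'' cannot be carried out inside the locally convex space, since shrinking a small loop forces the curvature through zero; this is precisely why $f$ and $f^{[t_0\# 2]}$ are homotopic in $\mathcal{I}(\mQ)$ but in general not in the constrained space. Splitting off $\Omega\mathbb{S}^3$ requires composing with the inclusion into $\mathcal{I}(\mQ)$ and invoking the h-principle there, not a retraction within the constrained space.

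The genuine gap is in the construction of the exotic sphere generators and in the global assembly. You parametrise each extra spiral by an $\mathbb{S}^2$ of axes together with an $\mathbb{S}^2$ of phase couplings and assert that $(\mathbb{S}^2)^{2k}$, after collapsing the degeneracy locus, is $\mathbb{S}^{4k}$; but a $4k$-dimensional parameter space with some boundary collapsed is not automatically a $4k$-sphere, and nothing in your description exhibits the degeneracy locus as the boundary sphere of a $4k$-disk, so there is no reason at this level of detail to obtain a sphere rather than some other $4k$-complex. The claim that a forced half-spiral in the anti-convex case ``costs two extra parameter dimensions'' is likewise asserted rather than derived. Most seriously, passing from a collection of detected classes to the claimed homotopy equivalence with the entire wedge is the actual content of the theorem: it is not a formal consequence of ``a fibration-sequence or spectral-sequence argument,'' and in \cite{sald} it rests on an explicit CW structure arising from a stratification of the locally convex space by multiplicity, which is where the exact periodicities $4k$ and $4k+2$ actually originate. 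As written, your sketch produces candidate generators but does not establish the stated homotopy type.
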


\noindent For the precise definition of \emph{convexity} of $\bsm{z}\in\mathbb{S}^3$, refer \cite{sald} p.3-4. Despite the omission of an apparent complexity in the hypothesis, Theorem \ref{sald2} is a more general version of Theorem \ref{sald}. 

Recently, in 2013, N. C. Saldanha and P. Zühlke \cite{salzuh} extended Little's result to the space of closed curves with geodesic curvature constrained in an open interval:
\begin{theorem}\label{saldzuhl}
Let $\kappa_1,\kappa_2$ be extended real numbers: $-\infty \leq \kappa_1 < \kappa_2 \leq +\infty$, and let $\rho_i = \arccot \kappa_i $ for $i=1,2$\footnote{We use the conventional function $\arccot:\mathbb{R}\to\left(0,\pi\right)$, we put $\arccot (+\infty) = 0$ and $\arccot (-\infty)= \pi$, extending it to $[-\infty,+\infty]$.}. Let
$$ n=\left\lfloor \frac{\pi}{\rho_1-\rho_2}\right\rfloor +1 .$$

\noindent Then the space $\mathcal{L}_{\kappa_1}^{\kappa_2}$ of closed curves on $\mathbb{S}^2$ with geodesic curvature in the interval $(\kappa_1,\kappa_2)$ has exactly $n$ connected components $\mathcal{L}_1,\ldots\mathcal{L}_n$. Denote by $\gamma_j$ the circle traversed $j$ times described by the formula below:
$$\gamma_j = \frac{\sqrt{2}}{2}(1,0,0)+\frac{\sqrt{2}}{2}\big[\sin(2j\pi t)(0,1,0)-\cos(2j\pi t)(0,0,1)\big].$$
For each $j\in\{1,2,\ldots,n\}$, the component $\mathcal{L}_j$ contains the curve $\gamma_j:[0,1]\to\mathbb{S}^2$. 

The component $\mathcal{L}_{n-1}$ also contains $\gamma_{(n-1)+2k}$ for all $k\in\mathbb{N}$, and $\mathcal{L}_n$ also contains $\gamma_{n+2k}$ for all $k\in\mathbb{N}$. Moreover, for $n \geq 3$, each of $\mathcal{L}_1,\ldots ,\mathcal{L}_{n-2}$ is homeomorphic to the space $\SO \times \mathbb{E}$, where $\mathbb{E}$ is the separable Hilbert space.
\end{theorem}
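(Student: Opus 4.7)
The theorem splits naturally into three tasks: (a) show that $\mathcal{L}_{\kappa_1}^{\kappa_2}$ has exactly $n$ connected components; (b) identify which $\gamma_j$ sits in which component; (c) promote the analysis for $j\le n-2$ to the homeomorphism with $\SO\times\mathbb{E}$.

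For (a), the strategy is to construct a discrete ``rotation number'' invariant. Given $\gamma\in\mathcal{L}_{\kappa_1}^{\kappa_2}$, I would lift the Frenet frame $\mathfrak{F}_\gamma\colon[0,1]\to\SO$ to the universal cover $\mathbb{S}^3$ and extract an integer recording how many full turns the lifted frame makes. The curvature bound $\kappa\in(\kappa_1,\kappa_2)$ is equivalent to the osculating circle having intrinsic radius in $(\rho_2,\rho_1)$, and in the lift this translates to a sharp cone restriction on the velocity of the lifted frame. The key geometric estimate to establish is that such a constrained lifted loop cannot ``advance'' by more than an angle controlled by $\rho_1-\rho_2$ before the tangent vector completes a half-turn, yielding the upper bound $n=\lfloor\pi/(\rho_1-\rho_2)\rfloor+1$ on the number of components. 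The lower bound follows by computing the invariant on $\gamma_1,\ldots,\gamma_n$ and checking that the values are pairwise distinct. I expect this quantitative estimate, rather than the lower bound, to be the main technical obstacle of the counting part.

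Task (b) then reduces, for $j\le n-2$, to the trivial observation that the invariant on $\gamma_j$ matches the label, and, for the two outermost components, to the explicit construction of homotopies from $\gamma_{(n-1)+2k}$ to $\gamma_{n-1}$ and from $\gamma_{n+2k}$ to $\gamma_{n}$ (for $k\ge 1$) that remove two loops via a figure-eight-style deformation staying inside $(\kappa_1,\kappa_2)$. The fact that this move is forbidden for the inner components is an immediate consequence of the invariant from (a).

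For (c), I would exploit the $\SO$ action on $\mathcal{L}_j$ by rotating the initial Frenet frame: evaluation at the basepoint gives a principal $\SO$-bundle $\mathcal{L}_j\to\SO$, and by standard Hilbert-manifold arguments the total space is homeomorphic to $\SO\times F$ once $F$ is shown to be a contractible separable Hilbert manifold (any such $F$ being homeomorphic to $\mathbb{E}$). The content is thus the contractibility of the fiber $F$ when $j\le n-2$. The plan is a two-step argument in the spirit of the h-principle mentioned in the abstract: first, a ``straightening'' deformation retracting $F$ onto a subspace of canonical representatives, using that the hypothesis $j\le n-2$ leaves enough room inside $(\kappa_1,\kappa_2)$ to absorb the required curvature perturbations; and second, an explicit contraction of the space of canonical representatives to a single curve. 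I anticipate the first step to be the most delicate, since one must ensure perturbations never push the curvature out of the open band uniformly across compact families---and this is precisely where the hypothesis $j\le n-2$ is used, as the outermost components $\mathcal{L}_{n-1}, \mathcal{L}_n$ are ``pinned'' against the boundary of the allowed curvature interval and cannot be straightened in this manner.
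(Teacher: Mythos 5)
The paper does not prove this statement: it is quoted as Theorem~1.5 with a citation to Saldanha--Zühlke~\cite{salzuh}, so there is no in-paper proof to compare against, and I assess your plan on its own merits against the approach of the cited reference. There is a genuine gap at the very first step of part~(a). For a \emph{closed} curve $\gamma\colon\mathbb{S}^1\to\mathbb{S}^2$ the Frenet frame $\mathfrak{F}_\gamma$ is a loop in $\SO$, and since $\pi_1(\SO)\cong\mathbb{Z}/2\mathbb{Z}$ the lift $\tilde{\mathfrak{F}}_\gamma\colon[0,1]\to\mathbb{S}^3$ records only whether $\tilde{\mathfrak{F}}_\gamma(1)=\tilde{\mathfrak{F}}_\gamma(0)$ or $\tilde{\mathfrak{F}}_\gamma(1)=-\tilde{\mathfrak{F}}_\gamma(0)$; there is no canonical integer ``number of full turns'' of an arbitrary path in $\mathbb{S}^3$, unlike the degree of a tangent map $\mathbb{S}^1\to\mathbb{S}^1$ in the planar Whitney--Graustein setting. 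That $\mathbb{Z}/2$ datum is precisely the Smale invariant separating the two components of the unconstrained space $\mathcal{I}$, and it alone cannot yield $n$ classes. The real content, which you defer to the phrase ``cone restriction,'' is to use the curvature bound to manufacture a well-defined integer refining the parity, and then to prove it is both homotopy invariant \emph{and} complete on $\pi_0\bigl(\mathcal{L}_{\kappa_1}^{\kappa_2}\bigr)$.

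That construction is exactly where the cited proof concentrates its effort: after reducing to the symmetric band $(-\kappa_0,\kappa_0)$ via their Theorem~A (quoted here as Theorem~\ref{thm8}), they work with a condensed/diffuse dichotomy for the tangent image, together with grafting (loop attachment) and a Birkhoff-type curve shortening that normalizes a curve to a concatenation of arcs of extremal radius; the integer emerges from this normalization, not from the $\mathbb{S}^3$ lift. Your items (b) and (c) are reasonable in outline but presuppose (a): the figure-eight moves identifying $\gamma_{(n-1)+2k}$ with $\gamma_{n-1}$ (and the prohibition of such moves in inner components) require the invariant to be in hand, and the bundle-plus-contractible-fiber argument in (c) needs the decomposition into $\mathcal{L}_1,\dots,\mathcal{L}_n$ before it can start. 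Moreover, the step you flag as delicate, keeping the curvature inside the open band uniformly over compact families during ``straightening,'' is handled in the reference by the Hilbert-manifold model of admissible curves (pairs $(\hat v,\hat w)\in\mathbf{E}$) and a dense inclusion of smooth representatives, none of which appears in your sketch; I would push you to spell out, before anything else, the precise definition of your integer invariant and a proof that it does not change along a $C^1$ path in $\mathcal{L}_{\kappa_1}^{\kappa_2}$.
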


However, at the moment, not much is known about the higher homotopy structures of the spaces $\mathcal{L}_{n-1}$ and $\mathcal{L}_n$ which appear in Theorem \ref{saldzuhl}, except for the cases in which $\rho_1-\rho_2=\frac{\pi}{2}$ (see Theorem \ref{sald}) and $\rho_1-\rho_2=\pi$, respectively. In the former case, the space $\mathcal{L}_{\kappa_1}^{\kappa_2}$ is homeomorphic to the space of locally convex curves, so its components have homotopy types as described in Theorem \ref{sald}. In the latter case, $\mathcal{L}_{\kappa_1}^{\kappa_2}=\mathcal{L}_{-\infty}^{+\infty}$ is the space of $C^1$ immersed curves without restrictions on the curvature, which is mentioned in the beginning of Introduction. 
Based on the above fact, they conjectured the connected components $\mathcal{L}_{n-1}$ and $\mathcal{L}_{n}$ to be homotopically equivalent to $(\Omega\mathbb{S}^3) \vee \mathbb{S}^{n_1}\vee \mathbb{S}^{n_2}\vee\mathbb{S}^{n_3}\vee \cdots$. On the other hand, it is unknown whether the result of Theorem \ref{saldzuhl} holds for the space of curves that are not closed, except for the case $\rho_1-\rho_2 = \frac{\pi}{2}$ (see Theorem \ref{sald2}).

It is worth to mention that in 2014, N. C. Saldanha and P. Zühlke \cite{salzuh2} solved the related problem for the space of curves in the plane $\mathbb{R}^2$ with prescribed initial and final Frenet frames. 

In this article we consider the space $\mathcal{L}_{\kappa_1}^{\kappa_2}(P,Q)$ of curves in $\mathbb{S}^2$ with prescribed initial and final Frenet frames $P,Q\in\SO$ respectively, and obtain a result consistent with the conjecture by proving the existence of a non-trivial map $F:\mathbb{S}^{n_1}\to\mathcal{L}_{\kappa_1}^{\kappa_2}(P,Q)$. It turned out that the existence of such map and the dimension $n_1$ are linked to the maximum number of arcs of angle $\pi$ for each of four types of ``maximal'' critical curves. It is not clear how to adapt the method of proof in \cite{salzuh2} which was used for the plane case to the sphere case, so we use entirely different method in this article.

Now we give an intuitive and brief statement of the main theorem (Theorem \ref{mainTheorem}, which we will prove in this article). Let $\mathcal{L}^{+\kappa_0}_{-\kappa_0}(\mI,\mQ)$ be the space of $C^1$ immersed curves $\gamma:[0,1]\to\mathbb{S}^2$ which satisfies the condition $-\kappa_0<\kappa_\gamma^-(t)\leq\kappa_\gamma^+(t)<\kappa_0$ for all $t \in [0,1]$, starting at Frenet frame $\mI$ and ending at Frenet frame $\mQ$ (see the definitions of $\kappa_\gamma^-(t)$ and $\kappa_\gamma^+(t)$ in Section \ref{sec2}). This space contains the space ${\mathcal{C}^r}_{-\kappa_0}^{+\kappa_0}(I,Q)$ of $C^r$ immersed curves on $\mathbb{S}^2$ for $r\in\{2,3,\ldots,\infty\}$ with geodesic curvature constrained in the interval $(-\kappa_0,+\kappa_0) $. N. C. Saldanha and P. Zühlke (\cite{salzuh}, Lemma 1.11) proved that the inclusion $i:{\mathcal{C}^r}_{-\kappa_0}^{+\kappa_0}(I,Q)\to\mathcal{L}_{-\kappa_0}^{+\kappa_0}(I,Q)$ is a homotopic equivalence. \emph{In this article we consider the case that $\kappa_0 > 1$. For convenience, denote $\rho_0= \arccot \kappa_0$, then $\rho_0\in (0,\frac{\pi}{4} )$}. We need the following concept. 

\begin{definition} We call a curve $\gamma\in\mathcal{L}^{+\kappa_0}_{-\kappa_0}(\mI,\mQ)$ \emph{critical} if it is a concatenation of a finite number of arcs of circles and satisfies the following properties. Let $r_0$, $r_1$,$\ldots$ , $r_k$ be the radii and $\gamma_0$, $\gamma_1$,$\ldots$ , $\gamma_k$ be these arcs of circles, respectively.
\begin{enumerate}
\item The centers of all circles lie in a unique great circle.
\item Each circle has radius in $\left(\rho_0,\frac{\pi}{2}-\rho_0\right)\cup \left(\frac{\pi}{2}+\rho_0,\pi-\rho_0\right)$.
\item For each $i\in\{1,2,\ldots,k-1\}$, $\gamma_i$ has length equal to $\pi\sin r_i$.
\item $\gamma_0$ and $\gamma_k$ have length $<\pi\sin r_0$ and $<\pi\sin r_k$, respectively.
\item The signs of the geodesic curvatures of all segments of arcs of $\gamma$ are alternating. In other words, for each $i\in\{0,1,\ldots,k-1\}$, if the curvature of $\gamma_i$ is positive then the curvature of $\gamma_{i+1}$ is negative and vice-versa.
\item $\gamma$ does not have self-intersections. 
\end{enumerate}
Given a critical curve, we associate to it a string of alternating signs of type ``$+-+-\cdots $'' or ``$-+-+\cdots $'' by the rule: We ``walk'' along the curve and measure the curvature of $\gamma$ from start to end. If the curvature jumps from positive to negative we put a ``$+$'' sign, for each jump from negative to positive we put a ``$-$'' sign.
\end{definition}

Refer Figure \ref{fig:criticalcurve2} for a more geometric view of critical curves.

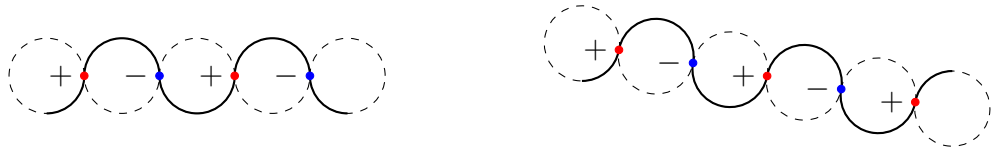
\begin{figure}[htb]
\centering
\begin{tikzpicture}[every text node part/.style={align=center}]
\begin{scope}[scale=.5,yshift=-25]
\draw[dashed] (0,1) circle (1) ;
\draw[dashed] (2,1) circle (1) ;
\draw[dashed] (4,1) circle (1) ;
\draw[dashed] (6,1) circle (1) ;
\draw[dashed] (8,1) circle (1) ;
\draw[thick] (0,1) ++(-90:1) arc (-90:0:1) node[left]{$+$};
\draw[thick] (2,1) ++(180:1) arc (180:0:1) node[left]{$-$};
\draw[thick] (4,1) ++(-180:1) arc (-180:0:1) node[left]{$+$};
\draw[thick] (6,1) ++(180:1) arc (180:0:1)node[left]{$-$};
\draw[thick] (8,1) ++(-180:1) arc (-180:-90:1);
\draw[fill,red] (1,1) circle (0.1);
\draw[fill,blue] (3,1) circle (0.1);
\draw[fill,red] (5,1) circle (0.1);
\draw[fill,blue] (7,1) circle (0.1);
\end{scope}
\begin{scope}[scale=.5,xshift=400,rotate=-10]
\draw[dashed] (0,1) circle (1) ;
\draw[dashed] (2,1) circle (1) ;
\draw[dashed] (4,1) circle (1) ;
\draw[dashed] (6,1) circle (1) ;
\draw[dashed] (8,1) circle (1) ;
\draw[dashed] (10,1) circle (1) ;
\draw[thick] (0,1) ++(-80:1) arc (-80:0:1) node[left]{$+$};
\draw[thick] (2,1) ++(180:1) arc (180:0:1) node[left]{$-$};
\draw[thick] (4,1) ++(-180:1) arc (-180:0:1) node[left]{$+$};
\draw[thick] (6,1) ++(180:1) arc (180:0:1) node[left]{$-$};
\draw[thick] (8,1) ++(-180:1) arc (-180:0:1) node[left]{$+$};
\draw[thick] (10,1) ++(180:1) arc (180:100:1);
\draw[fill,red] (1,1) circle (0.1);
\draw[fill,blue] (3,1) circle (0.1);
\draw[fill,red] (5,1) circle (0.1);
\draw[fill,blue] (7,1) circle (0.1);
\draw[fill,red] (9,1) circle (0.1);
\end{scope}
\end{tikzpicture}
\caption{The curve on the left is critical of type $+-+-$ and the curve on the right is critical of type $+-+-+$ (the signs represents the inclination of tangent vectors at dotted points). Meanwhile the dashed circles have radii greater than $\rho_0=\arccot \kappa_0$ and are aligned so that their centers are on the same geodesic.}
\label{fig:criticalcurve2}
\end{figure}

We now give an intuitive description of the main theorem:

\begin{theorem}[informal statement of the main theorem (Theorem \ref{mainTheorem})] Given a matrix $\mQ\in\SO$, the following information about the topology of $\mathcal{L}^{+\kappa_0}_{-\kappa_0}(\mI,\mQ)$ can be obtained by analyzing critical curves in $\mathcal{L}^{+\kappa_0}_{-\kappa_0}(\mI,\mQ)$. If there exist an integer $n\geq 1$ and critical curves of types $\underbrace{+-+-\ldots}_{n+1}$ and $\underbrace{-+-+\ldots}_{n+1}$, and there is neither a critical curve of types $\underbrace{+-+-\ldots}_{n+2}$ nor $\underbrace{-+-+\ldots}_{n+2}$, then there exists an exotic generator of $\ho_n\big(\mathcal{L}^{+\kappa_0}_{-\kappa_0}(\mI,\mQ)\big) $.
\end{theorem}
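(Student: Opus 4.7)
The plan is to exhibit an explicit continuous map $F : \mathbb{S}^n \to \mathcal{L}^{+\kappa_0}_{-\kappa_0}(\mI,\mQ)$ whose homology class furnishes the sought exotic generator of $\ho_n$, and to certify its non-triviality via the version of the h-principle announced in the abstract.

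First I would take the two given maximal critical curves $\gamma^{+}$, of type $\underbrace{+-+-\cdots}_{n+1}$, and $\gamma^{-}$, of type $\underbrace{-+-+\cdots}_{n+1}$, and designate them as antipodal ``poles'' of $\mathbb{S}^n$. Each such curve has $n+1$ junction points, and at every junction it is locally the concatenation of two tangent circular arcs whose radii lie strictly inside $(\rho_0,\tfrac{\pi}{2}-\rho_0)\cup(\tfrac{\pi}{2}+\rho_0,\pi-\rho_0)$. At each junction I can define a one-parameter deformation that rotates the common tangent direction a bit and redistributes length between the two neighbouring arcs (possible because each interior arc has length exactly $\pi\sin r_i$, leaving room to both shrink and grow while staying strictly inside the curvature bound). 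Performing the deformation simultaneously at all $n+1$ junctions yields a map from the cube $[-1,1]^{n+1}$ into $\bar{\mathcal{L}}^{+\kappa_0}_{-\kappa_0}(\mI,\mQ)$; imposing the constraint that the final frame equal $\mQ$ reduces the parameter dimension by one, and the boundary faces---on which one or more junctions collapse onto critical curves of strictly shorter combinatorial length (which do exist by hypothesis)---should glue coherently so that the resulting cell becomes $\mathbb{S}^n$ with $\gamma^{\pm}$ at its two poles.

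For non-triviality of $[F]\in\ho_n(\mathcal{L}^{+\kappa_0}_{-\kappa_0}(\mI,\mQ))$, I would invoke the h-principle to be established in the paper, which compares the constrained space to a model stratified by the combinatorial type of critical curves. A putative null-homotopy $\widetilde{F}\colon D^{n+1}\to\mathcal{L}^{+\kappa_0}_{-\kappa_0}(\mI,\mQ)$ extending $F$, viewed through a min-max procedure applied to an appropriate length-or-energy functional, would have to pass through an interior saddle which must itself be a critical curve whose combinatorial length is strictly greater than that of $\gamma^{\pm}$---that is, of type $\underbrace{+-+-\cdots}_{n+2}$ or $\underbrace{-+-+\cdots}_{n+2}$. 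Since such a critical curve does not exist by hypothesis, no null-homotopy of $F$ is possible inside $\mathcal{L}^{+\kappa_0}_{-\kappa_0}(\mI,\mQ)$; on the other hand, the same min-max inside the ambient space $\imspace$ proceeds without obstruction (the curvature bound being dropped), so $[F]$ is genuinely exotic with respect to the inclusion.

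The main obstacle will be the global assembly in step two, together with the obstruction argument in step three. In step two one has to verify that the $(n+1)$-cube of local deformations really collapses to an $\mathbb{S}^n$, with every boundary face accounted for by a concrete critical curve of smaller combinatorial length and with no residual degeneracy that would collapse $[F]$ to a lower-dimensional cycle. In step three the delicate point is that a null-homotopy could try to sneak through the closure $\bar{\mathcal{L}}^{+\kappa_0}_{-\kappa_0}(\mI,\mQ)$ on which curvature $\pm\kappa_0$ is attained, and ruling this out---so that any saddle produced by the min-max remains a genuine critical curve of the forbidden combinatorial length---is precisely the feature for which the paper's h-principle, applied to the closure, is essential.
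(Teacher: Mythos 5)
The high-level idea is right—build a map $F:\mathbb{S}^{n}\to\Lspace$ indexed by the number of ``humps'' a critical curve may have, and show it cannot be killed inside $\Lspace$ while it dies trivially in $\mathcal{I}(\mQ)$—but your two key steps diverge from what the paper actually does, and both of them have genuine gaps as you describe them.

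\textbf{Construction of $F$.} You propose to start from the two maximal critical curves $\gamma^{\pm}$ as poles and deform at junctions, then hope the faces of a $(n+1)$-cube glue into $\mathbb{S}^n$. There is no argument in the proposal that the frame constraint at the endpoint cuts out exactly a sphere, or that the curvature bound stays strict throughout the cube (the critical curves sit very close to the boundary $\partial\Lspace$; an interior arc of length exactly $\pi\sin r_i$ between two arcs of alternating curvature is precisely a borderline configuration, and a generic local deformation of a junction will push some arc's curvature out of the open interval). The paper avoids this entirely: it constructs $\bar{F}:\mathbb{R}^{n_\mQ}\to\Lspace$ explicitly by concatenating $n_\mQ+1$ arcs $\alpha_i$ whose endpoints ride on families of ``control circles'' $\spL_i,\spR_i$ chosen with radius $\rhot>\rho_0$ so that the curves stay strictly inside the curvature bound, and then it extends $\bar{F}$ to $\mathbb{S}^{n_\mQ}$ by adding loops near $\partial B_R(0)$ and contracting to a single curve $\tilde\gamma$ at the pole. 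The critical curves of type $n+1$ are not literally in the image of $F$; they appear only as limiting objects in the index estimates.

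\textbf{Non-triviality of $[F]$ and role of the h-principle.} The min-max/saddle argument you sketch is not the paper's route and would need substantial new machinery (a Palais-Smale type condition on the length functional over $\Lspace$, an identification of its critical points, and a reason the saddle has the right combinatorial type). The paper instead builds a second map $G:\Lspace\to\mathbb{S}^{n_\mQ}$ via the ``good subsequence'' construction on the data $(x_k)$ extracted from how $\gamma$ and $\vt_\gamma$ meet the annuli $\Xi_0,\Xi_1$, and then shows $G\circ F$ has degree $1$ by a direct computation. The non-existence of critical curves of type $n+2$ enters not through a min-max saddle but through Proposition \ref{propindex}, which bounds $\cindex_\epsilon$ by $n_\mQ$ and is used to make $G$ well-defined and non-vanishing on $\partial\mathcal{C}_0$. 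Finally, your placement of the h-principle is inverted: in the paper it is used in the \emph{opposite} direction, namely to show $[\boldsymbol{i}\circ F]$ is trivial in $\mathcal{I}(\mQ)$ (curves in $\img(F)$ are confined to a ball and stereographically projected to the plane, where Smale's theorem gives contractibility), and, via Proposition \ref{prop64}, to transfer homotopies from $\mathcal{I}(\mQ)$ back to $\Lspace$ after adding enough loops. It is not the engine that obstructs a null-homotopy inside $\Lspace$.
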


\noindent The generator mentioned in the theorem above will be constructed explicitly. A formal and detailed statement of the main theorem will be presented in Section 2.

On the other hand, we show that for some $\mQ$, the map $\boldsymbol{i}:\Lspace\hookrightarrow\mathcal{I}(\mI,\mQ)$ is a homotopical equivalence in Section \ref{trivialresult}. This result gives information about the topology of the space of curves based only on $\mQ$ and $\rho_0$. 

\subsection{The topology of curves in higher dimension spheres, plane and other spaces}

One may be curious whether there are similar properties for the space of curves on spheres $\mathbb{S}^n$ of higher dimensions. Indeed there are some studies: \cite{shasha}, \cite{shap}, \cite{salsha}, \cite{alves}, \cite{alves2} and \cite{goulart}.

The research on the topological aspects of spaces of curves has not been restricted exclusively to sphere $\mathbb{S}^n$. For curves on $2$-dimensional Euclidean plane, here we mention the articles: \cite{whit}, \cite{dubins1}, \cite{dubins2}, \cite{salzuh2}, \cite{salzuh3}, \cite{ayala4}, \cite{ayala3}, \cite{ayala2}, \cite{ayala}. We also mention \cite{anisov} for $\mathbb{RP}^2$, \cite{salzuh4} for two dimensional hyperbolic space with constant curvature $-1$, and \cite{smale} and \cite{mossad} for general Riemannian manifolds, respectively. 

\textbf{Acknowledgement}. I would like to thank my advisor Nicolau C. Saldanha for his guidance and great support throughout my PhD studies. I also would like to thank Capes and Faperj for financial support (scholarship) during my graduate studies at PUC-Rio. 

\newpage
\section{Statement of the main theorem}\label{sec2}

This section begins with the introduction of some definitions which we will use throughout the text. After that, we present our main result and the scheme of proof.

\subsection{Definition of $\mathcal{I}$}
For completeness, we present the definition for the space $\mathcal{I}$ of immersed curves in $\mathbb{S}^2$. We consider all $C^1$ applications of type $\gamma: I_\gamma\to\mathbb{S}^2$, $\gamma'(t)\neq 0$, for all $t\in I_\gamma$, where $I_\gamma\subset\mathbb{R}$ is a closed non-degenerate interval. We say that two applications:
$$ \alpha:I_\alpha\to\mathbb{S}^2 \quad \text{and} \quad \beta:I_\beta\to\mathbb{S}^2 . $$
\noindent are equivalent if there exists a $C^1$ strictly increasing bijection $\bar{t}:I_\alpha\to I_\beta$, $\bar{t}'(t)>0$, such that:
$$ \alpha(t) = \left(\beta\circ\bar{t}\right)(t). $$
\noindent One may have noted that $\beta$ is just a \emph{reparametrization} of $\alpha$. We use the notation $\alpha\sim\beta$. It can be easily verified that $\sim$ is an equivalence relation. 
The space of $C^1$ \emph{immersed curves} on $\mathbb{S}^2$ denoted by $\mathcal{I}$ is the following quotient space:
$$\mathcal{I} = \bigslant{\left\{\gamma:I_\gamma\to\mathbb{S}^2; \gamma \text{ is a $C^1$ application and $\gamma'(t)\neq 0$, for all $t\in I_\gamma$}\right\}}{\sim}. $$
\noindent \label{equivpg} By abuse of notation, we will use $\alpha$ to represent the equivalence class $[\alpha]=\{\beta;\alpha\sim\beta\}\in\mathcal{I}$, and call $\alpha$ a $C^1$ immersed curve on $\mathbb{S}^2$, or an immersed curve for short. 
Now we recall the concept of arc-length. Given an immersed curve $\gamma: [0,1]\to\mathbb{S}^2$ with parameter $t$, define the arc-length of $\gamma$ by $s:[0,1]\to [0,L_\gamma]$ as follows:
$$ s(t)\coloneqq \int_0^{t} |\gamma'(t)| dt  , $$
\noindent where $L_\gamma=\int_0^1|\gamma'(t)|dt$ is the \emph{length} of $\gamma$. Since $|\gamma'|> 0$, $s$ is a strictly increasing function. 
By re-parametrizing the curve by arc-length $s$ we obtain a curve $\gamma: [0,L_\gamma]\to\mathbb{S}^2$ 
with $|\gamma'(s)|\equiv 1$. We will use the notation $\vt_\gamma(t)$ to denote the unit tangent vector $\gamma'(s)\vert_{s=s(t)}$ at $\gamma(t)$. 

Given any two immersed curves $\alpha$ and $\beta$, let $L_\alpha$ and $L_\beta$ denote their lengths. Reparametrize both curves proportionally to arc-length with:
 $$|\alpha'(t)|=L_\alpha \quad \text{and} \quad |\beta'(t)|=L_\beta,$$
so that $\alpha,\beta:[0,1]\to\mathbb{S}^2$ have constant speeds respectively. Define:
$$\bar{d}(\alpha,\beta) = \max \left\{d\big(\alpha\left(t\right),\beta\left(t\right)\big)+d\big(\vt_{\alpha}\left(t\right),\vt_{\beta}\left(t\right)\big);t\in [0,1] \right\}.$$ 
\noindent In the equation above, $d$ is the distance measured on the surface between two points in $\mathbb{S}^2$. It is easy to check that $\bar{d}$ is well defined on $\mathcal{I}$, and a distance function. So the pair $(\mathcal{I},\bar{d})$ is a metric space.  We have the usual $C^1$ topology in $\mathcal{I}$, induced by the metric $\bar{d}:\mathcal{I}\times\mathcal{I}\to [0,\infty)$. We use this topology throughout the text. 

\subsection{Definition of spaces $\Lspace$ and $\bar{\mathcal{L}}_{\rho_0}(\mQ)$}

Given a $C^1$ immersed curve $\gamma:I\to \mathbb{S}^2$, we define the \emph{unit normal vector} $\boldsymbol{n}_\gamma$ to $\gamma$ by
$$\bsm{n}_\gamma(t)=\gamma(t)\times\vt_\gamma(t),$$
where $\times$ denotes the vector product in $\mathbb{R}^3$. If $\gamma$ also has the second derivative, the \emph{geodesic curvature} $\kappa_\gamma(s)$ at $\gamma(s)$ is defined by
\begin{equation}\label{curv}
\kappa_\gamma(s)=\left\langle\vt_\gamma'(s), \bsm{n}_\gamma(s)\right\rangle,
\end{equation} 
\noindent where $s$ is the arc-length of $\gamma$. Remember that we are working with $C^1$ curves, so the geodesic curvature may not be well defined for these curves. Here we establish a broader definition of the curvature $\kappa_\gamma(s)$ for $C^1$ regular curves below (see Figure \ref{fig:broader}, for an intuition of this concept). Given a $C^1$ curve $\gamma:I_1\to\mathbb{S}^2$ and a circle $\zeta:I_2\to\mathbb{S}^2$, we say that $\zeta$ is tangent to $\gamma$ at $\gamma(t_1)$, $t_1\in I_1$, from the \emph{left} if the next conditions are satisfied:
\begin{figure}
\begin{tikzpicture}
\begin{scope}[xshift = -0cm, decoration={markings, mark=at position 0.5 with {\arrowreversed {Latex}}}]
\draw[dashed,blue] (-1.0,0) circle (1.0);
\draw[dashed,red] (1.2,0) circle (1.2);
\draw [postaction={decorate},thick] (0,0) arc (180:270:1.4);
\end{scope}
\begin{scope}[xshift = -0cm, decoration={markings, mark=at position 0.5 with {\arrow {Latex}}}]
\draw [postaction={decorate},thick] (0,0) arc (0:90:1.7);
\end{scope}
\begin{scope}[xshift = -5cm, decoration={markings, mark=at position 0.3 with {\arrow {Latex}},mark=at position 0.7 with {\arrow {Latex}}}]
\draw[dashed,blue] (-1.1,0) circle (1.1);
\draw[dashed,red] (1.2,0) circle (1.2);
\draw [postaction={decorate},thick] plot [smooth, tension = 1.0] coordinates {(-.3,-1.5) (0,0) (-.3,1.5)};
\end{scope}
\begin{scope}[xshift = 5cm, decoration={markings, mark=at position 0.3 with {\arrow {Latex}}, mark=at position 0.7 with {\arrow {Latex}}}]
\draw[dashed,red] (1.4,0) circle (1.4);
\draw[postaction={decorate},scale=0.5,domain=-1:1,smooth,variable=\y,thick]  plot ({-\y*\y*\y*\y},{3*\y*\y*\y});
\end{scope}
\end{tikzpicture}
\caption{
In the left hand side image is a smooth curve. In the center image is a piece-wise $C^2$ curve. In the right hand side image is the curve given by the spherical projection of the plane curve $t\mapsto(-t^4,t^3)$. Note that there does not exist a circle tangent to this curve at $(0,0)$ from the left. The second and the third curves are $C^1$ regular curves, but not $C^2$. However, we will define the concept of the left and the right curvatures for these curves. For the rightmost curve, the left and right curvature at the projection of $(0,0)$ are both $+\infty$.}
\label{fig:broader}
\end{figure}
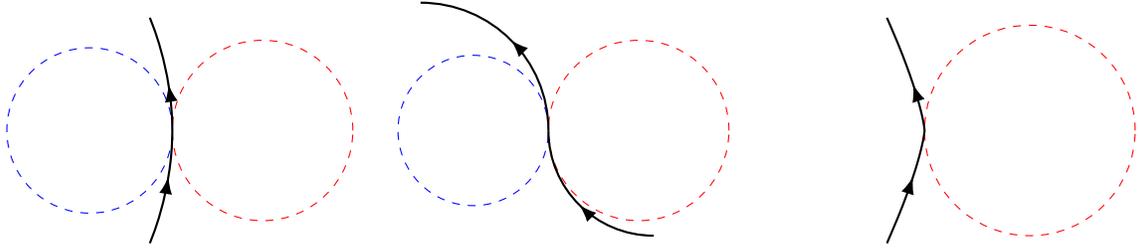

\begin{figure}
\begin{tikzpicture}
\begin{scope}[scale=1.7, xshift = 5cm, decoration={markings, mark=at position 0.3 with {\arrow {Latex}}, mark=at position 0.7 with {\arrow {Latex}}}]
\draw[dashed,red] (1.4,0) circle (1.4);
\draw[dashed,blue] (-1.4,0) circle (1.4);
\draw[postaction={decorate},scale=0.5,domain=-1:1,smooth,variable=\y,thick]  plot ({-2*\y*\y*\y*\y*\y},{3*\y*\y*\y});
\node[anchor=west] (0,0) {$\gamma(0)$};
\draw[fill=black] (0,0) circle (.025);
\end{scope}
\end{tikzpicture}
\caption{
The graph of the spherical projection of the plane curve $\gamma(t)$ given by $t\mapsto (-t^5,t^3)$. Note that there is neither a circle tangent to the curve at $(0,0)$ from the left nor from the right. The left and the right curvatures in the point of inflection $\gamma(0)$ are $+\infty$ and $-\infty$, respectively.}
\label{fig:ctex}
\end{figure}
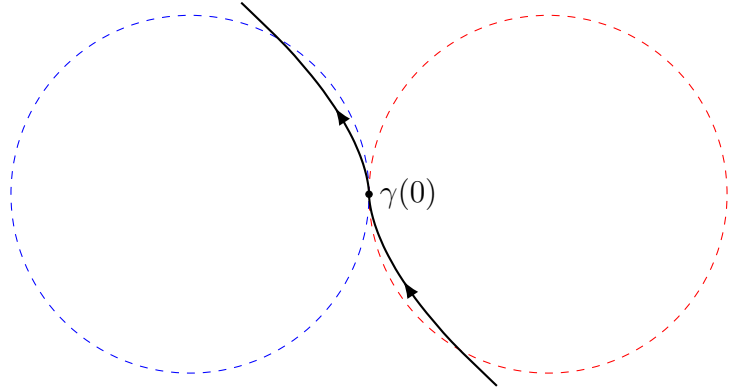

\begin{enumerate}
\item There exists a $t_2\in I_2$ such that $\gamma(t_1)=\zeta(t_2)$ and $\gamma'(t_1)=\zeta'(t_2)$.
\item Denote the center of $\zeta$ by $a$ so that $\zeta$ travels \emph{anti-clockwise} with respect to $a$ and denote by $r$ the radius (measured on sphere) of $\zeta$ in relation to $a$. There exists a $\delta>0$ such that:
$$ d(\gamma(t),a)\geq r, \quad \forall t\in (t_1-\delta,t_1+\delta) .$$
In the above inequality, $d$ is the distance measured on $\mathbb{S}^2$. 
\end{enumerate}
In the same manner we say that $\zeta$ is tangent to $\gamma$ at $\gamma(t_1)$ from the \emph{right} by replacing Condition (2) with:
\begin{enumerate}
\item[(2')] Denote the center of $\zeta$ by $a$ so that $\zeta$ travels \emph{anti-clockwise} with respect to it and denote by $r$ the radius (measured on sphere) of $\zeta$ in relation to $a$. There exists a $\delta>0$ such that:
$$ d(\gamma(t),a)\leq r, \quad \forall t\in (t_1-\delta,t_1+\delta) .$$ 
\end{enumerate}

We define the \emph{left} and the \emph{right curvatures}, denoted by $\kappa_\gamma^+$ and $\kappa_\gamma^-$, respectively:
\begin{align*}
\kappa_\gamma^+(t) &=\inf\left\{\cot(r);\text{where $r$ is the radius of a circle tangent to $\gamma$ at $\gamma(t)$ from the left.} \right\}\\
\kappa_\gamma^-(t) &= \sup\left\{\cot(r);\text{where $r$ is the radius of a circle tangent to $\gamma$ at $\gamma(t)$ from the right.} \right\}.
\end{align*}
\noindent We follow the convention that $\inf\emptyset = +\infty$ and $\sup\emptyset = -\infty$. Note that $\kappa_\gamma^+(t)\geq\kappa_\gamma^-(t)$ for all $t\in I_1$. When the equality occurs for some $t$, we define the \emph{curvature} of $\gamma$ as $\kappa_\gamma(t)=\kappa_\gamma^+(t)=\kappa_\gamma^-(t)$. For $C^2$ curves, the definition of curvature coincides with the usual definition of the \emph{geodesic curvature} (see Equation (\ref{curv})). 

We also define the \emph{Frenet frame} of $\gamma$ by:
$$\mathfrak{F}_{\gamma}(t)=\left( \begin{array}{ccc} | & | & | \\
\gamma(t) & \vt_\gamma (t) & \bsm{n}_\gamma(t) \\
| & | & | \end{array} \right) \in \textrm{SO}_3(\mathbb{R}) .$$

The space $\SO$ is homeomorphic to the unit tangent bundle of sphere $\textup{UT}\mathbb{S}^2$ by mapping the matrix $\bsm{M}\in\SO$ to the vector $\bsm{M}(0,1,0) \in \textup{T}_{\bsm{M}(1,0,0)}\mathbb{S}^2$. Now we define the spaces $\mathcal{I}(\mP,\mQ)$, $\mathcal{L}_{\kappa_1}^{\kappa_2}(\mP,\mQ)$ and $\bar{\mathcal{L}}_{\kappa_1}^{\kappa_2}(\mP,\mQ)$:

\begin{definition}\label{deflspace}
Given $\mP,\mQ\in \textrm{SO}_3(\mathbb{R})$, $\kappa_1,\kappa_2 \in [-\infty,+\infty]$, with $\kappa_1\leq\kappa_2$. 
\begin{itemize}
\item Let $\mathcal{I}(\mP,\mQ)$ be the space of all $C^1$ immersed curves in $\mathbb{S}^2$ with Frenet frames $\mathfrak{F}_\gamma(0)=\mP$ and $\mathfrak{F}_\gamma(1)=\mQ$. We will use the notation $\mathcal{I}(\mQ)$, when $\mP=\mI$.
\item Let $\mathcal{L}^{\kappa_2}_{\kappa_1}(\mP,\mQ)\subset\mathcal{I}(\mP,\mQ)$ be the subspace of curves that satisfies $\kappa_1<\kappa_\gamma^-(t)\leq\kappa_\gamma^+(t)<\kappa_2$ for all $t \in [0,1]$. 
\item Let $\bar{\mathcal{L}}^{\kappa_2}_{\kappa_1}(\mP,\mQ)\subset\mathcal{I}(\mP,\mQ)$ be the subspace of curves that satisfies $\kappa_1\leq\kappa_\gamma^-(t)\leq\kappa_\gamma^+(t)\leq\kappa_2$ for all $t \in [0,1]$. 
\end{itemize} 
We will also adopt shorter notations when these spaces are symmetric in the sense that $-\kappa_1=\kappa_2=\kappa_0$, with $\kappa_0\in (0,+\infty]$. Let $\rho_0 \coloneqq \arccot(\kappa_0)$, we will mostly use $\Lspace \coloneqq \mathcal{L}^{+\kappa_0}_{-\kappa_0}(\mI,\mQ)$ and  $\cLspace \coloneqq \bar{\mathcal{L}}^{+\kappa_0}_{-\kappa_0}(\mI,\mQ)$.
\end{definition}

Note that this definition of $\mathcal{L}_{\kappa_1}^{\kappa_2}(\mat{P},\mat{Q})$ differs from the definition of the space studied in \cite{salzuh}. In fact the spaces of curves originated from these definitions are indeed different sets, but there is a homotopy equivalence between them. For a detailed discussion and characterization of this space see Subsection
 \ref{appdeftop}. 
 
 There is no loss of generality in considering only the situation that $\mP=\mI$, because the space $\mathcal{L}_{\kappa_1}^{\kappa_2}(\mP,\mQ)$ is homeomorphic to $\mathcal{L}_{\kappa_1}^{\kappa_2}(\mI,\mP^{-1}\mQ)$ via the map $\gamma\mapsto \mP^{-1}\gamma$ (the same is valid for $\bar{\mathcal{L}}_{\kappa_1}^{\kappa_2}(\mP,\mQ)$). We study spaces $\mathcal{L}_{\kappa_1}^{\kappa_2}(\mI,\mQ)$ and $\bar{\mathcal{L}}_{\kappa_1}^{\kappa_2}(\mI,\mQ)$, there is no loss of generality in assuming the intervals $(\kappa_1,\kappa_2)$ and $[\kappa_1,\kappa_2]$ to be $(-\kappa_0,\kappa_0)$ and $[-\kappa_0,\kappa_0]$, respectively. This is due to the following result((1.22)Theorem A in \cite{salzuh}).

\begin{theorem}\label{thm8} Let $\mQ\in\SO$, $\kappa_1,\kappa_2, \bar{\kappa}_1, \bar{\kappa}_2\in [-\infty,+\infty]$ such that $\kappa_1<\kappa_2$ and $\bar{\kappa}_1<\bar{\kappa}_2$. Define $\rho_i = \arccot \kappa_i$ and $\bar{\rho}_i = \arccot \bar{\kappa}_i$, for $i=1,2$. Suppose that: 
$$\rho_1 - \rho_2 = \bar{\rho}_1 - \bar{\rho}_2 .$$
\noindent Then there exists a homeomorphism between the spaces $\mathcal{L}^{\kappa_2}_{\kappa_1}(\mQ)$ and $\mathcal{L}^{\bar{\kappa}_2}_{\bar{\kappa}_1}(\rot_{-\theta}\mQ\rot_{\theta})$, where $\theta = \rho_2 - \bar{\rho}_2$ and
$$\rot_\theta = \left(\begin{array}{ccc} \cos\theta & 0 & -\sin\theta \\
0 & 1 & 0 \\
\sin\theta & 0 & \cos\theta
\end{array} \right)$$
\noindent is the rotation matrix around the axis $(0,1,0)$ by the right-hand rule.
\end{theorem}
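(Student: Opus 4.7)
The homeomorphism arises from a \emph{parallel transport on $\mathbb{S}^2$}: each point of $\gamma$ is displaced a signed spherical distance $\theta := \rho_2-\bar\rho_2$ along its normal. Explicitly, set
\[ \Psi_\theta(\gamma)(t) := \cos\theta\cdot \gamma(t) + \sin\theta\cdot \boldsymbol{n}_\gamma(t), \qquad \Phi_\theta(\gamma) := \rot_{-\theta}\cdot \Psi_\theta(\gamma), \]
where the post-rotation just normalizes the initial frame back to $\mI$. If $\mathfrak{F}_\gamma(t)=\mP$, then $\gamma(t)=\mP e_1$, $\vt_\gamma(t)=\mP e_2$, and $\boldsymbol{n}_\gamma(t)=\mP e_3$, hence $\Psi_\theta(\gamma)(t) = \mP(\cos\theta\,e_1+\sin\theta\,e_3) = \mP\rot_\theta e_1$. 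Differentiating $\Psi_\theta(\gamma)$ (see below) produces a tangent parallel to $\mP e_2 = \mP\rot_\theta e_2$, and a cross-product yields $\boldsymbol{n}_{\Psi_\theta(\gamma)}(t) = \mP\rot_\theta e_3$. Thus $\mathfrak{F}_{\Psi_\theta(\gamma)}(t) = \mP\rot_\theta$, whence $\mathfrak{F}_{\Phi_\theta(\gamma)}(t) = \rot_{-\theta}\mP\rot_\theta$, giving initial frame $\mI$ and final frame $\rot_{-\theta}\mQ\rot_\theta$.

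Parametrizing by arc length and using $\vt_\gamma' = \kappa_g \boldsymbol{n}_\gamma - \gamma$, $\boldsymbol{n}_\gamma'=-\kappa_g\vt_\gamma$, one obtains
\[ \Psi_\theta(\gamma)'(s) = (\cos\theta - \kappa_g(s)\sin\theta)\,\vt_\gamma(s) = \frac{\sin(\rho(s)-\theta)}{\sin\rho(s)}\,\vt_\gamma(s), \]
where $\rho(s) := \arccot\kappa_g(s)\in(\rho_2,\rho_1)\subset(0,\pi)$. The hypothesis $\rho_1-\rho_2=\bar\rho_1-\bar\rho_2$ gives $\rho(s)-\theta\in(\bar\rho_2,\bar\rho_1)\subset(0,\pi)$, so the speed is strictly positive and $\Psi_\theta(\gamma)$ is immersed; differentiating once more, the new geodesic curvature equals $\cot(\rho(s)-\theta)\in(\bar\kappa_1,\bar\kappa_2)$. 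Geometrically this is the statement that parallel transport by $\theta$ sends a circle of spherical radius $\rho$ to a concentric circle of radius $\rho-\theta$, a fact that applies equally to the one-sided osculating circles of a merely $C^1$ curve and thereby transfers the pointwise bounds $\kappa_\gamma^\pm(t)\in(\kappa_1,\kappa_2)$ into $\kappa_{\Phi_\theta(\gamma)}^\pm(t)\in(\bar\kappa_1,\bar\kappa_2)$.

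The inverse is $\beta \mapsto \Psi_{-\theta}(\rot_\theta\cdot\beta)$; the identity $\Psi_{-\theta}\circ\Psi_\theta = \mathrm{id}$ follows from a direct computation using $\boldsymbol{n}_{\Psi_\theta(\gamma)} = \cos\theta\cdot\boldsymbol{n}_\gamma - \sin\theta\cdot\gamma$ (so parallel-transport by $-\theta$ undoes parallel-transport by $\theta$). Continuity of $\Phi_\theta$ in the $C^1$ topology is immediate from its pointwise formula in $(\gamma(t),\vt_\gamma(t))$. The main obstacle is verifying that $\Psi_\theta(\gamma)$ is a genuine $C^1$ immersed curve when $\gamma$ is only $C^1$ — a priori $\boldsymbol{n}_\gamma$ is only continuous. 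I expect this to be the subtlest point; it can be handled either by extracting $C^1$-regularity directly from the speed formula above (the bound on $\kappa_\gamma^\pm$ makes $\vt_\gamma$ Lipschitz, hence $\boldsymbol{n}_\gamma$ Lipschitz, and one reparametrizes $\Psi_\theta(\gamma)$ by arc length), or by first establishing the homeomorphism on $C^r$ curves for $r\geq 2$ and transferring via the $C^1$/$C^r$ homotopy equivalence from \cite{salzuh}.
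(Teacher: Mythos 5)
You should note first that the paper does not prove Theorem~\ref{thm8}: it is cited verbatim as Theorem~A~(1.22) of Saldanha--Z\"uhlke \cite{salzuh}, so there is no in-text argument to compare against. That said, your parallel-transport map is the geometric content of that result, and your frame and curvature computations --- $\mathfrak{F}_{\Psi_\theta(\gamma)}=\mathfrak{F}_\gamma\rot_\theta$, the speed $\sin(\rho-\theta)/\sin\rho$, the new curvature $\cot(\rho-\theta)$ --- are all correct, as is the choice of inverse $\Psi_{-\theta}(\rot_\theta\cdot\,)$.

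There are two concrete gaps. (1)~The triangle-inequality argument for transporting one-sided tangent circles only covers one sign of $\theta$. For $\theta>0$, the left tangent circle of $\gamma$ at $\gamma(t_1)$ with center $a$ and radius $r$ transports correctly: $d(\Psi_\theta(\gamma)(t_1),a)=r-\theta$ and $d(\Psi_\theta(\gamma)(t),a)\ge d(\gamma(t),a)-\theta\ge r-\theta$, and symmetrically for the right. But $\theta=\rho_2-\bar\rho_2$ is frequently negative (e.g.\ $\kappa_1=-1$, $\kappa_2=1$, $\bar\rho_2=\pi/3$, $\bar\rho_1=5\pi/6$ gives $\theta=-\pi/12$). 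Then $\Psi_\theta$ moves $\gamma(t_1)$ \emph{away} from $a$, one needs $d(\Psi_\theta(\gamma)(t),a)\ge r+|\theta|$, and the triangle inequality only yields $\ge r-|\theta|$; the corresponding estimate for the right tangent circle fails the same way. So ``applies equally to the one-sided osculating circles'' is not justified for $\theta<0$. (2)~Your fallback of ``transferring via the $C^1/C^r$ homotopy equivalence'' cannot recover what is claimed: a homotopy equivalence carries homotopy-theoretic information but does not promote a homeomorphism between the $C^r$ subspaces to a homeomorphism between the $C^1$ spaces.

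The first route you sketch is the right fix, and --- for finite $\kappa_i$ --- the needed regularity is exactly what the paper itself establishes in Subsection~\ref{appdeftop}: every $\gamma\in\mathcal{L}_{\kappa_1}^{\kappa_2}(\mP,\mQ)$ has Lipschitz unit tangent with $\dot{\vt}_\gamma(s)=-\gamma(s)+w(s)\boldsymbol{n}_\gamma(s)$ a.e.\ for some $w\in L^\infty$ taking values in a compact subinterval of $(\kappa_1,\kappa_2)$, and conversely any such curve lies in $\mathcal{L}_{\kappa_1}^{\kappa_2}$. With this in hand $\boldsymbol{n}_\gamma$ is Lipschitz, $\Psi_\theta(\gamma)$ is Lipschitz with a.e.\ speed uniformly bounded away from $0$ and $\infty$, the arc-length reparametrization $\hat\gamma$ satisfies $\hat\gamma'(\tilde s)=\vt_\gamma(s(\tilde s))$ (continuous, hence $\hat\gamma$ is $C^1$), and its a.e.\ curvature $\cot(\rho-\theta)$ lands in a compact subinterval of $(\bar\kappa_1,\bar\kappa_2)$ for \emph{either} sign of $\theta$. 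This gives $\hat\gamma\in\mathcal{L}_{\bar\kappa_1}^{\bar\kappa_2}$, and continuity of $\Phi_\theta$ in $\bar d$ then follows from the pointwise formula plus the uniformly bi-Lipschitz reparametrization --- a short verification, but not quite ``immediate from the pointwise formula.'' Note finally that Theorem~\ref{thm8} as stated also allows $\kappa_i=\pm\infty$, where the tangent need not be Lipschitz; your argument, like the paper's Lipschitz lemma, covers only the finite range, which is the case actually used in the body of the paper.
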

\noindent For the space $\bar{\mathcal{L}}_{\kappa_1}^{\kappa_2}(\mI,\mQ)$ the conclusion and the proof of Theorem \ref{thm8} are analogous. It turned out that the smoothness condition about the curve does not change the homology of the space $\Lspace$, due to the following theorem (the original proof is given in \cite{salzuh}): 

\begin{definition}
Let $\rho_0\in \left[0,\frac{\pi}{2}\right)$, $\kappa_0 = \arccot \rho_0$, $\mQ\in\SO$ and $r\in\{2,3,\ldots,\infty\}$. Define $\mathcal{C}_{\rho_0}(\mQ)$ to be the set of all $C^r$ regular curves $\gamma:[0,1]\to\mathbb{S}^2$ furnished with $C^r$ topology, with $\gamma$ such that:
\begin{enumerate}
\item $\mathfrak{F}_\gamma(0) = \mI$ and $\mathfrak{F}_\gamma(1) = \mQ$;
\item $-\kappa_0<\kappa_\gamma(t)<\kappa_0$ for each $t\in [0,1]$.
\end{enumerate}
\end{definition}

\begin{theorem}
Let $\rho_0\in \left[0,\frac{\pi}{2}\right)$, $\kappa_0 = \arccot \rho_0$, $\mQ\in\SO$ and $r\in\{2,3,\ldots,\infty\}$. Then the set inclusion $\boldsymbol{i}: \mathcal{C}_{\rho_0}(\mQ)\hookrightarrow\mathcal{L}_{\rho_0}(\mQ)$ is a homotopy equivalence. Therefore, the sets $\mathcal{C}_{\rho_0}(\mQ)$ and $\Lspace$ are homeomorphic.
\end{theorem}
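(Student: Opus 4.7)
The plan is to prove that $\boldsymbol{i}$ is a homotopy equivalence by constructing a smoothing retraction $r : \Lspace \to \mathcal{C}_{\rho_0}(\mQ)$ together with a deformation $H_s$ inside $\Lspace$ from the identity to $\boldsymbol{i} \circ r$, following the mollification strategy of \cite{salzuh}.

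I would first establish a uniform slack lemma: for every $\gamma \in \Lspace$ there exists $\epsilon(\gamma) > 0$ such that $\kappa_\gamma^-(t), \kappa_\gamma^+(t) \in [-\kappa_0 + \epsilon, \kappa_0 - \epsilon]$ for all $t \in [0,1]$. This follows from the compactness of $[0,1]$ together with appropriate semi-continuity of the one-sided curvatures: if a circle of radius $r > \rho_0 + \epsilon$ is tangent to $\gamma$ at $t_0$ from the left, a slight perturbation of that circle remains tangent from the left at nearby parameters, giving upper semi-continuity of $\kappa_\gamma^+$, and similarly lower semi-continuity of $\kappa_\gamma^-$. The pointwise open condition defining $\Lspace$ thus upgrades to a uniform compact constraint, which is what one needs for convolution to stay inside the admissible set.

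Next I would define a smoothing operator $\Phi_\tau$ for small $\tau > 0$. After reparametrizing $\gamma$ proportionally to arc-length, pass to the Frenet frame $\mathfrak{F}_\gamma : [0,1] \to \SO$ and its angular velocity $\Lambda_\gamma \in \text{so}_3(\mathbb{R})$ (defined almost everywhere via one-sided derivatives). The curvature bound translates into a convex constraint on the entries of $\Lambda_\gamma$. Mollify $\Lambda_\gamma$ by convolution with a smooth bump of width $\tau$, chosen small compared to $\epsilon(\gamma)$, and reintegrate to a curve in $\SO$ via the corresponding linear ODE. Convexity of the constraint under convolution guarantees that the resulting curve has curvature in $(-\kappa_0 + \epsilon/2, \kappa_0 - \epsilon/2)$ and is $C^r$ (in fact $C^\infty$). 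A small endpoint correction, supported in shrinking neighborhoods of $t = 0, 1$ and consuming only a fraction of the slack $\epsilon(\gamma)$, restores the Frenet frame conditions $\mathfrak{F}_\gamma(0) = \mI$ and $\mathfrak{F}_\gamma(1) = \mQ$ without violating the curvature bound. Setting $H_s(\gamma) = \Phi_{s\tau(\gamma)}(\gamma)$ with $\tau(\gamma) > 0$ chosen continuously on $\Lspace$ via a partition of unity subordinate to the open sets $\{\gamma : \epsilon(\gamma) > 2^{-k}\}$ yields the desired homotopy.

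The main obstacle will be the combined estimate ensuring that the endpoint correction and the mollification each consume only a small, controlled fraction of the slack $\epsilon(\gamma)$, so that every intermediate curve $H_s(\gamma)$ remains in $\Lspace$ and not merely in $\cLspace$. This requires quantitative control of how much $\Phi_\tau(\gamma)$ differs in the $C^1$ norm from $\gamma$, in terms of $\tau$ and a modulus of continuity for $\Lambda_\gamma$. Once this quantitative bound is proved, $H_s$ is a continuous homotopy inside $\Lspace$ from the identity to a map landing in $\mathcal{C}_{\rho_0}(\mQ)$, and thus $\boldsymbol{i}$ is a homotopy equivalence. The homeomorphism conclusion then follows from standard results on separable infinite-dimensional Hilbert manifolds, since both $\mathcal{C}_{\rho_0}(\mQ)$ and $\Lspace$ are such manifolds and share the same homotopy type.
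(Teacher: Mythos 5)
Your broad strategy---mollify the logarithmic derivative of the frame, reintegrate, and correct the endpoint---is the one the paper is relying on when it defers the proof to Lemma 1.14 of \cite{salzuh}; the paper gives no argument of its own beyond that citation, together with the identification of $\Lspace$ with the strongly admissible curves and a density lemma. But there is a genuine gap in the way you globalize the choice of mollification scale. You assert that the sets $\{\gamma\in\Lspace : \epsilon(\gamma) > 2^{-k}\}$, with $\epsilon(\gamma) = \kappa_0 - \max\bigl(\sup_t\kappa_\gamma^+(t),\,-\inf_t\kappa_\gamma^-(t)\bigr)$, form an open cover, so that a subordinate partition of unity exists. This is false: $\epsilon$ is not lower semi-continuous for the metric $\bar d$ on $\Lspace$, since a $C^1$-small perturbation can add short-wavelength, small-amplitude wiggles whose curvature stays comparable to $\kappa_0$. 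If $\gamma_0$ is a geodesic there are $\gamma_n\to\gamma_0$ in $\bar d$ with $\epsilon(\gamma_n)$ bounded away from $\epsilon(\gamma_0)=\kappa_0$, so $\{\epsilon>2\kappa_0/3\}$ is not a neighborhood of $\gamma_0$. Lemma~\ref{semicontinuity} gives semicontinuity of $\kappa_\gamma^\pm$ in $t$ for \emph{fixed} $\gamma$; it does not give the joint semicontinuity in $\gamma$ that your partition-of-unity step needs.

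To repair this one should decouple the two uses of the slack. Since the curvature constraint on the logarithmic derivative is an open convex cone and $\phi_\tau>0$, the mollification sends $\Lspace$ into $\mathcal{C}_{\rho_0}(\mQ)$ (with $r=\infty$) for \emph{every} $\tau>0$---the slack of $\gamma$ itself is not needed there. The endpoint correction should instead be calibrated against the slack $\tilde\epsilon_\tau(\gamma)$ of the \emph{mollified} curve, which for a fixed $\tau>0$ \emph{is} continuous in $\gamma$: $\gamma\mapsto\Lambda_\gamma*\phi_\tau$ is $C^0$-continuous by $L^\infty$-boundedness, weak-$*$ convergence of the $\Lambda_{\gamma_n}$, and equicontinuity of the convolutions. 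Alternatively, argue parametrically over compact families (where a single $\tau$ suffices) to get a weak homotopy equivalence and upgrade it using that both spaces are metrizable infinite-dimensional Banach manifolds; this is closer to the route \cite{salzuh} takes in the $(\hat v,\hat w)$-coordinates, where the constrained spaces become affine Hilbert submanifolds. Your final homeomorphism deduction is sound.
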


\noindent This theorem is shown in the subsection \ref{appdeftop} (Theorem \ref{thmequiv}). However, in contrast to the previous remarks, the above property is only valid for $\Lspace$. In fact, in general, the spaces $\cLspace$ and $\bar{\mathcal{C}}_{\rho_0}(\mQ)$ are not homotopics.

\subsection{Statement of the main theorem}\label{secformulapq}
The space $\mathcal{I}(\mQ)$ is weakly homotopically equivalent to the space $\Omega\SO$ (the space of loops in $\SO$), refer \cite{elimis} and \cite{grom} for more details. Moreover, $\Omega\SO \simeq \Omega\mathbb{S}^3\sqcup\Omega\mathbb{S}^3$, namely one of these connected component consists of curves with even number of self-intersections, and the other one consists of curves with odd number of self-intersections. For description of the topology of $\Omega\mathbb{S}^3$ refer \cite{milnor}. In this book it is shown that the loop space $\Omega\mathbb{S}^3$ has the homotopy type of a CW-complex with exactly one cell in each of the dimensions $0$, $2$, $4$, $6$, \ldots, $2k$, \ldots, for $k\in \mathbb{N}$. 

Observe that $\Lspace\subset\mathcal{I}(Q)$. It is known from an analogous result of \cite{sald} that the inclusion map $\boldsymbol{i}:\Lspace\to\mathcal{I}(\mQ) $ induces a surjetive map on homology (refer Proposition \ref{prop64}):
\begin{equation}\label{eqhominc}
\ho_k\big(\boldsymbol{i}\big):\ho_k\big(\Lspace\big)\to\ho_k\big(\mathcal{I}(\mQ)\big).
\end{equation}
Our objective is to understand the topological structure of the space $\Lspace$. In this article we prove its homology differs from the homology of $\mathcal{I}(\mQ)$. Our strategy is to construct some specific non-trivial maps $F: \mathbb{S}^n \to \Lspace$ and $G: \Lspace \to \mathbb{S}^n$, for some $n=n_\mQ\in\mathbb{N}$ depending on $\mQ$, such that $F$ and $G$ satisfy the properties:
$$ (G\circ F) : \mathbb{S}^n \to \mathbb{S}^n \quad \text{has degree $1$ and $(\boldsymbol{i}\circ F):\mathbb{S}^n\to\mathcal{I}(\mQ)$ is a trivial map.}$$
\noindent The existence of such maps implies $\ho_n(\boldsymbol{i})\big([F]\big)=0$, but $[F]\neq 0$ in $\ho_n\big(\Lspace\big)$, where $[F]$ denotes the homotopy equivalence class of $F$. Hence the map $\ho_n\big(\boldsymbol{i}\big)$ is \emph{not} injective, from (\ref{eqhominc}) we deduce that the inclusion map $\boldsymbol{i}$ is not a homotopic equivalence. 

Denote by $\left\{\bar{\vet{e}}_1,\bar{\vet{e}}_2,\bar{\vet{e}}_3\right\}$ the basis in $\text{so}_3(\mathbb{R})=\tangent{\mI}{\SO}$ (the Lie algebra of $\textrm{SO}_3(\mathbb{R})$, which is the set of $3\times 3$ anti-symmetric matrices), given by:
$$  \bar{\vet{e}}_1 = \left(\begin{matrix}
0 & 0 & 0 \\
0 & 0 & -1 \\
0 & 1 & 0 \\
\end{matrix}\right),\quad 
\bar{\vet{e}}_2 = \left(\begin{matrix}
0 & 0 & 1 \\
0 & 0 & 0 \\
-1 & 0 & 0 \\
\end{matrix}\right)\quad \text{ and }\quad
\bar{\vet{e}}_3 = \left(\begin{matrix}
0 & -1 & 0 \\
1 & 0 & 0 \\
0 & 0 & 0 \\
\end{matrix}\right).$$
\noindent Note that the exponentials of matrices above are rotations around the axes $x$, $y$ and $z$ respectively. 

Let $v\in\mathbb{S}^2$. We define $\rot_{\rho} (v)$ as the anti-clockwise rotation of angle $\rho$ around the axis generated by the direction from the origin to $v$, following the right hand rule. This rotation is represented by a matrix in $\SO$, which is given by the following formula:
\begin{equation}\label{eqrotation}
\rot_\rho(v)=\exp\big(\rho\left(\left\langle e_1, \vet{v} \right\rangle\bar{\vet{e}}_1 + \left\langle e_2, v \right\rangle\bar{\vet{e}}_2 + \left\langle e_3, v \right\rangle\bar{\vet{e}}_3
\right)\big),
\end{equation}

\noindent where $e_1=(1,0,0)$, $e_2=(0,1,0)$ and $e_3=(0,0,1)$. For $\vet{M}\in\SO$, we use notations:
$$p_1(\mat{M})=\left[\rot_{\rho_0}(\mat{M}e_2)\right](\mat{M}e_1) \quad\text{ and }\quad p_2(\mat{M})=\left[\rot_{-\rho_0}(\mat{M}e_2)\right](\mat{M}e_1).$$ 

\noindent In the other words,
$$ p_1(\mat{M}) = \mat{M}(\cos\rho_0,0,\sin\rho_0) \quad\text{and}\quad p_2(\mat{M})=\mat{M}(\cos\rho_0,0,-\sin\rho_0) .$$
In the definition above, if one views the matrix $\mat{M}$ as the Frenet frame of a curve $\gamma\in\Lspace$ on $\gamma(t)$, the point $p_1$ is the center of the circle of radius $\rho_0$ tangent to $\gamma$ at $\gamma(t)$ from the left. Analogously, the point $p_2$ is the center of the circle of radius $\rho_0$ tangent to $\gamma$ at $\gamma(t)$ from the right. For example, $p_1(\mI)=(\cos\rho_0,0,\sin\rho_0)$ and $p_2(\mI)=(\cos\rho_0,0,-\sin\rho_0)$. 

Due to frequent appearance in the text, we will also use the shorter notations:
\begin{equation}\label{formulapq}
p_1 = p_1(\mI),\quad p_2 = p_2(\mI), \quad q_1 = p_1(\mQ) \quad \text{and} \quad q_2 = p_2(\mQ).
\end{equation}

\noindent Geometrically, $p_1$ and $p_2$ are the centers of the circles of radius $\rho_0$ tangent to the curves in $\Lspace$ at the time $t=0$ from the left and the right, respectively. On the other hand, $q_1$ and $q_2$ are the centers of the circles of radius $\rho_0$ tangent to the curves in $\Lspace$ at the end of the curves from the left and right, respectively. 

Consider the following lengths measured on $\mathbb{S}^2$ given by:
$$D_1 \coloneqq d(\p1,\q2), \quad D_2 \coloneqq d(\p2,\q1), \quad L_1 \coloneqq d(\p1,\q1), \quad L_2 \coloneqq d(\p2,\q2),$$ 
\noindent so that $D_1$, $D_2$ represent the lengths of two diagonals of quadrilateral $\square p_1q_1q_2p_2$ and $L_1$, $L_2$ represent the lengths of the sides $p_1q_1$ and $p_2q_2$, respectively (see Figure \ref{fig:criticalcurve}). 

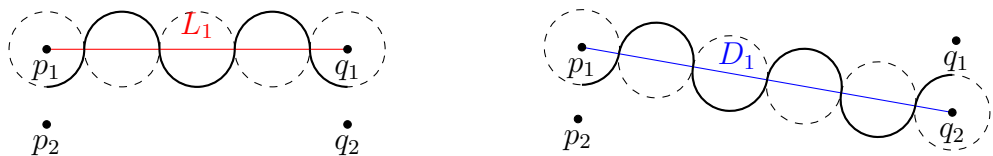
\begin{figure}[htb]
\centering
\begin{tikzpicture}[every text node part/.style={align=center}]
\begin{scope}[scale=.5,yshift=-25]
\draw[dashed] (0,1) circle (1) ;
\draw[dashed] (2,1) circle (1) ;
\draw[dashed] (4,1) circle (1) ;
\draw[dashed] (6,1) circle (1) ;
\draw[dashed] (8,1) circle (1) ;
\draw[red] (0,1) -- (8,1) node[midway,above,red] {$L_1$};
\draw[fill] (0,1) node[below] {$p_1$} circle (0.1) ;
\draw[fill] (0,-1) node[below] {$p_2$} circle (0.1) ;
\draw[fill] (8,1) node[below] {$q_1$} circle (0.1) ;
\draw[fill] (8,-1) node[below] {$q_2$} circle (0.1) ;
\draw[thick] (0,1) ++(-90:1) arc (-90:0:1);
\draw[thick] (2,1) ++(180:1) arc (180:0:1);
\draw[thick] (4,1) ++(-180:1) arc (-180:0:1);
\draw[thick] (6,1) ++(180:1) arc (180:0:1);
\draw[thick] (8,1) ++(-180:1) arc (-180:-90:1);
\end{scope}
\begin{scope}[scale=.5,xshift=400,yshift=-23,rotate=-10]
\draw[dashed] (0,1) circle (1) ;
\draw[dashed] (2,1) circle (1) ;
\draw[dashed] (4,1) circle (1) ;
\draw[dashed] (6,1) circle (1) ;
\draw[dashed] (8,1) circle (1) ;
\draw[dashed] (10,1) circle (1) ;
\draw[blue] (0,1) -- (10,1) node[midway,anchor=south east,blue] {$D_1$};
\draw[fill] (0,1) node[below] {$p_1$} circle (0.1) ;
\draw[fill] (0.23,-0.9) node[below] {$p_2$} circle (0.1) ;
\draw[fill] (10,1) node[below] {$q_2$} circle (0.1) ;
\draw[fill] (9.77,2.9) node[below] {$q_1$} circle (0.1) ;
\draw[thick] (0,1) ++(-80:1) arc (-80:0:1);
\draw[thick] (2,1) ++(180:1) arc (180:0:1);
\draw[thick] (4,1) ++(-180:1) arc (-180:0:1);
\draw[thick] (6,1) ++(180:1) arc (180:0:1);
\draw[thick] (8,1) ++(-180:1) arc (-180:0:1);
\draw[thick] (10,1) ++(180:1) arc (180:100:1);
\end{scope}
\end{tikzpicture}
\caption{These are critical curves of indices 3 and 4 respectively (from left to right) which are contained in $\cLspace$ (but \emph{not} in $\Lspace$). Note that the amount of hills and valleys that we are able to add on the critical curve is directly related to the distance between points, which are $L_1$ and $D_1$ respectively. To be able to construct a critical curve similar to the image in the left in $\Lspace$, we need $L_1> 8\rho_0$, and for the image in the right, we need $D_1>10\rho_0$. These examples motivate us to give the definition in Equation ($\ref{eqLD}$).}
\label{fig:criticalcurve}
\end{figure}

For $i=1,2$, define the truncated lengths which will be used to enunciate the main theorem:
\begin{equation} \label{eqLD}
\bar{L}_i \coloneqq 2\left\lceil\frac{L_i}{4\rho_0}\right\rceil - 3 \quad\text{and}\quad \bar{D}_i \coloneqq 2\left\lceil\frac{D_i}{4\rho_0}-\frac{1}{2}\right\rceil-2.
\end{equation}
\noindent In the equation above, $\lceil x \rceil$ represents the least integer that is greater than or equal to $x$. Note that $\bar{L}_i$ is always an odd integer and $\bar{D}_i$ is always an even integer. These two numbers describe, intuitively, the index of a ``maximal critical curve'' of even or odd type (see Figure \ref{fig:criticalcurve}). See also Figure \ref{fig:graph} for the graphs of $\bar{L}_i$ and $\bar{D}_i$ as functions of $L_i$ and $D_i$, respectively. The reason of this definition will be further clarified in the subsequent sections.

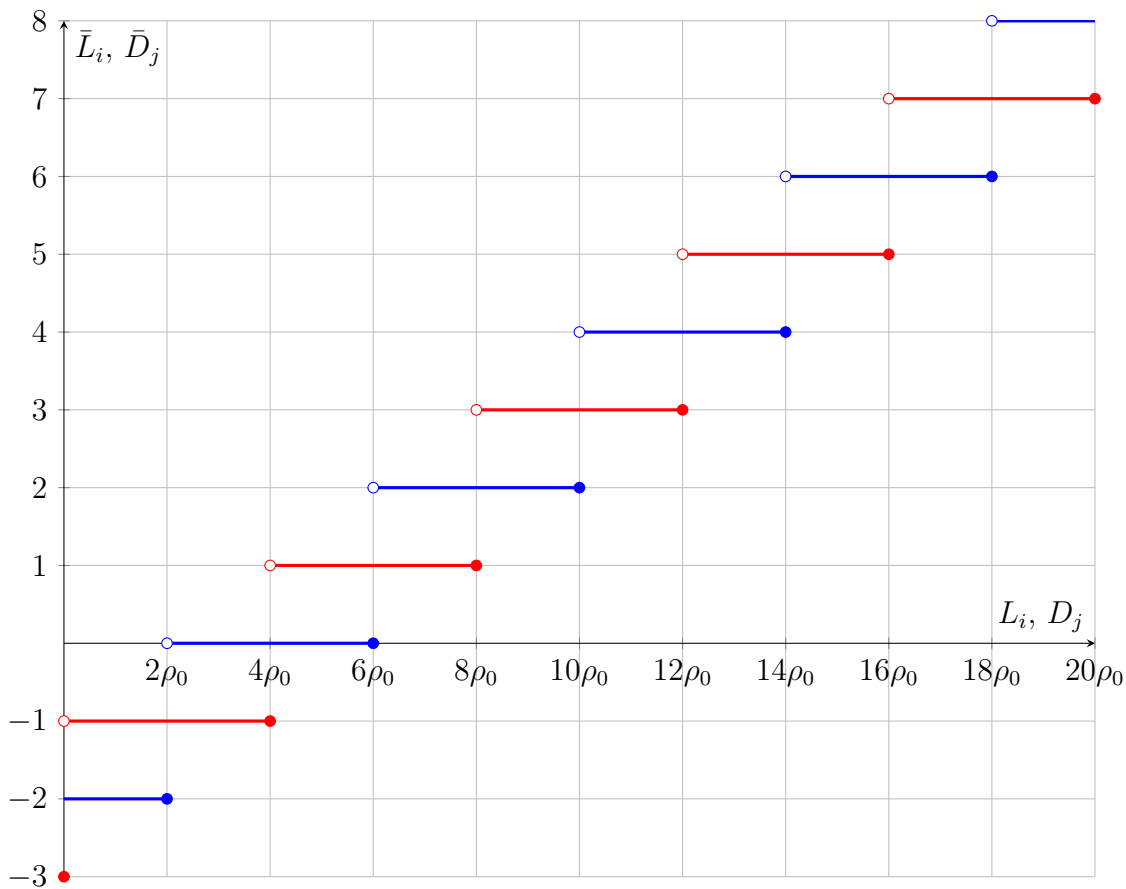
\begin{figure}[htb]
\begin{tikzpicture} 
\begin{axis}[ylabel={$\bar{L}_i$, $\bar{D}_j$},
        xlabel={$L_i$, $D_j$},
        xtick={2,4,6,8,10,12,14,16,18,20},
        ytick={-3,-2,-1,0,1,2,3,4,5,6,7,8},
        xticklabels={$2\rho_0$,$4\rho_0$,$6\rho_0$,$8\rho_0$,$10\rho_0$,$12\rho_0$,$14\rho_0$,$16\rho_0$,$18\rho_0$,$20\rho_0$},
        grid=both,axis lines=middle,scale=2]
\addplot [
    jump mark mid,
    domain=0:20,
    samples=100,
    very thick, red
] {2*ceil(x/4)-3};
\addplot [
    jump mark mid,
    domain=0:20,
    samples=100,
    very thick, blue
] {2*ceil(x/4-1/2)-2};
\addplot[incl,blue] coordinates {(2,-2)};
\addplot[incl,blue] coordinates {(6,0)};
\addplot[incl,blue] coordinates {(10,2)};
\addplot[incl,blue] coordinates {(14,4)};
\addplot[incl,blue] coordinates {(18,6)};
\addplot[incl,red] coordinates {(0,-3)};
\addplot[incl,red] coordinates {(4,-1)};
\addplot[incl,red] coordinates {(8,1)};
\addplot[incl,red] coordinates {(12,3)};
\addplot[incl,red] coordinates {(16,5)};
\addplot[incl,red] coordinates {(20,7)};
\addplot[exclb] coordinates {(2,0)};
\addplot[exclb] coordinates {(6,2)};
\addplot[exclb] coordinates {(10,4)};
\addplot[exclb] coordinates {(14,6)};
\addplot[exclb] coordinates {(18,8)};
\addplot[exclr] coordinates {(0,-1)};
\addplot[exclr] coordinates {(4,1)};
\addplot[exclr] coordinates {(8,3)};
\addplot[exclr] coordinates {(12,5)};
\addplot[exclr] coordinates {(16,7)};
\end{axis}
\end{tikzpicture}
\caption{Graph of $\bar{L}_i$ as function of $L_i$ in red and $\bar{D}_j$ as function of $D_j$ in blue.}
\label{fig:graph}
\end{figure}

\begin{lemma} If $\bar{L}_i>\bar{D}_j$ for all $i,j\in\{1,2\}$ then $\bar{L}_1=\bar{L}_2$. In the same manner, if $\bar{D}_i>\bar{L}_j$ for all $i,j\in\{1,2\}$ then $\bar{D}_1=\bar{D}_2$.
\end{lemma}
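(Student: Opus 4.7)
The plan is to derive both statements from the spherical triangle inequality applied to the quadrilateral with vertices $p_1, q_1, q_2, p_2$, combined with the arithmetic of the ceiling functions defining $\bar{L}_i$ and $\bar{D}_j$ in (\ref{eqLD}). The starting point is the geometric fact $d(p_1,p_2)=d(q_1,q_2)=2\rho_0$: from (\ref{formulapq}) one has $p_1=(\cos\rho_0,0,\sin\rho_0)$ and $p_2=(\cos\rho_0,0,-\sin\rho_0)$, so $\langle p_1,p_2\rangle=\cos(2\rho_0)$, and the analogous identity for $q_1,q_2$ follows because $\mQ\in\SO$ acts as an isometry of $\mathbb{S}^2$. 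Applying the triangle inequality to each pair $(L_i,D_j)$---each such pair shares one endpoint, while the remaining two endpoints are at distance $2\rho_0$---then yields
\[
|L_i - D_j| \leq 2\rho_0 \qquad\text{for all } i,j\in\{1,2\}.
\]

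For the first assertion I would set $a_i=L_i/(4\rho_0)$ and $b_j=D_j/(4\rho_0)$, so the bound above reads $|a_i-b_j|\leq\tfrac12$, and by (\ref{eqLD}) one has $\bar{L}_i=2\lceil a_i\rceil-3$ and $\bar{D}_j=2\lceil b_j-\tfrac12\rceil-2$. Arguing by contradiction, suppose that $\bar{L}_i>\bar{D}_j$ for all $i,j$ but $\bar{L}_1\neq\bar{L}_2$; without loss of generality $\bar{L}_1>\bar{L}_2$. Setting $n=\lceil a_2\rceil$ one gets $\lceil a_1\rceil\geq n+1$, whence $a_1>n$ and $a_2\leq n$. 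Then for any $j\in\{1,2\}$ the bounds $b_j\geq a_1-\tfrac12>n-\tfrac12$ and $b_j\leq a_2+\tfrac12\leq n+\tfrac12$ force $\lceil b_j-\tfrac12\rceil=n$, so $\bar{D}_j=2n-2$; but then $\bar{L}_2=2n-3<2n-2=\bar{D}_j$, contradicting the hypothesis.

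The second assertion follows by the same reasoning with the roles of $L_i$ and $D_j$ interchanged: assuming $\bar{D}_1>\bar{D}_2$ and setting $n=\lceil b_2-\tfrac12\rceil$, one derives $b_1>n+\tfrac12$ and $b_2\leq n+\tfrac12$, which via $|a_j-b_i|\leq\tfrac12$ force $\lceil a_j\rceil=n+1$ for both $j$; hence $\bar{L}_j=2n-1>2n-2=\bar{D}_2$, contradicting $\bar{D}_2>\bar{L}_j$. I do not anticipate a genuine obstacle here: the only mildly delicate point is to keep straight the half-integer shift between the definitions of $\bar{L}_i$ and $\bar{D}_j$, and this asymmetry is exactly what allows the single bound $|L_i-D_j|\leq 2\rho_0$ to supply both contradictions.
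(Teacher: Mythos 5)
Your proof is correct and follows essentially the same route as the paper's: the triangle inequality in the spherical quadrilateral $\square p_1q_1q_2p_2$ (using $d(p_1,p_2)=d(q_1,q_2)=2\rho_0$) combined with the arithmetic of the ceiling functions in (\ref{eqLD}). The only cosmetic difference is that you derive everything from the four bounds $|L_i-D_j|\leq 2\rho_0$ and unwind the ceilings explicitly, whereas the paper also invokes $|L_1-L_2|\leq 4\rho_0$ as a separate triangle inequality and reads the resulting implications $\bar{L}_2=\bar{L}_1-2$ and $\bar{D}_1\geq\bar{L}_1-1$ off Figure \ref{fig:graph}.
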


\begin{proof} For the first part of the lemma, suppose, by contradiction, that $\bar{L}_1\neq\bar{L}_2$. Without loss of generality, we assume that $\bar{L}_1>\bar{L}_2$. By the triangular inequality:
$$ |L_1-L_2|=|d(p_1,q_1)-d(p_2,q_2)|\leq d(p_1,p_2)+d(q_1,q_2)=4\rho_0 .$$
This implies $\bar{L}_2=\bar{L}_1-2$ (see Figure \ref{fig:graph}). On the other hand, again by the triangular inequality:
$$ |L_1-D_1|=|d(p_1,q_1)-d(p_1,q_2)|\leq d(q_1,q_2) =2\rho_0 .$$ 
This implies $\bar{D}_1\geq \bar{L}_1-1$ (see Figure \ref{fig:graph}). Thus $\bar{D}_1>\bar{L}_1-2=\bar{L}_2$ which contradicts the initial hypothesis that $\bar{L}_2>\bar{D}_1$. The proof for the second part of the lemma is analogous.
\end{proof}

With the lemma above, we introduce the definition of the index of $\mQ$ and the main theorem.

\begin{definition}[index of $\mQ$]\label{defnq} Define the \emph{index} of $\mQ$, denoted by $n_\mQ$, as follows:
\begin{itemize}
\item If $\bar{L}_i > \bar{D}_j$ for all $i,j\in\{1,2\}$ then we define $n_\mQ\coloneqq \bar{L}_1 = \bar{L}_2$.
\item If $\bar{D}_i > \bar{L}_j$ for all $i,j\in\{1,2\}$ then we define $n_\mQ\coloneqq \bar{D}_1 = \bar{D}_2$.
\end{itemize}
\end{definition}
\noindent Note that if neither of both cases in Definition \ref{defnq} occurs, $n_\mQ$ is not defined. Now we are ready to state our main theorem.

Denote by $\square p_1q_1q_2p_2 $ the geodesic quadrilateral\footnote{A quadrilateral in which all sides are geodesics.} on the sphere with its interior included, and define its $\rho$-neighborhood by:
$$ B_{\rho}(\square p_1q_1q_2p_2)=\{p\in\mathbb{S}^2 ; d(p,\square p_1q_1q_2p_2)<\rho \} .$$
\noindent Denote the closure of $B_{\rho}(\square p_1q_1q_2p_2)$ by $\bar{B}_{\rho}(\square p_1q_1q_2p_2)$
\begin{theorem}[main theorem]\label{mainTheorem}
\label{condq1q2} Let $\mQ\in\SO$ and $\rho_0\in (0,\frac{\pi}{4})$. Assume that the following conditions are satisfied. 
\begin{enumerate}
\item $\langle q_1,e_2\rangle >0$ and $\langle q_2,e_2\rangle >0$.
\item $\min\{D_1,D_2\}>2\rho_0 .$
\item $\square p_1q_1q_2p_2$ is a convex set.
\item There exists a $\delta>0$ satisfying that for all $\rhot\in [\rho_0,\rho_0+\delta)$, $\langle\qta, e_2\rangle >0$ and $\langle\qtb, e_2\rangle >0 $ and there exists a CSC curve (defined in Definition \ref{defcsc}) in $\bar{\mathcal{L}}_{\rhot}(\mI,\mQ)$ such that its image is contained in $B_{\rhot}(\square \pta\qta\qtb\ptb)$, where
\begin{align*}
\pta = (\cos\rhot,0,\sin\rhot),& \quad  \ptb = (\cos\rhot,0,-\sin\rhot), \\
 \qta = \mQ(\cos\rhot,0,\sin\rhot)& \quad \text{and} \quad \qtb = \mQ(\cos\rhot,0,-\sin\rhot).
\end{align*}
\end{enumerate}
Compare the values $\bar{L}_i$ and $\bar{D}_j$, for $i,j\in\{1,2\}$:
\begin{itemize}
\item If $\bar{L}_i > \bar{D}_j$ for $i,j\in\{1,2\}$ then there is an application $F:\mathbb{S}^{n_\mQ}\to\Lspace$ such that $[F]\in\ho_{n_\mQ}\big(\Lspace\big)$ is non-trivial, but $[\boldsymbol{i}\circ F]\in\ho_{n_\mQ}\big(\mathcal{I}(\mQ)\big)$ is trivial, where $n_\mQ = \bar{L}_1 = \bar{L}_2$. In particular, the inclusion $\boldsymbol{i}:\Lspace\hookrightarrow \mathcal{I}(\mQ)$ is not a homotopy equivalence.
\item If $\bar{D}_i > \bar{L}_j$ for $i,j\in\{1,2\}$ then there is an application $F:\mathbb{S}^{n_\mQ}\to\Lspace$ such that $[F]\in\ho_{n_\mQ}\big(\Lspace\big)$ is non-trivial, but $[\boldsymbol{i}\circ F]\in\ho_{n_\mQ}\big(\mathcal{I}(\mQ)\big)$ is trivial, where $n_\mQ = \bar{D}_1 = \bar{D}_2$. In particular the inclusion $\boldsymbol{i}:\Lspace\hookrightarrow \mathcal{I}(\mQ)$ is not a homotopy equivalence.
\end{itemize}
\end{theorem}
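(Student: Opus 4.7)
The plan is to prove the theorem by explicitly constructing two continuous maps
\[
F\colon \mathbb{S}^{n_\mQ}\to\Lspace \quad\text{and}\quad G\colon\Lspace\to\mathbb{S}^{n_\mQ}
\]
such that $G\circ F\colon\mathbb{S}^{n_\mQ}\to\mathbb{S}^{n_\mQ}$ has degree one, while the composition $\boldsymbol{i}\circ F\colon\mathbb{S}^{n_\mQ}\to\mathcal{I}(\mQ)$ is null-homotopic. Once both are in place, $[F]$ represents a nonzero class in $\ho_{n_\mQ}(\Lspace)$ (because $\ho_{n_\mQ}(G)[F]$ is the fundamental class of $\mathbb{S}^{n_\mQ}$) which is killed by $\ho_{n_\mQ}(\boldsymbol{i})$, and the statement follows.

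For the construction of $F$, I would use critical curves as the skeleton. In the case $\bar L_i>\bar D_j$, hypotheses (1)--(4) guarantee the existence of a ``maximal'' critical curve of type $\underbrace{+-+-\cdots}_{n_\mQ+2}$ (and another of type $\underbrace{-+-+\cdots}_{n_\mQ+2}$) lying in the boundary $\cLspace\setminus\Lspace$ and whose image sits inside $\bar B_{\rho_0+\delta}(\square p_1q_1q_2p_2)$; the case $\bar D_i>\bar L_j$ is analogous with types $\underbrace{+-+-\cdots}_{n_\mQ+1}$. Each of the $n_\mQ$ interior half-circle arcs can be ``breathed'' along a one-parameter family of radii in a small interval $(\rho_0+\varepsilon,\tfrac{\pi}{2}-\rho_0-\varepsilon)$, the two endpoint-radii of that interval corresponding to configurations where the bump can be absorbed by its neighbour via a CSC move inside $\Lspace$. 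Taking the product over the $n_\mQ$ bumps gives an $n_\mQ$-cube parametrizing curves in $\Lspace$, and I would show that on the boundary of this cube one obtains a continuously varying family which, after identifying all boundary faces to a single reference CSC curve (using hypothesis (4) and the convexity of $\square p_1q_1q_2p_2$), yields a well-defined map from $\mathbb{S}^{n_\mQ}$ into $\Lspace$.

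For $G$, the idea is a ``Morse-type'' projection back onto the critical combinatorics. For a generic curve $\gamma\in\Lspace$ whose image lies close to the neighbourhood $\bar B_{\rho_0+\delta}(\square p_1q_1q_2p_2)$, I would measure, at each of the $n_\mQ$ tangency points one expects from a near-critical curve, the signed radius of the osculating circle, and map this tuple to a point of the parameter cube, then to $\mathbb{S}^{n_\mQ}$; curves that are not sufficiently ``critical-like'' get sent to the basepoint. A transversality/local-model computation at a single critical representative then shows $G\circ F$ has local degree one there and is constant-to-basepoint elsewhere, hence has global degree one.

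Finally, to show $\boldsymbol{i}\circ F$ is null-homotopic, I would invoke the weak homotopy equivalence $\mathcal{I}(\mQ)\simeq\Omega\mathbb{S}^3\sqcup\Omega\mathbb{S}^3$ coming from the Hirsch--Smale h-principle recalled in Section~\ref{secformulapq}, together with the fact that $\Omega\mathbb{S}^3$ has a CW model with cells only in even dimensions $0,2,4,\dots$. When $n_\mQ$ is odd (the $\bar L$ case) this is immediate. When $n_\mQ$ is even (the $\bar D$ case), one uses the explicit generators of $\ho_{2k}(\Omega\mathbb{S}^3)$ (which are themselves produced by inserting small loops away from the image) and checks that our family $F$ is null-homologous by performing, without curvature constraint, a step-by-step contraction of each bump to a geodesic; the absence of the curvature bound allows the shrinking to pass through radii below~$\rho_0$. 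The main obstacle I expect is the boundary identification in the construction of $F$: verifying that simultaneous degenerations of several bumps at corners of the cube give compatible CSC limits in $\Lspace$, and that the resulting map descends continuously to $\mathbb{S}^{n_\mQ}$. This is where the quantitative hypotheses (2)--(4) of the theorem (particularly the strict inequalities $\min\{D_1,D_2\}>2\rho_0$ and the existence of a CSC curve inside $B_{\rhot}(\square\pta\qta\qtb\ptb)$) should be used to secure enough transversal room for the homotopies to be carried out without violating the curvature constraint.
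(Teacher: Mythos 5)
Your high-level strategy matches the paper's exactly: build $F\colon\mathbb{S}^{n_\mQ}\to\Lspace$ and $G\colon\Lspace\to\mathbb{S}^{n_\mQ}$ with $\deg(G\circ F)=1$ and $\boldsymbol{i}\circ F$ null-homotopic, and conclude that $\ho_{n_\mQ}(\boldsymbol{i})$ is not injective. The three items below, however, differ substantively from the paper's execution, and the second and third contain genuine gaps.

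First, your $F$ ``breathes'' the radii of the $n_\mQ$ interior arcs and works with an $n_\mQ$-cube whose boundary is then collapsed to a reference CSC curve. The paper instead fixes the radius of all auxiliary circles at a single $\rhot$ slightly above $\rho_0$ and parametrizes by the \emph{positions} of $n_\mQ$ pairs of ``control circles'' sliding along concentric trajectories $\spL_i,\spR_i$ around $\tilde p_1,\tilde p_2$ (Lemma~\ref{segment}); the boundary identification is then not a CSC absorption but is done by the loop-adding/spreading/collapsing machinery (Lemmas~\ref{lem62}, \ref{lemspre}, \ref{lemcont} and Proposition~\ref{prop64}, adapted from Saldanha). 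You flag the boundary identification as your ``main obstacle,'' and indeed it is: CSC absorption alone does not give a consistent choice of reference curve on all faces of the cube simultaneously, precisely because several bumps can degenerate at once. The paper avoids the problem by noting that near the cube boundary some arc is long enough to host small loops, which can then be transported to a fixed parameter $t_{-1}$ and collapsed in a coherent way.

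Second, your $G$ measures ``signed radii of osculating circles at tangency points.'' This is not well defined on $\Lspace$: the curves are only $C^1$, the paper explicitly works with the generalized left/right curvatures $\kappa^\pm_\gamma$ (which need not agree), and a generic curve has no distinguished ``tangency points'' at all. The paper's $G$ instead restricts to a carefully defined contractible set $\mathcal{C}_0$ of hemispheric curves, associates to $\gamma$ an axis $v_\gamma$ by a minimax over the quadrilateral $\mathcal{Q}_{\mQ,2}$, and extracts a sequence $(x_k)$ of \emph{areas} swept in the annulus $\Xi_0$ and \emph{lengths} traced in $\Xi_1$; these quantities are continuous in $\gamma$ with only $C^1$ regularity. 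The coordinates $y_j$ of $\bar G$ are then elementary-symmetric-polynomial-type combinations of the $x_k$ over ``good subsequences,'' which is what makes the degree-one computation in Section~\ref{sec:gcircf} tractable. Your Morse-style projection would need to be replaced by something equally robust before the transversality argument could be carried out.

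Third, for $\boldsymbol{i}\circ F$: the paper's argument is parity-independent. It observes that every curve in $\img(F)$ lies in a fixed open disc $B_a(r)\subsetneq\mathbb{S}^2$, applies stereographic projection from $a$, and invokes Smale's contractibility of the space of immersed plane curves with prescribed endpoints and directions. Your route via $\mathcal{I}(\mQ)\simeq\Omega\mathbb{S}^3\sqcup\Omega\mathbb{S}^3$ works cheaply when $n_\mQ$ is odd (since $\ho_{n_\mQ}(\Omega\mathbb{S}^3)=0$), but in the even case $\ho_{n_\mQ}(\Omega\mathbb{S}^3)\cong\mathbb{Z}$ and the ``shrink each bump through radius below $\rho_0$'' sketch does not by itself produce a continuous null-homotopy of the whole family; you would still need, in effect, the argument the paper uses, or the loop-collapsing technology. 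As it stands, the even case is a gap.
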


\begin{exmp} Given $\theta\in (0,\pi)$, let
$$\mQ = \left(\begin{array}{ccc} \cos\theta & -\sin\theta & 0 \\
\sin\theta & \cos\theta & 0 \\
0 & 0 & 1 
\end{array} \right) \in\SO .$$
For all $\rho_0\in \left(0,\frac{\pi}{4}\right)$, the set $\Lspace$ satisfies the hypothesis of the main theorem. In fact, the length-minimizing curve given by $\gamma_0(t)=(\cos (t),\sin (t), 0)$ for $t\in [0,\theta]$ obviously lies inside the quadrilateral $\square p_1q_1q_2p_2$. A direct computation shows that $q_1=(\cos\rho_0\cos\theta,\cos\rho_0\sin\theta,\sin\rho_0)$ and $q_2=(\cos\rho_0\cos\theta,\cos\rho_0\sin\theta,-\sin\rho_0)$. Thus:
$$ L_1=L_2=\arccos(\cos^2\rho_0\cos\theta+\sin^2\rho_0)\quad \text{and}\quad D_1=D_2=\arccos(\cos^2\rho_0\cos\theta-\sin^2\rho_0) .$$

\noindent Since these properties are invariant in a neighborhood of $\mQ$, the main theorem is valid for an open set in $\SO$ containing $\mQ$ given above.
\end{exmp}

A particular case of the main theorem follows from the example given above:

\begin{theorem}[a special case]\label{mainTheorem2}
Let $\rho_0\in (0,\frac{\pi}{4})$, $\theta\in(0,\pi)$ and $\mQ\in\SO$ given by
$$\mQ = \left(\begin{array}{ccc} \cos\theta & -\sin\theta & 0 \\
\sin\theta & \cos\theta & 0 \\
0 & 0 & 1 
\end{array} \right).$$
Compare the values $\bar{L}_i$ and $\bar{D}_j$, for $i,j\in\{1,2\}$:
\begin{itemize}
\item If $\bar{L}_i > \bar{D}_j$ for $i,j\in\{1,2\}$ then there is an application $F:\mathbb{S}^{n_\mQ}\to\Lspace$ such that $[F]\in\ho_{n_\mQ}\big(\Lspace\big)$ is non-trivial, but $[\boldsymbol{i}\circ F]\in\ho_{n_\mQ}\big(\mathcal{I}(\mQ)\big)$ is trivial, where $n_\mQ = \bar{L}_1 = \bar{L}_2$. In particular, the inclusion $\boldsymbol{i}:\Lspace\hookrightarrow \mathcal{I}(\mQ)$ is not a homotopy equivalence.
\item If $\bar{D}_i > \bar{L}_j$ for $i,j\in\{1,2\}$ then there is an application $F:\mathbb{S}^{n_\mQ}\to\Lspace$ such that $[F]\in\ho_{n_\mQ}\big(\Lspace\big)$ is non-trivial, but $[\boldsymbol{i}\circ F]\in\ho_{n_\mQ}\big(\mathcal{I}(\mQ)\big)$ is trivial, where $n_\mQ = \bar{D}_1 = \bar{D}_2$. In particular the inclusion $\boldsymbol{i}:\Lspace\hookrightarrow \mathcal{I}(\mQ)$ is not a homotopy equivalence.
\end{itemize}
\end{theorem}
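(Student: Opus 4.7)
The plan is to obtain Theorem \ref{mainTheorem2} as an immediate corollary of the main theorem (Theorem \ref{mainTheorem}) by checking that the specific matrix $\mQ$ described in the statement — rotation by angle $\theta\in(0,\pi)$ about the $z$-axis — satisfies each of the four hypotheses of Theorem \ref{mainTheorem}. Once this verification is complete, Theorem \ref{mainTheorem} produces the map $F:\mathbb{S}^{n_\mQ}\to\Lspace$ with the desired non-triviality and triviality properties, and the conclusion follows.

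First, I would compute the four distinguished points from formula (\ref{formulapq}) for this $\mQ$. A direct matrix multiplication gives $p_1=(\cos\rho_0,0,\sin\rho_0)$, $p_2=(\cos\rho_0,0,-\sin\rho_0)$, $q_1=(\cos\rho_0\cos\theta,\cos\rho_0\sin\theta,\sin\rho_0)$ and $q_2=(\cos\rho_0\cos\theta,\cos\rho_0\sin\theta,-\sin\rho_0)$. This at once gives $\langle q_i,e_2\rangle=\cos\rho_0\sin\theta>0$ for $\theta\in(0,\pi)$ and $\rho_0\in(0,\pi/4)$, verifying Condition (1). For Condition (2), a short calculation yields
\[
D_1=D_2=\arccos\bigl(\cos^2\rho_0\cos\theta-\sin^2\rho_0\bigr),
\]
and since $\cos\theta<1$ we have $\cos^2\rho_0\cos\theta-\sin^2\rho_0<\cos^2\rho_0-\sin^2\rho_0=\cos(2\rho_0)$, whence $D_1,D_2>2\rho_0$.

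For Condition (3), I would observe that all four vertices lie in the strip $|z|\leq \sin\rho_0<\tfrac{\sqrt2}{2}$ and, after an auxiliary rotation about the $z$-axis by $\theta/2$, inside the open hemisphere $\{x>0\}$; the geodesic quadrilateral $\square p_1q_1q_2p_2$ is therefore well defined, lies in an open hemisphere, and is convex (each of its sides is a short geodesic arc bounding a convex region in that hemisphere). For Condition (4), the length-minimizing geodesic $\gamma_0(t)=(\cos t,\sin t,0)$, $t\in[0,\theta]$, has Frenet frame $\mI$ at $t=0$ and $\mQ$ at $t=\theta$, its image is the equatorial arc from $e_1$ to $\mQ e_1$, and its curvature is identically $0$. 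This is a degenerate CSC curve, belongs to $\bar{\mathcal{L}}_{\rhot}(\mI,\mQ)$ for every $\rhot\in[\rho_0,\pi/2)$, and lies inside $\square p_1q_1q_2p_2\subset B_{\rhot}(\square\pta\qta\qtb\ptb)$. Since $\pta,\ptb,\qta,\qtb$ depend continuously on $\rhot$ and the inequalities in Conditions (1) and (2) are strict at $\rhot=\rho_0$, they persist on some interval $[\rho_0,\rho_0+\delta)$; hence Condition (4) holds for $\delta$ sufficiently small.

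With all four hypotheses of Theorem \ref{mainTheorem} checked, an invocation of that theorem produces the map $F:\mathbb{S}^{n_\mQ}\to\Lspace$ asserted in Theorem \ref{mainTheorem2}, whichever of the cases $\bar{L}_i>\bar{D}_j$ or $\bar{D}_i>\bar{L}_j$ is in force. The only non-routine point is the convexity in Condition (3); the other three are immediate symbolic computations. This will not be the main obstacle since convexity on the sphere of a small geodesic quadrilateral lying in an open hemisphere is standard, but it is the one step that genuinely uses $\rho_0<\pi/4$ and $\theta<\pi$ together.
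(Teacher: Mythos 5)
Your proof is correct and takes essentially the same route as the paper: the paper deduces Theorem \ref{mainTheorem2} directly from Theorem \ref{mainTheorem} via the preceding example, which computes $q_1,q_2$, $L_i$, $D_i$ for this rotation matrix and notes that the equatorial geodesic serves as the required CSC curve inside $\square p_1q_1q_2p_2$. Your write-up simply spells out the verification of conditions (1)--(4) in more detail (in particular the strict inequality $D_i>2\rho_0$ via $\cos\theta<1$ and the persistence under small increase of $\tilde\rho$), which the paper leaves implicit.
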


The proof of Theorem \ref{mainTheorem} is divided into three parts, in Sections \ref{sec:mapF}, \ref{sec:mapG} and \ref{sec:gcircf}. These sections are, respectively, definition of $F$, definition of $G$ and the proof of the essential properties of $G\circ F$.

\newpage
\section{Defining the map $F:\mathbb{S}^{n_\mQ} \to \Lspace$} \label{sec:mapF}
In this section we will define the map $F:\mathbb{S}^{n_\mQ} \to \Lspace$. First we introduce some needed definitions and notations, then we construct a map $\bar{F}:\mathbb{R}^{n_\mQ}\to \bar{\mathcal{L}}_{\rhot}(\mQ)$, for some $\rhot>\rho_0$, and lastly we modify the map $\bar{F}$ into the desired $F$. From now on, the parameter $s$ will no longer be used to denote exclusively the arc-length of the curve.

\subsection{Preliminary definitions, notations and lemmas} 

Intuitively, the definition of $\bar{F}:\mathbb{R}^{n_Q}\to\Lspace$ may be thought as continuously deforming a curve (``elastic band'') that is constrained in the middle of $n_Q$ pairs of control circles (``reels''). By moving the position of these reels (refer Figures \ref{fig:auxcurves} and \ref{fig:auxcurves2}), the elastic band will follow the movement of reels. Here we will formalize this concept.

Recall that a basis $\{v_1,v_2,v_3\}$ of $\mathbb{R}^3$ is positive if:
$$\det\left(\begin{array}{ccc} | & | & | \\
v_1 & v_2 & v_3 \\
| & | & | 
\end{array}\right)>0 .$$
Given an $ a \in \mathbb{S}^2$, there are two (non-unique) vectors $u_1(a),u_2(a)\in\mathbb{S}^2$ such that $\{ a,u_1(a),u_2(a)\}$ forms a positive orthonormal basis of $\mathbb{R}^3$. Denote an \emph{oriented circle} with center $a\in\mathbb{S}^2$ and radius $\rho\in (0,\pi)$ as
\begin{equation}\label{circle}
\zeta_{a,\rho}(s) \coloneqq  (\cos \rho) \cdot a + (\sin \rho) \cdot\big[\cos{s}\cdot u_1(a)-\sin{s}\cdot u_2(a)\big],\quad s\in \mathbb{R}.
\end{equation}

We start defining two families of circles with the centers at $\tilde{p}_1$ or $\tilde{p}_2$, denoted by $\spL_i , \spR_i : \mathbb{R}\to \mathbb{S}^2$, with $i\in\{1,2,\ldots,n_\mQ\}$, which describe the positions of centers of ``control circles'' (reels). We fix an orientation on $\text{T}\mathbb{S}^2$ by setting $\{(0,1,0),(0,0,1)\}$ as a positive basis in $\text{T}_{(1,0,0)}\mathbb{S}^2$. Under the induced orientation on $\mathbb{S}^2\subset\mathbb{R}^3$, the normal vector points outwards. 

\begin{definition}\label{defcsc}
Let $\rho\in \left(0,\frac{\pi}{2}\right)$ and $\mP,\mQ\in\SO$. Consider the space $\bar{\mathcal{L}}_{\rho}(\mP,\mQ)$.
We say that a curve $\gamma$ in $\bar{\mathcal{L}}_{\rho}(\mP,\mQ)$ is of type CSC in $\bar{\mathcal{L}}_\rho(\mP,\mQ)$, if $\gamma$ is concatenation of the following three curves:
$$\gamma(t)=\left\{\begin{array}{ll} \gamma_{1}(t), \quad & t\in [0,t_1] \\
\gamma_{2}(t), \quad & t\in [t_1,t_2] \\
\gamma_{3}(t), \quad & t\in [t_3,1]
\end{array} \right. $$
where both $\gamma_{1}$ and $\gamma_{3}$ are arcs of circles of radius equal to either $\rho$ or $\pi-\rho$, and $\gamma_{2}$ is a segment of geodesic. 
We say that a curve $\gamma$ in $\bar{\mathcal{L}}_{\rho}(\mP,\mQ)$ is of type CCC if $\gamma$ is a concatenation of three arcs of circles of radius equal to either $\rho$ or $\pi-\rho$. 
\end{definition}

The term CSC stands for ``Curved-Straight-Curved'', meaning that the referred curve is composed by concatenation of 3 curves, the first one is an arc with constant geodesic curvature with modulus equal to $\kappa_0$, then comes the second which is a Geodesic segment, finally the last curve is again an arc with modulus equal to $\kappa_0$. Note that in Definition \ref{defcsc}, each of the three segments is allowed to have zero length (degenerate). If $\gamma_1$ is degenerate, then we also call the curve $\gamma$ of type SC. If both $\gamma_1$ and $\gamma_3$ are degenerate, we call $\gamma$ of type S, so on. This kind of nomenclature is commonly used in the studies of Dubins' curves (see, for example, \cite{ayala}).

For $n_\mQ$ an even number, consider $\varsigma = \min\{D_1,D_2\} = \min\{d(p_1,q_2),d(p_2,q_1)\}$. For $n_\mQ$ an odd number, consider $\varsigma = \min\{L_1,L_2\} = \min\{d(p_1,q_1),d(p_2,q_2)\}$. Then take 
$$\delta_0 = \frac{\varsigma - (2n_\mQ+2)\rho_0}{2n_\mQ+3}.$$ 
\noindent By the definition of $n_\mQ$, $\delta_0>0$. The purpose of the choice of $\delta_0$ is that, for $\rhot\in(\rho_0,\rho_0+\delta_0]$, it holds that $(2n_\mQ+2)\rhot < \varsigma$. This will allow us to construct critical curves of index $n_\mQ$ by using arcs of circles with radius $\geq\rhot$ in $\bar{\mathcal{L}}_{\rhot}(\mI,\mQ)\subset\Lspace$.

The following theorem is an adapted version of a part of a theorem proved by F. Monroy-Pérez (Theorem 6.1 in \cite{monroy}). This theorem was proven for the particular case in which the radius $\rho=\frac{\pi}{4}$. The original proof can be adapted to any $\rho\in \left(0,\frac{\pi}{4}\right)$.

\begin{theorem}\label{perez}  Let $\rho\in \left(0,\frac{\pi}{2}\right]$ and $\kappa=\cot\rho$. Every length-minimizing curve in $\bar{\mathcal{L}}_\rho(\mI,\mQ)$ is a concatenation of at most three pieces of arcs with constant curvature equal to $+\kappa$, $-\kappa$ and $0$. Moreover, if the length-minimizing curve contains a geodesic arc, then it is of the form CSC.
\end{theorem}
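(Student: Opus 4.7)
The plan is to set up the problem as a free-time optimal control problem on $\SO$ and apply Pontryagin's Maximum Principle (PMP), following the scheme of Monroy-Pérez. Parametrize an arbitrary $\gamma \in \bar{\mathcal{L}}_\rho(\mI,\mQ)$ by arc-length, lift to its Frenet frame $\mathfrak{F}_\gamma : [0,L] \to \SO$, and observe that $\mathfrak{F}_\gamma$ satisfies the left-invariant control system
$$ \mathfrak{F}_\gamma'(s) = \mathfrak{F}_\gamma(s)\bigl( \bar{\vet{e}}_3 + u(s)\, \bar{\vet{e}}_1 \bigr), \qquad u(s) \in [-\kappa,\kappa], $$
with boundary conditions $\mathfrak{F}_\gamma(0) = \mI$ and $\mathfrak{F}_\gamma(L) = \mQ$. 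Minimizing length is therefore equivalent to a free-time Mayer problem on $\SO$, and existence of a minimizer follows from the standard Filippov compactness argument applied to the convex control set $[-\kappa,\kappa]$.

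First I would apply PMP. The Hamiltonian $H = \langle \lambda, \mathfrak{F}(\bar{\vet{e}}_3 + u\bar{\vet{e}}_1) \rangle$ is affine in $u$, so the maximized control is $u = \kappa \operatorname{sgn}\phi(s)$ wherever the switching function $\phi(s) := \langle \lambda(s), \mathfrak{F}(s)\bar{\vet{e}}_1 \rangle$ is nonzero. On open intervals where $\phi \equiv 0$, successive differentiations give the Goh-type condition and a direct computation in $\mathfrak{so}_3$ forces $u \equiv 0$, so singular arcs are precisely geodesic segments. Hence every extremal is a concatenation of arcs with curvature in $\{+\kappa,-\kappa,0\}$, establishing the first part of the conclusion.

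Second I would bound the number of pieces. Exploiting the explicit adjoint equations, the switching function satisfies a linear second-order ODE of the form $\phi'' + \phi = c$, where $c$ is determined by the conserved Hamiltonian. Thus consecutive zeros of $\phi$ are exactly $\pi$ apart in the appropriate parameter, and any bang arc sandwiched between two bang arcs of opposite sign has angular length $\pi\sin\rho$. Combined with a conjugate-point / cut-locus analysis on $\SO$, as in Monroy-Pérez, this rules out extremals with four or more bang arcs: an explicit length-decreasing variation can be constructed by replacing a $CCCC$ subchain with a shorter admissible arc. For the CSC part, if a length-minimizer contains any singular (geodesic) subarc, the PMP bang-bang description forces each adjacent arc to be of bang type $\pm\kappa$; and two geodesic subarcs separated by a bang arc would admit a shortening variation obtained by sliding the intermediate bang arc toward one endpoint, contradicting minimality. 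This leaves only the forms $C$, $S$, $CS$, $SC$, $CSC$ and $CCC$.

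The main obstacle is the sharp bound ``at most three pieces'' and the exclusion of configurations like $CSCSC$ or $CCCC$: PMP alone only yields the list $\{+\kappa,-\kappa,0\}$ of admissible controls and says nothing about the number of switchings. The substantive work is a geometric comparison argument on the sphere together with an analysis of the bang-bang flow of the adjoint system; here one must verify that the trigonometric identities used by Monroy-Pérez for $\rho = \pi/4$ continue to hold in the full range $\rho \in (0,\pi/2]$. This is a routine but careful check, since the radius $\rho$ enters only through the length $\pi\sin\rho$ of interior bang arcs and through the normalization of the control bound, so the structural argument is parameter-independent once these substitutions are tracked.
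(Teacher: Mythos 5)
The paper itself does not prove Theorem~\ref{perez}: it quotes it as an adaptation of Theorem~6.1 of Monroy-Pérez~\cite{monroy} (proved there for $\rho=\frac{\pi}{4}$) and simply asserts that the argument adapts to other values of $\rho$. Since Monroy-Pérez's proof is precisely the Pontryagin-Maximum-Principle scheme you outline, your approach coincides with the source the paper cites, and in that sense with the paper's own route.

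Two technical slips are worth flagging. From the Lie--Poisson brackets on the dual of $\mathfrak{so}_3(\mathbb{R})$ the switching function satisfies $\ddot\phi+(1+u^2)\phi=uH$, not $\ddot\phi+\phi=c$: the frequency $\sqrt{1+\kappa^2}=\frac{1}{\sin\rho}$ is what produces the $\pi\sin\rho$ scale, but the forcing term changes sign at each switch, so ``consecutive zeros exactly $\pi$ apart'' is not the right statement as written and needs the careful bookkeeping Monroy-Pérez supplies. Relatedly, for a length-minimizing $CCC$ curve the middle arc has \emph{angle strictly greater than} $\pi$ (see the fuller version of the theorem quoted in the paper's appendix); the half-turn arcs you describe are a feature of the paper's critical curves, which lie on the boundary of $\cLspace$ and are not minimizers. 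You correctly identify the substantive gap --- ruling out $\geq 4$ bang arcs and $CSCSC$ chains --- as a geometric comparison argument that PMP alone does not provide; that is the real content of the cited theorem and would have to be reproduced rather than invoked. A partially self-contained alternative appears in the paper's appendix: the Birkhoff-type shortening argument around Lemma~\ref{sectionhomotopy} shows that a limit of the shortening process consists of $\pm\kappa_0$-arcs and geodesics with interior circular pieces of length at least $\pi\sin\rho_0$, but that route by itself does not produce the sharp ``at most three pieces'' count nor the $CSC$ form, which is exactly why the paper falls back on the citation.
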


As a corollary of this theorem, we obtain:

\begin{corollary}\label{cor15} Let $p_1,p_2,q_1,q_2$ as defined previously. If $n_\mQ\geq 1$ then there exists a $\delta_1>0$ such that for every $\rhot\in (\rho_0,\rho_0+\delta_1]$, every length-minimizing curve in $\bar{\mathcal{L}}_\rhot(\mI,\mQ)$ is of type CSC.
\end{corollary}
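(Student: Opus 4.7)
The plan is to eliminate the CCC alternative in Theorem~\ref{perez}, leaving CSC as the only possibility for a length-minimizer when $\rhot$ is close to $\rho_0$. Theorem~\ref{perez} establishes the dichotomy: each length-minimizing curve in $\bar{\mathcal{L}}_\rhot(\mI,\mQ)$ is either of type CSC or is a concatenation of three arcs of constant curvature $\pm\cot\rhot$ (type CCC). So it suffices to rule out the second alternative for $\rhot \in (\rho_0,\rho_0+\delta_1]$ for some $\delta_1>0$.

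First I would invoke the finer statement from Monroy-P\'erez's original theorem (the version commented out in the excerpt): in the CCC case, if $\alpha,\lambda,\beta$ denote the angular measures of the three arcs in order, then the middle arc satisfies $\lambda>\pi$. Consequently, the spherical length of any CCC minimizer is strictly greater than $\pi\sin\rhot$. Next I would produce an upper bound on the minimum length by exhibiting an explicit CSC competitor. By hypothesis~(4) of Theorem~\ref{mainTheorem}, for every $\rhot\in[\rho_0,\rho_0+\delta)$ there is a CSC curve in $\bar{\mathcal{L}}_\rhot(\mI,\mQ)$ whose image lies in $B_\rhot(\square\pta\qta\qtb\ptb)$; its length is bounded by a quantity $M(\rhot)$ depending continuously on $\rhot$ (the geodesic piece is at most the diameter of the neighborhood, while each circular arc has spherical length at most $2\pi\sin\rhot$). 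Hence $M(\rhot)\to M(\rho_0)$ as $\rhot\searrow\rho_0$.

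The main step, which I expect to be the chief obstacle, is the strict comparison $M(\rhot)<L_{\mathrm{CCC}}(\rhot)$ for every CCC candidate, uniformly in $\rhot$ near $\rho_0$. The geometric content of $n_\mQ\geq 1$ is that the centers of the first and last arcs of any CCC curve in $\bar{\mathcal{L}}_\rhot(\mI,\mQ)$ are constrained to lie in $\{\pta,\ptb\}$ and $\{\qta,\qtb\}$ respectively, and the admissible angles $\alpha,\beta$, together with the forced $\lambda>\pi$, make the total length exceed the length of the CSC witness. When the direct quantitative estimate is delicate, I would fall back on a compactness argument: if the statement fails, pick $\rhot_k\searrow\rho_0$ with CCC minimizers $\gamma_k$; by Arzel\`a--Ascoli extract a $C^0$ (in fact $C^1$) limit $\gamma_\infty$ in $\bar{\mathcal{L}}_{\rho_0}(\mI,\mQ)$, which by lower semicontinuity of length would be a length-minimizer in the boundary space; since at $\rhot=\rho_0$ assumption (4) still furnishes a CSC competitor, a small perturbation argument shows this CSC can be made strictly shorter than any CCC with $\lambda>\pi$, producing the contradiction.

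Finally, I would choose $\delta_1\in(0,\delta)$ small enough so that the strict inequality $M(\rhot)<L_{\mathrm{CCC}}(\rhot)$ holds throughout $(\rho_0,\rho_0+\delta_1]$; then on this interval the only surviving branch of the dichotomy in Theorem~\ref{perez} is CSC, proving the corollary.
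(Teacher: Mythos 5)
Your proposal takes a genuinely different — and ultimately flawed — route from the paper's. You treat this as a \emph{length comparison} problem: bound the length of any CCC candidate from below (via $\lambda>\pi$) and the length of a CSC competitor from above, then try to show the CSC witness is shorter. You correctly identify that the key step ``$M(\rhot)<L_{\mathrm{CCC}}(\rhot)$'' is the obstacle, but you never actually close this gap, and the fallback compactness argument does not resolve it either: extracting a limit curve $\gamma_\infty$ only tells you that CCC curves persist to $\rhot=\rho_0$, and you still owe the strict length comparison there.

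The paper sidesteps the length comparison entirely by observing that, under the hypothesis $n_\mQ\geq 1$, a CCC curve in $\bar{\mathcal{L}}_\rhot(\mI,\mQ)$ \emph{cannot even exist} (never mind whether it is a minimizer) once $\rhot$ is close enough to $\rho_0$. The argument is purely a distance constraint: if the first arc has positive curvature, its center is $\pta$; for a (minimizing) CCC the curvature signs alternate, so the third arc also has positive curvature with center $\qta$, and the middle arc's center $c_2$ must satisfy $d(\pta,c_2)=d(c_2,\qta)=2\rhot$. Hence $d(\pta,\qta)\leq 4\rhot$. Letting $\rhot\searrow\rho_0$ gives $L_1=d(p_1,q_1)\leq 4\rho_0$, but $n_\mQ\geq 1$ forces $L_1>4\rho_0$ (whether $n_\mQ$ is odd, directly from $\bar{L}_1\geq 1$, or even, via the triangle inequality through $\bar{D}_1\geq 2$), a contradiction. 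You do touch this idea in passing — ``the centers of the first and last arcs... are constrained to lie in $\{\pta,\ptb\}$ and $\{\qta,\qtb\}$'' — but you use it only to feed a length estimate you never complete, rather than as the direct feasibility obstruction it actually is. The paper's route is both simpler and unconditional: it makes no use of the middle-arc angle $\lambda>\pi$, nor of hypothesis~(4) of the main theorem, nor of any explicit CSC competitor.
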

\begin{proof}
Suppose by contradiction that for every $\delta_1>0$ there exists a $\rhot\in (\rho_0,\rho_0+\delta_1]$ such that there exists a CCC curve in $\bar{\mathcal{L}}_\rhot(\mI,\mQ)$. Assume, without loss of generality, that the first arc of this curve has positive curvature. Consider the points $\pta,\ptb,\qta,\qtb\in\mathbb{S}^2$:
\begin{align*}
\pta = (\cos\rhot,0,\sin\rhot),& \quad  \ptb = (\cos\rhot,0,-\sin\rhot), \\
 \qta = \mQ(\cos\rhot,0,\sin\rhot)& \quad \text{and} \quad \qtb = \mQ(\cos\rhot,0,-\sin\rhot).
\end{align*}

Let $c_2$ be the center of the second arc of the CCC curve, note that the centers of the first and third arcs of circles are $\pta$ and $\qta$, respectively. This and the triangular inequality implies:
$$ d(\pta,\qta)\leq d(\pta,c_2)+d(c_2,\qta) = 4\rhot.$$
In the equation above, take the limit $\rhot$ to $\rho_0$. Note that $\pta$ and $\qta$ converge to $p_1$ and $q_1$ respectively. Thus 
\begin{equation}\label{ineqdist}
L_1=d(p_1,q_1)\leq 4\rho_0.
\end{equation}

On the other hand, if $n_\mQ\geq 1$ is an odd number then $n_\mQ=\bar{L}_1\geq 1$. This implies $L_1> 4\rho_0$ (see graph of $L_1$ in Figure \ref{fig:graph}). For $n_\mQ\geq 2$ an even number, then $n_\mQ=\bar{D}_1\geq 2$, this and the triangular inequality imply 
$$L_1 = d(p_1,q_1) \geq |D_1-d(q_1,q_2)| > |6\rho_0-2\rho_0| =  4\rho_0 .$$
So in both cases we obtain $L_1 > 4\rho_0$, contradicting Inequality (\ref{ineqdist}). 
\end{proof}

\begin{corollary}\label{cor16} Let $\mQ\in \SO$ be such that $\langle q_1 , e_2\rangle > 0$, $\langle q_2 , e_2\rangle >0$ and $n_\mQ = 0$. Then there exists a $\delta_2>0$ such that for every $\rhot\in (\rho_0,\rho_0+\delta_2]$ the length-minimizing curve in $\bar{\mathcal{L}}_{\rhot}(\mI,\mQ)$ is of type CSC.
\end{corollary}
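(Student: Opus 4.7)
The proof adapts the strategy of Corollary~\ref{cor15}. Since the distance bound $L_1 \leq 4\rho_0$ derived from the mere existence of a CCC curve is no longer contradictory under $n_\mQ = 0$, one must argue via a length comparison, using the hypothesis $\langle q_i, e_2\rangle > 0$ to produce a short CSC competitor.

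Suppose, for contradiction, that there exists a sequence $\rhot_n \downarrow \rho_0$ and length-minimizers $\gamma_n \in \bar{\mathcal{L}}_{\rhot_n}(\mI, \mQ)$ which are not of type CSC. By Theorem~\ref{perez} each $\gamma_n$ is then a CCC with alternating sign pattern; after extraction we may fix the pattern as $(+,-,+)$ (the case $(-,+,-)$ being symmetric, via $p_1\leftrightarrow p_2$, $q_1\leftrightarrow q_2$). As in Corollary~\ref{cor15}, the outer-arc centers converge to $p_1$ and $q_1$, and the middle-arc center $c_{2,n}$ lies at distance $2\rhot_n$ from each; a further subsequence yields $c_{2,n} \to c_\infty$ with $d(p_1, c_\infty) = d(c_\infty, q_1) = 2\rho_0$ and a uniform limit $\gamma_\infty \in \bar{\mathcal{L}}_{\rho_0}(\mI, \mQ)$ which is itself a CCC $(+,-,+)$ curve. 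Passing the refined Monroy--P\'erez inequality $\omega_n > \pi$ through the limit, the middle-arc central angle of $\gamma_\infty$ satisfies $\omega_\infty \geq \pi$.

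To contradict the minimality of $\gamma_n$ it is enough to exhibit a continuous family of CSC curves $\sigma_\rhot \in \bar{\mathcal{L}}_{\rhot}(\mI, \mQ)$, defined for $\rhot$ in a right neighborhood of $\rho_0$, such that $\mathrm{len}(\sigma_{\rho_0}) < \mathrm{len}(\gamma_\infty)$: by continuity of length in $\rhot$, $\mathrm{len}(\sigma_{\rhot_n}) < \mathrm{len}(\gamma_n)$ for $n$ large. The bounds $L_i, D_j \leq 6\rho_0$ from $n_\mQ = 0$ together with the hemispheric condition $\langle q_i,e_2\rangle>0$ (which places $q_1,q_2$ strictly inside $\mathcal{H}_{e_2}$ while $p_1,p_2\in\partial\mathcal{H}_{e_2}$) allow such a family to be built explicitly: one selects one of the four sign patterns $(\pm, 0, \pm)$ so that the relevant common tangent geodesic between the circles $\zeta_{p_i,\rhot}$ and $\zeta_{q_j,\rhot}$ lies inside $\mathcal{H}_{e_2}$ for all $\rhot$ near $\rho_0$, and a direct spherical-trigonometric estimate bounds $\mathrm{len}(\sigma_{\rho_0})$ by a fixed multiple of $\rho_0$ strictly smaller than $2\pi \sin \rho_0$. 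The same hemispheric placement of $q_1$ forces the outer-arc angles $\alpha_\infty, \beta_\infty$ of $\gamma_\infty$ to be bounded below by a positive quantity close to $\pi/2$ (the direction $\vec{p_1 e_1}$ at $p_1$ is essentially $-e_3$, while $\vec{p_1 c_\infty}$ lies in the $+e_2$ half-plane), so that $\mathrm{len}(\gamma_\infty) \geq (\pi + \alpha_\infty + \beta_\infty)\sin\rho_0$ exceeds $2\pi\sin\rho_0 - \epsilon$ up to controlled lower-order terms, yielding the desired strict inequality.

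The main obstacle is executing this length comparison rigorously. It depends on (i) the refined form of Monroy--P\'erez's theorem---the bound $\omega > \pi$ is essential, and the upper bounds on the outer-arc angles from \cite{monroy} may also be needed; (ii) a careful spherical-trigonometric estimate of both the CSC length and the CCC outer-arc angles under the geometric data $L_i, D_j$ forced by $n_\mQ = 0$; and (iii) verifying that at least one of the four candidate CSC sign patterns yields a valid curve in $\bar{\mathcal{L}}_{\rho_0}(\mI, \mQ)$ whose arcs and geodesic segment all lie within the appropriate hemisphere, so that the family $\sigma_\rhot$ indeed persists into $\bar{\mathcal{L}}_{\rhot_n}(\mI, \mQ)$ for $n$ large.
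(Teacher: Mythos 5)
Your strategy diverges substantially from the paper's, and as you candidly acknowledge in your final paragraph, the key length comparison is left unexecuted---so this is not yet a proof but a sketch of a possible proof. The paper's argument is far more elementary and local. It does \emph{not} construct a CSC competitor and does \emph{not} take a limit $\rhot_n \downarrow \rho_0$: it works directly with the putative length-minimizing CCC curve at a fixed $\rhot$ and produces a strictly shorter \emph{CCC} competitor by reflection. Concretely, the paper observes that $n_\mQ = 0$ forces $d(p_1,q_2), d(p_2,q_1) > 2\rho_0$; then, given a CCC minimizer with circles $C_1, C_2, C_3$ and centers $c_1, c_2, c_3$, the Monroy--P\'erez constraint $\theta_2 > \pi$ places $c_2$ on one side of the geodesic through $c_1$ and $c_3$, and one simply reflects $C_2$ across that geodesic to get a new middle circle $\tilde{C}_2$ tangent to both $C_1$ and $C_3$ but on the other side. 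The resulting CCC curve is strictly shorter (the middle arc drops below $\pi$), and the distance inequality guarantees the reflected configuration is non-degenerate. One short paragraph, no trigonometric estimates, no convergence.

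Beyond being a different route, your plan has a genuine gap that even completing items (i)--(iii) may not close easily: the lower bound on $\mathrm{len}(\gamma_\infty)$. You assert $\alpha_\infty + \beta_\infty$ is ``close to $\pi$'' from the hemispheric placement of $q_1$, but the Monroy--P\'erez bounds constrain the outer-arc angles from \emph{above} ($\min\{\alpha,\beta\} < \pi\sin\rho$), not below, and nothing in the hypothesis $n_\mQ = 0$ prevents an outer arc from being short. Without a concrete lower bound on $\alpha_\infty + \beta_\infty$ independent of $\mQ$, the claimed inequality $\mathrm{len}(\sigma_{\rho_0}) < \mathrm{len}(\gamma_\infty)$ cannot follow from $\theta_2 > \pi$ alone, since the CSC competitor's geodesic segment also carries nontrivial length. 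The reflection trick sidesteps all of this by comparing the CCC curve to a same-type, same-radius modification of itself, where the gain from the middle arc is manifest. I would recommend replacing the compactness-and-competitor scheme by the reflection argument.
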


\begin{proof}
Note that $n_\mQ=0$ implies that: 
\begin{equation} \label{eqdist2}
d(p_1,q_2),d(p_2,q_1)>2\rho_0.
\end{equation}

Suppose, by contradiction, that the length-minimizing curve is a CCC curve. We will construct another curve whose length is less than the original CCC curve. It is easy to check that a length-minimizing $CCC$ curve satisfies the following properties:
\begin{enumerate}
\item The first and the third arcs have the same curvature, while the second arc has the opposite curvature.
\item If a $\text{C}_{\theta_1}\text{C}_{\theta_2}\text{C}_{\theta_3}$ curve is length-minimizing then $\theta_2>\pi$, where $\theta_1, \theta_2$ and $\theta_3$ denote the angles of the corresponding arcs of circles.
\end{enumerate}
Denote the three arcs of CCC curve by $\gamma_1$, $\gamma_2$ and $\gamma_3$, respectively. Also denote their correspondent circles by $C_1$, $C_2$ and $C_3$, and centers by $c_1$, $c_2$ and $c_3$, respectively. Suppose, without loss of generality, that $\gamma_1$ has positive curvature. By Item (2), the center $c_2$ lies in one of the hemispheres delimited by the geodesic passing through the centers $c_1$ and $c_3$. 

We consider another circle $\tilde{C}_2$ of the same radius also tangent to $C_1$ and $C_3$ whose the center lies in the other hemisphere. By Equation (\ref{eqdist2}), the CCC curve formed by concatenation of an arc of $C_1$, followed by an arc of $\tilde{C}_2$ and an arc of $C_3$ is well defined and strictly shorter than the original curve. It is a contradiction.
\end{proof}

For $n_\mQ=0$, define $F:\mathbb{S}^0\to\Lspace$ as $F(-1)=\gamma_0$ and $F(+1)=\gamma_0^{\left[0.5\#2\right]}$, where $\gamma_0:[0,1]\to\mathbb{S}^2$ is the length-minimizing CSC curve and $\gamma_0^{\left[0.5\#2\right]}$ is the curve $\gamma_0$ with two loops added at the instant $t=0.5$.

Fix $\delta_1$, $\delta_2$ given in Corollaries \ref{cor15} and \ref{cor16}. Also fix $\delta_3$ from the hypothesis of the main theorem. From now on, we fix a $\rhot\in \big(\rho_0,\rho_0+\min\{\delta_0,\delta_1,\delta_2,\delta_3\}\big]$ and assume $n_\mQ\geq 1$. As checked previously, for $n_\mQ\geq 1$, the hypothesis of Theorem \ref{mainTheorem} guarantees that the length-minimizing curve $\gamma_0\in\bar{\mathcal{L}}_{\tilde{\rho}}(\mI,\mQ)$ is of type CSC. We fix a length-minimizing CSC curve and denote it by: 
$$\gamma_0(t)=\left\{\begin{array}{ll} \gamma_{0,1}(t), \quad & t\in [0,t_1] \\
\gamma_{0,2}(t), \quad & t\in [t_1,t_2] \\
\gamma_{0,3}(t), \quad & t\in [t_3,1]
\end{array} \right. $$
where both $\gamma_{0,1}$ and $\gamma_{0,3}$ are arcs of circles of radius $\rhot$ and $\gamma_{0,2}$ is a segment of geodesic.
\\[.5em]
\textbf{A simpler construction choice for $n_\mQ$ odd case:}
 In this case, we construct $\tilde{F}:\mathbb{S}^{n_\mQ}\to\mathcal{L}_{\rho_0}(\tilde{\mP},\tilde{\mQ})$ into the space of curves that start at the frame $\tilde{\mP}=\mathfrak{F}_{\gamma_{0,2}}(t_1)$ and end at the frame $\tilde{\mQ}=\mathfrak{F}_{\gamma_{0,2}}(t_2)$. Then afterwards concatenate these curves with $\gamma_{0,1}$ and $\gamma_{0,3}$ at the beginning and the end, respectively. From this concatenation we obtain the desired map $F:\mathbb{S}^{n_\mQ}\to\Lspace$. So for $n_\mQ$ an odd number, we may suppose, without loss of generality that $\mQ$ is of form:
$$\mQ = \left(\begin{array}{ccc} \cos\theta & -\sin\theta & 0 \\
\sin\theta & \cos\theta & 0 \\
0 & 0 & 1 
\end{array} \right).$$
\noindent In this case the length-minimizing curve $\gamma_0$ is a geodesic segment. However if $n_\mQ$ is an even number, we follow the construction below.
\\[.5em]
\textbf{General construction for $n_\mQ\geq 1$ (for both even and odd cases):}
Let $\gamma_0$ the length-minimizing CSC curve in $B_{\rhot}(\square p_1q_1q_2p_2) $. We define the auxiliary curves:
$$\gamma_{0,l}(s) = \exp_{\gamma(s)} \big( \rhot \bsm{n}_{\gamma_0}(s) \big) \quad \text{and} \quad \gamma_{0,r}(s) = \exp_{\gamma(s)} \big( -\rhot \bsm{n}_{\gamma_0}(s) \big).$$
\noindent In the above equations, $\exp$ denotes the exponential map $\exp:\textup{T}\mathbb{S}^2\to\mathbb{S}^2$, $(p,v)\mapsto \exp_p(v)$.
Consider the points $\pta,\ptb,\qta,\qtb\in\mathbb{S}^2$:
\begin{align*}
\pta = (\cos\rhot,0,\sin\rhot),& \quad  \ptb = (\cos\rhot,0,-\sin\rhot), \\
 \qta = \mQ(\cos\rhot,0,\sin\rhot)& \quad \text{and} \quad \qtb = \mQ(\cos\rhot,0,-\sin\rhot).
\end{align*}

\noindent We will show two useful lemmas below:

\begin{lemma}\label{timming}
Let $\rho_1 = \min\{d(\pta,\qta),d(\pta,\qtb),d(\ptb,\qta),d(\ptb,\qtb)\}$. If $\rho_1 > 4\rhot$, then for any $\rho\in (2\rhot,\rho_1-2\rhot)$, $i\in\{1,2\}$ and $j\in\{l,r\}$\footnote{$l$ and $r$ in $\{l,r\}$ denote letters.} there is a unique number $s_{\rho,i,j}\in [0,2\pi)$ such that:
\begin{enumerate}
\item $\zeta_{\tilde{p}_i,\rho}(s_{\rho,i,j})\in\img (\gamma_{0,j})$. We denote the point $\zeta_{\tilde{p}_i,\rho}(s_{\rho,i,j})$ as $a_{\rho,i,j}$.
\item $\{\zeta_{\tilde{p}_i,\rho}'(s_{\rho,i,j}),\gamma_{0,j}'(s)\}$ forms a positive basis of $\text{T}_{a_{\rho,i,j}}\mathbb{S}^2$.
\end{enumerate}
\end{lemma}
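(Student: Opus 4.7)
The plan is to convert the intersection problem into a single-variable statement about the distance function $f_{i,j}(s) \coloneqq d(\tilde p_i, \gamma_{0,j}(s))$. Since the image of $\zeta_{\tilde p_i, \rho}$ is exactly the geodesic circle $\{x \in \mathbb{S}^2 : d(\tilde p_i, x) = \rho\}$, condition (1) asserts that $f_{i,j}$ attains the value $\rho$. With the chosen orientation of $\zeta_{\tilde p_i, \rho}$ (so that $\{\zeta', \text{outward radial}\}$ is a positive frame), condition (2) is equivalent to $f_{i,j}'(s) > 0$ at the crossing. Hence the goal is to produce a unique strictly outward crossing of $f_{i,j}$ through $\rho$.

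The first step is to read off the endpoint values: from $\mathfrak{F}_{\gamma_0}(0) = \mI$, $\mathfrak{F}_{\gamma_0}(1) = \mQ$, and the definition $\gamma_{0,j}(s) = \exp_{\gamma_0(s)}(\pm\rhot\,\boldsymbol{n}_{\gamma_0}(s))$, one obtains $\gamma_{0,l}(0) = \pta$, $\gamma_{0,r}(0) = \ptb$, $\gamma_{0,l}(1) = \qta$ and $\gamma_{0,r}(1) = \qtb$. Therefore $f_{i,j}(0) \in \{0, 2\rhot\}$ and $f_{i,j}(1) \geq \rho_1$, so for every $\rho \in (2\rhot, \rho_1 - 2\rhot)$ the Intermediate Value Theorem furnishes at least one crossing.

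For uniqueness together with the positivity condition, I would split $\gamma_{0,j}$ into three sub-arcs inherited from the CSC decomposition $\gamma_0 = \gamma_{0,1}\cup\gamma_{0,2}\cup\gamma_{0,3}$ and show that $f_{i,j}$ is non-decreasing globally, with strictly positive derivative wherever it changes. On each circular sub-arc, $\gamma_{0,j}$ either collapses to one of the four control points $\{\pta,\ptb,\qta,\qtb\}$ (and then $f_{i,j}$ is constant there, taking the value $0$, $2\rhot$, or a number $\geq \rho_1$) or it traces an arc of a circle of radius $2\rhot$ about one of those points; in the latter case, the length-minimizing property of $\gamma_0$ constrains the angular extent at the center to be at most $\pi$, so the elementary monotonicity of $\theta \mapsto d(\tilde p_i,\text{point at angle }\theta\text{ on the }2\rhot\text{-circle})$ on an arc of extent $\leq\pi$ yields strict monotonicity of $f_{i,j}$ there. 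On the middle geodesic sub-arc, $\gamma_{0,j}$ is itself an arc of a small circle parallel to the geodesic at distance $\rhot$, and an explicit spherical Pythagoras computation in suitable coordinates gives $\cos f_{i,j}(s) = A + B\cos\phi(s)$ with $B > 0$ and $\phi$ strictly monotone, so $f_{i,j}$ is strictly monotone on the middle sub-arc as well. Concatenating the three pieces through the continuity of $\gamma_{0,j}$ yields a non-decreasing $f_{i,j}$, and $f_{i,j}(0) < \rho < f_{i,j}(1)$ together with strict monotonicity near any crossing gives both uniqueness of $s_{\rho,i,j}$ and condition (2).

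The main technical obstacle is organising this monotonicity consistently across the four choices $(i,j) \in \{1,2\} \times \{l,r\}$ and the two possible sign configurations for the curvatures of $\gamma_{0,1}$ and $\gamma_{0,3}$. In the sub-cases where $\gamma_{0,j}$ traces a non-degenerate $2\rhot$-arc about some $\tilde p_{i'}$ with $i' \neq i$, the function $f_{i,j}$ can a priori rise as high as $4\rhot$ on the initial piece, which makes it essential to invoke both the hypothesis $\rho_1 > 4\rhot$ (to ensure the middle-piece values pass above $\rho$ before any pathology can develop) and the angle-$\leq \pi$ bound from length-minimization (to rule out $f_{i,j}$ turning around on a circular piece). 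Checking that at the two junctions between pieces---where $\gamma_{0,j}$ is only $C^0$---the one-sided derivatives of $f_{i,j}$ remain non-negative is the remaining bookkeeping that preserves the global non-decreasing property.
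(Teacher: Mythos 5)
Your proof takes essentially the same approach as the paper: both convert conditions (1) and (2) into a single crossing of the distance function $s \mapsto d(\tilde{p}_i,\gamma_{0,j}(s))$ through the level $\rho$, obtain existence by the Intermediate Value Theorem from the endpoint values ($0$ or $2\rhot$ at $s=0$ and $\geq\rho_1$ at $s=1$), and obtain uniqueness together with the orientation condition from strict monotonicity of that distance on the range $(2\rhot,\rho_1)$. The paper's own proof is terser---it simply asserts the strict monotonicity on that range---so your piece-by-piece CSC analysis (circular sub-arcs collapsing or tracing $2\rhot$-circles, a spherical computation on the geodesic offset, the junction bookkeeping) is an elaboration of the same route rather than a different one, and the gaps you flag (the angular-extent bound and the one-sided derivative checks at the $C^0$ junctions) are precisely the points the paper leaves implicit.
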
 

\begin{proof} Suppose, without loss of generality, that the parametrization domains for the curves is $[0,1]$. We denote by $\gamma_0$ the length-minimizing curve in $\bar{\mathcal{L}}_\rhot(\mI,\mQ)$, which is of the type CSC. 

For the existence, note that $\gamma_{0,l}(0)=\pta$, $\gamma_{0,l}(1)=\qta$, $\gamma_{0,r}(0)=\ptb$ and $\gamma_{0,r}(1)=\qtb$, so by continuity, the functions $d_{1,l}(s)\coloneqq d(\gamma_{0,l}(s),\pta)$, $d_{2,l}(s)\coloneqq d(\gamma_{0,l}(s),\ptb)$, $d_{1,r}(s)\coloneqq d(\gamma_{0,r}(s),\qta)$ and $d_{2,r}(s)\coloneqq d(\gamma_{0,r}(s),\qtb)$ always have the interval $[2\rhot,\rho_1]$ in its image. Also, these functions are strictly increasing for values of $s$ satisfying $2\rhot<d_{i,j}(s)<\rho_1 $ for all $i=1,2$ and $j=l,r$. This implies the uniqueness.
\end{proof}

\begin{lemma}\label{reparametrization} 
For every $\rho\in (2\rhot,\pi-2\rhot)$, let $\zeta_{\pta,\rho},\zeta_{\ptb,\rho}:\left[-\frac{\pi}{2},\frac{\pi}{2}\right]\to\mathbb{S}^2$ be the circles defined in Equation (\ref{circle}) by taking 
$$u_1(\pta) = u_1(\ptb) = (0,1,0), $$
$$u_2(\pta) = (-\sin\rhot,0,\cos\rhot)\quad \text{and} \quad u_2(\ptb) = (\sin\rhot,0,\cos\rhot) .$$ 
Then there exist exactly 2 distinct reparametrizations of $\zeta_{\ptb,\rho}$, which we denote $\zeta_{\ptb,\rho}^1$ and $\zeta_{\ptb,\rho}^2 : \left[-\frac{\pi}{2},\frac{\pi}{2}\right]\to\mathbb{S}^2$ (see Figure \ref{fig:twoparam}), such that:
$$d\left(\zeta_{\pta,\rho}(s),\zeta_{\ptb,\rho}^i(s)\right)=2\rhot \quad\text{ for all $s\in \left[-\frac{\pi}{2},\frac{\pi}{2}\right]$ and $i=1,2$.}$$
Moreover,
\begin{enumerate}
\item For $\zeta^1_{\ptb,\rho}$ there are $s^1_1,s^2_2\in \left[-\frac{\pi}{2},\frac{\pi}{2}\right]$, such that $s^1_1<s^1_2$,  $\zeta_{\pta,\rho}(s^1_1)\in \img(\zeta_{\ptb,\rho})$ and $\zeta_{\ptb,\rho}^1(s^1_2)\in \img(\zeta_{\pta,\rho})$. 
\item In the same way, for $\zeta^2_{\ptb,\rho}$ there are $s^2_1,s^2_2\in \left[-\frac{\pi}{2},\frac{\pi}{2}\right]$, such that $s^2_2<s^2_1$, $\zeta_{\ptb,\rho}^2(s^2_2)\in \img(\zeta_{\pta,\rho})$ and $\zeta_{\pta,\rho}(s^2_1)\in \img(\zeta_{\ptb,\rho})$. 
\end{enumerate}
\end{lemma}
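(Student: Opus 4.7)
My plan is to reduce the constant-distance condition to a quadratic equation that factors into two linear relations, yielding the two reparametrizations explicitly, and then to read off the orderings in (1) and (2) from the explicit formulas.

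First, starting from $\langle\zeta_{\pta,\rho}(s_1),\zeta_{\ptb,\rho}(s_2)\rangle = \cos(2\rhot)$ and substituting the given coordinates of $\pta$, $\ptb$, a direct computation — using the sum-difference variables $u = (s_1+s_2)/2$, $v = (s_1-s_2)/2$ together with the usual product-to-sum identities — reduces the condition to
$$(\cos\rhot\sin v - \sin\rhot\cos u)(\cos\rhot\sin v + \sin\rhot\cos u) = 2\cot\rho\,\sin\rhot\cos\rhot\,\sin v\cos u.$$
Away from the locus $\cos u = 0$ (which forces $s_1 = s_2 = \pm\pi/2$ and is handled separately as an endpoint), this is a quadratic in $t := \sin v/\cos u$; its roots are $t = C_1 := \tan\rhot\cot(\rho/2)$ and $t = -C_2 := -\tan\rhot\tan(\rho/2)$. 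The hypothesis $\rho \in (2\rhot,\pi-2\rhot)$ ensures $C_1, C_2 \in (0,1)$.

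Next, each linear relation $\sin v = C_1\cos u$ or $\sin v = -C_2\cos u$ can be solved explicitly for $s_2$ as a function of $s_1$ via a half-angle substitution, giving
$$\tan\!\bigl(\sigma_1(s_1)/2\bigr) = \frac{\sin(s_1/2) - C_1\cos(s_1/2)}{\cos(s_1/2) - C_1\sin(s_1/2)}, \qquad \tan\!\bigl(\sigma_2(s_1)/2\bigr) = \frac{\sin(s_1/2) + C_2\cos(s_1/2)}{\cos(s_1/2) + C_2\sin(s_1/2)}.$$
Since $0 < C_i < 1$ the denominators do not vanish on $[-\pi/2, \pi/2]$. A direct differentiation yields that the numerator of each quotient's derivative simplifies to $(1 - C_i^2)/2 > 0$, so $\tan(\sigma_i(s_1)/2)$ is strictly increasing and each $\sigma_i$ is smooth and strictly monotone; evaluation at $s_1 = \pm\pi/2$ gives $\sigma_i(\pm\pi/2) = \pm\pi/2$. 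Hence each $\sigma_i$ is a homeomorphism of $[-\pi/2, \pi/2]$ onto itself, and $\zeta^i_{\ptb,\rho} := \zeta_{\ptb,\rho}\circ\sigma_i$ is a genuine reparametrization with the required distance property. Uniqueness is automatic since the quadratic has no further roots.

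For (1) and (2), the formulas give $\tan v > 0$ along $\sigma_1$ and $\tan v < 0$ along $\sigma_2$ on the interior, so $\sigma_1(s_1) < s_1 < \sigma_2(s_1)$ throughout $(-\pi/2, \pi/2)$. Solving $\zeta_{\pta,\rho}(s_1) = \zeta_{\ptb,\rho}(s_2)$ directly in coordinates yields the unique intersection of the two arc images at $\zeta_{\pta,\rho}(s_*) = \zeta_{\ptb,\rho}(-s_*)$ with $s_* = \arcsin(\cot\rho\tan\rhot) \in (0, \pi/2)$, so (1) and (2) reduce to $\sigma_1^{-1}(-s_*) > s_*$ and $\sigma_2^{-1}(-s_*) < s_*$. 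Setting $g(s_2) := d(\zeta_{\pta,\rho}(s_*), \zeta_{\ptb,\rho}(s_2))$, one has $g(-s_*) = 0$ while a short direct computation (using $\rho > 2\rhot$) shows $g(\pm\pi/2) > 2\rhot$; the intermediate value theorem then provides exactly two solutions of $g(s_2) = 2\rhot$, one in $(-\pi/2, -s_*)$ and one in $(-s_*, \pi/2)$. Since $\sigma_2(s_*) > s_* > -s_*$, the upper solution must be $\sigma_2(s_*)$, forcing $\sigma_1(s_*)$ to be the lower one, so in particular $\sigma_1(s_*) < -s_* < \sigma_2(s_*)$; monotonicity of the $\sigma_i$ then delivers the claimed inequalities. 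The main obstacle is the trigonometric reduction in the first step; once the quadratic is obtained everything else is elementary, though care is needed to track signs in the last step.
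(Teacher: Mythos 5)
Your proof is correct in strategy and takes a genuinely different, more explicit route than the paper's. The paper argues geometrically: for each $a=\zeta_{\pta,\rho}(t)$ on the first arc, the circle of radius $2\rhot$ about $a$ meets $\zeta_{\ptb,\rho}$ in two points, defining $s_1(t)<s_2(t)$, and the orderings in items (1)--(2) are ``up to a direct computation''. You instead reduce the distance condition to a factorable quadratic in $t=\sin v/\cos u$, obtain the roots $\tan\rhot\cot(\rho/2)$ and $-\tan\rhot\tan(\rho/2)$, and produce closed-form expressions for $\tan(\sigma_i(s_1)/2)$ whose monotonicity follows from the derivative simplification to $(1-C_i^2)/2>0$. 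I checked the trigonometric reduction, the root computation, the inclusion $C_i\in(0,1)$ under $\rho\in(2\rhot,\pi-2\rhot)$, the half-angle inversion, the non-vanishing of the denominators, and the endpoint values $\sigma_i(\pm\pi/2)=\pm\pi/2$; all are right. (Your endpoint calculation also implicitly corrects the boundary values stated in the paper's proof, which are a typo: $(\cos(\rho+\rhot),0,\sin(\rho+\rhot))$ is not even on $\img(\zeta_{\ptb,\rho})$.) The real gain of your route is that it actually supplies the verification of (1) and (2), which the paper leaves to the reader.

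One step needs a small patch. You assert $s_*=\arcsin(\cot\rho\tan\rhot)\in(0,\pi/2)$, but $\cot\rho<0$ when $\rho>\pi/2$, which the hypothesis permits (it only forces $\rho<\pi-2\rhot$); so $s_*$ can be zero or negative, and the sentence ``Since $\sigma_2(s_*)>s_*>-s_*$'' fails in that range. The repair is short and symmetric: you already know $g$ has its global minimum $0$ at $s_2=-s_*$, is strictly monotone on each side of $-s_*$ (since $\cos\circ g$ has the form $R\cos(\,\cdot\,-\phi)+D$), and that $g=2\rhot$ has exactly the two distinct roots $\sigma_1(s_*)$ and $\sigma_2(s_*)$, so exactly one lies on each side of $-s_*$. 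If $s_*\ge 0$ then $\sigma_2(s_*)>s_*\ge -s_*$ forces $\sigma_1(s_*)<-s_*$; if $s_*<0$ then $\sigma_1(s_*)<s_*<-s_*$ forces $\sigma_2(s_*)>-s_*$. Either way $\sigma_1(s_*)<-s_*<\sigma_2(s_*)$, which is exactly (1) and (2). With this unimodality-plus-root-count argument you can in fact drop the separate bound $g(\pm\pi/2)>2\rhot$, which as stated also secretly uses $\rho<\pi-2\rhot$ and not only $\rho>2\rhot$.
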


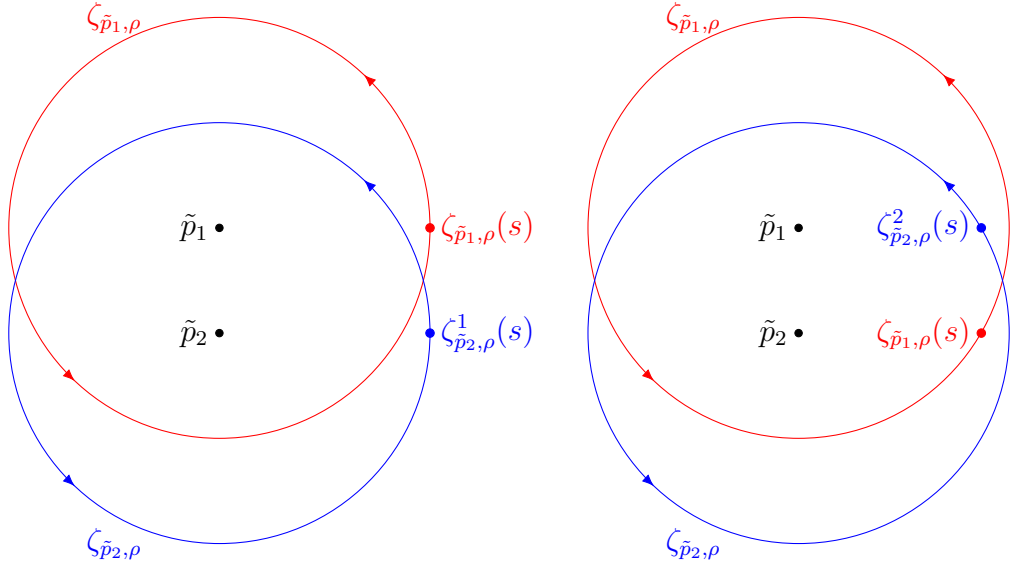
\begin{figure}
\begin{tikzpicture}
\begin{scope}[scale=0.7,xshift=5cm,decoration={markings, mark=at position 0.13 with {\arrow {Latex}}, mark=at position 0.63 with {\arrow {Latex}}}]
\draw[fill=black] (0,1) circle (.07) node[left] {$\tilde{p}_1$} ;
\draw[fill=black] (0,-1) circle (.07) node[left] {$\tilde{p}_2$};
\draw[red,postaction={decorate}] (0,1) circle (4);
\draw[blue,postaction={decorate}] (0,-1) circle (4);
\draw[red,fill] (3.47,-1) circle (.08)  node[left] {$\zeta_{\tilde{p}_1,\rho}(s)$};
\draw[blue,fill] (3.47,1) circle (.08)  node[left] {$\zeta_{\tilde{p}_2,\rho}^2(s)$};
\node[red] at (-2,5) {$\zeta_{\tilde{p}_1,\rho}$} ;
\node[blue] at (-2,-5) {$\zeta_{\tilde{p}_2,\rho}$} ;
\end{scope}
\begin{scope}[scale=0.7,xshift=-6cm,decoration={markings, mark=at position 0.13 with {\arrow {Latex}}, mark=at position 0.63 with {\arrow {Latex}}}]
\draw[fill=black] (0,1) circle (.07) node[left] {$\tilde{p}_1$} ;
\draw[fill=black] (0,-1) circle (.07) node[left] {$\tilde{p}_2$};
\draw[red,postaction={decorate}] (0,1) circle (4);
\draw[blue,postaction={decorate}] (0,-1) circle (4);
\draw[red,fill] (4,1) circle (.08) node[right] {$\zeta_{\tilde{p}_1,\rho}(s)$} ;
\draw[blue,fill] (4,-1) circle (.08)  node[right] {$\zeta_{\tilde{p}_2,\rho}^1(s)$} ;
\node[red] at (-2,5) {$\zeta_{\tilde{p}_1,\rho}$} ;
\node[blue] at (-2,-5) {$\zeta_{\tilde{p}_2,\rho}$} ;
\end{scope}
\end{tikzpicture}
\caption{These are two reparametrizations mentioned in Lemma \ref{reparametrization}.  The left-hand side image represents the reparametrization $\zeta_{\tilde{p}_2,\rho}^1$ and the right-hand side represents $\zeta_{\tilde{p}_2,\rho}^2$.}
\label{fig:twoparam}
\end{figure}

\begin{proof} For each point of type $a=\zeta_{\pta,\rho}(t)$ with $t\in\left(-\frac{\pi}{2},\frac{\pi}{2}\right)$, we draw a circle of radius $2\rhot$ centered at $a$ (measured in $\mathbb{S}^2$). We denote this circle by $\zeta_{a,2\rhot}$. Since the points $\pta,\ptb,a$ do not lie in the same geodesic, by triangular inequality, we have $d(a,\ptb)<d(a,\pta)+d(\pta,\ptb)=\rho+2\rhot$. So the circle $\zeta_{a,2\rhot}$ intercepts $\zeta_{\ptb,\rho}$ at 2 distinct points, namely: 
$$ a_1 = \zeta_{\ptb,\rho}(s_1(t)) \quad \text{and} \quad a_2 = \zeta_{\ptb,\rho}(s_2(t)), \text{ with $s_1(t)<s_2(t)$.}$$
Since $t\in\left(-\frac{\pi}{2},\frac{\pi}{2}\right) $ is arbitrary, we define the following reparametrizations: 
$$\zeta^1_{\ptb,\rho}(s)=\zeta_{\ptb,\rho}(s_1(t)) \text{ and } \zeta^2_{\ptb,\rho}(s)=\zeta_{\ptb,\rho}(s_2(t))\text{ for } t\in\left(-\frac{\pi}{2},\frac{\pi}{2}\right) .$$ 
At the extremities $t=\pm \frac{\pi}{2}$, for $i=1,2$, we set: 
\begin{align*}
\zeta^i_{\ptb,\rho}\left(-\frac{\pi}{2}\right) & =\big(\cos(\rho+\rhot),0,-\sin(\rho+\rhot)\big),\\
\zeta^i_{\ptb,\rho}\left(\frac{\pi}{2}\right) & =\big(\cos(\rho+\rhot),0,\sin(\rho+\rhot)\big).
\end{align*}
By construction, it is up to a direct computation to verify that the above reparametrizations satisfy the properties of the lemma.
\end{proof}

\subsection{Definition of $\mQ_i:\mathbb{R}\to\SO$ for $i\in\{0,1,\ldots,n_\mQ+1\}$}

To define $F$ we need to construct certain curves which will be in the image of $F$. These curves are made from concatenation of several arcs of circles. Here we describe the curves $\spL_i$ and $\spR_i$ which denote the positions of the centers of these circles (see Figure \ref{fig:auxcurves} below). We use Lemmas \ref{timming} and \ref{reparametrization} to define:
\begin{gather*}
\spL_1(s) \coloneqq \left\{\begin{array}{ll}  
\zeta_{\ptb,2\rhot}^1(s-s_{2\rhot,2,r}), & \quad  \text{ if $s\leq 0$.} \\
\pta, & \quad  \text{ if $s\geq 0$.}
\end{array}\right. \\
\spR_1(s) \coloneqq \left\{\begin{array}{ll}  
\ptb, & \quad  \text{ if $s\leq 0$.} \\
\zeta_{\pta,2\rhot}(s-s_{2\rhot,1,r}), & \quad  \text{ if $s\geq 0$.}  
\end{array}\right. 
\end{gather*}

\noindent In the definition above, the number $s_{2\rhot,1,r}$ is given by Lemma \ref{timming} and the curve $\zeta_{\ptb,2\rhot}^1$ comes from Lemma \ref{reparametrization}. It follows from the definitions that $d\left(\spL_1(s),\spR_1(s)\right)=2\rhot$ for all $s\in\mathbb{R}$.
 
Next, for each even number $2\leq i\leq n_\mQ$ , we use Lemmas \ref{timming} and \ref{reparametrization} to define:
\begin{gather*}
\spL_i(s) \coloneqq   
\zeta_{\pta,2i\rhot}(s-s_{2i\rhot,1,l}).\\
\spR_i(s) \coloneqq \left\{\begin{array}{ll} \zeta_{\ptb,2i\rhot}^1(s-s_{2i\rhot,2,l}), \quad & s\in\bigcup_{n\in\mathbb{Z}} J_{2n}.  \\
\zeta_{\ptb,2i\rhot}^2(s-s_{2i\rhot,2,l}), \quad & s\in\bigcup_{n\in\mathbb{Z}}J_{2n+1}.
\end{array}
\right.
\end{gather*}

\noindent Here $J_n=\left[n\pi-\frac{\pi}{2}+s_{2i\rhot,2,l},n\pi+\frac{\pi}{2}+s_{2i\rhot,2,l}\right]$. Finally, for $3\leq i\leq n_\mQ$ an odd number, we use Lemmas \ref{timming} and \ref{reparametrization} to define:
\begin{gather*}
\spL_i(s) \coloneqq \left\{\begin{array}{ll} \zeta_{\ptb,2i\rhot}^2(s-s_{2j\rhot,2,r}) \quad & s\in J_0 \\
\zeta_{\ptb,2i\rhot}^1(s-s_{2i\rhot,2,r}) \quad & s\in\bigcup_{n\in\mathbb{Z}^*}J_{2n} \\
\zeta_{\ptb,2i\rhot}^2(s-s_{2i\rhot,2,r}) \quad & s\in\bigcup_{n\in\mathbb{Z}}J_{2n+1}
\end{array}
\right.\\ 
\spR_i(s) \coloneqq   
\zeta_{\pta,2i\rhot}(s-s_{2i\rhot,1,r}).
\end{gather*}

\noindent Here $J_n=\left[n\pi-\frac{\pi}{2}+s_{2i\rhot,2,r},n\pi+\frac{\pi}{2}+s_{2i\rhot,2,r}\right]$. Again, from Lemma \ref{reparametrization}, the spherical distance $d\left(\spL_i(s),\spR_i(s)\right)=2\rhot$ for all $s\in\mathbb{R}$ and $i\in\mathbb{N}$. This means that if we draw a circle with curvature $+\kappa_0$ centered at $\spL_i(s)$ and another circle with curvature $-\kappa_0$ centered at $\spR_i(s)$, these circles touch each other at a unique point with common tangent vector, we denote the common Frenet frame at that point by $\mQ_i(s)$, with $s\in\mathbb{R}$. Thus we have defined a family of continuous applications:
$$ \mQ_i:\mathbb{R}\to\SO , \quad \text{with $i\in\{1,2,\ldots ,n_\mQ\}$.} $$

\noindent We also define $\mQ_0,\mQ_{n_\mQ+1}:\mathbb{R}\to\SO $ with $\mQ_0\equiv \mI$ and $\mQ_{n_\mQ+1}\equiv \mQ$, where $\mQ$ is the matrix in the definition of $\Lspace$. 

Also note that the following relation is an immediate consequence of the definition.
\begin{proposition}\label{propdist}
For each $i\in\{0,1,2,\ldots,n_\mQ\}$, the following inequalities are satisfied
 $$d\big(\spL_i(t_1),\spR_{i+1}(t_2)\big)\geq 2\rhot \quad\text{and}\quad d\big(\spR_i(t_1),\spL_{i+1}(t_2)\big)\geq 2\rhot \quad \forall t_1,t_2\in\mathbb{R}.$$ 
Moreover, for each $t_1\in\mathbb{R}$ and $k\in\mathbb{Z}$, there exist unique $t_2$ and $t_3\in \big[2k\pi,2(k+1)\pi\big)$ such that $d\big(\spL_i(t_1),\spR_{i+1}(t_2)\big)= 2\rhot$ and $d\big(\spR_i(t_1),\spL_{i+1}(t_3)\big)= 2\rhot$.
\end{proposition}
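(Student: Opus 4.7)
The plan is to reduce everything to the reverse triangular inequality on $\mathbb{S}^2$ by identifying a shared reference center for each consecutive pair. From the preceding construction one checks directly that, for every $i \geq 1$, the image of $\spL_i$ lies on the circle of intrinsic radius $2i\rhot$ centered at a point $c_i := \ptb$ (if $i$ is odd) or $c_i := \pta$ (if $i$ is even), and symmetrically $\spR_i$ lies on the circle of intrinsic radius $2i\rhot$ centered at the opposite point $c_i'$. Extending by the natural convention $\spL_0 \equiv \pta$, $\spR_0 \equiv \ptb$ (compatible with $\mQ_0 = \mI$), the pattern persists: in each pair $(\spL_i,\spR_{i+1})$ both curves share a common reference center $c$, with intrinsic distances $2i\rhot$ and $2(i+1)\rhot$ from $c$ respectively; the analogous statement holds for $(\spR_i,\spL_{i+1})$ with the other reference center.

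With this structural observation in hand, the inequality is immediate from the reverse triangular inequality:
\[
d\bigl(\spL_i(t_1), \spR_{i+1}(t_2)\bigr) \;\geq\; d\bigl(c,\spR_{i+1}(t_2)\bigr) - d\bigl(c,\spL_i(t_1)\bigr) \;=\; 2(i{+}1)\rhot - 2i\rhot \;=\; 2\rhot,
\]
and the argument for $d(\spR_i(t_1), \spL_{i+1}(t_3))$ is identical after swapping $\pta \leftrightarrow \ptb$. The choice of $\rhot \in (\rho_0, \rho_0 + \delta_0]$ ensures $2(n_\mQ{+}1)\rhot < \tfrac{\varsigma}{2} < \pi$, so the minimizing geodesic from $c$ to $\spR_{i+1}(t_2)$ is unique; equality in the triangular inequality therefore forces $\spR_{i+1}(t_2)$ to be the endpoint of the geodesic ray from $c$ through $\spL_i(t_1)$ extended to arclength $2(i{+}1)\rhot$. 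This singles out a \emph{unique} point on the circle $\zeta_{c,\,2(i+1)\rhot}$.

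To convert the last observation into uniqueness of $t_2 \in [2k\pi, 2(k{+}1)\pi)$, the plan is to verify that $s \mapsto \spR_{i+1}(s)$ is a $2\pi$-periodic parametrization of $\zeta_{c,\,2(i+1)\rhot}$ which is injective on any fundamental period. For odd $i+1 \geq 3$ this is immediate, as $\spR_{i+1}$ is (up to a shift) the canonical parametrization of $\zeta_{\pta,\,2(i+1)\rhot}$ from formula \eqref{circle}. For even $i+1$ one invokes Lemma~\ref{reparametrization}: the two reparametrizations $\zeta^1, \zeta^2$ trace the two complementary half-arcs of $\zeta_{\ptb,\,2(i+1)\rhot}$, and splicing them on the alternating intervals $J_{2n}$ and $J_{2n+1}$ yields a continuous $2\pi$-periodic bijection onto the whole circle. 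The unique point singled out above therefore corresponds to a unique $t_2$ per period; the argument for $t_3$ is identical.

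The main obstacle I anticipate is the bookkeeping for the boundary indices $i \in \{0, n_\mQ\}$, where one of the four curves involved is a constant map (to $\pta$, $\ptb$, $\qta$, or $\qtb$). In those degenerate cases the triangular inequality collapses to an equality along an entire interval of parameter values on the constant branch, so the uniqueness half of the statement has to be read as uniqueness of the parameter on the \emph{non}-constant side. Confirming this requires unpacking the piecewise definitions of $\spL_1$, $\spR_1$ (and their analogues near $\mQ$) to check that the non-constant branch still traces its circle bijectively over one $2\pi$-period, which should follow directly from Lemma~\ref{timming} and the monotonicity established in its proof.
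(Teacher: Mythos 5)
The paper offers no actual proof here — it dismisses the statement as ``an immediate consequence of the definition'' — so the review is really a soundness check on your argument. Your reduction to the reverse triangular inequality with a shared reference center ($\pta$ or $\ptb$) is exactly what the author must have had in mind: for $1\leq i\leq n_\mQ$ the curves $\spL_i$ and $\spR_i$ sit on circles of radius $2i\rhot$ around $\pta$ or $\ptb$ with the parity pattern you describe, and the constant branches of $\spL_1$, $\spR_1$ still lie on the relevant circle since $d(\pta,\ptb)=2\rhot$. So the inequality half of your argument is correct and complete.

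The uniqueness half, however, has a genuine gap at the boundary indices that you notice but dismiss too quickly. With $\spL_0\equiv\pta$ and $\spR_0\equiv\ptb$, \emph{every} value of $\spR_1$ lies at intrinsic distance exactly $2\rhot$ from $\pta$: the constant branch is $\ptb$ with $d(\pta,\ptb)=2\rhot$, and the nonconstant branch traces $\zeta_{\pta,2\rhot}$. Hence $d\big(\spL_0(t_1),\spR_1(t_2)\big)=2\rhot$ for every $t_2$, and there is no unique $t_2$ at which equality holds; the same collapse occurs for $d\big(\spR_0(t_1),\spL_1(t_3)\big)$, and symmetrically at $i=n_\mQ$, where $\spR_{n_\mQ+1}\equiv\qtb$ makes the distance independent of $t_2$ altogether. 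Your proposed remedy, ``read uniqueness on the non-constant side,'' does not rescue the $i=0$ case because $\spR_1$ \emph{is} the non-constant side. This is an imprecision in the proposition as stated (neither $\spL_0$, $\spR_0$, $\spL_{n_\mQ+1}$, $\spR_{n_\mQ+1}$ is ever defined, and the downstream uses in Lemma~\ref{segment} set $\tilde{x}_1^1=\tilde{x}_1^2=0$ by hand rather than invoking uniqueness at $i=0$), but your write-up should say so explicitly instead of asserting the bookkeeping ``should follow directly.'' One small slip besides: the bound $2(n_\mQ+1)\rhot<\varsigma/2$ you quote is not what the choice of $\delta_0$ gives — only $(2n_\mQ+2)\rhot<\varsigma\leq\pi$ holds — but that weaker inequality already guarantees uniqueness of the minimizing geodesic from the shared center, which is all your equality analysis needs. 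The remaining unverified step is the bijectivity of the spliced parametrization from Lemma~\ref{reparametrization}; this is plausible but you assert it rather than derive it, and it deserves at least a sentence since the uniqueness for $1\leq i\leq n_\mQ-1$ hinges on it.
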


\begin{figure}
\centering
\begin{tikzpicture}
\begin{scope}[scale=.7]
\coordinate (p) at (0,1);
\coordinate (q) at (0,-1);
\draw[dashed,blue] (p) circle (1);
\draw[dashed,red] (q) circle (1); 
\draw[dashed,blue] (4,1) circle (1);
\draw[dashed,red] (4,-1) circle (1); 
\draw[dashed,blue] (5.65,1) circle (1);
\draw[dashed,red] (5.65,-1) circle (1); 
\draw[dashed,blue] (8,1) circle (1);
\draw[dashed,red] (8,-1) circle (1); 
\draw[dashed,blue] (9.85,1) circle (1);
\draw[dashed,red] (9.85,-1) circle (1); 
\draw[red] (p) circle (2);
\draw[blue] (q) circle (2);
\draw[blue] (p) circle (4);
\draw[red] (q) circle (4);
\draw[red] (p) circle (6);
\draw[blue] (q) circle (6);
\draw[blue] (p) circle (8);
\draw[red] (q) circle (8);
\draw[red,dashed] (p) circle (10);
\draw[blue,dashed] (q) circle (10);
\draw[blue,fill=blue] (p) circle (.1);
\draw[red,fill=red] (q) circle (.1);
\draw[fill=black] (9.85,1) circle (.1);
\draw[fill=black] (9.85,-1) circle (.1);
\draw[blue,fill=blue] (4,1) circle (.1);
\draw[red,fill=red] (4,-1) circle (.1);
\draw[blue,fill=blue] (8,1) circle (.1);
\draw[red,fill=red] (8,-1) circle (.1);
\draw[blue,fill=blue] (5.65,1) circle (.1);
\draw[red,fill=red] (5.65,-1) circle (.1);
\draw[thick] (0,0) -- (9.85,0);
\node[above] at (p) {$\spL_1(0)=\tilde{p}_1$};
\node[below] at (q) {$\spR_1(0)=\tilde{p}_2$};
\node[above] at (9.85,1) {$\tilde{q}_1$};
\node[below] at (9.85,-1) {$\tilde{q}_2$};
\node[above] at (0,3) {\color{red}$\zeta_{\tilde{p}_2,4\rhot}$};
\node[below] at (0,-3) {\color{blue}$\zeta_{\tilde{p}_1,4\rhot}$};
\node[below] at (0,3) {\color{red}$\zeta_{\tilde{p}_1,2\rhot}$};
\node[above] at (0,-3) {\color{blue}$\zeta_{\tilde{p}_2,2\rhot}$};
\node[above] at (0,5) {\color{blue}$\zeta_{\tilde{p}_2,6\rhot}$};
\node[below] at (0,-5) {\color{red}$\zeta_{\tilde{p}_1,6\rhot}$};
\node[above] at (0,7) {\color{red}$\zeta_{\tilde{p}_2,8\rhot}$};
\node[below] at (0,-7) {\color{blue}$\zeta_{\tilde{p}_1,8\rhot}$};
\node[above] at (4,1) {$\spL_2(0)$};
\node[below] at (4,-1) {$\spR_2(0)$};
\node[above] at (5.65,1) {$\spL_3(0)$};
\node[below] at (5.65,-1) {$\spR_3(0)$};
\node[above] at (8,1) {$\spL_4(0)$};
\node[below] at (8,-1) {$\spR_4(0)$};

\end{scope}
\end{tikzpicture}
\caption{Illustration of application $\bar{F}$ on $\mathbb{S}^2$. Each red circle represents the trajectory of the center of a circle osculating the curve from the right and each blue circle represents the trajectory of the center of a circle osculating the curve from the left. The ten small dashed circles are control circles on left in blue and on right in red. The two big dashed circles are $\zeta_{\tilde{p}_2,10\rhot}$ (in blue) and $\zeta_{\tilde{p}_1,10\rhot}$ (in red) which cannot be used as trajectory for control circles because they are too close to $\tilde{q}_2$ and $\tilde{q}_1$ respectively. 
So, in this picture the index is $n_\mQ=4$, and there are four pairs of control circles which we can freely move along the trajectories $\spL_i$ and $\spR_i$ described without interfering with each other. The crucial point is that the distance from each blue circle to the red circle with different radius is greater or equal to $2\rhot$.}
\label{fig:auxcurves}
\end{figure}
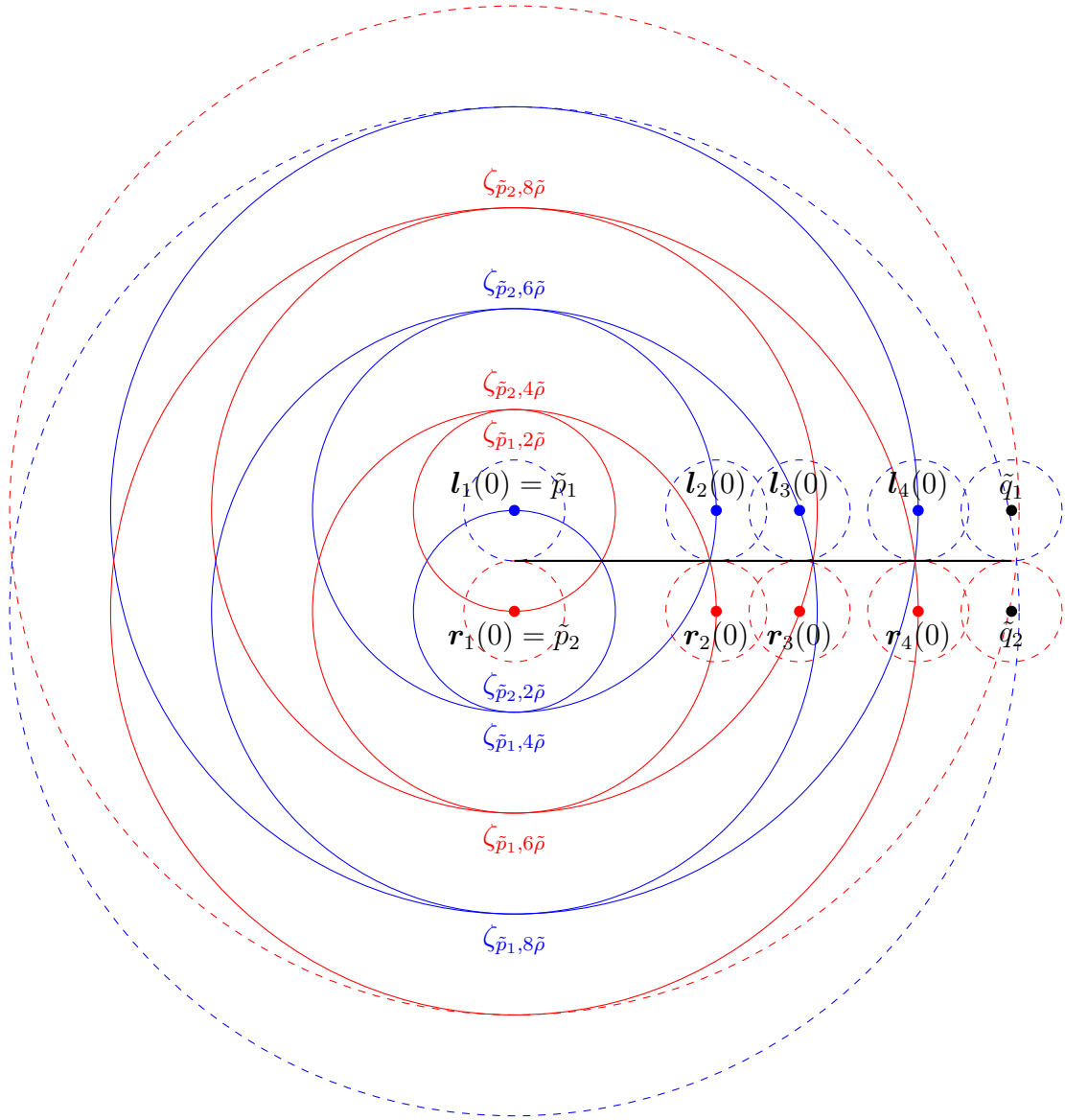

\subsection{Definition of $\bar{F}:\mathbb{R}^{n_\mQ}\to\Lspace$}
Summarizing this subsection, we shall define a map $\bar{F}:\mathbb{R}^{n_\mQ}\to\mathcal{L}_{\rho_0}(\mQ)$. For each $(x_1,x_2,\ldots,x_{n_\mQ})\in\mathbb{R}^{n_\mQ}$, we associate it to $n_\mQ+1$ curves in the following spaces, respectively, 
$$\mathcal{L}_{\rho_0}\big(\mQ_0,\mQ_1(x_1)\big),\mathcal{L}_{\rho_0}\big(\mQ_1(x_1),\mQ_2(x_2)\big),\mathcal{L}_{\rho_0}\big(\mQ_2(x_2),\mQ_3(x_3)\big),\ldots,\mathcal{L}_{\rho_0}\big(\mQ_{n_\mQ}(x_{n_\mQ}),\mQ_{n_\mQ+1}\big).$$
Then we concatenate these $n_\mQ+1$ curves to obtain a curve in $\mathcal{L}_{\rho_0}(\mI,\mQ)$ that will be defined as $\bar{F}(x_1,x_2,\ldots,x_{n_\mQ})$. 

For $a\in\mathbb{S}^2$, $r\in (0,\pi)$, we denote $\zeta_{a,r,\circlearrowleft}$ to be the circle of radius $r$ centered at $a$: 
\begin{equation}\label{circle1}
\zeta_{a,r,\circlearrowleft}(s)=\cos r \cdot a + \sin r\big(\cos s \cdot u_1(a)-\sin s\cdot u_2(a)\big) .
\end{equation}
Analogously, we denote $\zeta_{\pta,r,\circlearrowright}$ as:
\begin{equation}\label{circle2}
\zeta_{a,r,\circlearrowright}(s)=\cos r \cdot a + \sin r\big(\cos s \cdot u_1(a)+\sin s\cdot u_2(a)\big) .
\end{equation}
Given $k\in\mathbb{N}$, we say that a curve \emph{traverses} the circle $\zeta_{a,r,\circlearrowleft}$ $k$ times, if this curve is one of reparametrizations of $\zeta_{a,r,\circlearrowleft}(s)$ with the domain $s\in [0,2k\pi]$. We will use the same term for $\zeta_{a,r,\circlearrowright}$. 
Exclusively in this subsection, we will also use the following notation, for each $M\in\SO$:
\begin{equation*}
\pta(M) = M(\cos\rhot,0,\sin\rhot) \quad\text{and} \quad  \ptb(M) = M(\cos\rhot,0,-\sin\rhot). 
\end{equation*}


\begin{lemma}For each $i\in \{0,1,2,\ldots,n_\mQ\}$ consider $(x_i,x_{i+1})$ and $\mQ_i,\mQ_{i+1} \in\SO$ as defined above. There exists a unique continuous choice, depending on $(x_i,x_{i+1})$, of CSC curve in the space $\mathcal{L}_{\rhot}(\mQ_i,\mQ_{i+1})$, denoted by $\gamma_{0,\rhot}(\mQ_i,\mQ_{i+1})$ satisfying the following property. If $(x_i,x_{i+1})$ is such that 
$$d\big(\pta(\mQ_i(x_i)),\ptb(\mQ_{i+1}(x_{i+1}))\big)=2\rhot \quad \text{or} \quad d\big(\ptb(\mQ_i(x_i)),\pta(\mQ_{i+1}(x_{i+1}))\big)=2\rhot ,$$ 
then $\gamma_{0,\rhot}(\mQ_i,\mQ_{i+1})$ is of type CC. That is, $\gamma_{0,\rhot}(\mQ_i,\mQ_{i+1})$ is concatenation of two arcs of circles of radius $\rhot$.
\end{lemma}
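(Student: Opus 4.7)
\medskip
\noindent\textbf{Proof plan.} My approach is to construct $\gamma_{0,\rhot}(\mQ_i,\mQ_{i+1})$ explicitly from the four candidate pairs of circle-centers $(\pta(\mQ_i),\pta(\mQ_{i+1}))$, $(\pta(\mQ_i),\ptb(\mQ_{i+1}))$, $(\ptb(\mQ_i),\pta(\mQ_{i+1}))$, $(\ptb(\mQ_i),\ptb(\mQ_{i+1}))$ and then invoke the elementary theory of common tangent geodesics on $\mathbb{S}^2$. By Proposition \ref{propdist}, for every $(x_i,x_{i+1})\in\mathbb{R}^2$ one has
\[
d\!\left(\pta(\mQ_i(x_i)),\ptb(\mQ_{i+1}(x_{i+1}))\right)\geq 2\rhot \quad\text{and}\quad d\!\left(\ptb(\mQ_i(x_i)),\pta(\mQ_{i+1}(x_{i+1}))\right)\geq 2\rhot,
\]
with equality characterizing precisely the hypothesized degenerate cases. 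So the natural candidates for CSC types are the two ``opposite-sign'' configurations: an initial arc of curvature $+\kappa_0$ about $\pta(\mQ_i)$ followed by a final arc of curvature $-\kappa_0$ about $\ptb(\mQ_{i+1})$ (type ``$+-$''), or its mirror ``$-+$''.

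The plan is first to fix the type ``$+-$'' versus ``$-+$'' in a continuous manner. The way the control circles $\spL_i,\spR_i$ were defined in the previous subsection alternates which side of the elastic band touches $\pta$ versus $\ptb$ as $i$ advances; in particular, for each parity of $i$ exactly one of the two opposite-sign configurations is geometrically compatible with the Frenet frames $\mQ_i(x_i)$ and $\mQ_{i+1}(x_{i+1})$ (the other would force the first or last arc to be traversed in the wrong direction). Call the chosen pair of centers $(a_1,a_2)$. Second, for this $(a_1,a_2)$ with $d(a_1,a_2)\geq 2\rhot$, I shall construct the common \emph{internal} tangent geodesic between the two circles of radius $\rhot$ centered at $a_1$ and $a_2$: working in the hemisphere normal to the great circle through $a_1$ and $a_2$, the tangent points are $T_1=\exp_{a_1}(\rhot\,\bsm{n})$ and $T_2=\exp_{a_2}(-\rhot\,\bsm{n})$, where $\bsm{n}$ is the unit vector perpendicular to the $a_1a_2$-geodesic in $T_{a_1}\mathbb{S}^2$ chosen so that the orientations of the two arcs match those prescribed by $\mQ_i$ and $\mQ_{i+1}$. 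The CSC curve is then: arc of the first circle from $\mQ_i$ to $T_1$, geodesic segment from $T_1$ to $T_2$, arc of the second circle from $T_2$ to $\mQ_{i+1}$.

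Third, continuity in $(x_i,x_{i+1})$ follows from the smooth dependence of $a_1,a_2$ and of the tangent points $T_1,T_2$ on $(x_i,x_{i+1})$; since the orientation choices are locally constant and the type is fixed, the resulting CSC is a continuous function of $(x_i,x_{i+1})$. Uniqueness within the class of CSC curves follows because, having fixed the type and orientations, the common internal tangent geodesic between two disjoint round circles on $\mathbb{S}^2$ of radius $<\pi/2$ is unique. For the degenerate case, when $d(a_1,a_2)=2\rhot$ the two circles are externally tangent at a unique point $T$, the vectors $T_1$ and $T_2$ both coincide with $T$, the geodesic segment collapses to a point, and the curve reduces to the concatenation of the two arcs meeting tangentially at $T$, that is, a CC curve.

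The main obstacle is the type-selection argument in the first step: one must rule out that, at some interior $(x_i,x_{i+1})$, the ``other'' opposite-sign configuration also yields a legitimate CSC curve in $\bar{\mathcal{L}}_{\rhot}(\mQ_i(x_i),\mQ_{i+1}(x_{i+1}))$ which could be continuously linked to ours. I expect to handle this by examining the Frenet frames at $T_1,T_2$ produced by each configuration and checking, using the explicit formulas for $\spL_i,\spR_i$ and the placement of $u_1,u_2$ fixed just before Lemma \ref{timming}, that only one of the two configurations produces Frenet frames consistent with $\mQ_i(x_i)$ at the initial time and $\mQ_{i+1}(x_{i+1})$ at the final time; the other would require reversing the orientation of one of the two arcs, which is geometrically excluded by the construction.
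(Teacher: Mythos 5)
Your approach diverges from the paper's. The paper simply declares $\gamma_{0,\rhot}(\mQ_i,\mQ_{i+1})$ to be the length-minimizing curve in $\bar{\mathcal{L}}_{\rhot}(\mQ_i,\mQ_{i+1})$: it invokes Theorem~\ref{perez} to conclude that the minimizer is CSC or CCC, rules out CCC using Proposition~\ref{propdist} along the lines of Corollary~\ref{cor16}, and then cites the Ayala--Rubinstein continuity argument for the remaining claims. You instead attempt a direct construction, fixing one of the two opposite-sign configurations (``$+-$'' or ``$-+$'') by the parity of $i$ and building the common internal tangent geodesic.

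There is a genuine gap in the type-selection step, and it is not the one you flagged. By fixing a single type, say ``$+-$'', for all $(x_i,x_{i+1})$, your curve uses the pair of centers $\big(\pta(\mQ_i(x_i)),\ptb(\mQ_{i+1}(x_{i+1}))\big)$ throughout, and it degenerates to a CC curve precisely when $d\big(\pta(\mQ_i(x_i)),\ptb(\mQ_{i+1}(x_{i+1}))\big)=2\rhot$. But the lemma requires $\gamma_{0,\rhot}$ to degenerate to CC also when the \emph{other} equality $d\big(\ptb(\mQ_i(x_i)),\pta(\mQ_{i+1}(x_{i+1}))\big)=2\rhot$ holds, and in the surrounding construction (Lemma~\ref{segment}, Construction~\ref{propsegm}) both conditions actually occur: the interval $\big[\tilde{x}_{i+1}^1,\tilde{x}_{i+1}^2\big]$ on which $\alpha_i=\gamma_{0,\rhot}$ has its left endpoint at the ``$-+$'' tangency and its right endpoint at the ``$+-$'' tangency. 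A fixed-type ``$+-$'' CSC curve never degenerates at the former, and a fixed-type ``$-+$'' curve never degenerates at the latter, so your construction cannot satisfy the stated degeneration property at both ends. The CSC curve in the lemma must change type as $(x_i,x_{i+1})$ sweeps across that interval; the paper sidesteps the combinatorics of type changes entirely by defining the choice via length minimization, which automatically selects the CC configuration at each boundary. If you want to salvage the explicit approach, you would need to compare all four CSC types and argue that the minimizer is unique and varies continuously, which is precisely what the cited Ayala--Rubinstein argument supplies.
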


\begin{proof}
 From Theorem \ref{perez} and Proposition \ref{propdist} for $\rho=\rhot$, $\mP=\mQ_i$ and $\mQ=\mQ_{i+1}$, the length-minimizing curve is of type CSC. The continuity can be proven by using the same argument as J. Ayala and H. Rubinstein's argument in \cite{ayala2} for the plane case. The idea is to define a region $\Omega$ that depends continuously on $\mQ_i$ and $\mQ_{i+1}$. $\mQ_i$ and $\mQ_{i+1}$ satisfy the condition D in their article. The length-minimizing curve in $\bar{\mathcal{L}}_\rhot(\mQ_i,\mQ_{i+1})$ can be verified to lie in $\Omega$, is unique and continuous.

This argument is similar to the demonstration of Corollary \ref{cor16}.
\end{proof}

For each $i\in\{0,1,2,\ldots,n_\mQ\}$ we define a curve $\alpha_i(x)\in\mathcal{L}_{\rho_0}\big(\mQ_i(x_i),\mQ_{i+1}(x_{i+1})\big)$ by following the construction below

\begin{lemma}[Definition of $\alpha_i$'s and its properties] \label{segment} For each $i\in\{0,1,\ldots,n_\mQ\}$ and each $(x_1,x_2,\ldots,x_{n_\mQ})\in\mathbb{R}^{n_\mQ}$, there exist real functions $\tilde{x}_{i+1}^1,\tilde{x}_{i+1}^2:\mathbb{R}\to\mathbb{R}$, continuous functions $s_0$, $s_1$, $s_2:\mathbb{R}^2\times\{0,1,\ldots,n_\mQ\}\to\mathbb{R}$ and a curve $\alpha_i$ satisfying the following properties. When $i$ is even \footnote{For simplicity, we denote $s_j(x_i,x_{i+1},i)=s_j$ for $j=1,2,3$, $\tilde{x}_{i+1}^j(x_i)=\tilde{x}_{i+1}^j$ for $i=1,2$, $\mQ_i(x_i)=\mQ_i$ and $\mQ_{i+1}(x_{i+1})=\mQ_{i+1}$.}:
\begin{gather*}
 \text{ If $x_{i+1}\leq \tilde{x}_{i+1}^1$,} \quad \alpha_i(s)=\left\{\begin{array}{ll} \zeta_{\ptb(\mQ_i),\rhot,\circlearrowright}(s), & s\in [0,s_0]. \\
\zeta_{\ptb,(2i-1)\rhot,\circlearrowright}(s), & s\in [s_0,s_1].\\
\zeta_{\pta(\mQ_{i+1}),\rhot,\circlearrowleft}(s), & s\in [s_1,s_2].
\end{array} \right. \\
 \text{ If $\tilde{x}_{i+1}^1\leq x_{i+1} \leq \tilde{x}_{i+1}^2$,}\quad\alpha_i= \gamma_{0,\rhot}(\mQ_i,\mQ_{i+1}).\\
 \text{ If $x_{i+1}\geq \tilde{x}_{i+1}^2$,}\quad\alpha_i(s)=\left\{\begin{array}{ll} \zeta_{\pta(\mQ_i),\rhot,\circlearrowleft}(s), & s\in [0,s_0]. \\
\zeta_{\pta,(2i-1)\rhot,\circlearrowleft}(s), & s\in [s_0,s_1].\\
\zeta_{\ptb(\mQ_{i+1}),\rhot,\circlearrowright}(s), & s\in [s_1,s_2].
\end{array} \right. 
\end{gather*}

When $i$ is odd:
\begin{gather*}
\text{ If $x_{i+1}\leq \tilde{x}_{i+1}^1$,}\quad\alpha_i(s)=\left\{\begin{array}{ll} \zeta_{\ptb(\mQ_i),\rhot,\circlearrowright}(s), & s\in [0,s_0]. \\
\zeta_{\pta,(2i-1)\rhot,\circlearrowright}(s), & s\in [s_0,s_1].\\
\zeta_{\pta(\mQ_{i+1}),\rhot,\circlearrowleft}(s), & s\in [s_1,s_2].
\end{array} \right. \\
\text{ If $ \tilde{x}_{i+1}^1\leq x_{i+1} \leq \tilde{x}_{i+1}^2$,}\quad\alpha_i= \gamma_{0,\rhot}(\mQ_i,\mQ_{i+1}).  \\
\text{ If $x_{i+1}\geq \tilde{x}_{i+1}^2$.}\quad\alpha_i(s)=\left\{\begin{array}{ll} \zeta_{\pta(\mQ_i),\rhot,\circlearrowleft}(s), & s\in [0,s_0]. \\
\zeta_{\ptb,(2i-1)\rhot,\circlearrowleft}(s), & s\in [s_0,s_1].\\
\zeta_{\ptb(\mQ_{i+1}),\rhot,\circlearrowright}(s), & s\in [s_1,s_2].
\end{array} \right.
\end{gather*}
Moreover, the parameter $s$ in each case above is chosen such that $\alpha_i(0)=\mQ_i \cdot e_1$, $\alpha_i(s_2)=\mQ_{i+1} \cdot e_1$. $\alpha_i(s_0)$ and $\alpha_i(s_1)$ are well defined and continuous with respect to the pair $(x_i,x_{i+1})$. In other words, the following function is continuous:
$$F_{i+1}:(x_i,x_{i+1})\in\mathbb{R}^2\mapsto \alpha_i\in\mathcal{L}_{\rho_0}\big(\mQ_i(x_i),\mQ_{i+1}(x_{i+1})\big).$$
\end{lemma}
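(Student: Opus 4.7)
The plan is to identify the thresholds $\tilde{x}_{i+1}^1(x_i)$ and $\tilde{x}_{i+1}^2(x_i)$ as precisely the values of $x_{i+1}$ at which the length-minimising CSC curve in $\bar{\mathcal{L}}_{\rhot}(\mQ_i(x_i),\mQ_{i+1}(x_{i+1}))$ degenerates into a CC concatenation of two circular arcs of radius $\rhot$, that is, where the geodesic middle segment collapses to a point. By Proposition \ref{propdist}, for each $x_i\in\mathbb{R}$ and each window $[2k\pi,2(k+1)\pi)$, each of the equations
$$d\big(\spL_i(x_i),\spR_{i+1}(x_{i+1})\big)=2\rhot \quad\text{and}\quad d\big(\spR_i(x_i),\spL_{i+1}(x_{i+1})\big)=2\rhot$$
has a unique solution in $x_{i+1}$; these solutions, labelled according to whether they correspond to the ``left-meets-right'' or ``right-meets-left'' tangency configuration (a choice dictated by the parity of $i$ and by the orientation conventions built into $\spL_i, \spR_i$), will be $\tilde{x}_{i+1}^1$ and $\tilde{x}_{i+1}^2$. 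Continuity of the thresholds in $x_i$ follows from the implicit function theorem applied to these smooth distance equations.

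In the medium regime $\tilde{x}_{i+1}^1\leq x_{i+1}\leq\tilde{x}_{i+1}^2$ I invoke the preceding lemma to set $\alpha_i=\gamma_{0,\rhot}(\mQ_i,\mQ_{i+1})$; continuity in $(x_i,x_{i+1})$ is part of that lemma, and the parameters $s_0,s_1$ are the instants at which $\alpha_i$ leaves the initial $\rhot$-arc centred at $\pta(\mQ_i)$ or $\ptb(\mQ_i)$ and enters the final $\rhot$-arc centred at $\pta(\mQ_{i+1})$ or $\ptb(\mQ_{i+1})$, while $s_2$ is the total length; all three depend continuously on $(x_i,x_{i+1})$ because the transition points are tangency points between circles whose centres and radii vary continuously.

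In the extremal regimes the naive CSC would require a geodesic segment of negative length, so I replace it by an arc of one of the circles $\zeta_{\pta,(2i-1)\rhot}$ or $\zeta_{\ptb,(2i-1)\rhot}$. The choice of middle-arc centre is forced by the tangency requirement: the middle circle must be externally tangent to both end circles of radius $\rhot$ centred at $\pta(\mQ_i),\ptb(\mQ_i)$ and $\pta(\mQ_{i+1}),\ptb(\mQ_{i+1})$ respectively, so its centre lies on the appropriate $\spL$- or $\spR$-trajectory and its radius $(2i-1)\rhot$ is exactly what Proposition \ref{propdist} forces. External tangency ensures opposite curvature signs across each junction, so the concatenated curve has $C^1$ tangent and alternating signs as demanded by the formulas. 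The parameters $s_0,s_1$ are the tangency instants between consecutive arcs and $s_2$ is the time at which the final arc reaches $\mQ_{i+1}\cdot e_1$; these are continuous since they are given by explicit intersection-of-circles formulas. The two extremal formulas (for $x_{i+1}\leq\tilde{x}_{i+1}^1$ versus $x_{i+1}\geq\tilde{x}_{i+1}^2$, and for even versus odd $i$) are distinguished by which pair of control centres is brought to distance $2\rhot$, which determines on which side the middle arc curls.

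The main obstacle is continuity at the regime transitions $x_{i+1}=\tilde{x}_{i+1}^j$. At such a threshold the CSC curve has a zero-length geodesic segment and reduces to a CC curve, and simultaneously the extremal CCC curve has a zero-length middle circular arc and reduces to the \emph{same} CC curve (this is exactly the content of the threshold equations), so the two formulas agree at the boundary; within each regime the dependence on $(x_i,x_{i+1})$ is continuous, so $F_{i+1}$ is globally continuous. Finally, each arc of $\alpha_i$ has curvature of absolute value $\cot\rhot$ or $\cot\!\big((2i-1)\rhot\big)$, both strictly less than $\cot\rho_0=\kappa_0$ since $\rhot>\rho_0$, so $\alpha_i$ indeed lies in $\mathcal{L}_{\rho_0}\big(\mQ_i(x_i),\mQ_{i+1}(x_{i+1})\big)$ rather than merely in its closure.
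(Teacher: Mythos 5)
Your proposal follows essentially the same route as the paper: you identify $\tilde{x}_{i+1}^1,\tilde{x}_{i+1}^2$ as the parameter values at which the length-minimizing CSC curve in $\bar{\mathcal{L}}_{\rhot}(\mQ_i,\mQ_{i+1})$ degenerates to a CC curve (equivalently, where a distance condition of the form $d(\spL_i,\spR_{i+1})=2\rhot$ or $d(\spR_i,\spL_{i+1})=2\rhot$ holds, via Proposition~\ref{propdist}), replace the collapsed geodesic by an arc of the big control circle in the extremal regimes, and glue the three formulas together by observing that both the degenerate CSC and the degenerate CCC reduce to the same CC curve at each threshold. The curvature bound is checked at the end, as in the paper. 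Two small points deserve attention. First, the paper treats $i=0$ separately: there $\mQ_0\equiv\mI$ is a fixed frame, there are no trajectories $\spL_0,\spR_0$, the thresholds collapse to $\tilde{x}_1^1=\tilde{x}_1^2=0$, and $\alpha_0$ is a single arc of $\zeta_{\pta,\rhot}$ or $\zeta_{\ptb,\rhot}$ (possibly with extra full revolutions); your argument, which invokes the $\spL/\spR$-distance equations uniformly, does not cover this base case. Second, your remark that the middle arc's ``centre lies on the appropriate $\spL$- or $\spR$-trajectory'' is not quite right: the middle arc is an arc of the \emph{fixed} circle $\zeta_{\pta,(2i\pm1)\rhot}$ or $\zeta_{\ptb,(2i\pm1)\rhot}$ centred at the fixed point $\pta$ or $\ptb$; it is the centres of the two bounding $\rhot$-circles (namely $\pta(\mQ_i),\ptb(\mQ_i),\pta(\mQ_{i+1}),\ptb(\mQ_{i+1})$) that travel along the $\spL_i,\spR_i,\spL_{i+1},\spR_{i+1}$ trajectories. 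This is a phrasing slip rather than a real gap, but it does matter for the explicit tangency verifications the paper carries out by listing the four pairs of circles whose tangency produces the $C^1$ junctions in each parity.
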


Moreover, during the proof of the lemma above, we will also verify some of properties listed on the construction below.
\begin{construction}[A more detailed description of $\alpha_i$'s]\label{propsegm}
\upshape For each $i\in\{0,1,\ldots,n_Q\}$, the application $F_{i+1}$ defined in Lemma \ref{segment} satisfies the following relation:
\begin{itemize}
\item $\length\big(F_{i+1}(x_{i+1}+2k\pi)\big)=\length\big(F_{i+1}(x_{i+1})\big)+2k\pi\sin\big((2i+1)\rhot\big)$ for all $k\in\mathbb{N}$, $x_{i+1}\in \left[\tilde{x}_{i+1}^1,\tilde{x}_{i+1}^1+2\pi\right)$.
\item $\length\big(F_{i+1}(x_{i+1}-2k\pi)\big)=\length\big(F_{i+1}(x_{i+1})\big)+2k\pi\sin\big((2i+1)\rhot\big)$ for all $k\in\mathbb{N}$, $x_{i+1}\in \left(\tilde{x}_{i+1}^2-2\pi,\tilde{x}_{i+1}^2\right]$.
\end{itemize}
Furthermore, we describe $\alpha_i$ with more details. In the case that $i$ is even:
\begin{enumerate}
\item For $x_{i+1}\leq \tilde{x}^2_{i+1}$, $\alpha$ is concatenation of the following $3$ curves:
\begin{enumerate}
\item Shortest arc on $\zeta_{\ptb(\mQ_i),\rhot,\circlearrowright}$ that travels from $\mQ_i\cdot e_1$ to the unique point $a_1$ in $\zeta_{\ptb(\mQ_i),\rhot}\cap\zeta_{\ptb,(2i+1)\rhot}$.
\item Arc on $\zeta_{\ptb,(2i-1)\rhot,\circlearrowright}$ that travels from $a_1$ to the unique point $b_1$ in $\zeta_{\ptb,(2i+1)\rhot}\cap\zeta_{\pta(\mQ_{i}),\rhot}$. This arc is concatenation of shortest arc from $a_1$ to $b_1$ and circle that traverses $\zeta_{\ptb,(2i+1)\rhot,\circlearrowright}$ $k$ times, where $k\in\mathbb{N}$ satisfy $(x_{i+1}+2k\pi)\in \left(\tilde{x}_{i+1}^2-2\pi,\tilde{x}_{i+1}^2\right]$.
\item Shortest arc on $\zeta_{\pta(\mQ_{i+1}),\rhot,\circlearrowleft}$ that travels from $b_1$ to $\mQ_{i+1}\cdot e_1$.
\end{enumerate}
\item For $\tilde{x}^2_{i+1}\leq x_{i+1} \leq \tilde{x}^1_{i+1}$, $\alpha$ is a type CSC curve in $\bar{\mathcal{L}}_{\rhot}\big(\mQ_i(x_i),\mQ_{i+1}(x_{i+1})\big)$.
\item For $x_{i+1}\geq \tilde{x}^1_{i+1}$, $\alpha$ is concatenation of the following $3$ curves:
\begin{enumerate}
\item Shortest arc on $\zeta_{\pta(\mQ_i),\rhot,\circlearrowleft}$ that travels from $\mQ_i\cdot e_1$ to the unique point $a_1$ in $\zeta_{\pta(\mQ_i),\rhot}\cap\zeta_{\pta,(2i+1)\rhot}$.
\item Arc on $\zeta_{\pta,(2i+1)\rhot,\circlearrowleft}$ that travels from $a_1$ to the unique point $b_1$ in $\zeta_{\pta,(2i+1)\rhot}\cap\zeta_{\ptb(\mQ_{i+1}),\rhot}$. This arc is concatenation of shortest arc from $a_1$ to $b_1$ and circle that traverses $\zeta_{\pta,(2i+1)\rhot,\circlearrowleft}$ $k$ times, where $k\in\mathbb{N}$ satisfy $(x_{i+1}-2k\pi)\in \left[\tilde{x}_{i+1}^1,\tilde{x}_{i+1}^1+2\pi\right)$.
\item Shortest arc on $\zeta_{\pta(\mQ_{i+1}),\rhot,\circlearrowright}$ that travels from $b_1$ to $\mQ_{i+1}\cdot e_1$.
\end{enumerate}
\end{enumerate} 
In the case that $i$ is odd:
\begin{enumerate}
\item For $x_{i+1}\leq \tilde{x}^2_{i+1}$, $\alpha$ is concatenation of the following $3$ curves:
\begin{enumerate}
\item Shortest arc on $\zeta_{\ptb(\mQ_i),\rhot,\circlearrowright}$ that travels from $\mQ_i\cdot e_1$ to the unique point $a_1$ in $\zeta_{\ptb(\mQ_i),\rhot}\cap\zeta_{\pta,(2i+1)\rhot}$.
\item Arc on $\zeta_{\pta,(2i+1)\rhot,\circlearrowright}$ that travels from $a_1$ to the unique point $b_1$ in $\zeta_{\pta,(2i+1)\rhot}\cap\zeta_{\pta(\mQ_{i+1}),\rhot}$. This arc is concatenation of shortest arc from $a_1$ to $b_1$ and circle that traverses $\zeta_{\pta,(2i+1)\rhot,\circlearrowright}$ $k$ times, where $k\in\mathbb{N}$ satisfy $(x_{i+1}+2k\pi)\in \left(\tilde{x}_{i+1}^2-2\pi,\tilde{x}_{i+1}^2\right]$.
\item Shortest arc on $\zeta_{\pta(\mQ_{i+1}),\rhot,\circlearrowleft}$ that travels from $b_1$ to $\mQ_{i+1}\cdot e_1$.
\end{enumerate}
\item For $\tilde{x}^2_{i+1}\leq x_{i+1} \leq \tilde{x}^1_{i+1}$, $\alpha$ is a type CSC curve in $\bar{\mathcal{L}}_{\rhot}\big(\mQ_i(x_i),\mQ_{i+1}(x_{i+1})\big)$.
\item For $x_{i+1}\geq \tilde{x}^1_{i+1}$, $\alpha$ is concatenation of the following $3$ curves:
\begin{enumerate}
\item Shortest arc on $\zeta_{\pta(\mQ_i),\rhot,\circlearrowleft}$ that travels from $\mQ_i\cdot e_1$ to the unique point $a_1$ in $\zeta_{\pta(\mQ_i),\rhot}\cap\zeta_{\ptb,(2i+1)\rhot}$.
\item Arc on $\zeta_{\ptb,(2i+1)\rhot,\circlearrowleft}$ that travels from $a_1$ to the unique point $b_1$ in $\zeta_{\ptb,(2i+1)\rhot}\cap\zeta_{\ptb(\mQ_{i+1}),\rhot}$. This arc is concatenation of shortest arc from $a_1$ to $b_1$ and circle that traverses $\zeta_{\ptb,(2i+1)\rhot,\circlearrowleft}$ $k$ times, where $k\in\mathbb{N}$ satisfy $(x_{i+1}-2k\pi)\in \left[\tilde{x}_{i+1}^1,\tilde{x}_{i+1}^1+2\pi\right)$.%
\item Shortest arc on $\zeta_{\pta(\mQ_{i+1}),\rhot,\circlearrowright}$ that travels from $b_1$ to $\mQ_{i+1}\cdot e_1$.
\end{enumerate}
\end{enumerate} 
\end{construction}

\begin{figure}
\centering
\begin{tikzpicture}
   \node[anchor=west,inner sep=0] at (0,-10.5) {\includegraphics[width=8.5cm]{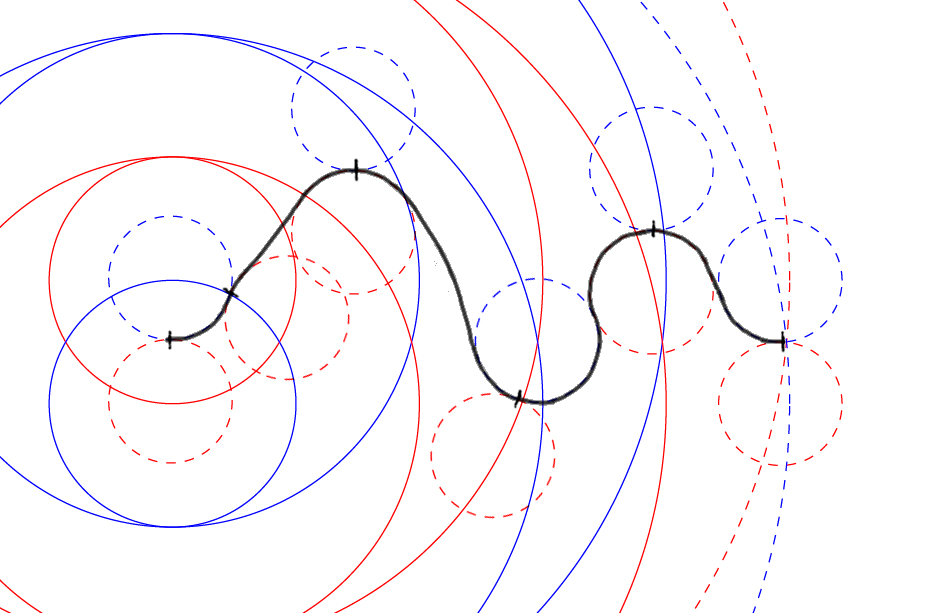}};
  \node[anchor= west,inner sep=0] at (8.2,-10.5) {\includegraphics[width=8.5cm]{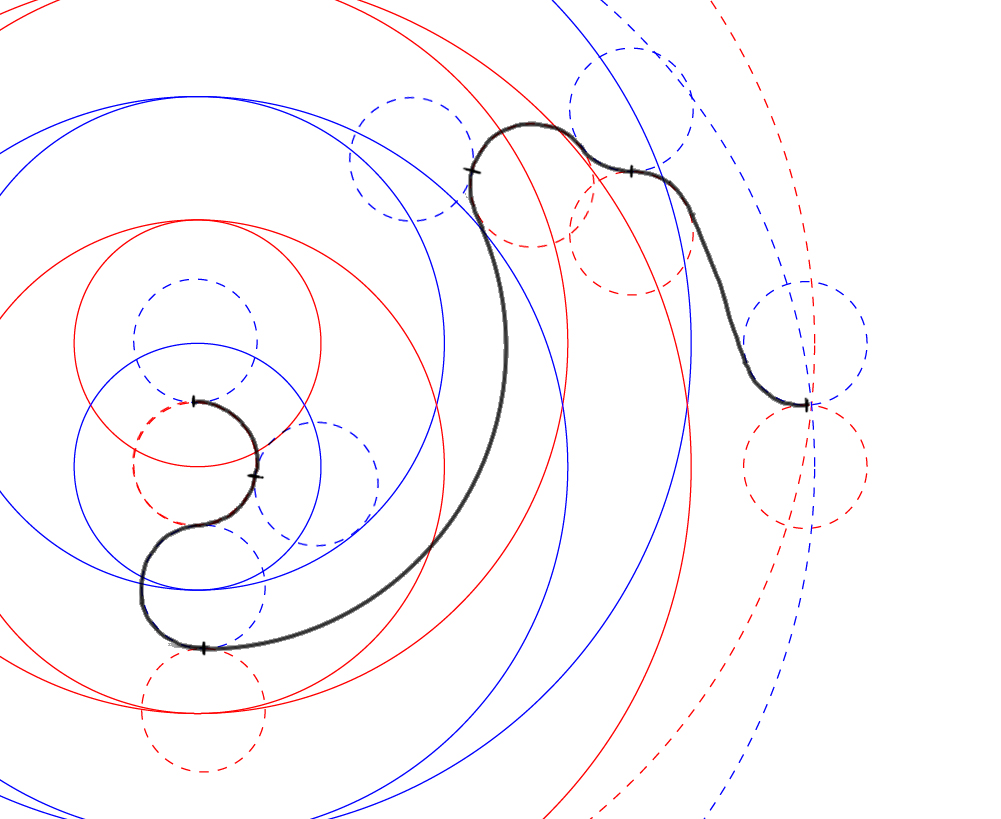}};
\end{tikzpicture}
\caption{These are examples of curves by map $\bar{F}$ for the case $n_\mQ=4$. In each figure, the thick dark curve is $\bar{F}(x)$, the marked points on curves are endpoints of $\alpha_i$, $i\in\{0,1,2,3,4\}$, the blue and red small dashed circles represent all $6$ pairs of control circles.}
\label{fig:auxcurves2}
\end{figure}

\begin{proof}[Proof of Lemma \ref{segment} and assertions on Construction \ref{propsegm}] We shall prove that such $F_{i+1}$, $\tilde{x}_{i+1}^1$ and $\tilde{x}_{i+1}^2$ exists by explicitly constructing them based on descriptions given in Construction \ref{propsegm}. If $i=0$, we set $\tilde{x}_1^1=\tilde{x}_1^2=0$, and
\begin{gather*}
\alpha_1(s)=\zeta_{\pta,\rhot,\circlearrowleft}(s), \text{ for $s\in [0,s_0]$}, \quad \text{ if $ 0\leq x_1 < 2\pi$}.\\
\alpha_1(s)=\zeta_{\ptb,\rhot,\circlearrowright}(s), \text{ for $s\in [0,s_0]$}, \quad \text{ if $-2\pi < x_{1}\leq 0$}.
\end{gather*}
In the equations above, $s_0$ is the continuous real function such that for $x_{1}\geq 0$, $\zeta_{\pta,\rhot,\circlearrowleft}(s_0)=\mQ_1(x_{1})\cdot e_1$, for $x_{1}\leq 0$, $\zeta_{\pta,\rhot,\circlearrowright}(s_0)=\mQ_1(x_{1})\cdot e_1$ and $s_0(0)=0$. Also we put $s_1\equiv s_2\equiv s_0$, so the second and the third segment of $\alpha_1$ on definition listed above are degenerate. For each integer $k\geq 1$, we also set
\begin{gather*}
\alpha_1(s)=\zeta_{\pta,\rhot,\circlearrowleft}(s), \text{ for $s\in [0,2k\pi+s_0]$}, \quad \text{ if $ 2k\pi\leq x_1 < 2(k+1)\pi$}.\\
\alpha_1(s)=\zeta_{\ptb,\rhot,\circlearrowright}(s), \text{ for $s\in [0,2k\pi+s_0]$}, \quad \text{ if $ -2(k+1)\pi < x_1\leq -2k\pi$}.
\end{gather*}

For each $i\geq 1$, we use an inductive process, set:  
\begin{gather*}
\tilde{x}^2_{i+1}=\min\left\{t\geq x_{i} ; d\big(\spL_i(t),\spR_{i+1}(t)\big)=2\rhot\right\},\\
\tilde{x}^1_{i+1}=\max\left\{t\leq x_{i} ; d\big(\spR_i(t),\spL_{i+1}(t)\big)=2\rhot\right\}.
\end{gather*}
Thus in particular, $\tilde{x}_{i+1}^1\leq x_i \leq \tilde{x}_{i+1}^2 $. We define $\alpha_i$ as in the statement of Proposition \ref{propsegm}. To verify that the definition is valid, we separate the argument into two cases.\vspace{.7em}

\noindent\textbf{Even Case with $i\geq 1$:} For each even integer $i\geq 2$, from the definition, we deduce: 
$$d(\ptb,\ptb(\mQ_i))=d(\pta,\pta(\mQ_i))=2i\rhot \quad \text{and} \quad d(\ptb,\pta(\mQ_{i+1}))=d(\pta,\ptb(\mQ_{i+1}))=(2i+2)\rhot. $$

\noindent So:
\begin{enumerate}
\item The following conclusion is obtained for the pair of curves $(\zeta_{\ptb(\mQ_i),\rhot,\circlearrowright},\zeta_{\ptb,(2i+1)\rhot,\circlearrowright})$. The intersection $\zeta_{\ptb(\mQ_i),\rhot}\cap \zeta_{\pta,(2i+1),\rhot}$ consists of exactly one point, namely $a_1$. Furthermore, the tangent vector of $\zeta_{\ptb(\mQ_i),\rhot,\circlearrowright}$ coincides with the tangent vector of $\zeta_{\ptb,(2i+1)\rhot,\circlearrowright}$ at $a_1$. 
\item The analogous conclusion is obtained for the following pairs of oriented circles 
\begin{enumerate}
\item $(\zeta_{\pta(\mQ_i),\rhot,\circlearrowleft},\zeta_{\pta,(2i+1)\rhot,\circlearrowleft})$, 
\item $(\zeta_{\pta(\mQ_{i+1}),\rhot,\circlearrowleft},\zeta_{\ptb,(2i+1)\rhot,\circlearrowright})$, 
\item $(\zeta_{\pta(\mQ_{i+1}),\rhot,\circlearrowright},\zeta_{\pta,(2i+1)\rhot,\circlearrowleft})$.
\end{enumerate}
\end{enumerate}

\noindent This makes the concatenation of segments in described on Items (1) and (3) of Construction \ref{propsegm} possible, unique and from the concatenation we obtain indeed a $C^1$ curve in $\mathcal{L}_{\rho_0}(\mQ_i,\mQ_{i+1})$.

For the proof continuity of $F_{i+1}$ at $\tilde{x}^1_{i+1}$, note that since $a_1=b_1$, the middle segment $\zeta_{a,(2i+1)\rhot,c}$, $a\in\{\pta,\ptb\}$, $c\in\{\circlearrowright,\circlearrowleft\}$ of concatenation in Item (1)(b) $\mathcal{L}_\rhot\big(\mQ_i(\tilde{x}_{i+1}^1),\mQ_{i+1}(\tilde{x}_{i+1}^2)\big)$ is degenerate. So the curve formed by concatenation of arcs constructed in Item (1) coincides with the length-minimizing curve in $\bar{\mathcal{L}}_\rhot(\mQ_{i},\mQ_{i+1})$. For continuity at $\tilde{x}^2_{i+1}$, the argument is analogous.\vspace{.7em}

\noindent\textbf{Odd Case with $i\geq 1$:} For each odd integer $i\geq 1$, the procedure is the same as the Even Case. We note that:
$$d(\pta,\ptb(\mQ_i))=d(\ptb,\pta(\mQ_i))=2i\rhot \quad \text{and} \quad d(\pta,\pta(\mQ_{i+1}))=d(\ptb,\ptb(\mQ_{i+1}))=(2i+2)\rhot.$$ 
\noindent Using the same arguments as in Even Case for pairs:
\begin{align*}
(\zeta_{\ptb(\mQ_i),\rhot,\circlearrowright},\zeta_{\pta,(2i+1)\rhot,\circlearrowright}), (\zeta_{\pta(\mQ_i),\rhot,\circlearrowleft},\zeta_{\ptb,(2i+1)\rhot,\circlearrowleft}),\\ (\zeta_{\pta(\mQ_{i+1}),\rhot,\circlearrowleft},\zeta_{\pta,(2i+1)\rhot,\circlearrowright})\quad \text{and} \quad(\zeta_{\pta(\mQ_{i+1}),\rhot,\circlearrowright},\zeta_{\ptb,(2i+1)\rhot,\circlearrowleft}),
\end{align*}
\noindent we obtain that the concatenation of segments in Items 1 and 3 is possible and is indeed a $C^1$ curve in $\mathcal{L}_{\rho_0}(\mQ_i,\mQ_{i+1})$.
The justifications for the continuity at $\tilde{x}^1_{i+1}$ and $\tilde{x}^2_{i+1}$ are also the same as in Even Case.

This proves that $F$ is well defined and continuous. The relation about the length in Construction \ref{propsegm} is an immediate consequence of its description.
\end{proof}

So by Lemma \ref{segment}, for each vector $(x_1,x_2,\ldots,x_{n_\mQ})$ we associate it to $n_\mQ+1$ curves namely:
$$\alpha_i\in\bar{\mathcal{L}}_{\rhot}(\mQ_i,\mQ_{i+1})\subset\mathcal{L}_{\rho_0}(\mQ_i,\mQ_{i+1}),\quad i=0,1,\ldots,n_\mQ . $$
Since the final frame of each $\alpha_i$ coincides with the initial frame of $\alpha_{i+1}$, the concatenation of all $\alpha_i$ results into a curve in $\mathcal{L}_{\rho_0}(\mI,\mQ)$. We define this curve as the image of $(x_1,x_2,\ldots,x_{n_\mQ})$ under $\bar{F}$:
$$\bar{F}(x_1,x_2,\ldots,x_{n_\mQ})=\bigoplus_{i=0}^{n_\mQ}\alpha_i .$$

Now we have defined a continuous application $\bar{F}:\mathbb{R}^{n_\mQ}\to\Lspace$, and we will modify it into our desired $F:\mathbb{S}^{n_\mQ}\to\Lspace$ in the next subsection. 

\begin{remark}\upshape In general, the length-minimizing CSC curve $\gamma_0$ does not lie in $\img(\bar{F})$. Only in very specific cases we have $\bar{F}(0,0,\ldots,0)=\gamma_0$. This happens in the case in which
$$\mQ = \left(\begin{array}{ccc} \cos\theta & -\sin\theta & 0 \\
\sin\theta & \cos\theta & 0 \\
0 & 0 & 1
\end{array}\right) .$$
This case is shown in Figure \ref{fig:auxcurves}.
\end{remark}

\subsection{Adding loops}

First we define the concept of geodesic loops added to a given curve $\gamma\in\cLspacea$.

\begin{definition} Consider the space $\imspace$. Given a curve $\gamma\in\imspace$, parametrized so that $\gamma:[0,1]\to\mathbb{S}^2$ and $t_0\in (0,1)$. Let $n\geq 1$ be an integer. We denote by $\gamma^{[t_0\# (2n)]}$ the following curve:
$$\gamma^{[t_0\#(2n)]}(t)=\left\{ \begin{array}{ll} \gamma(t) \quad & t\in [0,t_0-2\epsilon]\\
\gamma(t_0-2\epsilon+2(t-t_0+2\epsilon)) \quad & t\in [t_0-2\epsilon,t_0-\epsilon] \\
\mathfrak{F}_\gamma(t_0)\zeta\left(\frac{2n\pi(t-t_0+\epsilon)}{\epsilon}\right) \quad & t\in [t_0-\epsilon,t_0+\epsilon] \\
\gamma(t_0+2(t-t_0-\epsilon)) \quad & t\in [t_0+\epsilon,t_0+2\epsilon] \\
\gamma(t) \quad & t\in [t_0+2\epsilon,1] \\
\end{array}
\right.$$
In the equation above, $\epsilon$ is taken sufficiently small so that $(t_0-2\epsilon,t_0+2\epsilon)\subset [0,1]$. The curve $\zeta$ is given by $\left(\cos(t),\sin(t),0\right)$. \footnote{Two such curves with loops added by different choices of $\epsilon$ satisfying $(t_0-2\epsilon,t_0+2\epsilon)\subset [0,1]$ are in fact the same curve via the equivalence $\sim$ defined on page \pageref{equivpg}.}

For $t_0=0$ and $k\geq 1$ an integer, we define:
$$ \gamma^{[0\# k]}(t) = \left\{ \begin{array}{ll} \zeta\left(\frac{2k\pi t}{\epsilon}\right) \quad & t\in \left[0,\epsilon\right] \\
\gamma(2(t-\epsilon)) \quad & t\in [\epsilon,2\epsilon] \\
\gamma(t) \quad & t\in [2\epsilon,1].
\end{array} \right.$$

For $t_0=1$ and $k\geq 1$ an integer, we define:
$$ \gamma^{[1\# k]}(t) = \left\{ \begin{array}{ll} 
\gamma(t) \quad & t\in [0,1-2\epsilon] \\
\gamma(1-2\epsilon+2(t-1+2\epsilon)) \quad & t\in [1-2\epsilon,1-\epsilon] \\
\zeta\left(\frac{2k\pi (t-(1-\epsilon))}{\epsilon}\right) \quad & t\in \left[1-\epsilon,1\right] .
\end{array} \right.$$

\end{definition}

\begin{figure}[H]
\begin{tikzpicture}
\begin{scope}[xshift = -7cm,scale=.6]
\draw[blue,dashed] (2,0) circle (1) ;
\draw[red,dashed] (2,0) circle (2) ;
\draw[gray,dashed] (2,.5) circle (.5) ;
\draw[gray,dashed] (2,-.5) circle (.5) ;
\draw[thick] (-0.5,0) -- (4.5,0);
\end{scope}
\begin{scope}[xshift = -3.5cm,scale=.6]
\draw[blue,dashed] (2,0) circle (1) ;
\draw[red,dashed] (2,0) circle (2) ;
\draw[gray,dashed] (2.5,0) circle (.5) ;
\draw[gray,dashed] (1.5,0) circle (.5) ;
\draw[thick] (-.5,0) -- (.65,0);
\draw[thick] (4.5,0) -- (3.35,0);
\draw[thick] (.63,0) arc (-90:-30:.5) ;
\draw[thick] (1.5,0)++(0:.5) arc (-0:150:.5) ;
\draw[thick] (2.5,0)++(180:.5) arc (-180:-30:.5) ;
\draw[thick] (3.37,0) arc (90:150:.5) ;
\end{scope}
\begin{scope}[xshift = +0cm,scale=.6]
\draw[blue,dashed] (2,0) circle (1) ;
\draw[red,dashed] (2,0) circle (2) ;
\draw[gray,dashed] (2,.5) circle (.5) ;
\draw[gray,dashed] (2,-.5) circle (.5) ;
\draw[thick] (-.5,0) -- (.60,0);
\draw[thick] (4.5,0) -- (3.40,0);
\draw[thick] (.58,0) arc (-90:-50:.5) -- (1.7,.9) ;
\draw[thick] (2,0) arc (-90:125:.5)  ;
\draw[thick] (2,0) arc (90:305:.5)  ;
\draw[thick] (3.42,0) arc (90:130:.5) -- (2.3,-0.9) ;
\end{scope}
\begin{scope}[xshift = +3.5cm, scale=.6]
\draw[blue,dashed] (2,0) circle (1) ;
\draw[red,dashed] (2,0) circle (2) ;
\draw[gray,dashed] (2.5,0) circle (.5) ;
\draw[gray,dashed] (1.5,0) circle (.5) ;
\draw[thick] (-0.5,0) -- (1.2,0);
\draw[thick] (1.2,0) arc (-90:-70:1) -- (2.3,0.46) ;
\draw[thick] (2.8,0) arc (90:110:1) -- (1.7,-0.46);
\draw[thick] (1.5,0) ++ (0:.5) arc (0:300:.5);
\draw[thick] (2.5,0) ++ (-180:.5) arc (-180:120:.5);
\draw[thick] (2.8,0) -- (4.5,0);
\end{scope}
\begin{scope}[xshift = +7cm, scale=.6]
\draw[blue,dashed] (2,0) circle (1) ;
\draw[red,dashed] (2,0) circle (2) ;
\draw[thick] (-0.5,0) -- (4.5,0);
\draw[thick] (2,.5) circle (.5) ;
\draw[thick] (2,-.5) circle (.5) ;
\end{scope}
\end{tikzpicture}
\caption{For a curve $\gamma\in\bar{\mathcal{L}}_\rho(\mI,\mQ)$, if the curvature is small on a sufficiently long piece of $\gamma$, then $\gamma$ is homotopic to $\gamma^{[t_0\# 2]}$ in $\gamma\in\bar{\mathcal{L}}_\rho(\mI,\mQ)$. Under this deformation, the curve remains unchanged outside of the dashed red circle which has radius $8\rho$. In $\mathcal{I}(\mI,\mQ)$, since there is no restriction on the curvature, this deformation can be done on an arbitrary small segment of $\gamma$.}
\label{fig:homotopyloop}
\end{figure}
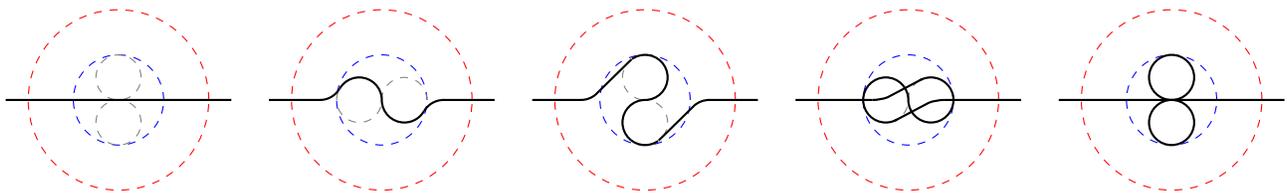

Analogously, for integers $n,m\geq 1$, we may define $\gamma^{[t_0\#2n;t_1\#2m]}$ as the curve $\gamma$ with $2n$ loops attached at $\gamma(t_0)$ and $2m$ loops attached at $\gamma(t_1)$.
\begin{definition} Consider the space $\imspace$. Given a curve $\gamma\in\imspace$ parametrized with constant speed $\gamma:[0,1]\to\mathbb{S}^2$. Given an integer $n\geq 1$, we denote by $\gamma^{[\# 2n]}$ the following curve:
$$ \gamma^{[\# 2n]}=\gamma^{[t_0\# 1;t_1\#2;\ldots;t_n\# 1]},$$
\noindent where $t_k = \frac{k}{n} $ for $k\in\{0,1,\ldots,n\}$.
\end{definition}

For $n$ sufficiently large, we define $\gamma^{[\flat (2n)]}$ by modifying the curve $\gamma^{[\# (2n)]}$. Assume that the same $\epsilon>0$ is used for each loop so that for all $t\in[0,1]$ such that $t-t_j\leq\epsilon$ we have:
$$ \gamma^{[\#(2n)]}(t)=\mathfrak{F}_{\gamma}(t_j)\zeta\left(\frac{t-t_j}{\epsilon}\right).$$

\noindent For each each $j\in\{0,1,\ldots,n\}$, let 
$$t_{j,0}=t_j+\frac{7}{8}\epsilon, \quad t_{j,\frac{1}{2}}=\frac{t_j+t_{j+1}}{2} \quad \text{and} \quad t_{j,1}=t_{j+1}-\frac{7}{8}\epsilon .$$

\noindent We also consider the unique length-minimizing CSC curve $\beta_j$ in $\bar{\mathcal{L}}_\rhot(\mathfrak{F}_\gamma(t_{j,0}),\mathfrak{F}_\gamma(t_{j,1}))$. For convenience, parametrize its domain as $\beta_j:[t_{j,0},t_{j,1}]\to\mathbb{S}^2$.

\begin{definition} Given a curve $\gamma\in\imspace$ and $\rhot\in\left(0,\frac{\pi}{4}\right)$, take an $m$ sufficiently large. For all $n\geq m$, we define $\gamma^{[\flat (2n)]}$ by:
\begin{equation} \label{eqflat1}
\gamma^{[\flat (2n)]}(t) = \left\{\begin{array}{ll}
\gamma^{[\# (2n)]}(t),\quad & \text{for } t\in [0,1]\setminus \bigcup_{j=0}^{n}(t_{j,0},t_{j,1}),\\
\beta_j(t) \quad & \text{for } t\in [t_{j,0},t_{j,1}].
\end{array}\right.
\end{equation}
\end{definition}

Below is Lemma 6.1 of \cite{sald}, the proof is based on Figure \ref{fig:homotopyloop}.
\begin{lemma}\label{lem61} Let $K$ be a compact set, $\mQ\in\SO$ and $n\geq 1$ an integer. Let $f:K\to \mathcal{I}(\mI,\mQ)$ and $t_0:K\to (0,1)$ be continuous functions. Then $f$ and $f^{[t_0\# 2n]}$ are homotopic in $\mathcal{I}(\mI,\mQ)$.
\end{lemma}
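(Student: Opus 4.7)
The underlying reason the result holds is the Smale--Hirsch theorem: the Frenet frame map $\gamma \mapsto \mathfrak{F}_\gamma$ induces a weak homotopy equivalence $\mathcal{I}(\mI,\mQ) \simeq \Omega\SO$ (paths in $\SO$ from $\mI$ to $\mQ$), and the operation $\gamma \mapsto \gamma^{[t_0\#2]}$ corresponds to inserting, at parameter $t_0$, a double traversal of a loop in $\SO$ based at $\mathfrak{F}_\gamma(t_0)$. Since $\pi_1(\SO) \cong \mathbb{Z}/2$, this double loop is null-homotopic in $\SO$, so $f$ and $f^{[t_0\#2]}$ induce homotopic maps into $\Omega\SO$. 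The plan is to turn this intuition into an explicit homotopy in $\mathcal{I}(\mI,\mQ)$, following Figure \ref{fig:homotopyloop}.

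First, the plan is to reduce to the case $n=1$ by induction: up to reparametrization in $\mathcal{I}$, one has $f^{[t_0\#2n]} = \bigl(f^{[t_0\#2(n-1)]}\bigr)^{[t_0\#2]}$, so iterating the result for $n=1$ yields every $n$. Next, by compactness of $K$ together with continuity of $f$ and $t_0$, one chooses $\epsilon>0$ small enough that $t_0(k) \in [2\epsilon, 1-2\epsilon]$ and that $f(k)([t_0(k)-2\epsilon, t_0(k)+2\epsilon])$ lies in an arbitrarily small spherical disk $V_k$ around $f(k)(t_0(k))$ for every $k\in K$, with $V_k$ varying continuously. Using a family of charts on $V_k$ depending continuously on $\mathfrak{F}_{f(k)}(t_0(k))$, the problem reduces to the following planar statement: for a short near-straight arc in $\mathbb{R}^2$ with prescribed endpoint positions and tangents, the arc with two small loops attached at its midpoint is homotopic, in the space of $C^1$ immersed arcs with fixed endpoint frames, to the arc without loops.

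The explicit planar homotopy is the one displayed in Figure \ref{fig:homotopyloop}: starting from the straight arc, one deforms it to meet each of the two dashed loops, fuses it with them, gradually rotates the fused configuration until the two former loops lie along the arc as small undulations, and finally straightens these undulations out. Each intermediate curve is a $C^1$ immersion (tangent vector never vanishes; self-intersections are allowed in $\mathcal{I}$), and every step depends continuously on the initial data. Applied fiberwise over $K$ via the chart family above, this yields a continuous homotopy $H: K \times [0,1] \to \mathcal{I}(\mI,\mQ)$ with $H(\cdot,0) = f$ and $H(\cdot,1) = f^{[t_0\#2]}$.

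The main obstacle is making the planar homotopy of the preceding paragraph formally rigorous: while the picture makes each stage intuitively clear, writing down explicit parametric formulas and verifying that the tangent never vanishes throughout the deformation takes careful bookkeeping, especially at the moments when the two small loops merge with the arc. An alternative that bypasses explicit formulas is to use Smale--Hirsch directly: pointwise in $k$, contract the inserted double loop in $\SO$ to the constant loop at $\mathfrak{F}_{f(k)}(t_0(k))$, choosing this null-homotopy continuously in $k$ (which is possible because it depends only on the continuously varying basepoint), and then lift the resulting homotopy back to $\mathcal{I}(\mI,\mQ)$ via the h-principle.
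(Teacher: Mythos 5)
Your main argument — zooming in by compactness, working in a chart near $f(k)(t_0(k))$, and performing the explicit planar deformation of Figure \ref{fig:homotopyloop}, with no curvature constraint to worry about in $\mathcal{I}(\mI,\mQ)$ — is exactly the proof the paper intends (it simply cites Lemma 6.1 of \cite{sald} and points to that figure), and your reduction to $n=1$ via iterated loop insertion is correct up to the reparametrization equivalence already built into $\mathcal{I}$. The Smale--Hirsch/$\pi_1(\SO)\cong\mathbb{Z}/2$ alternative you sketch is also sound in spirit, but as you yourself note it requires the parametric h-principle to lift the null-homotopy of the inserted double loop back into $\mathcal{I}(\mI,\mQ)$, which is considerably more machinery than the elementary deformation.
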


Now we introduce a simple technical lemma:
\begin{lemma}\label{simplel} Let $k\in\mathbb{N}$. If $x=(x_1,x_2,\ldots,x_{k})\in\mathbb{R}^k$, is such that:
$$\max\big\{|x_i|;i\in\{1,2,\ldots,k\}\big\}\geq 2\left\lceil\frac{k}{2}\right\rceil\pi,$$ 
\noindent then at least one of the following items is satisfied.
\begin{enumerate}
\item $|x_1|\geq 2\pi$.
\item There exists an $i\in\{1,2,\ldots,k-1\}$ such that $|x_i-x_{i+1}|\geq 2\pi$.
\item $|x_k|\geq 2\pi$.
\end{enumerate}
\end{lemma}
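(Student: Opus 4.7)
The plan is to prove the contrapositive. I will assume that none of items (1), (2), (3) hold, i.e., that $|x_1|<2\pi$, $|x_k|<2\pi$, and $|x_i-x_{i+1}|<2\pi$ for every $i\in\{1,\ldots,k-1\}$, and I will deduce that $\max_{j}|x_j|<2\lceil k/2\rceil\pi$, contradicting the hypothesis.

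First I would apply a telescoping triangle inequality starting from the left endpoint: the identity $x_j=x_1+\sum_{i=1}^{j-1}(x_{i+1}-x_i)$ combined with the assumed bounds gives $|x_j|\leq|x_1|+\sum_{i=1}^{j-1}|x_{i+1}-x_i|<2j\pi$. Next I would run the symmetric argument from the right endpoint, using $x_j=x_k-\sum_{i=j}^{k-1}(x_{i+1}-x_i)$, to obtain $|x_j|<2(k-j+1)\pi$. Taking the minimum of the two estimates yields $|x_j|<2\min\{j,\,k-j+1\}\pi$ uniformly in $j\in\{1,\ldots,k\}$.

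It then remains to observe the elementary identity $\max_{1\leq j\leq k}\min\{j,\,k-j+1\}=\lceil k/2\rceil$, which follows from a quick case distinction on the parity of $k$ (the maximum is attained at $j=(k+1)/2$ when $k$ is odd, and at $j=k/2$ or $j=k/2+1$ when $k$ is even). Plugging this in gives $\max_j|x_j|<2\lceil k/2\rceil\pi$, the desired contradiction. I do not expect any substantive obstacle: the lemma is a purely combinatorial/metric fact and the argument above is essentially complete up to routine bookkeeping. Its role in the paper is presumably to ensure that if the parameter vector $x\in\mathbb{R}^{n_{\mQ}}$ leaves a box of radius $\sim\lceil n_{\mQ}/2\rceil\cdot 2\pi$, then either an endpoint coordinate or a consecutive difference must be $\geq 2\pi$, so that at least one full loop (in the sense of the previous construction involving $\gamma^{[t_0\#(2n)]}$) can be slipped out and absorbed into the homotopy in $\mathcal{I}(\mI,\mQ)$.
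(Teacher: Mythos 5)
Your proof is correct and rests on the same core idea as the paper's: a telescoping triangle inequality from an endpoint, counting terms, and the observation that $\lceil k/2\rceil$ is the relevant bound. The only difference is presentational — you argue by contrapositive and get the uniform bound $|x_j|<2\min\{j,k-j+1\}\pi$, which avoids the paper's case split on the location of the maximizer $m$ and handles the small-$k$ cases automatically rather than by fiat.
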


\begin{proof} For $k=1,2$, it is obvious. Now suppose that $k\geq 3$. Let $m\in\{1,2,\ldots,k\}$ satisfy $|x_m|=\max\big\{|x_i|;i\in\{1,2,\ldots,k\}\big\}$. We first consider the case that $m\leq\left\lceil\frac{k}{2}\right\rceil$. By triangular inequality:
$$ \max\{|x_i|\}\leq |x_1| + \sum_{i=1}^{\left\lceil\frac{k}{2}\right\rceil - 1}|x_i-x_{i+1}|.$$
Note that the left-hand side is greater or equal to $ 2\left\lceil\frac{k}{2}\right\rceil\pi$, and the right-hand side has exactly $ \left\lceil\frac{k}{2}\right\rceil$ non-negative terms. This implies $|x_1|\geq 2\pi$ or $|x_i-x_{i+1}|\geq 2\pi$ for some $i\in\left\{1,2,\ldots,\left\lceil\frac{k}{2}\right\rceil-1\right\}$. 

The case $m\geq\left\lceil\frac{k}{2}\right\rceil$ is analogous. This concludes that $|x_k|\geq 2\pi$ or $|x_i-x_{i+1}|$ for some $i\in\left\{\left\lfloor\frac{k}{2}\right\rfloor+1,\left\lfloor\frac{k}{2}\right\rfloor+2 , \ldots , k-1\right\}$.
\end{proof}

In the previous subsection,
 $$\alpha=\bar{F}(x)=\bigoplus_{i=0}^{n_\mQ}\alpha_i .$$
  Then if $x$ is such that $\max\{|x_i|\}\geq 2\left\lceil\frac{n_\mQ}{2}\right\rceil$, applying Lemma \ref{simplel} (with $k=n_\mQ$), we obtain one of the following cases:
 \begin{enumerate}
 \item If $|x_1|\geq 2\pi$, then $\alpha_0 $ has a segment of constant curvature with radius equal to $\rhot$ and with length of that segment greater than $\pi\sin\rhot$. 
 \item If $|x_i-x_{i+1}|\geq 2\pi$, then $\alpha_i $ has a segment of constant curvature with radius equal to $(2i+1)\rhot$ and length greater than $\pi\sin \left((2i+1)\rhot\right)$. 
 \item If $|x_{n_\mQ}|\geq 2\pi$, then $\alpha_{n_\mQ}$ has a segment of constant curvature with radius equal to $(2k+1)\rhot$ and length greater than $\pi\sin \left((2n_\mQ+1)\rhot\right)$. 
\end{enumerate}  

This follows directly from Construction \ref{propsegm}. For each of 3 cases above we will add a huge number of small loops on that part of the curve without changing the other parts of the curve. Now we present a construction to explain how these loops are added:

\begin{construction} Given a real number $r\in[\rho_0,\pi-\rho_0]$, consider an arc of circle of radius $r$ with angle $\theta\geq\pi$. If $r\in\left[\rhot,\frac{\pi}{2}\right]$, we add loops by following the process described in Figure \ref{fig:loop1}.
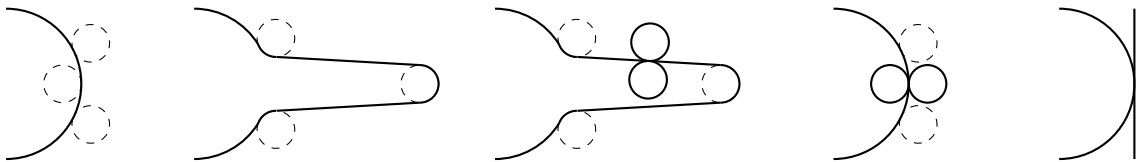
\begin{figure}
\centering
\begin{tikzpicture}
\begin{scope}[xshift=-4cm,scale=.5]
\draw[thick] (0,0) ++(-90:2) arc (-90:90:2) node[pos=.35](a){} node[pos=.5](c){} node[pos=.65](b){};
\draw[dashed] (a) arc (-200:160:.5);
\draw[dashed] (b) arc (-160:200:.5);
\draw[dashed] (c) arc (0:360:.5);
\end{scope}
\begin{scope}[xshift=-1.5cm,scale=.5]
\draw[thick] (0,0) ++(-90:2) arc (-90:-31.5:2) node[pos=1](a){};
\draw[thick] (0,0) ++(31.5:2) arc (31.5:90:2) node[pos=0](b){};
\draw[dashed] (a) arc (-200:160:.5) node[pos=.8](f){};
\draw[thick] (a.center) arc (-200:-270:.5);
\draw[dashed] (b) arc (-160:200:.5) node[pos=.2](g){};
\draw[thick] (b.center) arc (-160:-90:.5);
\draw[dashed] (6,0) ++(0:.5) arc (0:360:.5) node[pos=.25](d){} node[pos=.75](e){};
\draw[thick] (6,0) ++(-90:.5) arc (-90:90:.5);
\draw[thick] (g.center) -- (d.center);
\draw[thick] (f.center) -- (e.center);
\end{scope}
\begin{scope}[xshift=2.5cm,scale=.5]
\draw[thick] (0,0) ++(-90:2) arc (-90:-31.5:2) node[pos=1](a){};
\draw[thick] (0,0) ++(31.5:2) arc (31.5:90:2) node[pos=0](b){};
\draw[dashed] (a) arc (-200:160:.5) node[pos=.8](f){};
\draw[thick] (a.center) arc (-200:-270:.5);
\draw[dashed] (b) arc (-160:200:.5) node[pos=.2](g){};
\draw[thick] (b.center) arc (-160:-90:.5);
\draw[dashed] (6,0) ++(0:.5) arc (0:360:.5) node[pos=.25](d){} node[pos=.75](e){};
\draw[thick] (6,0) ++(-90:.5) arc (-90:90:.5);
\draw[thick] (g.center) -- (d.center) node[pos=.5](h){};
\draw[thick] (f.center) -- (e.center) node[pos=.5](i){};
\draw[thick] (h) arc (-93:260:.5);
\draw[thick] (h) arc (87:440:.5);
\end{scope}
\begin{scope}[xshift=7cm,scale=.5]
\draw[thick] (0,0) ++(-90:2) arc (-90:90:2) node[pos=.35](a){} node[pos=.5](c){} node[pos=.65](b){};
\draw[dashed] (a) arc (-200:160:.5);
\draw[dashed] (b) arc (-160:200:.5);
\draw[thick] (1.5,0) circle (.5);
\draw[thick] (2.5,0) circle (.5);
\end{scope}
\begin{scope}[xshift=10cm,scale=.5]
\draw[thick] (0,0) ++(-90:2) arc (-90:90:2) node[pos=.35](a){} node[pos=.5](c){} node[pos=.65](b){};
\draw[thick] (2,2) -- (2,-2) ;
\end{scope}
\end{tikzpicture}
\caption{This figure describes how loops were added. Dashed circles have radius $\rhot$. In the image of left side, start with an arc of circle of radius greater or equal to $\rhot$. Pull the curve by using an rotation in sphere so that the curve will have sufficiently long arcs as in the image of center left. Then, in the center image, small loops were added on long arcs by the deformation of Figure \ref{fig:homotopyloop}. Finally, the curve is deformed back to the original position except for two loops that we added. Additionally, we enlarge the radius these two loops transforming them into great circles.}
\label{fig:loop1}
\end{figure}

Analogously, if $r\in \left(\frac{\pi}{2},\pi-\rhot\right]$, we just instead of pulling the curve to right, we pull the curve to the left as if it is a mirrored version of previous case.
\end{construction}

\begin{remark}\upshape
The construction above is just one of many choices that will satisfy our future needs, one may come to several other ways to add loops that will also work. After adding enough loops, in the next step for each curve, we spread the loops along the curve. So each curve would look like a phone wire, and finally we construct a homotopy of these curves into a single curve.
\end{remark}

\begin{figure}
\centering
\begin{tikzpicture}
\begin{scope}[scale=3]
    \pgfmathsetmacro{\CameraX}{sin(\ViewAzimuth)*cos(\ViewElevation)}
    \pgfmathsetmacro{\CameraY}{-cos(\ViewAzimuth)*cos(\ViewElevation)}
    \pgfmathsetmacro{\CameraZ}{sin(\ViewElevation)}
    \path[use as bounding box] (-1,-1) rectangle (1,1); 
    \begin{scope}
        \clip (0,0) circle (1);
        \begin{scope}[transform canvas={rotate=-20}]
            \shade [ball color=white] (0,0.5) ellipse (1.8 and 1.5);
        \end{scope}
    \end{scope}
    \begin{axis}[
        hide axis,
        view={\ViewAzimuth}{\ViewElevation},     
        every axis plot/.style={very thin},
        disabledatascaling,                      
        anchor=origin,                           
        viewport={\ViewAzimuth}{\ViewElevation}, 
    ]
        \addFGBGplot[domain=0:2*pi, samples=100, samples y=1] ({sin(deg(pi/3))*cos(deg(x))}, {sin(deg(pi/3))*sin(deg(x))}, {cos(deg(pi/3))});
        \addFGBGplot[domain=0:2*pi, samples=100, samples y=1] ({cos(deg(x))}, {sin(deg(x))}, 0);
        \addFGBGplot[domain=0:2*pi, samples=100, samples y=1] ({sin(deg(pi/3))*cos(deg(x))}, {sin(deg(pi/3))*sin(deg(x))}, {-cos(deg(pi/3))});
    \end{axis}
    \draw [decorate,decoration={brace,amplitude=5pt}]
(-1.2,-.95) -- (-1.2,-.55) node [black,midway,xshift=-0.2cm,anchor=east] {$\tilde{F}(U)$};
	\draw [decorate,decoration={brace,amplitude=5pt}]
(-1.2,-.45) -- (-1.2,-.05) node [black,midway,xshift=-0.2cm,anchor=east] {Adding Loops};
	\draw [decorate,decoration={brace,amplitude=5pt}]
(-1.2,.05) -- (-1.2,.45) node [black,midway,xshift=-0.2cm,anchor=east] {Spreading Loops};
	\draw [decorate,decoration={brace,amplitude=5pt}]
(-1.2,.55) -- (-1.2,.95) node [black,midway,xshift=-0.2cm,anchor=east] {Homotopy into a point};
	\draw[dashed] (-1.2,-1) -- (-.2,-1);
	\draw[dashed] (-1.2,-.5) -- (-.85,-.5);
	\draw[dashed] (-1.2,0) -- (-1,0);
	\draw[dashed] (-1.2,.5) -- (-.85,.5);
	\draw[dashed] (-1.2,1) -- (-.2,1);
\end{scope}
\end{tikzpicture}
\caption{General behavior of map $F:\mathbb{S}^{n_\mQ}\to\Lspace$.}
\label{fig:MapF}
\end{figure}

The following results are adapted directly from \cite{sald} of N. C. Saldanha. The following result corresponds to the Lemma 6.2 in this article.

\begin{lemma}\label{lem62} Let $K$ be a compact set, $\mQ\in\SO$ and $n\geq 1$ a integer. Let $t_0: K\to (0,1)$ and $f:K\to\Lspace$ be continuous functions. Then $f^{[t_0\# 2n]}$ and $f^{[t_0\#2(n+1)]}$ are homotopic, i.e., there exists $H:[0,1]\times K\to\Lspace$ with $H(0,p)=f^{[t_0\# 2n]}(p)$, $H(1,p)=f^{[t_0\#2(n+1)]}(p)$.
\end{lemma}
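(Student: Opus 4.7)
The plan is to reduce the proof to a local construction in a small time-neighborhood of $t_0(p)$ and then transport it continuously over $K$. The curves $f^{[t_0\#2n]}(p)$ and $f^{[t_0\#2(n+1)]}(p)$ coincide with $f(p)$ outside a small interval $[t_0(p)-2\epsilon,t_0(p)+2\epsilon]\subset(0,1)$; inside this interval they are the same attachment curves near the endpoints together with the great circle $\mathfrak{F}_{f(p)}(t_0(p))\cdot(\cos s,\sin s,0)$ traversed $2n$ and $2(n+1)$ times, respectively. Because $K$ is compact and $t_0$ takes values in the open interval $(0,1)$, a single $\epsilon>0$ can be chosen that works uniformly for all $p\in K$. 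Thus it is enough to construct, for each $p$, a continuous $1$-parameter family in $\Lspace$ interpolating between the $2n$-fold and the $2(n+1)$-fold traversal of a fixed great circle (with matching Frenet frames at the endpoints), and then to verify that this family varies continuously in~$p$.

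The core of the argument is the local deformation illustrated in Figure~\ref{fig:homotopyloop}, which shows how to add a pair of loops of opposite signed geodesic curvatures to any $C^1$ curve that contains a ``flat'' subarc of sufficient length (say, length greater than some constant depending only on $\rho_0$). Since the $2n$-fold traversal of a great circle is an arc of curvature $0$ whose length $4n\pi$ is much larger than this threshold as soon as $n\ge 1$, the hypothesis is satisfied on any subarc of the loop region. The deformation proceeds in three stages: first, pull the flat subarc to create a longer near-geodesic arc; second, insert there a pair of small circles of radius $r\in(\rho_0,\pi/2)$ with curvatures $\pm\cot(r)$, which lie strictly inside $(-\kappa_0,+\kappa_0)$ and therefore in $\Lspace$; third, continuously enlarge the radii of these two small circles until they become two additional great-circle traversals. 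This yields precisely $2(n+1)$ traversals, and every intermediate curve is $C^1$ and has geodesic curvature strictly within $(-\kappa_0,\kappa_0)$.

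Continuity in $p$ is then obtained by conjugating a single model homotopy (performed on the standard great circle in the $xy$-plane) by the continuous family of Frenet frames $\mathfrak{F}_{f(p)}(t_0(p))\in\SO$. Outside the deformation interval, $H(\lambda,p)$ is simply $f(p)$, reparametrized in the same way used to define $\gamma^{[t_0\# 2n]}$, and the matching of tangent vectors at the boundary of the loop region is preserved throughout by construction. The resulting map $H:[0,1]\times K\to\Lspace$ satisfies $H(0,p)=f^{[t_0\# 2n]}(p)$ and $H(1,p)=f^{[t_0\# 2(n+1)]}(p)$.

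The main obstacle I anticipate is the careful bookkeeping needed to keep the deformation of Figure~\ref{fig:homotopyloop} uniformly valid over $K$: one must choose the auxiliary radii, the location of the inserted loops within the $2n$-fold traversal, and the reparametrizations joining the new loops to the rest of the curve, all in a continuous and uniformly controlled way, while maintaining the strict bound $|\kappa|<\kappa_0$. Compactness of $K$, together with the fact that the $2n$-fold great-circle segment provides arbitrarily long flat subarcs, guarantees that all these choices can be made uniformly; the verification is essentially the same as in Saldanha's Lemma~6.2 of~\cite{sald}, and nothing new conceptually is required here beyond transporting that construction through the frame $\mathfrak{F}_{f(p)}(t_0(p))$.
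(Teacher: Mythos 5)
Your proof is correct and follows essentially the same approach as the paper: a local deformation near $t_0(p)$, rooted in the ``add a pair of loops'' move of Figure~\ref{fig:homotopyloop}/Figure~\ref{fig:loop1}, transported continuously over $K$ by conjugating a fixed model homotopy with the frames $\mathfrak{F}_{f(p)}(t_0(p))$. The one stylistic difference is that the paper phrases the argument as an induction on $n$ (base case $n=1$ via Figure~\ref{fig:loop1}, then apply it to $g=f^{[t_0\#2(n-1)]}$), while you argue directly for all $n$ by observing that the $2n$-fold great-circle loop already supplies an arbitrarily long geodesic subarc; the two are logically equivalent. A minor redundancy in your write-up: since the loop region is already a geodesic arc of length $4n\pi$, the first stage (``pull the flat subarc to create a longer near-geodesic arc'') is unnecessary in this setting — you can insert the pair of opposite-curvature circles directly and grow their radii from just above $\rho_0$ to $\pi/2$ — but including it does no harm.
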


\begin{proof} For $n=1$, we use the deformation described in Figure \ref{fig:loop1} on one of loops on $f^{[t_0\# 2]}$. This defines a homotopy between $f^{[t_0\# 2]}$ and $f^{[t_0\# 4]}$. For general case, consider $g=(f^{[t_0\# 2(n-1)]})$. By the previous case, $g^{[t_0\# 2]}$ is homotopic to $g^{[t_0\# 4]}$. This implies that $f^{[t_0\# 2n]}$ and $f^{[t_0\#2(n+1)]}$ are homotopic.
\end{proof}

The following lemma is a direct adaptation of Lemma 6.3 in \cite{sald}.
\begin{lemma}\label{lemspre} Let $K$ be a compact set, $f:K\to\Lspace$ and $t_0:K\to (0,1)$ continuous maps. Then, for a sufficiently large $n$, the function $f^{[\flat(2n)]}$ is homotopic to $f^{[t_0\#(2n)]}$, i.e., there exists an application $H:[0,1]\times K\to\Lspace$ such that $H(0,\cdot)=f^{[\flat(2n)]}$ and $H(1,\cdot)=f^{[t_0\#(2n)]}$.
\end{lemma}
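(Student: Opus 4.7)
The plan is to construct the homotopy in two stages: first a \emph{spreading} stage that moves the $2n$ loops from the single parameter $t_0(p)$ to the evenly-spaced parameters $t_0,t_1,\ldots,t_n$ appearing in the definition of $f^{[\#(2n)]}$, and second a \emph{flattening} stage that replaces each piece of the original curve $f(p)$ lying between two consecutive loop insertion points by the length-minimizing CSC curve $\beta_j$ used in Equation (\ref{eqflat1}). Concatenating these two homotopies will produce the desired $H$ from $f^{[t_0\#(2n)]}$ to $f^{[\flat(2n)]}$.

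For the spreading stage, I observe that a loop, as defined here, is simply a great circle $\zeta(s)=(\cos s,\sin s, 0)$ traversed once using the Frenet frame at the insertion point; in particular it has geodesic curvature identically $0$ and its insertion preserves the Frenet frame of $f(p)$ at that point. Hence, given two nearby parameters $s<s'$ in $(0,1)$, the operation $\gamma\mapsto \gamma^{[s\#2]}$ and $\gamma\mapsto\gamma^{[s'\#2]}$ differ only by translating the loop along the arc $\gamma|_{[s,s']}$, and the explicit reparametrization formulas in the definition already give a continuous family connecting them in $\Lspace$. I will iterate this sliding to push loops one at a time from $t_0(p)$ to the positions $t_0,\ldots,t_n$, with the number at each $t_j$ matching the pattern $1,2,2,\ldots,2,1$. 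Continuity in $p\in K$ is automatic because all operations depend continuously on $p$ through $f$ and $t_0$, and the deformation stays inside $\Lspace$ since the $C^1$-modifications near each loop only insert short arcs of the great circle.

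For the flattening stage, I will use that for $n$ sufficiently large each interval $[t_{j,0},t_{j,1}]$ is short, so that by compactness of $K$ and continuity of $f$ the Frenet frames $\mathfrak{F}_{f(p)}(t_{j,0})$ and $\mathfrak{F}_{f(p)}(t_{j,1})$ are uniformly close to one another. Hence, by Corollaries \ref{cor15} and \ref{cor16} applied locally (shrinking to the relevant frames), the length-minimizing element $\beta_j(p)$ of $\bar{\mathcal{L}}_{\rhot}\!\left(\mathfrak{F}_{f(p)}(t_{j,0}),\mathfrak{F}_{f(p)}(t_{j,1})\right)$ is a unique type CSC curve, it lies in $\Lspace$ since $\cot\rhot<\kappa_0$, and it depends continuously on $p$. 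In a small neighborhood of $\beta_j(p)$ the space $\mathcal{L}_{\rho_0}\!\left(\mathfrak{F}_{f(p)}(t_{j,0}),\mathfrak{F}_{f(p)}(t_{j,1})\right)$ is contractible to $\beta_j(p)$ (e.g.\ via a linear interpolation of curvature together with a reparametrization of length, or via Saldanha--Zühlke's contractibility of such ``narrow'' spaces). I will use this local contractibility to define a continuous homotopy, simultaneously on each interval $[t_{j,0},t_{j,1}]$ and continuously in $p$, replacing $f(p)|_{[t_{j,0},t_{j,1}]}$ by $\beta_j(p)$ while keeping the loops and the pieces outside $\bigcup_j(t_{j,0},t_{j,1})$ fixed. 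The output of this homotopy is exactly $f^{[\flat(2n)]}(p)$.

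The main obstacle is selecting $n$ uniformly in $p\in K$: we need each segment $f(p)|_{[t_{j,0},t_{j,1}]}$ to lie in the domain where the CSC length-minimizer of $\bar{\mathcal{L}}_{\rhot}$ is unique, type CSC, and continuously varying, and we need the segment itself to lie inside the contractible neighborhood around this length-minimizer. Both conditions amount to controlling the $C^1$-oscillation of $f(p)$ on short subintervals, which by uniform continuity of $f$ on the compact $K$ can be achieved by taking $n$ large enough depending only on $f$ and the gap $\rhot-\rho_0$ chosen earlier. Once this uniform choice of $n$ is fixed, the two homotopies above concatenate to give the required $H:[0,1]\times K\to\Lspace$, completing the proof.
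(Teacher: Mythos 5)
Your two-stage decomposition (spread the loops, then flatten each sub-arc to a CSC curve) matches the paper's structure, and your first stage is essentially the same idea: the paper's homotopy $H_1(s,p) = (f(p))^{[\tilde{t}_0(s)\# 1;\ldots;\tilde{t}_n(s)\# 1]}$ with $\tilde{t}_j(s) = \tfrac{sj}{n}+(1-s)t_0$ is precisely the loop-sliding you describe, done all at once rather than one loop at a time, but this is cosmetic.

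The gap is in your flattening stage. You assert that ``in a small neighborhood of $\beta_j(p)$ the space $\mathcal{L}_{\rho_0}\big(\mathfrak{F}_{f(p)}(t_{j,0}),\mathfrak{F}_{f(p)}(t_{j,1})\big)$ is contractible to $\beta_j(p)$'' and wave at a curvature/length interpolation or a citation to Saldanha--Zühlke. Neither justification is adequate as stated. Linear interpolation of curvature with a compensating reparametrization does not obviously hit the prescribed endpoint Frenet frame throughout the homotopy: the terminal frame is a nonlinear functional of the curvature profile and the length, so the naive interpolation generically leaves the space $\mathcal{L}_{\rho_0}(\cdot,\cdot)$. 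The Saldanha--Zühlke contractibility results are statements about global components (being $\mathrm{SO}_3\times\mathbb{E}$) under hypotheses on $\lfloor\pi/(\rho_1-\rho_2)\rfloor$, not an off-the-shelf ``local contractibility around an arbitrary curve'' fact. Similarly, Corollaries \ref{cor15} and \ref{cor16} are phrased for the fixed $\mQ$ of the main theorem, not for arbitrary nearby pairs of frames; the continuity/uniqueness you need for $\beta_j(p)$ should instead be extracted from the Ayala--Rubinstein-style argument used elsewhere in the paper for the continuity of $\gamma_{0,\rhot}(\mQ_i,\mQ_{i+1})$.

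The paper avoids all of this with an explicit construction: after conjugating by $\tilde{\mQ}_j(p)=\big(\mathfrak{F}_{f(p)}(t_{j,\frac12})\big)^{-1}$, for $n$ large both the original sub-arc and $\beta_j$ become \emph{graphs} over the first coordinate $x$ (the $x$-parameter is an increasing diffeomorphism of $[t_{j,0},t_{j,1}]$ onto roughly $[-\tfrac12,\tfrac12]$). Contractibility of the space of increasing diffeomorphisms lets one first match the $x$-parametrization of the two arcs, and then the homotopy is a convex combination in $\mathbb{R}^3$ followed by projection to $\mathbb{S}^2$. Because the arcs are $C^1$-close graphs and the curvature bound is open, this interpolation visibly stays inside $\Lspace$ and keeps the endpoint frames fixed, which is exactly what your abstract contractibility claim was supposed to deliver but does not prove. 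If you want to retain your higher-level language, you should substitute the graph argument (or prove the local contractibility you invoke) to close the gap.
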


\begin{proof} Notice that the functions $f^{[t_0\# (2n)]}$ and $f^{[\#(2n)]}$ are homotopic: the homotopy consists of merely rolling loops along the curve. More precisely, for $\tilde{t}_j(s) = \frac{sj}{n}+(1-s)t_0$, this homotopy is defined by
$$ H_1(s,p) = (f(p))^{[\tilde{t}_0(s)\# 1;\tilde{t}_1(s)\# 2;\ldots;\tilde{t}_{n-1}(s)\# 2;\tilde{t}_n(s)\# 1]} .$$

We next verify that, for sufficiently large $n$, the functions $f^{[\# (2n)]}$ and $f^{[\flat(2n)]}$ are homotopic. Let $\tilde{\mQ}_j(p)=(\mathfrak{F}_{f(p)}(t_{j,\frac{1}{2}}))^{-1}\in\SO$, where $t_{j,0}, t_{j,\frac{1}{2}}, t_{j,1}$ are as in the construction of $f^{[\flat(2n)]}$. We have
\begin{align*}
\tilde{\mQ}_j(p)\mathfrak{F}_{(f(p))^{[\flat(2n)]}}(t_{j,0}) & =  \tilde{\mQ}_j(p)\mathfrak{F}_{(f(p))^{[\#(2n)]}}(t_{j,0})  \\
\tilde{\mQ}_j(p)\mathfrak{F}_{(f(p))^{[\flat(2n)]}}(t_{j,1}) & =  \tilde{\mQ}_j(p)\mathfrak{F}_{(f(p))^{[\#(2n)]}}(t_{j,1})
\end{align*}
Thus, for sufficiently large $n$, the arcs
$$ \tilde{\mQ}_j(p)(f(p))^{[\flat(2n)]},\tilde{\mQ}_j(p)(f(p))^{[\#(2n)]}:[t_{j,0},t_{j,1}]\to\mathbb{S}^2 $$
are \emph{graphs}, in the sense that the first coordinate $x: [t_{j,0},t_{j,1}]\to [x_-,x_+]$ is an increasing diffeomorphism (with $x_\pm \approx \pm\frac{1}{2} $), and $y$ and $z$ can be considered functions of $x$. Since the space of increasing diffeomorphisms of an interval is contractible, we may construct a homotopy from $f^{[\#(2n)]}$ to a suitable reparametrization $f_1$ of $f^{[\#(2n)]}$ in each $[t_{j,0},t_{j,1}]$ for which the function $x$ above is the same as for $f^{[\flat(2n)]}$. We may then join $f_1$ and $f^{[\flat(2n)]}$ by performing a convex combination followed by projection to $\mathbb{S}^2$. We observe that if the curves $f(p)$ are in $\Lspace$, then both constructions above remain in $\Lspace$.
\end{proof}

The following lemma, which is a direct adaptation of Lemma 6.6 in \cite{sald}, guarantees the continuity of the choice on Step 1:
\begin{lemma}\label{lemcont} Let $\mQ\in\SO$. Let $K$ be a compact manifold and $f:K\to\Lspace$ a continuous map. Assume that the following three properties are satisfied:
\begin{enumerate}
\item $t_0\in (0,1)$ and $t_1,t_2,\ldots,t_n:K\to (0,1)$ are continuous functions with $t_0<t_1<\cdots <t_n$;
\item $K=\bigcup_{1\leq i\leq n}U_i$, where $U_i\subset K$ are open sets;
\item there exist continuous functions  $g_i:U_i\to\Lspace$ such that, for all $p\in U_i$, we have $f(p)=\big(g_i(p)\big)^{[t_i(p)\#2]}$.
\end{enumerate}
Then there exists $H:[0,1]\times K \to \Lspace$ with $H(0,p)=f(p)$, $H(1,p)=\big(f(p)\big)^{[t_0\# 2]}$.
\end{lemma}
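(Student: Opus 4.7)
The plan is to construct $H$ by combining, via induction on $n$ together with shrinking-of-cover arguments, local homotopies that each transfer the pair of loops sitting at $t_i(p)$ in $f(p)$ over to the fixed time $t_0$. The key inputs are Lemma \ref{lem62} (which says that adding a further pair of loops at a point where loops already exist is a homotopy in $\Lspace$) and the sliding-along-a-curve technique used in the proof of Lemma \ref{lemspre}.

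First I would construct, on each $U_i$ separately, a local homotopy $H_i\colon[0,1]\times U_i\to\Lspace$ satisfying $H_i(0,\cdot)=f|_{U_i}$ and $H_i(1,\cdot)=f^{[t_0\#2]}|_{U_i}$. On $U_i$ one has $f(p)=g_i(p)^{[t_i(p)\#2]}$, so two loops already live at time $t_i(p)$. In the first half of $H_i$ I apply the deformation of Figure \ref{fig:homotopyloop} and Lemma \ref{lem62} at time $t_i(p)$ to reach $g_i(p)^{[t_i(p)\#4]}$, i.e.\ four loops at $t_i(p)$. In the second half I hold one pair of loops in place at $t_i(p)$ and slide the other pair along the curve from $t_i(p)$ to the fixed time $t_0$, exactly as in the rolling-loops argument of Lemma \ref{lemspre}; this stays in $\Lspace$ because it only repositions already existing loops along an arc, without altering the curvature profile elsewhere. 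The outcome is $g_i(p)^{[t_0\#2;\,t_i(p)\#2]}$, which by unfolding the definition of $f(p)$ equals $f(p)^{[t_0\#2]}$.

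Next I would assemble $H_1,\ldots,H_n$ into a global $H$ by induction on $n$, working down through the ordering $t_n>t_{n-1}>\cdots>t_1$. For the base case $n=1$ there is nothing further to do. For the inductive step, by standard shrinking-of-cover on a compact manifold I can choose a compact $C_n\subset U_n$ and open $V_n$ with $C_n\subset V_n$ and $\overline{V_n}\subset U_n$ such that $\{U_1,\ldots,U_{n-1}\}$ still covers $K\setminus C_n$. The inductive hypothesis, applied to the restriction of $f$ to $K\setminus C_n$ with the cover $\{U_i\cap(K\setminus C_n)\}_{i<n}$ and the same $t_0,t_1,\ldots,t_{n-1}$, yields a homotopy $H'\colon[0,1]\times(K\setminus C_n)\to\Lspace$ from $f$ to $f^{[t_0\#2]}$. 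On $C_n$ I use $H_n$, and on the band $V_n\setminus C_n$ I interpolate between $H'$ and $H_n$ via a cutoff function: since both $H_n$ and $H'$ have the same two endpoints, the interpolation consists of doing each homotopy on a time subinterval controlled by the cutoff and then smoothly transitioning between the two paths in the function space, using the rolling-loops operation to continuously move the "extra" loops between positions $t_i(p)$ as the cutoff varies.

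The main obstacle is precisely the compatibility check on the band $V_n\setminus C_n$: the homotopies $H_n$ and $H'$ both travel from $f(p)$ to $f(p)^{[t_0\#2]}$ but follow different paths through $\Lspace$, and one must exhibit a concrete homotopy between these paths. This is handled by observing that the discrepancy between $H_n$ and $H'$ at intermediate times consists only of pairs of loops placed at different positions along the curve; the strict ordering $t_0<t_1(p)<\cdots<t_n(p)$ guarantees that these loop pairs never have to cross one another, so one can slide each pair independently from its position under $H'$ to its position under $H_n$, producing the desired interpolation. This is the step requiring the most care and is the place where the full force of hypothesis (1) is used.
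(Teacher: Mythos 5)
Your argument has a genuine gap at the patching step, and the gap is not incidental --- it is the crux. You build local homotopies $H_i$ on each $U_i$ and then try to glue them via a cutoff on the band $V_n\smallsetminus C_n$, which requires, for each $p$ in the band, a homotopy rel endpoints between the two paths $H_n(\cdot,p)$ and $H'(\cdot,p)$ in $\Lspace$, depending continuously on $p$. Your justification that the discrepancy at intermediate times is only ``pairs of loops placed at different positions'' does not hold: the first half of each local $H_i$ is the Figure \ref{fig:homotopyloop}/Lemma \ref{lem62} loop-\emph{creating} deformation, whose intermediate curves are not of the form ``base curve with loop pairs attached,'' and $H'$ is the output of a black-box inductive call, so you have no structural description of its intermediate stages to compare against. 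The ordering $t_0<t_1<\cdots<t_n$ does not by itself produce the required two-parameter family. There is also a secondary issue: you apply the inductive hypothesis to $f|_{K\smallsetminus C_n}$, but $K\smallsetminus C_n$ is open rather than a compact manifold, so the statement does not apply as written.

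The paper's proof avoids all of this by never producing two homotopies that have to be reconciled. In the inductive step it picks $W$ open with $\overline{W}\subset U_n$ and $K=W\cup\bigcup_{i<n}U_i$, a cutoff $u$ with $u\equiv 1$ on $W$ and $u\equiv 0$ off $U_n$, and slides the loop pair at $t_n(p)$ partway (controlled by $u$) toward $t_{n-1}(p)$, setting $H_n(s,p)=g_n(p)^{[((1-u(p)s)t_n(p)+u(p)s\,t_{n-1}(p))\#2]}$ on $U_n$ and $H_n(s,p)=f(p)$ off $U_n$. The resulting $\bar f=H_n(1,\cdot)$ is then covered by $U_1,\dots,U_{n-2},U_{n-1}\cup W$ with the unchanged $t_0<\cdots<t_{n-1}$, so the inductive hypothesis applies \emph{directly to $\bar f$ on the same compact $K$}, and the desired homotopy for $f$ is obtained by concatenating $H_n$, the inductive homotopy for $\bar f$, and the loop-augmented reverse of $H_n$. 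The cutoff is used to make a single global slide well defined, not to interpolate between two already-constructed homotopies --- that is the key structural difference, and it is what makes the compatibility question you struggle with simply never arise.
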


\begin{proof}
We proceed by induction on $n$. For $n=1$ we have $U_1=K$ and therefore $f=g_1^{[t_1\# 2]}$. The conclusion follows from the Lemma \ref{lem62}. Assume now that $n>1$. Let $W\subset U_n$ be an open set whose closure is contained in $U_n$ and such that $K=W\cup \bigcup_{1\leq i\leq n-1}U_i$. We now slide the loop in the position $t_n$ to the position $t_{n-1}$ in $W$, allowing for the loop to stop elsewhere for $p\in U_n\smallsetminus W$. More precisely, let $u: K \to [0,1]$ be a continuous function with $u(p)=1$ for $p\in W$ and $u(p) =0$ for $p\not\in U_n$. Define $H_n:[0,1]\times K\to\Lspace$ by
$$ H_n(s,p)=\left\{\begin{array}{ll} f(p), \quad & p\not\in U_n\\
g_n(p)^{[((1-u(p)s)t_n(p)+u(p)st_{n-1}(p))\# 2]}, \quad & p \in U_n
\end{array}\right.$$
Let $ \bar{f}(p)=H_n(1,p)$, $H_n$ defines a homotopy between $\bar{f}$ and $f$. Let $\bar{U}_i=U_i$ for $i<n-1$ and $\bar{U}_{n-1}=U_{n-1}\cup W$; the hypothesis of the Lemma apply to $\bar{f}$ with a smaller value of $n$ and therefore $\bar{f}$ is homotopic to $\bar{f}^{[t_0\# 2]}$. Therefore, so is $f$. 
\end{proof}

Here we give an explicit construction of $F$.

\vspace{.7em}\noindent\textbf{Step 1:} Consider the ball $B_R(0)\in\mathbb{R}^{n_\mQ}$, with $R=2\left\lceil\frac{n_\mQ}{2}\right\rceil\pi$, and take the boundary $\Theta_1=\partial B_R(0)$ of the ball which is a sphere of dimension $n_\mQ-1$. By the construction given in Lemma \ref{segment}, every curve in $\Theta_1$ has at least one arc of circle with radius $r$ in the interval $\big[\rhot,(2k+1)\rhot\big]$ with length greater than $\pi\sin r$. 

Define $f:\Theta_1\to\Lspace$ as $\bar{F}$ with two loops added to each of its long arcs. To preserve the continuity, for each arc that is very close to become a long arc draw something intermediary as shown in the Figure \ref{fig:homotopyloop}. There is a homotopy as below:
$$\bar{F}_2:\Theta_1\times[0,1]\to\Lspace ,$$
where $\bar{F}_2(\cdot,0)=\bar{F}(\cdot)$ and $\bar{F}_2(\cdot,1)=f(\cdot)$. 

\vspace{.7em}\noindent\textbf{Step 2:}  We look more carefully into the construction of $\bar{F}$ (see Figure \ref{fig:auxcurves2}). There are:
\begin{enumerate}
\item $n_\mQ+1$ open sets $U_i\in\Theta_1$, $i\in\{0,1,\ldots,n_\mQ\}$ corresponding to curves that have at least one arc of circle with radius $r$ in the interval $\big[\rhot,(2k+1)\rhot\big]$ with length greater than $\pi\sin r$ at $\alpha_i$. As seen above, $\bigcup_i U_i=\Theta_1$.
\item $n_\mQ+1$ continuous functions $t_i:\Theta_1\to (0,1)$, $i\in\{0,1,\ldots,n_\mQ\}$ corresponding to the precise parameter of the curve $\gamma= \bar{F}(p)$ in which we add loops to each of long arcs of $\alpha_i$ (when it is available). Since arcs were added in $\alpha_i$ and $\gamma$ is concatenation of $\alpha_i$'s, it is clear that $t_0<t_1<t_2<\ldots<t_{n_\mQ}$.
\item Since $t_0:\Theta_1\to (0,1)$ is continuous and $\Theta_1$ is compact, there exists a constant $t_{-1}<t_0(p)$, for all $p\in\Theta_1$ .
\item Define $g_i:U_i\to\Lspace $ as $\bar{F}\vert_{U_i}$.
\end{enumerate}

Applying Lemma \ref{lemcont} for $K=\Theta_1$ and $f:\Theta_1\to\Lspace$ we obtain the following homotopy. We obtain:
$$\bar{F}_3:\Theta_1\times[1,2]\to\Lspace ,$$
where $\bar{F}_3(\cdot,1)=f(\cdot)=\bar{F}_2(\cdot,1)$, and $\bar{F}_3(\cdot,2)=f(\cdot)^{[t_{-1}\# 2]}$ is the same curve with at least two loops added at $t_{-1}$.

\vspace{.7em}\noindent\textbf{Step 3:} Finally we prove and use the following proposition, which also is a direct adaptation of Proposition 6.4 in \cite{sald} to obtain:
$$ \bar{F}_4:\Theta_1\times[2,3]\to\Lspace ,$$
where $\bar{F}_4(\cdot,2)=\bar{F}_3(\cdot,2)$, and $\bar{F}_4(a_1,3)=\bar{F}_4(a_2,3)=\tilde{\gamma}$, for all $a_1,a_2\in\Theta_1$.

\begin{proposition}\label{prop64} Let $n$ be a positive integer. Let $K$ be a compact set and let $f:K\to \Lspace\subset \mathcal{I}(\mI,\mQ)$ be a continuous function. Then $f$ is homotopic to a constant in $\mathcal{I}(\mI,\mQ)$ if and only if $f^{[t_0\# 2n]}$ is homotopic to a constant in $\Lspace$.
\end{proposition}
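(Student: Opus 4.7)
The implication $(\Leftarrow)$ is immediate: if $f^{[t_0\# 2n]}$ is null-homotopic in $\Lspace$, then composing with the inclusion $\Lspace\hookrightarrow\mathcal{I}(\mI,\mQ)$ shows that $f^{[t_0\# 2n]}$ is null-homotopic in $\mathcal{I}(\mI,\mQ)$; and Lemma \ref{lem61} gives $f\simeq f^{[t_0\# 2n]}$ in $\mathcal{I}(\mI,\mQ)$, so $f$ itself is null-homotopic there.

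For the harder direction $(\Rightarrow)$, note first that by iterating Lemma \ref{lem62} the statement ``$f^{[t_0\# 2n]}$ is null-homotopic in $\Lspace$'' is independent of the positive integer $n$, so I am free to enlarge $n$ whenever convenient. By Lemma \ref{lemspre}, for sufficiently large $n$ the map $f^{[t_0\# 2n]}$ is moreover homotopic in $\Lspace$ to the spread-loop version $f^{[\flat(2n)]}$; hence it is enough to null-homotope $f^{[\flat(2n)]}$ in $\Lspace$. Fix a null-homotopy $H\colon[0,1]\times K\to\mathcal{I}(\mI,\mQ)$ with $H(0,\cdot)=f$ and $H(1,\cdot)\equiv\gamma_\ast$ constant. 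By uniform continuity on the compact set $[0,1]\times K$ with respect to the metric $\bar{d}$, choose a partition $0=s_0<s_1<\cdots<s_N=1$ so that for every $p\in K$ and every $i$, $H(s_i,p)$ and $H(s_{i+1},p)$ are as $C^0$-close as the local argument below requires; then take $n$ large (depending on $N$ and this modulus) and define $H_i\colon K\to\Lspace$ by $H_i(p)=H(s_i,p)^{[\flat(2n)]}$.

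The core of the argument is to construct, for each $i$, a homotopy in $\Lspace$ between $H_i$ and $H_{i+1}$. On each of the $2n$ short sub-intervals between consecutive loops of the spread-loop curve, the curve is a length-minimizing CSC arc whose endpoint frames depend continuously on $(s,p)$; a small $C^0$ perturbation of $H(s,p)$ induces a small frame perturbation, which, by the continuous dependence of the CSC construction established in Lemma \ref{segment} (together with Corollaries \ref{cor15} and \ref{cor16}), yields a one-parameter deformation of the arc strictly inside $\Lspace$. Any residual frame mismatch at the loop junctions can be absorbed by reparametrizing the winding of the two flanking loops, exactly as in the proof of Lemma \ref{lem62}. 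Concatenating these homotopies over $i=0,\ldots,N-1$ produces a homotopy in $\Lspace$ from $f^{[\flat(2n)]}$ to $\gamma_\ast^{[\flat(2n)]}$. The target is a constant curve decorated with $2n$ loops, which is null-homotopic in $\Lspace$ by sliding the loops to a single parameter value and then shrinking them pairwise (Lemma \ref{lem62} applied in reverse).

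The principal obstacle is the quantitative control: $n$ must be chosen uniformly in $(s,p)$ so that every local CSC replacement remains \emph{strictly} inside $(-\kappa_0,\kappa_0)$, not merely in $[-\kappa_0,\kappa_0]$, and so that the winding available in each pair of flanking loops is always large enough to compensate the frame mismatch produced by the outer homotopy $H$. This is precisely where the open gap $\rhot>\rho_0$ built into the spread-loop construction (via Corollaries \ref{cor15}, \ref{cor16} and the choice of $\delta_0,\delta_1,\delta_2$) is essential: it provides the open room required for the h-principle-type replacement. Up to these quantitative adjustments, the scheme parallels Proposition 6.4 of \cite{sald}, adapted from closed curves to curves with prescribed non-trivial endpoint frames $\mI,\mQ$.
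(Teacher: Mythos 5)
Your proof captures the right idea but arrives at it by a needlessly circuitous and partly unjustified route. The paper's proof is much more direct: it observes that the spread-loop operation $[\flat(2m)]$ can be applied to the \emph{entire homotopy} $H\colon K\times[0,1]\to\mathcal{I}(\mI,\mQ)$ at once. The resulting $H^{[\flat(2m)]}$ is automatically $\Lspace$-valued (each replaced middle segment is a short CSC arc of radius $\rhot>\rho_0$, and the loops are great circles, so the curvature is bounded strictly away from $\pm\kappa_0$), and it is continuous for large enough $m$ because the CSC replacement on each $[t_{j,0},t_{j,1}]$ depends continuously on the endpoint frames $\mathfrak{F}_{H(s,p)}(t_{j,0})$, $\mathfrak{F}_{H(s,p)}(t_{j,1})$. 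This single observation gives a null-homotopy of $f^{[\flat(2m)]}=H^{[\flat(2m)]}(\cdot,0)$ in $\Lspace$; then Lemma \ref{lemspre} passes to $f^{[t_0\#(2m)]}$ and Lemma \ref{lem62} handles arbitrary $n$, exactly as you outline.

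Where you go astray is in discretizing $[0,1]$ into $s_0<\cdots<s_N$ and then trying to homotope $H(s_i,\cdot)^{[\flat(2n)]}$ to $H(s_{i+1},\cdot)^{[\flat(2n)]}$ step by step. The natural homotopy between these two maps \emph{is} $s\mapsto H(s,\cdot)^{[\flat(2n)]}$ for $s\in[s_i,s_{i+1}]$, and once you see that, the discretization is superfluous. More problematic is your appeal to ``absorbing residual frame mismatch at the loop junctions by reparametrizing the winding of the flanking loops.'' Within a single curve there is no mismatch: the $[\flat(2n)]$ construction by design keeps the original curve on $[0,1]\setminus\bigcup(t_{j,0},t_{j,1})$ and replaces only the interiors, so the frames at the junction points agree automatically. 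And between two nearby curves $H(s,\cdot)$ and $H(s',\cdot)$ there is no mismatch to absorb either, since the replacement map is continuous in the endpoint frames. The winding-reparametrization argument is borrowed from a context where it is needed (sliding a loop, Lemma \ref{lem62}) and grafted onto a step where it is not; as stated it would need real justification, and the cleaner path avoids it entirely. In short: keep your $(\Leftarrow)$, keep the reduction to $[\flat(2n)]$ via Lemma \ref{lemspre}, but replace the whole discretization scheme by the single step ``$H^{[\flat(2m)]}$ is a continuous $\Lspace$-valued null-homotopy of $f^{[\flat(2m)]}$.''
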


\begin{proof} $(\Leftarrow)$ It is trivial. In $\mathcal{I}(\mI,\mQ)$, $f$ and $f^{[t_0\# 2n]}$ are homotopic.

\noindent $(\Rightarrow)$ Let $H:K\times [0,1]\to\mathcal{I}(\mI,\mQ)$ be a homotopy with $H(\cdot,0)=f$ and $H(\cdot,1)$ is a constant function. The image of $H^{[\flat(2m)]}$ is contained in $\Lspace$. For a sufficiently large number $m$, $H^{[\flat(2m)]}$ is also continuous. This implies that $f^{[\flat \# 2m]}(\cdot)=H^{[\flat (2m)]}(\cdot,0)$ is homotopic in $\Lspace$ to a constant. By Lemma \ref{lemspre}, $f^{[t_0\# (2m)]}$ is homotopic to $f^{[\flat \# 2m]}$ in $\Lspace$ and therefore the proposition is proved for large $n$. The general case now follows from Lemma \ref{lem62}.
\end{proof}

\vspace{.7em}\noindent\textbf{Step 4:} Now we concatenate $\bar{F}$, $\bar{F}_2$, $\bar{F}_3$, $\bar{F}_4$ to obtain $F:\mathbb{S}^{n_\mQ}\to\Lspace$. First, we divide the sphere into $\mathbb{S}^{n_\mQ}=\Theta_2\sqcup\Theta_3\sqcup\Theta_4\sqcup\Theta_5\sqcup \{(0,0,\ldots,1)\}$, where
\begin{align*}
\Theta_2 & = \left\{(b_1,b_2,\ldots,b_{n_\mQ},\cos \beta );\beta\in \left[-\pi,-\frac{3\pi}{4}\right)\right\}, \\
\Theta_3 & =  \left\{(b_1,b_2,\ldots,b_{n_\mQ},\cos \beta );\beta\in \left[-\frac{3\pi}{4},-\frac{\pi}{2}\right)\right\}, \\
\Theta_4 & =  \left\{(b_1,b_2,\ldots,b_{n_\mQ},\cos \beta );\beta\in \left[-\frac{\pi}{2},-\frac{\pi}{4}\right)\right\}, \\
\Theta_5 & =  \left\{(b_1,b_2,\ldots,b_{n_\mQ},\cos \beta );\beta\in \left[-\frac{\pi}{4},0\right)\right\}.
\end{align*}

Next, $F$ is defined as the following:
$$ F(a)=\left\{ \begin{array}{ll} \bar{F}\circ\varphi(a) \quad & a\in \Theta_2  \\
\bar{F}_2\circ\varphi(a) \quad & a\in \Theta_3  \\
\bar{F}_3\circ\varphi(a) \quad & a\in \Theta_4  \\
\bar{F}_4\circ\varphi(a) \quad & a\in \Theta_5  \\
\tilde{\gamma} \quad & a=(0,0,\ldots,0,1)
\end{array} \right. $$
where $\varphi:\mathbb{S}^{n_\mQ}\to (\Theta_1\times[0,3])$ is defined as $\varphi(0,0,\ldots,0,1)=0$ and:
$$\varphi(b_1,b_2,\ldots,b_{n_\mQ},\cos \beta ) = \left(\frac{(b_1,b_2,\ldots,b_{n_\mQ})}{|(b_1,b_2,\ldots,b_{n_\mQ})|}R,\frac{4\beta}{\pi}+4\right) \quad \text{for $\beta\in [-\pi,0)$}. $$

\subsection{Triviality of $F$ in $\mathcal{I}(\mQ)$}
Note that, by the conditions on page \pageref{condq1q2} for $\mQ$, each step may be proceeded so that the final map $F$ has the following property: there exists an open ball $U_\mQ=B_a(r)\subsetneq\mathbb{S}^2$, which depends on $\mQ$, satisfying:
$$ \img \big(F(p)\big)\subset B_a(r), \quad \forall p\in\mathbb{S}^{n_\mQ}. $$
We consider the stereographic projection $h:B_a(r)\to \mathbb{R}^2$ with center $a$. So $h\big(F(p)\big):[0,1]\to\mathbb{R}^2$ is a $C^1$ immersed curve with prescribed initial and final frames for each $p\in\mathbb{S}^{n_\mQ}$. Moreover this map defines a homeomorphism between immersed curves in $B_a(r)$ and $\mathbb{R}^2$.

Each component of the space of immersed curves with prescribed initial and final frames in $\mathbb{R}^2$ is known to be contractible, refer to the introduction and Theorem 4.1 of \cite{salzuh3}. This result is proven by S. Smale in \cite{smale}. Thus $h\big(F(\cdot)\big)$ is homotopically trivial in the space of immersed curves in $\mathbb{R}^2$. This guarantees the triviality of $(\boldsymbol{i}\circ F):\mathbb{S}^{n_\mQ}\to\mathcal{I}(\mQ)$, where $\boldsymbol{i}:\Lspace\to\mathcal{I}(\mQ)$ is the set inclusion map.

\newpage
\section{Definition of the map $G:\Lspace\to\mathbb{S}^{n_\mQ}$} \label{sec:mapG}
With the application $F:\mathbb{S}^{n_\mQ}\to\Lspace$ in hand, we need another application $G:\mathcal{L}_{\rho_0}(\mQ)\to\mathbb{S}^{n_\mQ}$ such that $G\circ F:\mathbb{S}^{n_\mQ}\to\mathbb{S}^{n_\mQ}$ has degree 1. To define $G$, we shall first define a very special contractible subset $\mathcal{C}_0\in\cLspace$, which contains the length-minimizing curve $\gamma_0$, with the property of uniqueness. The boundary $\partial\mathcal{C}_0$ consists of curves that are simultaneously ``graft-able'' and is homotopic to $\mathbb{S}^{n_\mQ-1}$. We first establish a map $\bar{G}:\mathcal{C}_0\to \bar{B}_{1}(0)\in\mathbb{R}^{n_\mQ}$ which can be easily extended into $G$.
To define such map $\bar{G}$, we need to carefully extract useful information for curves in $\mathcal{C}_0$. This information describes, roughly speaking, how many times the curve bends to left and right, and how much the curve goes ``up'' and ``down'' inside a region. The precise meaning of this information will be concretized in the subsequent text.

\subsection{Preliminary definitions}
We consider the points $p_1,p_2,q_1,q_2$ given by Equation (\ref{formulapq}). We recall that $\gamma\in\Lspace$ and $\gamma:J\to\mathbb{S}^2$, where $J$ is a closed interval in $\mathbb{R}$. Given a curve $\gamma\in\Lspace$, its unit tangent vector can be viewed as a map whose image lies in $\mathbb{S}^2$, that is $\vt_\gamma:J\to\mathbb{S}^2$. Let the set $\mathcal{C}\subset\Lspace$ be a subset containing all curves whose unit tangent vector is contained in a closed half-space. In other words, 
$$ \mathcal{C}=\left\{\gamma\in\Lspace ; \begin{array}{c} \text{there exists a } v\in\mathbb{S}^2 \text{ such that } \langle \vt_\gamma(s),v\rangle \geq 0 \text{ and} \\ \langle \gamma(s),e_2\rangle >0 \text{ for all }s \in J\end{array}\right\}. $$
We call the curves in the set $\mathcal{C}$ \emph{hemispheric} curves. Throughout this section, we assume that $\mQ\in\SO$ is such that the length-minimizing curve $\gamma_0$ is hemispheric, that is: $\gamma_0\in\mathcal{C}$. We consider the following situation: $\mQ\in\SO$ and $\rho_0$ in the definition of $\Lspace$ are such that the convex quadrilateral $\mathcal{Q}_{\mQ,1}$ on sphere formed by points $ p_1,q_1,q_2,p_2 $ has the property that the length-minimizing curve $\gamma_0$ lies inside this quadrilateral. We will show that for each hemispheric curve $\gamma\in\mathcal{C}$ there is a unique vector $v_\gamma$, depending continuously on $\gamma$, satisfying Condition (\ref{vgamma}) below. We consider the quadrilateral on sphere $\mathcal{Q}_{\mQ,2} = \{v\in\mathbb{S}^2 ;\text{$\langle v,p_i\rangle \leq 0$ and $\langle v,q_i\rangle \geq 0$ for $i=1,2$}\}$. For each $v\in\mathcal{Q}_{\mQ,2}$, we consider the value: 
$$m_{\gamma}(v)=\min\{\langle \vt_\gamma(s),v\rangle ; s\in J \} .$$
Now fix a $\gamma\in\mathcal{C}$, we take $v_\gamma\in\mathcal{Q}_{\mQ,2}$ as the vector such that:
\begin{equation}\label{vgamma}
m_\gamma(v_{\gamma})\geq m_\gamma(v) \quad \text{for all $v\in\mathcal{Q}_{\mQ,2}$.} 
\end{equation}
Intuitively speaking, $v_\gamma$ is the nearest point in $\mathcal{Q}_{\mQ,2}$ to the set $\vt_{\gamma}(J)$ (in sense of Hausdorff distance). The following proposition guarantees its uniqueness and continuity.
\begin{proposition}For each $\gamma\in\mathcal{C}$, such $v_{\gamma}$ satisfying Inequality (\ref{vgamma}) mentioned above is unique and depends continuously on $\gamma$.
\end{proposition}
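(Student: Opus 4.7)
I would split the proof into three standard steps: existence, uniqueness, and continuity of $\gamma\mapsto v_\gamma$.

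\emph{Existence.} The set $\mathcal{Q}_{\mQ,2}$ is compact, being the intersection of $\mathbb{S}^2$ with four closed half-spaces in $\mathbb{R}^3$. The function $(s,v)\mapsto\langle\vt_\gamma(s),v\rangle$ is jointly continuous on $J\times\mathcal{Q}_{\mQ,2}$ with $J$ compact, so $v\mapsto m_\gamma(v)$ is continuous, and a maximizer on $\mathcal{Q}_{\mQ,2}$ is attained.

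\emph{Uniqueness.} I would exploit that $m_\gamma$, as the pointwise infimum of a family of linear functions of $v\in\mathbb{R}^3$, is concave on $\mathbb{R}^3$, and that $\mathcal{Q}_{\mQ,2}$ is convex in the ambient $\mathbb{R}^3$. Suppose $v_1,v_2\in\mathcal{Q}_{\mQ,2}$ both realize $c^{\ast}:=\max m_\gamma$. Define the geodesic midpoint $w:=(v_1+v_2)/|v_1+v_2|$; by linear convexity of $\mathcal{Q}_{\mQ,2}$ in $\mathbb{R}^3$ followed by normalization, $w\in\mathcal{Q}_{\mQ,2}$. For each $s\in J$,
\begin{equation*}
\langle\vt_\gamma(s),w\rangle=\frac{\langle\vt_\gamma(s),v_1\rangle+\langle\vt_\gamma(s),v_2\rangle}{|v_1+v_2|}\;\geq\;\frac{2c^{\ast}}{|v_1+v_2|}.
\end{equation*}
If $v_1\neq v_2$ then $|v_1+v_2|<2$, and as long as $c^{\ast}>0$ this yields $m_\gamma(w)>c^{\ast}$, contradicting maximality; hence $v_1=v_2$. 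The positivity $c^{\ast}>0$ comes from $\gamma\in\mathcal{C}$: a witness $u\in\mathbb{S}^2$ of hemisphericity, together with compactness of $\vt_\gamma(J)$ and the standing hypotheses on $\mQ$ and $\rho_0$ (which force $\mathcal{Q}_{\mQ,2}$ to have non-empty intersection with the open hemisphere $\mathcal{H}_u$), produces a vector $v^{\ast}\in\mathcal{Q}_{\mQ,2}$ for which $m_\gamma(v^{\ast})>0$.

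\emph{Continuity.} If $\gamma_n\to\gamma$ in $\Lspace$ with its $C^1$ topology, then $\vt_{\gamma_n}\to\vt_\gamma$ uniformly on $J$, so $m_{\gamma_n}\to m_\gamma$ uniformly on the compact set $\mathcal{Q}_{\mQ,2}$. By compactness, any subsequence of $(v_{\gamma_n})$ has a further subsequence converging to some $v_\infty\in\mathcal{Q}_{\mQ,2}$; passing to the limit in $m_{\gamma_n}(v_{\gamma_n})\geq m_{\gamma_n}(v)$ gives $m_\gamma(v_\infty)\geq m_\gamma(v)$ for every $v\in\mathcal{Q}_{\mQ,2}$, so $v_\infty=v_\gamma$ by uniqueness, and hence the full sequence converges.

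\textbf{Main obstacle.} The genuinely delicate point is the positivity $c^{\ast}>0$ at every $\gamma\in\mathcal{C}$: the midpoint argument collapses on the boundary configurations where $\vt_\gamma(J)$ meets the equator of the hemisphere witnessing $\gamma\in\mathcal{C}$, and this is the one place where the geometric assumptions on $\mQ$, $\rho_0$, and the position of $\gamma_0$ inside $\mathcal{Q}_{\mQ,1}$ must be used rather than just the abstract concavity of $m_\gamma$.
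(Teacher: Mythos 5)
Your decomposition into existence, uniqueness (via averaging) and continuity (via compactness and uniqueness) is the same strategy the paper uses, and the core inequality
\begin{equation*}
m_\gamma\!\left(\tfrac{v_1+v_2}{|v_1+v_2|}\right)\ge\frac{1}{|v_1+v_2|}\,\frac{m_\gamma(v_1)+m_\gamma(v_2)}{2}
\end{equation*}
is the same one that drives the paper's uniqueness argument. Your continuity argument is also the paper's argument, merely phrased directly rather than by contradiction. So the approach is correct and essentially identical.

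There is, however, a genuine hole in your uniqueness step: when $v_1=-v_2$, the vector $v_1+v_2$ is zero and your geodesic midpoint $w$ is undefined, so the averaging argument says nothing. You implicitly assume $v_1\ne -v_2$ when you write $|v_1+v_2|<2$, but antipodal pairs in $\mathcal{Q}_{\mQ,2}$ are exactly the points $v$ orthogonal to all of $p_1,p_2,q_1,q_2$, and nothing in your argument rules them out. This is precisely the case the paper treats separately, and in fact it is the place where the hypothesis $\gamma\in\mathcal{C}$ does the heavy lifting: if $v_1=-v_2$ both maximize $m_\gamma\ge 0$, then $\langle\vt_\gamma(s),v_1\rangle\equiv 0$, forcing $\vt_\gamma(J)$ onto a great circle; then $\gamma$ is an arc of a circle of radius in $(\rho_0,\pi-\rho_0)$ with length $\ge\pi\sin r$, which is long enough that $\gamma$ must exit the half-space $\langle\cdot,e_2\rangle>0$, contradicting membership in $\mathcal{C}$. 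You correctly flag the positivity of $c^*$ as a delicate point, but you mislocate the geometric content: the hemisphericity and the curvature bound enter not so much to secure $c^*>0$ as to kill off this degenerate antipodal configuration. Add that case and your proof matches the paper's.
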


\begin{proof}[Proof] We start verifying the uniqueness. Suppose by contradiction that there exist $v_1$ and $v_2$, such that both satisfy Inequality (\ref{vgamma}). First we consider the case $v_1\neq\pm v_2$, take:
$$\tilde{v}=\frac{v_0}{|v_0|},\quad \text{where } v_0=\frac{v_1+v_2}{2}.$$
Then for all $p\in\vt_\gamma(J)$
$$\langle\tilde{v},p\rangle > \left\langle\frac{v_1+v_2}{2},p\right\rangle = \frac{1}{2}\big(\langle v_1,p\rangle + \langle v_2,p\rangle \big).$$
By taking minimum for $p\in \vt_\gamma(J)$ on both sides we get
$$m_\gamma(\tilde{v})>\frac{1}{2}\big(m_\gamma(v_1)+m_\gamma(v_2)\big)  ,$$
which contradicts the maximality of $v_1$.

For the case $v_1=-v_2$, since $\gamma\in\mathcal{C}$, we obtain:
$$\langle \vt_\gamma(s),v_1 \rangle \geq 0 \quad \text{and} \quad \langle \vt_\gamma(s),v_2 \rangle \geq 0 \quad \forall s\in J .$$
This implies that $\vt_\gamma(J)$ is contained in the great circle in the plane perpendicular to $v_1$. So we deduce that $\gamma$ is an arc of circle centered at $\pm v_1$, with radius $r\in (\rho_0,\pi-\rho_0)$ and length greater or equal to $\pi\sin r$. Thus $\langle \gamma(s),e_2\rangle< 0$ for some $s\in J$. This contradicts the fact that $\gamma\in\mathcal{C}$.

Now we discuss the continuous dependence of $v_\gamma$ on $\gamma$. Suppose that, by contradiction, for a pair $ (\gamma,v_\gamma)$, $\gamma\in\mathcal{C}$ there is a sequence of pairs $(\gamma_k,v_k)$, with $k\in\mathbb{N}$, and an $\epsilon>0$ such that $\gamma_k\in\mathcal{C}$, $v_k=v_{\gamma_k}$, $\lim_{k\to\infty}\gamma_k=\gamma$ and $d(v_\gamma,v_k)>\epsilon$. By compactness, we assume, without loss of generality, that the sequence $v_k$ converges to a limit $\tilde{v}\neq v_\gamma$. So
$$ \min_{s\in J} d(\vt_{\gamma_k}(s),v_\gamma)\leq \min_{s\in J} d(\vt_{\gamma_k},v_k) .$$
By taking $k\to\infty$ we obtain:
$$ \min_{s\in J} d(\vt_{\gamma}(s),v_\gamma)\leq \min_{s\in J} d(\vt_{\gamma},\tilde{v}) ,$$
which contradicts the uniqueness of $v_\gamma$.
\end{proof}

We consider these meridians with axis $v_\gamma$ passing through the points $p_1,p_2,q_1,q_2$ respectively. Let $\Theta_{1,\gamma}$ be the widest region containing $\gamma_0$ delimited by two of these meridians. Now we declare $\mathcal{C}_0$ as the following set:
$$ \mathcal{C}_0\coloneqq \{\gamma\in\Lspace ; \gamma(J)\subset \bar{B}_{\rho_0}(\Theta_{1,\gamma}) \text{ and $\gamma$ is Hemispheric.}\}. $$
We start by constructing a continuous map $\bar{G}:\mathcal{C}_0\to\mathbb{R}^{n_\mQ}$, which will satisfy $\bar{G}(\gamma)\geq R$ for some $R>0$ and all $\gamma\in\partial \mathcal{C}_0$. Then we put $G:\Lspace\to\mathbb{S}^{n_\mQ}$ as:
$$ G(\gamma)=\left\{\begin{array}{ll} p\circ \bar{G}(\gamma)  \quad & \text{if $\gamma\in\mathcal{C}_0$ and $\bar{G}(\gamma)<R$} \\
(0,0,1) \quad & \text{if $\bar{G}(\gamma)\geq R$ or $\gamma\in\Lspace\smallsetminus \mathcal{C}_0$}
\end{array}\right. ,$$
\noindent where $p$ is a homeomorphism map from the open ball $B_R(0)\subset\mathbb{R}^{n_\mQ}$ to $\mathbb{S}^{n_\mQ}\smallsetminus \{(0,0,1)\}$.
Hence the following subsections are dedicated to define the map $\bar{G}$. We will use the notation: $\sign :\mathbb{R}\to \{-1,0,+1\}$ with $\sign(x)=-1$ if $x<0$, $\sign(x)=0$ if $x=0$ and $\sign(x)=+1$ if $x>0$.

For each $\epsilon\in \left(0,\rho_0\right)$, our first step is to define a map $G_\epsilon:\Lspace\to\mathbb{R}^{n_\mQ}$. For suitable values of $\epsilon$ we will be able to use the map $G_\epsilon$ to construct the desired map $G$. Given a $\gamma\in\Lspace$, define the following two sets in $\mathbb{S}^2$:
\begin{equation}\label{defxi}
 \Xi_0(\epsilon,\gamma) \coloneqq \bar{B}_{\rho_0}(\Theta_{1,\gamma})\smallsetminus B_{\rho_0-\epsilon}(\Theta_{1,\gamma}) \quad \quad \Xi_1(\epsilon,\gamma) \coloneqq \bar{B}_{\frac{\pi}{2}}(v_\gamma)\smallsetminus B_{\frac{\pi}{2}-\epsilon}(v_\gamma) .
\end{equation}
\noindent Sometimes we omit $\epsilon$ by using notations $\Xi_0$ and $\Xi_1$ for both sets in Equation (\ref{defxi}) above.

\subsection{Extracting information based on the behavior of $\gamma\in\mathcal{C}_0$}

In this subsection we shall study the structure of intersections $\Xi_0\cap\gamma(I)$ and $\Xi_1\cap \vt_\gamma(I)$. From these intersections, for a suitable $\epsilon$, we shall construct a sequence $\big(y_1,\ldots ,y_{n_\mQ}\big)\in\mathbb{R}^{n_\mQ}$. This sequence will be used to construct the application $G$.
It follows from definition that the set $\Xi_0$ is symmetric by reflection in relation to a plane $\mathcal{P}$ passing through a $v_\gamma$-meridian and crosses the curve $\gamma_0$. We denote the upper and the lower parts of $\Xi_0$ in relation to $\mathcal{P}$ by $\Xi_0^+$ and $\Xi_0^-$, respectively. Also, we denote $\Xi_1^+$ and $\Xi_1^-$ the upper and the lower parts of $\Xi_1$ in relation to the plane $\mathcal{P}$. Furthermore we take $\Xi_0^+$ and $\Xi_1^+$ as both closed sets, by including the sections in the intersection of $\Xi_0$ and $\Xi_1$ with the plane $\mathcal{P}$. But later it will turn out this choice will not be important for our needs, because both the curve and its tangent vector will get nowhere close to $\partial\Xi_0^+\cap\partial\Xi_0^-$ and $\partial\Xi_1^+\cap\partial\Xi_1^-$ respectively. 

We use the notation $\left(\mathbb{R}^+\right)^\mathbb{N}$ to denote the space of sequences of non-negative numbers $(x_n)_{n\in\mathbb{N}}$, $x_k\geq 0$ for all $k\in\mathbb{N}$. Pick an $\epsilon\in [0,\epsilon_0)$, given a curve $\gamma\in\mathcal{C}_0$, we want to ``extract'' from the pair $(\epsilon,\gamma)$ a sequence $(x_n)_{n\in\mathbb{N}}\in (\mathbb{R}^+)^\mathbb{N}$ by the following 5 steps: 
\\[\cvspace]
\noindent\textbf{Step 1:} Consider two sets: $\mathcal{S}_1\coloneqq \Xi_0(\epsilon,\gamma)\cap \gamma$ and $\mathcal{S}_2\coloneqq \Xi_1(\epsilon,\gamma)\cap \vt_\gamma$. Also consider $J_1\coloneqq\{s\in J ; \gamma(s)\in \Xi_0\}$ $J_2\coloneqq\{s\in J ; \vt_\gamma(s)\in \Xi_1\}$.\footnote{Despite of use of $J_k$ to represent the subsets of $J$ here, $J_k$ are \emph{not} intervals in general.} Note that for $\epsilon$ sufficiently small, $J_1\cap J_2=\emptyset$ (taking $\epsilon <\frac{\rho_0}{8}$ is enough for that). 
\\[\cvspace]
\noindent\textbf{Step 2:} If both sets $\mathcal{S}_1$ and $\mathcal{S}_2$ are empty, we set $x_k=0$ for all $k\in\mathbb{N}$. So we finished defining the sequence for this particular case.
\\[\cvspace]
\noindent\textbf{Step 3:} If $\mathcal{S}_1$ or $\mathcal{S}_2$ is non-empty. We subdivide sets $J_1$ and $J_2$ into disjoint unions $J_1=J_1^+\sqcup J_1^-$ and $J_2=J_2^+\sqcup J_2^-$ by setting $J_1^+=\{s\in J;\gamma(s)\in\Xi_0^+\}$, $J_1^-=\{s\in J;\gamma(s)\in\Xi_0^-\}$, $J_2^+=\{s\in J;\gamma(s)\in\Xi_1^+\}$ and $J_2^-=\{s\in J;\gamma(s)\in\Xi_1^-\}$.
\\[\cvspace]
\noindent\textbf{Step 4:} \label{step4} Again, we subdivide these four sets into disjoint unions:
 $$J_1^+=\bigsqcup_{k\in\mathbb{N}} J_{1,k}^+ , \qquad J_1^-=\bigsqcup_{k\in\mathbb{N}} J_{1,k}^- , \qquad J_2^+=\bigsqcup_{k\in\mathbb{N}} J_{2,k}^+ , \qquad J_2^-=\bigsqcup_{k\in\mathbb{N}} J_{2,k}^- . $$
This subdivision may be done so that it satisfies the following 3 properties:
\begin{enumerate}
\item $J_{2,k}^+ < J_{1,k}^+ < J_{2,k}^- < J_{1,k}^- < J_{2,k+1}^+$ for all $k\in\mathbb{N}$. \footnote{For two disjoint subsets $J$ and $K$ of $\mathbb{R}$, we write $J<K$ when $a<b$ for all $a\in J$ and $b\in K$.}
\item If for some $k\in\mathbb{N}$, $k>0$, one of the following 4 cases occurs:
\begin{enumerate}
\item $J_{1,k}^+$, $J_{2,k}^-$, $J_{1,k}^-$ are all empty sets.
\item $J_{2,k}^-$, $J_{1,k}^-$, $J_{2,k+1}^+$ are all empty sets.
\item $J_{1,k}^-$, $J_{2,k+1}^+$, $J_{1,k+1}^+$ are all empty sets.
\item $J_{2,k+1}^+$, $J_{1,k+1}^+$, $J_{2,k+1}^-$ are all empty sets. 
\end{enumerate} 
Then for all integers $l$ such that $l> k$, the sets $J_{2,l}^+$, $J_{1,l}^+$, $J_{2,l}^-$, $J_{1,l}^-$ are all empty sets.
\item If one of the following 3 cases occurs:
\begin{enumerate}
\item $J_{2,0}^-$, $J_{1,0}^-$, $J_{2,1}^+$ are all empty sets.
\item $J_{1,0}^-$, $J_{2,1}^+$, $J_{1,1}^+$ are all empty sets.
\item $J_{2,1}^+$, $J_{1,1}^+$, $J_{2,1}^-$ are all empty sets. 
\end{enumerate} 
Then for all integers $l$ such that $l> 0$, the sets $J_{2,l}^+$, $J_{1,l}^+$, $J_{2,l}^-$, $J_{1,l}^-$ are all empty sets.
\end{enumerate} 
Intuitively, the property in Item (1) means that these sets are ordered in a strictly increasing fashion, and Items (2) and (3) say that the redundant empty sets compressed together so that non-empty sets have smallest indexes possible.
\\[\cvspace]
\noindent\textbf{Step 5:} For each $k\in\mathbb{N}$, denote by $\mathcal{A}_k^+$ the union of all closed regions delimited by $\gamma(J_{1,k}^+)$ and $\partial \Xi_0^+$ lying on the right of $\gamma$. Analogously, denote by $\mathcal{A}_k^-$ the union of all closed regions delimited by $\gamma(J_{1,k}^-)$ and $\partial \Xi_0^-$ lying on the left of $\gamma$. Lastly, for $J_{1,k}^+$ and $J_{1,k}^-$ empty, we set $\mathcal{A}_k^+ = \emptyset$. We define the sequence $(x_k)_{k\in\mathbb{N}}$:
\begin{equation} \label{eqextract}
 x_{4k}=\length \left(\vt_\gamma|_{J_{2,k}^+}\right) ,\enskip x_{4k+1}=\area \left(\mathcal{A}_k^+\right) ,\enskip x_{4k+2}=\length \left(\vt_\gamma|_{J_{2,k}^-}\right) ,\enskip x_{4k+3}=\area \left(\mathcal{A}_k^-\right).
\end{equation}
Keep in mind that the sequence $(x_k)_{k\in\mathbb{N}}$ depends on the value of pair $(\epsilon,\gamma)$. These formulas above will be crucial to define $G$, but before that we need to establish several properties. So we postpone the main construction to the next subsection. 

For such a sequence $(x_k)_{k\in\mathbb{N}}$, we define a kind of index for curves $\gamma$ in $\mathcal{C}_0$, we call it $\epsilon$-index of $\gamma$. Denote $\epsilon$-index by $\cindex_\epsilon : \mathcal{C}_0 \to \mathbb{N} $, defined as follows:
\begin{equation}\label{eqindex}
\cindex_\epsilon(\gamma) = \left\{ \begin{array}{ll}
\left\lceil\dfrac{k}{2}\right\rceil, & \text{if $x_k\neq 0$, $ x_l = 0 $ $\forall$ $l> k$ and $x_0$ or $x_1$ are non-zero.} \\[1.2em]
\left\lceil\dfrac{k}{2}\right\rceil -1, & \text{if $x_k\neq 0$, $ x_l=0 $ $\forall$ $l>k$ and $x_0=x_1=0$.} \\[1.2em]
0 , & \text{if $x_k=0$ for all $k\in\mathbb{N}$.}
\end{array}
\right. 
\end{equation}
\begin{figure}[htb]
\centering
\begin{tikzpicture}
 \node[anchor=south west,inner sep=0] at (0,0) {\includegraphics[width=8cm]{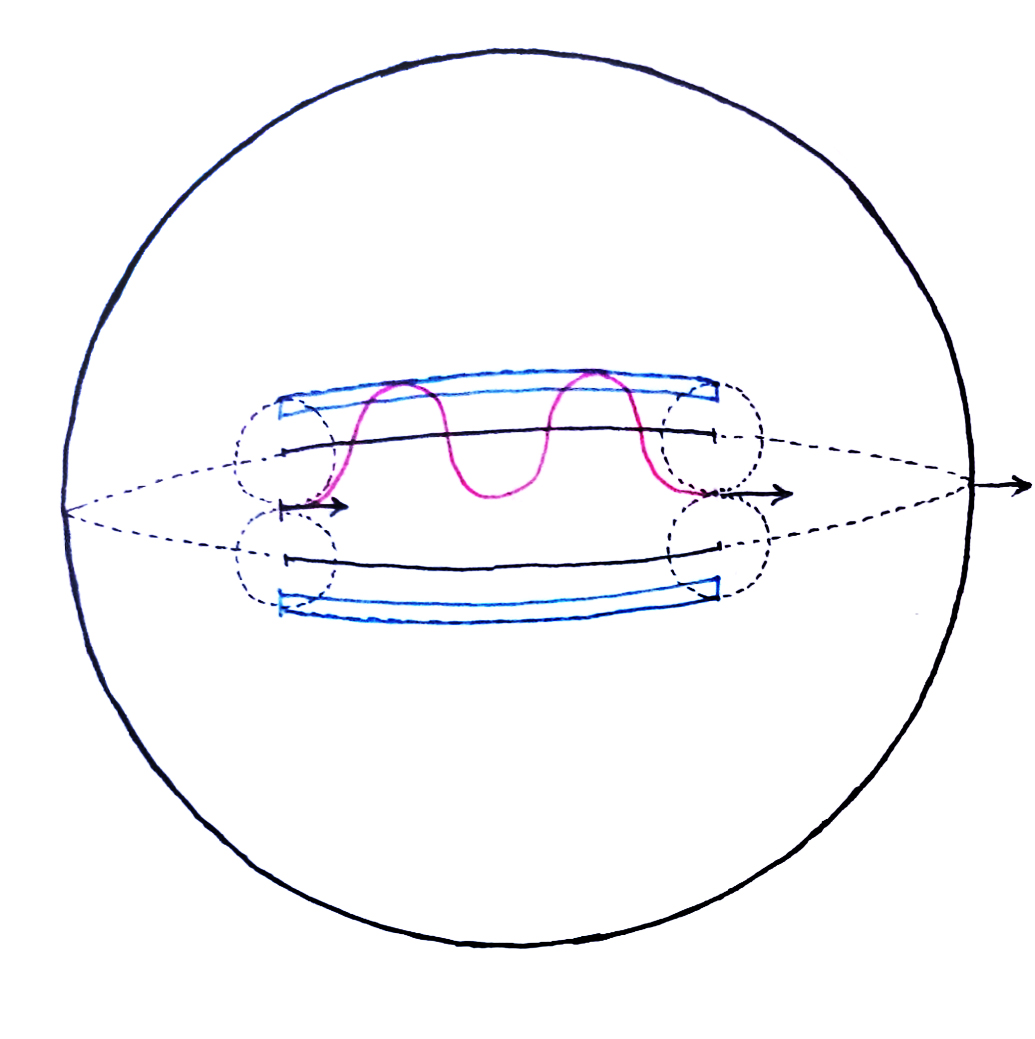}};
  \node[anchor=south west,inner sep=0] at (8.5,0) {\includegraphics[width=8cm]{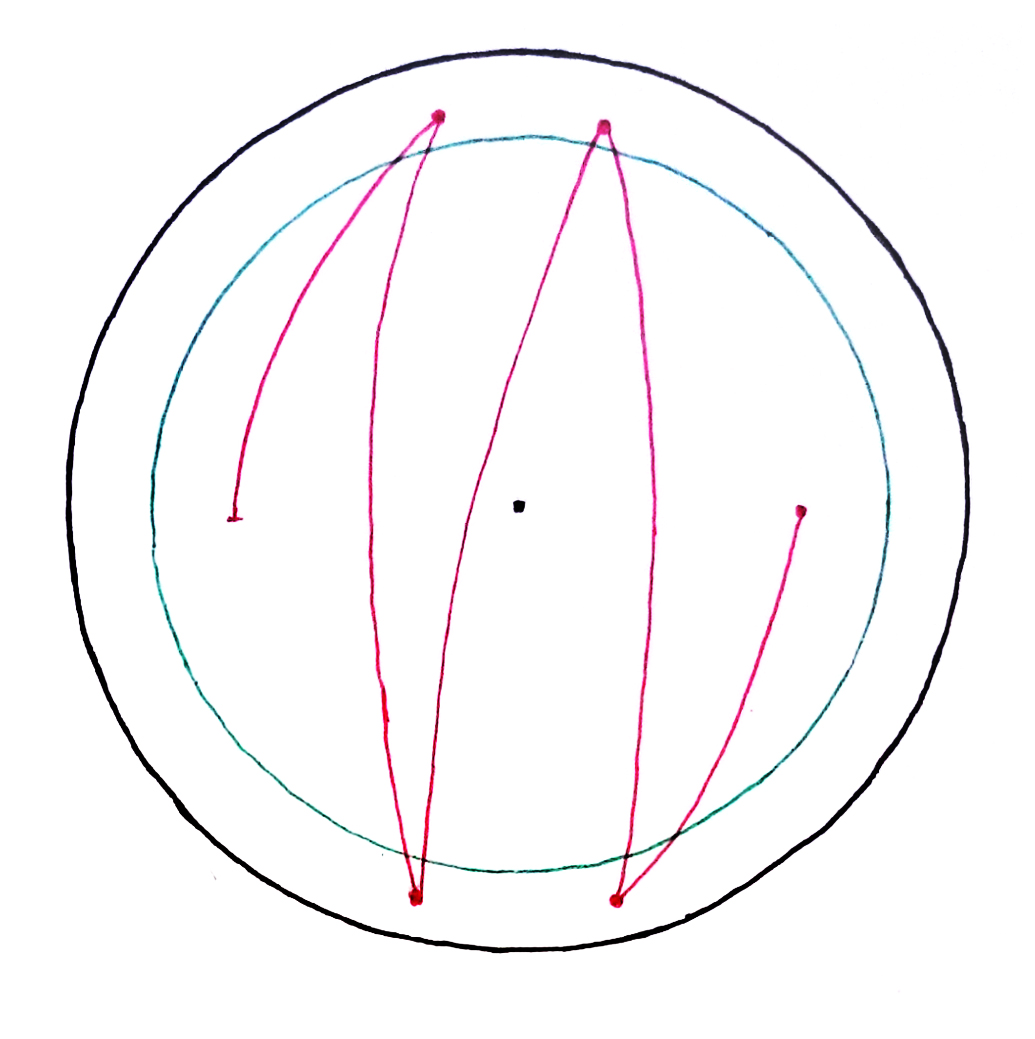}};
\node at (8,4.6) {$v_\gamma$};
\node at (12.8,4.4) {$v_\gamma$};
\node at (4,5.6) {$\Xi^+_0$};
\node at (4,2.9) {$\Xi^-_0$};
\node at (12.8,7.25) {$\Xi^+_1$};
\node at (12.8,1.0) {$\Xi^-_1$};
\end{tikzpicture}
\caption{Illustration of an example of $\gamma$ (the curve in red on the left-hand side) and its tangent vector $\vt_\gamma$ in $\mathbb{S}^2$ (the curve in red on the right-hand side). In this example we have $x_0, x_1, x_2 >0$, $x_3=0$, $x_4, x_5, x_6>0$ and $x_k=0$ for all $k\geq 7$. The $\epsilon$-index of $\gamma$ is $3$.}
\label{fig:extract}
\end{figure}

\begin{figure}[htb]
\centering
\begin{tikzpicture}
 \node[anchor=south west,inner sep=0] at (0,0) {\includegraphics[width=8cm]{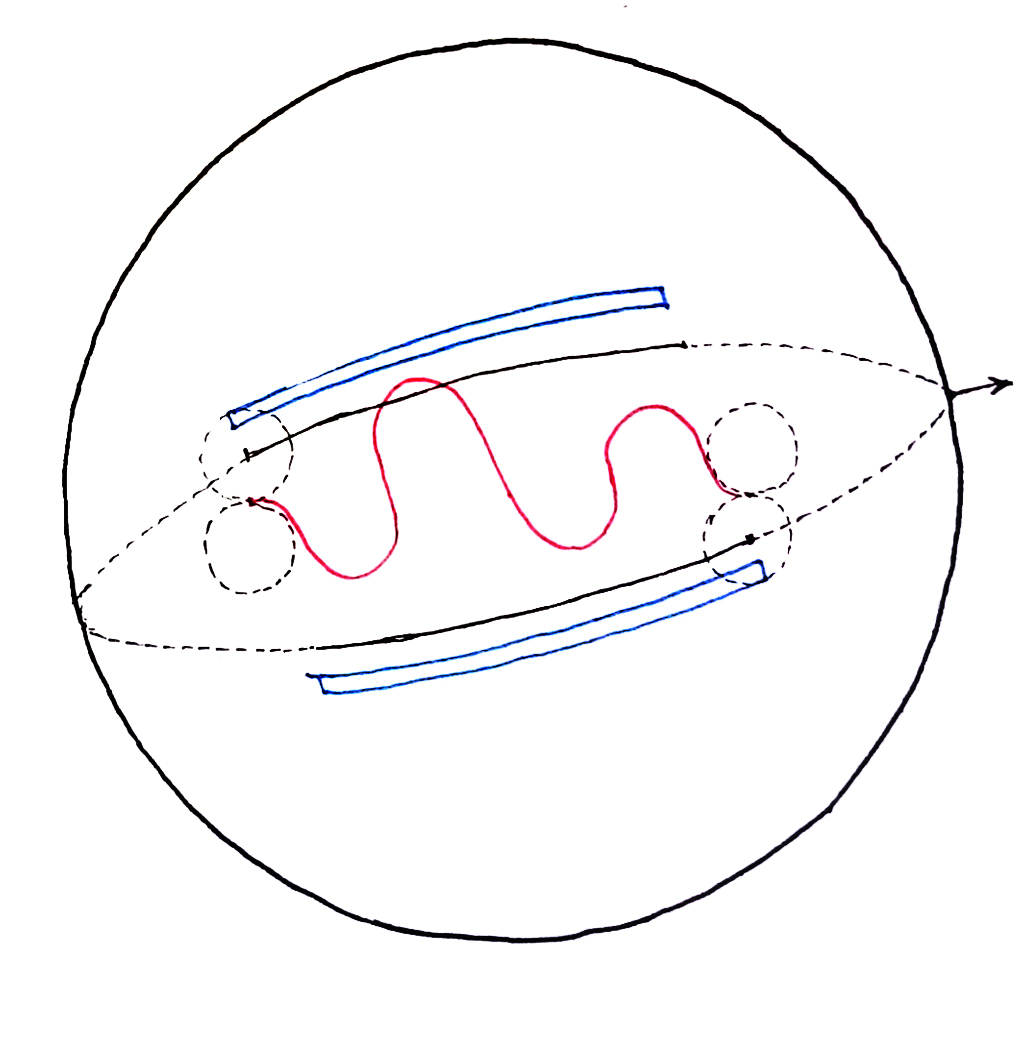}};
  \node[anchor=south west,inner sep=0] at (8.5,0) {\includegraphics[width=8cm]{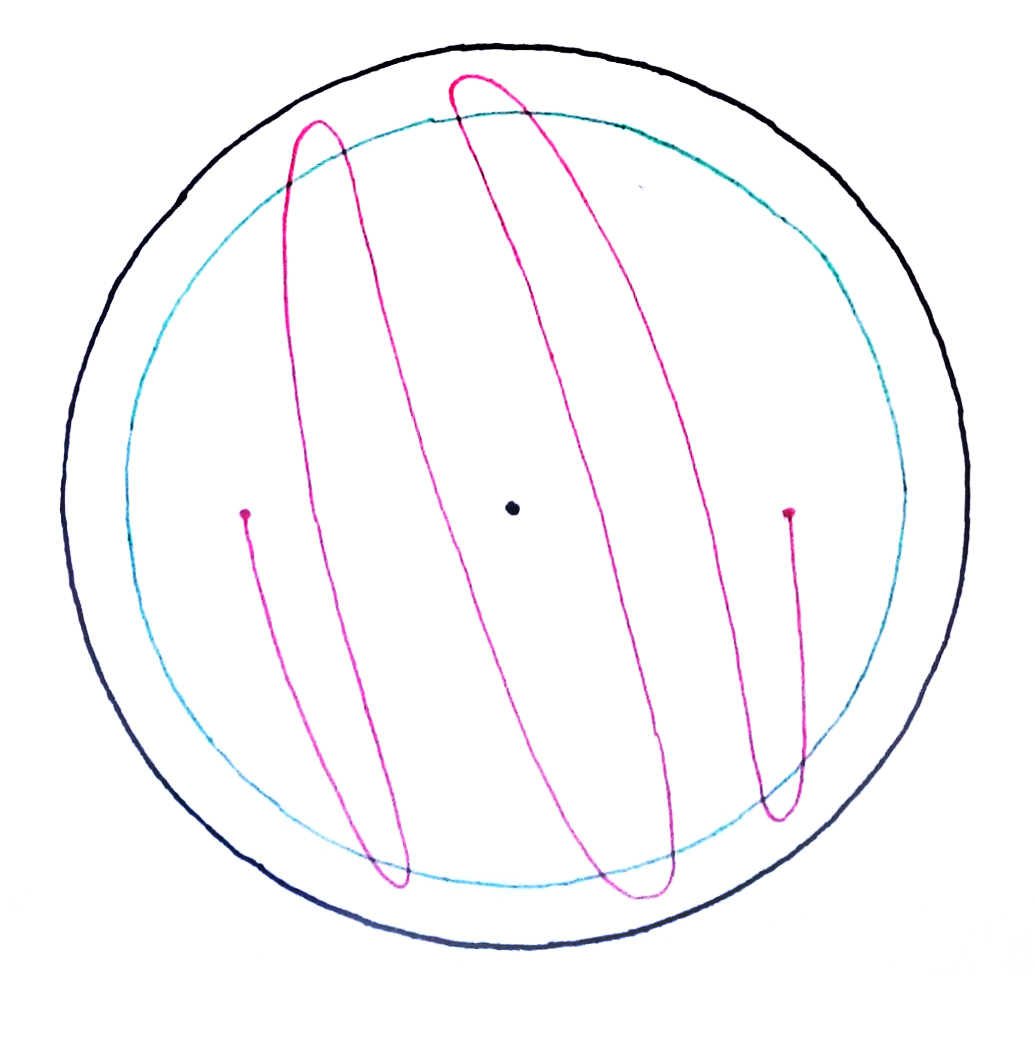}};
\node at (8,4.8) {$v_\gamma$};
\node at (12.8,4.3) {$v_\gamma$};
\node at (4,6) {$\Xi^+_0$};
\node at (4,2.6) {$\Xi^-_0$};
\node at (12.8,7.35) {$\Xi^+_1$};
\node at (12.8,0.95) {$\Xi^-_1$};
\end{tikzpicture}
\caption{Another illustration of an example of $\gamma$ (the curve in red on the left-hand side) and its tangent vector $\vt_\gamma$ in $\mathbb{S}^2$ (the curve in red on the right-hand side). In this example we have $x_0=x_1=0$, $x_2>0$, $x_3=0$, $x_4>0$, $x_5=0$, $x_6>0$, $x_7=0$, $x_8>0$, $x_9=0$ and $x_{10}>0$, $x_k=0$ for all $k\geq 11$. The $\epsilon$-index of $\gamma$ is $4$.}
\label{fig:extract2}
\end{figure}
\noindent A note about the second case in Equation (\ref{eqindex}), the condition ``$x_0$ and $x_1$ are both equal to zero'' together with Condition 2 of Step 4, implies that $x_2$ or $x_3$ is not zero. So these three cases in the definition of $\epsilon$-index do include all possible scenarios for the sequence $(x_k)_{k\in\mathbb{N}}$. We note that index of a curve is non-decreasing relative to $\epsilon$ in proposition below:
\begin{proposition} If there are two real values $\epsilon$ and $\bar{\epsilon}$ such that $0<\epsilon\leq\bar{\epsilon}<\epsilon_0$, then $\cindex_\epsilon(\gamma) \leq \cindex_{\bar{\epsilon}}(\gamma)$.
\end{proposition}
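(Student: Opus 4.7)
The plan is to trace how enlarging $\epsilon$ propagates through the five-step extraction procedure and to check that no non-empty labelled piece can disappear or be assigned a strictly smaller label.

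First, directly from definition (\ref{defxi}), the sets $\Xi_0(\epsilon,\gamma)$ and $\Xi_1(\epsilon,\gamma)$ are monotone in $\epsilon$: the outer radii $\rho_0$ and $\pi/2$ are fixed while the inner radii $\rho_0-\epsilon$ and $\pi/2-\epsilon$ shrink, so $\Xi_i(\epsilon,\gamma)\subseteq\Xi_i(\bar{\epsilon},\gamma)$ for $i\in\{0,1\}$. The symmetry plane $\mathcal{P}$ used to split each $\Xi_i$ into halves $\Xi_i^{\pm}$ does not depend on $\epsilon$, so the inclusion descends to $\Xi_i^{\pm}(\epsilon,\gamma)\subseteq\Xi_i^{\pm}(\bar{\epsilon},\gamma)$. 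Pulling back via $\gamma$ and $\vt_\gamma$ yields the corresponding inclusions $J_i^{\pm}(\epsilon)\subseteq J_i^{\pm}(\bar{\epsilon})$ inside the parameter interval $J$.

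Next I would analyse the labelling carried out in Step 4. This labelling is determined purely by the time-ordering of the non-empty connected pieces of $J_1^{\pm}$ and $J_2^{\pm}$ along $J$, subject to the alternation pattern $2^+,1^+,2^-,1^-,2^+,\ldots$ and to the compression rules (2)--(3) that force each non-empty piece to occupy the earliest slot compatible with that pattern. Because the inclusions above preserve both the type $(i,\pm)$---which depends only on the fixed subsets $\Xi_i^{\pm}$, not on $\epsilon$---and the order along $J$, moving from $\epsilon$ to $\bar{\epsilon}$ can only \emph{insert} new non-empty pieces alongside the existing ones; it cannot destroy one, and insertion can only push labels up. Hence for every non-empty $J_{i,k}^{s}(\epsilon)$ there exists $k'\geq k$ with $J_{i,k'}^{s}(\bar{\epsilon})$ non-empty, and by (\ref{eqextract}) the largest index $m$ with $x_m(\bar{\epsilon})\neq 0$ is at least the largest index $m$ with $x_m(\epsilon)\neq 0$.

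Finally I would translate this into the definition (\ref{eqindex}) of $\cindex_\epsilon(\gamma)$, checking the three cases. If the $\epsilon$-sequence is identically zero, then $\cindex_\epsilon(\gamma)=0\leq\cindex_{\bar{\epsilon}}(\gamma)$ trivially. Otherwise monotonicity of $k\mapsto\lceil k/2\rceil$ handles the first and third cases directly. The only delicate point is the $-1$ correction applied in the second case when $x_0=x_1=0$: the inclusion $J_i^{\pm}(\epsilon)\subseteq J_i^{\pm}(\bar{\epsilon})$ shows that if $x_0(\bar{\epsilon})=x_1(\bar{\epsilon})=0$ then also $x_0(\epsilon)=x_1(\epsilon)=0$, so one compares two indices both defined by the second case and the bound passes through; conversely if $x_0(\bar{\epsilon})$ or $x_1(\bar{\epsilon})$ is non-zero then $\cindex_{\bar{\epsilon}}(\gamma)$ falls in the first case and the absence of the $-1$ correction only helps. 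The main obstacle I expect is purely bookkeeping: one must verify rigorously that the compression rules (2)--(3), applied to the \emph{larger} family of non-empty pieces at $\bar{\epsilon}$, never relabel a piece present at $\epsilon$ with a strictly smaller index. This amounts to checking that the slot-assignment rule is order-preserving under insertion of extra pieces of prescribed types $(i,\pm)$ at prescribed positions in the time order, which can be done by a short induction on the number of inserted pieces.
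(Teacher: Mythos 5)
Your first two paragraphs follow the paper's own (very terse) proof exactly: monotonicity $\Xi_i(\epsilon,\gamma)\subset\Xi_i(\bar\epsilon,\gamma)$ from formula~(\ref{defxi}), hence $J_i^\pm(\epsilon)\subset J_i^\pm(\bar\epsilon)$, and then the observation that every non-empty labelled chunk $J_{i,k}^s(\epsilon)$ has its parameter values reassigned to some $J_{i,k'}^s(\bar\epsilon)$ with $k'\geq k$, because enlarging $\epsilon$ can only insert visits into the time-ordering, never delete one. That part is correct and is precisely what the paper declares ``it is clear'' about.

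The last paragraph, however, contains a false step. You assert that $J_i^\pm(\epsilon)\subseteq J_i^\pm(\bar\epsilon)$ forces $x_0(\bar\epsilon)=x_1(\bar\epsilon)=0\Rightarrow x_0(\epsilon)=x_1(\epsilon)=0$. This does not follow: $x_0,x_1$ are computed from the \emph{labelled} chunks $J_{2,0}^+$ and $J_{1,0}^+$ assigned by the Step~4 ordering rules, not from $J_2^+$ and $J_1^+$ themselves. Enlarging $\epsilon$ can introduce a $-$-type visit \emph{before} the first $+$-type visit --- for instance $\gamma(t_1)\in\Xi_0^-(\bar\epsilon)\smallsetminus\Xi_0^-(\epsilon)$ at a time $t_1$ earlier than the first entry of $\vt_\gamma$ into $\Xi_1^+$. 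Then the chunk that was $J_{2,0}^+(\epsilon)$ is relabelled $J_{2,1}^+(\bar\epsilon)$, so $x_0(\bar\epsilon)=x_1(\bar\epsilon)=0$ while $x_0(\epsilon)\neq 0$, and $\cindex_{\bar\epsilon}$ falls in the second case of~(\ref{eqindex}) while $\cindex_\epsilon$ falls in the first. The proposition is nevertheless true, but the argument you need is that when the first non-empty chunk at $\epsilon$ is pushed out of positions $0,1$ at $\bar\epsilon$, the Step~4 ordering (types cycle through $2^+,1^+,2^-,1^-$ with period four) forces \emph{every} later chunk to shift by at least one full cycle; hence $\bar m:=\max\{k:x_k(\bar\epsilon)\neq0\}\geq m+4$ where $m:=\max\{k:x_k(\epsilon)\neq 0\}$, and therefore $\lceil\bar m/2\rceil-1\geq\lceil m/2\rceil+1>\cindex_\epsilon(\gamma)$, which absorbs the $-1$ correction. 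Replacing your false implication with this shift estimate closes the gap.
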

\begin{proof} Given a curve $\gamma\in\mathcal{C}_0$. The inequality $\epsilon\leq\bar{\epsilon}$ and Formula (\ref{defxi}) imply that $\Xi_0(\epsilon,\gamma)\subset\Xi_0(\bar{\epsilon},\gamma)$ and $\Xi_1(\epsilon,\gamma)\subset\Xi_1(\bar{\epsilon},\gamma)$. This subsequently implies that $J_1^+(\epsilon,\gamma)\subset J_1^+(\bar{\epsilon},\gamma)$, $J_1^-(\epsilon,\gamma)\subset J_1^-(\bar{\epsilon},\gamma)$, $J_2^+(\epsilon,\gamma)\subset J_2^+(\bar{\epsilon},\gamma)$ and $J_2^-(\epsilon,\gamma)\subset J_2^-(\bar{\epsilon},\gamma)$. Now we check the rules for subdivision in Step 4 and Step 5, it is clear that implies $\cindex_\epsilon(\gamma)\leq\cindex_{\bar{\epsilon}}(\gamma)$
\end{proof}

In the visual aspect, a curve $\gamma$ having an $\epsilon$-index indicates that $\gamma$ resembles a critical curve of $\cindex_\epsilon(\gamma)$. The exact meaning and reasons of this similarity will be clarified in the next subsection. Now we shall prove the following essential proposition about $\epsilon$-index:

\begin{proposition}\label{propindex} There exists an $\epsilon_1>0$ such that for all $\epsilon\in (0,\epsilon_1)$, the function $\cindex_\epsilon:\mathcal{C}_0\to \mathbb{N}$ satisfies the following condition:
$$ \cindex_\epsilon(\gamma) \leq n_\mQ \quad \text{for all $\gamma\in\mathcal{C}_0$}.$$
\end{proposition}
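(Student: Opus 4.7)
The plan is to argue by contradiction: suppose no such $\epsilon_1$ exists. Then we may choose a sequence $\epsilon_n \downarrow 0$ and curves $\gamma_n \in \mathcal{C}_0$ with $\cindex_{\epsilon_n}(\gamma_n) \geq n_\mQ + 1$. The strategy is to extract, in the limit, a "critical-like" object forbidden by the very definition of $n_\mQ$.

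First I would establish compactness. Each curve in $\mathcal{C}_0$ has image in the compact region $\bar{B}_{\rho_0}(\square p_1 q_1 q_2 p_2)$ and has $|\kappa| \leq \kappa_0$; moreover, since $\gamma_n$ is hemispheric and sits in a bounded neighborhood of a fixed quadrilateral, its length is uniformly bounded. After reparametrizing each $\gamma_n$ proportionally to arc-length, Arzelà-Ascoli yields a subsequence (still denoted $\gamma_n$) converging in $C^0$, with tangents converging weakly, to some $\gamma_\infty \in \cLspace$. Since each $v_{\gamma_n}$ lies in the compact set $\mathcal{Q}_{\mQ,2}$, we may also assume $v_{\gamma_n} \to v_\infty$ and consequently $\Theta_{1,\gamma_n} \to \Theta_{1,\gamma_\infty}$.

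Next I would convert the lower bound on the index into rigid structural information about $\gamma_\infty$. The hypothesis $\cindex_{\epsilon_n}(\gamma_n) \geq n_\mQ + 1$ produces at least $n_\mQ+1$ non-empty triples of index sets arranged in the alternating pattern $J_{2,k}^+ < J_{1,k}^+ < J_{2,k}^- < J_{1,k}^- < J_{2,k+1}^+$ from Step 4. Because $\Xi_0(\epsilon_n,\gamma_n)$ is a shrinking shell inside the $\rho_0$-tube of $\Theta_{1,\gamma_n}$ and $\Xi_1(\epsilon_n,\gamma_n)$ is a shrinking shell on the $v_{\gamma_n}$-equator, any arc of $\gamma_n$ joining an entry to $\Xi_1^+$ to an entry to $\Xi_1^-$ and grazing $\partial\Xi_0^+$ must, under the curvature bound $|\kappa|<\kappa_0$, approximate an arc of constant curvature $\pm \kappa_0$ whose tangent sweeps nearly a half-turn. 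Passing to the limit, $\gamma_\infty$ becomes a concatenation of arcs of circles of radius in $[\rho_0,\pi-\rho_0]$ with alternating signs of curvature, whose centers lie on the great circle $\mathcal{G}$ perpendicular to $v_\infty$, and with at least $n_\mQ+1$ middle arcs each of angular length exactly $\pi\sin r_i$. In other words, $\gamma_\infty$ is a critical curve (in the sense of page~\pageref{fig:criticalcurve2}) of type $+-+-\cdots$ or $-+-+\cdots$ of length at least $n_\mQ+2$.

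Finally I would derive the numerical contradiction. Consecutive centers of a critical curve are separated by exactly $2r_i \geq 2\rho_0$ along $\mathcal{G}$; a curve of type with $n_\mQ+2$ alternating signs therefore has total center-displacement at least $(2n_\mQ+2)\rho_0$ along $\mathcal{G}$, and its first and last centers are respectively at distance $\rho_0$ from $p_1$ or $p_2$, and from $q_1$ or $q_2$. Splitting into the even and odd parity cases of $n_\mQ+1$, this displacement bound forces either $\bar{L}_i \geq n_\mQ+1$ for both $i\in\{1,2\}$, or $\bar{D}_j \geq n_\mQ+1$ for both $j\in\{1,2\}$, both of which contradict Definition~\ref{defnq} since $n_\mQ$ was defined as $\bar{L}_1 = \bar{L}_2$ or $\bar{D}_1 = \bar{D}_2$ in the respective cases. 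The main obstacle is the second step: one must rule out a degenerate scenario where a single arc of $\gamma_n$ glances against $\partial\Xi_0^\pm$ several times and thereby produces spurious non-empty $J_{1,k}^\pm$; this requires exploiting the interlacing with non-empty $J_{2,k}^\pm$ together with the curvature bound, so that two consecutive non-empty shell-visits of $\gamma_n$ are attributed to two genuinely distinct arcs of the limit $\gamma_\infty$.
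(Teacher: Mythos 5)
Your overall strategy---contradiction, compactness via what the paper calls Lemma~\ref{convergsubseq}, and a final center-displacement estimate against Definition~\ref{defnq}---is the same as the paper's, and your Step~1 and Step~3 essentially match. But your Step~2 contains a genuine gap: you assert that the limit $\gamma_\infty$ ``becomes a concatenation of arcs of circles of radius in $[\rho_0,\pi-\rho_0]$ with alternating signs of curvature, whose centers lie on the great circle $\mathcal{G}$ perpendicular to $v_\infty$, \dots\ with at least $n_\mQ+1$ middle arcs each of angular length exactly $\pi\sin r_i$,'' i.e.\ that $\gamma_\infty$ is a critical curve. This does not follow. Nothing prevents the sequence $(\gamma_n)$ from being constant, $\gamma_n\equiv\gamma$ for a fixed non-critical $\gamma\in\mathcal{C}_0$ whose $\epsilon$-index exceeds $n_\mQ$ at every small scale; then $\gamma_\infty=\gamma$ has no circular-arc structure, and your construction has nothing to work with. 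More generally, the curvature bound $|\kappa|\leq\kappa_0$ forces a turn between two visits of $\Xi_1^{\pm}$ to have length at least $\pi\sin\rho_0$, but it does not force the arc to have curvature $\pm\kappa_0$ nor to have its osculating centers on a common great circle: a turning arc of any bounded curvature profile can carry $\vt_\gamma$ across the $v_\gamma$-equator while $\gamma$ grazes $\partial\bar B_{\rho_0}(\Theta_{1,\gamma})$.

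The paper sidesteps this entirely by never claiming the limit $\bar\gamma$ is critical. For each nonzero $x_k$ it picks a time $t_k$ and draws the circle $\zeta_k$ (or $\zeta_k^{\pm}$) of radius \emph{exactly} $\rho_0$ tangent to $\bar\gamma$ at $\bar\gamma(t_k)$ from the appropriate side, together with the two boundary tangent circles at the endpoints. Because $|\kappa_{\bar\gamma}|\leq\kappa_0$, the curve never crosses these circles, and the interlacing of the nonempty $J_{1,k}^{\pm},J_{2,k}^{\pm}$ with the hemisphericity of $\bar\gamma$ gives the $\varphi$-comparisons~(\ref{eqcompare})--(\ref{eqcompare15}) and hence the $\theta$-separation~(\ref{eqcompare2})--(\ref{eqcompare25}): the $(-v_{\bar\gamma})$-parallel coordinates of consecutive tangent-circle centers differ by at least $2\rho_0$. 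Telescoping over $n_\mQ+2$ gaps then yields $L_1\geq 2\rho_0(n_\mQ+2)$ or $D_1\geq 2\rho_0(n_\mQ+2)$, and the contradiction with $\bar L_1=n_\mQ$ (resp.\ $\bar D_1=n_\mQ$) proceeds exactly as in your Step~3. So the final estimate you wrote is correct, but the chain of circles that delivers it must be built from $\rho_0$-tangent circles attached to the (possibly non-critical) limit curve, not read off from a critical structure the limit need not possess; the ``glancing'' worry you flag at the end is then absorbed automatically by the interlacing with the nonempty $J_{2,k}^{\pm}$, just as you suspected.
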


To prove Proposition \ref{propindex}, we need the following result.
\begin{lemma} \label{convergsubseq} Let $(\gamma_i)_{i\in\mathbb{N}}$ be a sequence of $C^1$ curves in $\mathbb{S}^2$ satisfying the following properties.
\begin{enumerate}
\item There exists a limited region $\mathcal{R}\subset\mathbb{S}^2$ such that $\gamma_i\subset\mathcal{R}$ for all $i\in\mathbb{N}$.
\item The $\kappa_{\gamma}^-$ and the $\kappa_{\gamma}^+$ lie inside an interval $[-\kappa_0,+\kappa_0]$, with $\kappa_0\in\mathbb{R}^+$.
\item There exists a positive number $L_0$ such that $\length(\gamma_i)\leq L_0$ for all $i\in\mathbb{N}$.
\end{enumerate}
Then $(\gamma_i)_{i\in\mathbb{N}}$ admits a convergent subsequence, and the limit of this subsequence satisfies all three conditions above.
\end{lemma}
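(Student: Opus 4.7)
The plan is to apply Arzelà--Ascoli after reparametrizing every $\gamma_i$ with constant speed on the unit interval. First I would write $\gamma_i:[0,1]\to\mathbb{S}^2$ with $|\gamma_i'(t)|\equiv L_i := \length(\gamma_i)\leq L_0$. By Bolzano--Weierstrass, pass to a subsequence along which $L_i\to L_\infty\in[0,L_0]$; the case $L_\infty=0$ is degenerate (the subsequence collapses to a constant) so assume $L_\infty>0$.

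Second, I would establish uniform equicontinuity of both the family $\{\gamma_i\}$ and the family $\{\vt_{\gamma_i}\}$. The former is immediate since $|\gamma_i'|\leq L_0$. For the tangents, I use that the constraint $-\kappa_0\leq\kappa_{\gamma_i}^-\leq\kappa_{\gamma_i}^+\leq\kappa_0$ forces $\vt_{\gamma_i}$, viewed as a function of arc length $s$, to be $\kappa_0$-Lipschitz. This follows from the definition of left and right curvatures via tangent circles of radius $\rho_0=\arccot\kappa_0$: at each point the curve is sandwiched between two osculating circles of radius $\rho_0$, so $\vt_{\gamma_i}$ cannot spin faster than those circles' tangents do, giving the spherical estimate $d\bigl(\vt_{\gamma_i}(s_1),\vt_{\gamma_i}(s_2)\bigr)\leq\kappa_0|s_1-s_2|$. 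Reparametrizing back with constant speed yields that $\vt_{\gamma_i}(t)$ is $L_0\kappa_0$-Lipschitz in $t$, uniformly in $i$.

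Third, I apply Arzelà--Ascoli in $C^0([0,1],\mathbb{S}^2)$ to extract a subsequence (still denoted $\gamma_i$) with $\gamma_i\to\gamma_\infty$ and $\vt_{\gamma_i}\to\vet{t}_\infty$ uniformly. Since $\gamma_i'=L_i\vt_{\gamma_i}$ and both factors converge uniformly, $\gamma_\infty$ is $C^1$ with $\gamma_\infty'=L_\infty\vet{t}_\infty$ and $\vt_{\gamma_\infty}=\vet{t}_\infty$. Condition (1) is clear after replacing $\mathcal{R}$ by its closure, and $\length(\gamma_\infty)=L_\infty\leq L_0$ gives condition (3).

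The main obstacle is the fourth step: passing the weak curvature bound through the limit. Since $\vt_{\gamma_i}$ is $\kappa_0$-Lipschitz in arc length and uniform limits of $\kappa_0$-Lipschitz functions are $\kappa_0$-Lipschitz, the same bound holds for $\vt_{\gamma_\infty}$. To translate this back into $|\kappa_{\gamma_\infty}^\pm|\leq\kappa_0$, I argue directly with the definition. Fix $t_0\in[0,1]$, and for each $i$ let $\zeta_{a_i,r_i}$ be a circle of radius $r_i\geq\rho_0$ tangent to $\gamma_i$ at $\gamma_i(t_0)$ from the left, which exists because $\kappa_{\gamma_i}^+(t_0)\leq\kappa_0$. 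Each such circle is determined by a center $a_i\in\mathbb{S}^2$ and a radius $r_i\in[\rho_0,\pi-\rho_0]$; by compactness pass to a subsequence with $a_i\to a_\infty$, $r_i\to r_\infty\geq\rho_0$. The tangency conditions $\gamma_i(t_0)=\zeta_{a_i,r_i}(s_i)$, $\gamma_i'(t_0)\parallel\zeta_{a_i,r_i}'(s_i)$ and the local inequality $d(\gamma_i(t),a_i)\geq r_i$ on a fixed neighborhood of $t_0$ all pass to the limit by continuity, showing $\zeta_{a_\infty,r_\infty}$ is tangent to $\gamma_\infty$ at $\gamma_\infty(t_0)$ from the left. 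Hence $\kappa_{\gamma_\infty}^+(t_0)\leq\cot r_\infty\leq\kappa_0$. The symmetric argument on the right gives $\kappa_{\gamma_\infty}^-(t_0)\geq-\kappa_0$, completing condition (2). The delicate point here is handling $t_0$ at the endpoints of $[0,1]$, which requires choosing the one-sided neighborhoods consistently, but this is a minor technical adjustment.
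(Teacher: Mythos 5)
Your plan---constant-speed reparametrization, Arzel\`a--Ascoli for $\gamma_i$ and for $\vt_{\gamma_i}$, and passing the curvature bound to the limit via tangent circles---is in the same spirit as the paper's proof, which uses a dense sequence $(s_j)\subset[0,1]$ and a diagonal argument; you are in fact more explicit about the equicontinuity of the tangent vectors, a point the paper glosses over. One slip: on the sphere the Frenet formula gives $\vt_\gamma' = -\gamma + \kappa_\gamma\vet{n}_\gamma$, so $|\vt_\gamma'|=\sqrt{1+\kappa_\gamma^2}$ and the correct Lipschitz constant in arc length is $\sqrt{1+\kappa_0^2}$ rather than $\kappa_0$ (the tangent vector of a geodesic still rotates). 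Any uniform bound suffices for Arzel\`a--Ascoli, so this does not break the argument.

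The genuine gap is in your fourth paragraph. You take, for each $i$, a circle of radius $r_i\geq\rho_0$ tangent to $\gamma_i$ at $\gamma_i(t_0)$ from the left, ``which exists because $\kappa^+_{\gamma_i}(t_0)\leq\kappa_0$.'' But $\kappa^+$ is defined as an infimum, and $\kappa^+_{\gamma_i}(t_0)\leq\kappa_0$ only gives $\sup\{r\}\geq\rho_0$ over left-tangent circles; there need not be one of radius $\geq\rho_0$ if the supremum is not attained. More seriously, you then assert that ``the local inequality $d(\gamma_i(t),a_i)\geq r_i$ on a fixed neighborhood of $t_0$ \ldots pass[es] to the limit by continuity.'' The definition of tangency from the left only supplies an $i$-dependent $\delta_i$, and nothing in your argument rules out $\delta_i\to 0$, in which case the limiting inequality becomes vacuous and $\zeta_{a_\infty,r_\infty}$ need not be tangent from the left at $\gamma_\infty(t_0)$. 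To close this you need a uniform comparison estimate: the hypothesis $\kappa^+_{\gamma_i}(t)\leq\kappa_0$ for \emph{all} $t$---not just at $t_0$---forces the left tangent circle of radius $\rho_0$ at $\gamma_i(t_0)$ to be avoided by $\gamma_i$ for a uniform arc length on either side of $t_0$. That estimate is true but must be stated and proved. The paper instead invokes the semicontinuity of $\kappa^\pm$ (Lemma \ref{semicontinuity}) together with a two-step limit, first over the dense parameters $s_j$ and then over $i$, though it too is terse at exactly this point.
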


\begin{lemma} \label{semicontinuity} Given $\gamma\in \mathcal{I}$, $\kappa^+_\gamma$ and $\kappa^-_\gamma$ are upper-semicontinuous and lower-semicontinuous respectively.\footnote{A real function $f:J\to\mathbb{R}$ is said to be upper-semicontinous if $\limsup_{s\to s_0} f(s) \leq f(s_0)$ for all $s_0\in J$; $f$ is said to be lower-semicontinuous if $-f$ is upper-semicontinous.}
\end{lemma}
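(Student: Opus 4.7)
The symmetric argument — swap ``left'' by ``right'', infimum by supremum, and the inequalities $d(\gamma(t),a)\geq r$ by $d(\gamma(t),a)\leq r$ — reduces the lemma to proving upper semicontinuity of $\kappa_\gamma^+$. Fix $s_0$ in the domain of $\gamma$ and any $K>\kappa_\gamma^+(s_0)$; the goal is to exhibit a neighborhood of $s_0$ on which $\kappa_\gamma^+<K$. By the very definition of $\kappa_\gamma^+(s_0)$ as an infimum, I can choose a circle $\zeta$ tangent to $\gamma$ at $\gamma(s_0)$ from the left whose radius $r$ satisfies $r>\rho:=\arccot K$. Denoting its center by $a_0$, the tangent-from-left condition gives a $\delta>0$ such that $d(\gamma(t),a_0)\geq r$ for all $t\in(s_0-\delta,s_0+\delta)$.

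Next, for each $s$ sufficiently close to $s_0$, I plan to exhibit a tangent-from-left circle at $\gamma(s)$ whose radius still exceeds $\rho$. The natural candidate, for any $\tilde\rho\in(\rho,r)$, is the unique circle $\zeta^{\tilde\rho}_s$ of radius $\tilde\rho$ whose center is $c^{\tilde\rho}_s:=\exp_{\gamma(s)}(\tilde\rho\,\boldsymbol{n}_\gamma(s))$; by construction it passes through $\gamma(s)$ with matching (anti-clockwise) tangent direction. Continuity of $\gamma$, $\boldsymbol{n}_\gamma$, and the exponential map gives $c^{\tilde\rho}_s\to c^{\tilde\rho}_{s_0}$ as $s\to s_0$, while $c^{\tilde\rho}_{s_0}$ and $a_0$ both lie on the geodesic normal to $\gamma$ at $\gamma(s_0)$ at respective distances $\tilde\rho$ and $r$, so $d(a_0,c^{\tilde\rho}_{s_0})=r-\tilde\rho$. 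The triangle inequality therefore yields, uniformly for $t\in(s_0-\delta,s_0+\delta)$,
\[
d(\gamma(t),c^{\tilde\rho}_s)\;\geq\;d(\gamma(t),a_0)-d(a_0,c^{\tilde\rho}_s)\;\geq\;r-\bigl((r-\tilde\rho)+o(1)\bigr)\;=\;\tilde\rho-o(1),
\]
as $s\to s_0$.

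The technical core of the proof, and the main obstacle, is upgrading this approximate bound to the exact inequality $d(\gamma(t),c^{\tilde\rho}_s)\geq\tilde\rho$ required by the definition of tangent from the left; the naive triangle inequality loses a term of order $\eta(s):=d(c^{\tilde\rho}_{s_0},c^{\tilde\rho}_s)$. The plan is to absorb this loss into the strictly positive slack $\tilde\rho-\rho$: once $s$ is close enough to $s_0$ that $\eta(s)<\tilde\rho-\rho$, choose $\rho_s\in(\rho,\tilde\rho]$ with $\rho_s\to\tilde\rho$, slide the center along $\boldsymbol{n}_\gamma(s)$ to maintain tangency at $\gamma(s)$, and verify the required inequality on a neighborhood of $s$ by combining the bound above with the spherical geometry relating $a_0$, $c^{\rho_s}_s$, and $\gamma(t)$ (for instance, via the spherical law of cosines or by extracting a convergent subsequence along the normal geodesic in the degenerate case where equality threatens). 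Because $\rho_s>\rho=\arccot K$, this produces $\kappa_\gamma^+(s)\leq\cot\rho_s<K$, as required. The boundary case where $\gamma$ coincides locally near $\gamma(s_0)$ with an arc of a circle is handled separately and trivially, since there $\kappa_\gamma^+$ is locally constant.
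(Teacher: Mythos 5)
Your approach is the paper's: fix a left-tangent circle at $\gamma(s_0)$ with radius $r>\rho:=\arccot K$, then try to propagate a left-tangent circle of radius $>\rho$ to $\gamma(s)$ for $s$ near $s_0$ by placing its center on the normal geodesic at $\gamma(s)$. You correctly flag the key obstacle: the triangle inequality only yields $d(\gamma(t),c^{\tilde\rho}_s)\geq\tilde\rho-o(1)$, whereas ``tangent from the left'' demands $\geq\tilde\rho$. The paper's proof has precisely the same gap; the sentence ``we can define the center $a_s$ of the left tangent circle of radius $r$ at $\gamma(s)$'' is assertion rather than argument, and the accompanying display has $s$ appearing on both sides of the inequality it is quantified over, so it cannot be parsed as a justification.

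Your plan to absorb the loss into the slack $\tilde\rho-\rho$ does not close the gap. If you shrink the candidate radius from $\tilde\rho$ to $\tilde\rho-\eta$, the center slides by exactly $\eta$ along the normal geodesic, so the triangle-inequality loss against the fixed larger circle grows by that same $\eta$; the two first-order effects cancel and you still fall short by the same $o(1)$. What is missing is a second-order estimate keeping $\gamma(t)$ outside the tilted circle, and no such estimate follows from the bare $C^1$ hypothesis. In fact the statement is false for general $\gamma\in\mathcal{I}$: in the planar model take $\gamma(t)=(t,f(t))$ with $f\equiv0$ on $t\leq0$ and $f(t)=t^3\sin(1/t)$ on $t>0$. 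This curve is $C^1$ and regular; every circle of every radius $r$ is left-tangent at $\gamma(0)$ (one checks $f(t)\leq t^2/(2r)$ whenever $|t|\leq 1/(2r)$), so $\kappa^+_\gamma(0)\leq0$, while $\gamma$ is smooth on $(0,\infty)$ with curvature tending to $+\infty$ along $t_n=((2n+\tfrac{3}{2})\pi)^{-1}\to 0^+$, so $\limsup_{t\to 0}\kappa^+_\gamma(t)=+\infty$ and upper semicontinuity fails. The lemma only becomes plausible under an additional two-sided bound on $\kappa^\pm_\gamma$ --- precisely what holds where the lemma is actually invoked, in Lemma~\ref{convergsubseq} --- and a correct proof would have to use that bound; neither your sketch nor the paper's argument does.
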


\begin{proof}[Proof of Lemma \ref{semicontinuity}] Let us prove that $\kappa^+_\gamma$ is upper-semicontinuous, which is equivalent to prove that $r^+ = \arccot \kappa^+_\gamma$ is lower-semicontinous. Given an $s_0\in J$ and an $r<r^+(s_0)$, there exists a $\delta>0$ and $a_0\in\mathbb{S}^2$ such that $d(a_0,\gamma(s_0)) = r $ and
\begin{equation}\label{eq6}
d(a_0,\gamma(s))\geq r, \quad \forall s\in (s_0-\delta,s_0+\delta).
\end{equation}
Since $\vt_\gamma$ is continuous, (\ref{eq6}) implies that there exists a $\delta_1$ such that for all $s\in (s_0-\delta_1,s_0+\delta_1)$, we can define the center $a_s$ of the left tangent circle of radius $r$ at $\gamma(s)$, with $d(a_s,\gamma(s))=r $ and
$$ d(a_s,\gamma(s))\geq r ,\quad \forall s\in (s_0-\delta_1,s_0+\delta_1).$$
That means $\kappa^+_\gamma(s)<\cot r$, for all $s$ in a neighborhood of $s_0$. Since $r< r^+$ is arbitrary, then
$$\limsup_{s\to s_0}\kappa^+_\gamma(s) \leq \kappa^+_\gamma(s_0) .$$  
For the case of $\kappa^-_\gamma$ the procedure is analogous.
\end{proof}

\begin{proof}[Proof of Lemma \ref{convergsubseq}] First, we unify the domains of all curves to $[0,1]$ by writing $\gamma_i: [0,1]\to \mathbb{S}^2$. Take a dense sequence $(s_j)_{j\in\mathbb{N}}$ in $[0,1]$. Condition (1) and the diagonal argument allow us to pick a subsequence $(\tilde{\gamma}_i)_{i\in\mathbb{N}}$ which converges for all $s_j$, with $j\in\mathbb{N}$. Condition (3) implies that $(\tilde{\gamma}_i)_{i\in\mathbb{N}}$ has a limit $\gamma$ such that $\gamma\in\mathcal{R}$ and $\length(\gamma)\leq L_0$. 
Using Condition (2), we take a subsequence for $(\tilde{\gamma}_i)_{i\in\mathbb{N}}$ such that the tangent vector, $\kappa^+_{\gamma_i}$ and $\kappa^-_{\gamma_i}$ converge for all $s_j$. 

We need to verify if $\gamma$ also satisfies Condition (2). 
We take $\rho=\cot\big(\kappa^+_\gamma(s)\big)$ and $a$ the center of the left tangent circle of radius $\rho$ at $\gamma(s)$. 
Take $r^+_{\gamma_i}(s_j)$ as the radius of the left tangent circle of $\gamma_i$ at $\gamma_i(s_j)$ and $r^+_{\gamma_i}(s)$ as the radius of the left tangent circle at $\gamma_i(s)$. By Lemma \ref{semicontinuity} and $\rho_0\geq r^+_{\gamma_i}$ for all $i,j\in\mathbb{N}$ we obtain:
$$ \rho_0 \geq \lim_{j\to\infty} r^+_{\gamma_i}(s_j) = r^+_{\gamma_i}(s), \quad \forall j \in\mathbb{N}. $$
\noindent Taking $i\to\infty$, we obtain:
$$\rho_0\geq r^+_\gamma(s).$$
\noindent The equation above means $\kappa_0\geq\kappa^+_\gamma(s)$. The proof for another inequality $-\kappa_0\leq\kappa^-_\gamma(s)$ is analogous, and so we omit it.
\end{proof}

Now we are ready to prove Proposition \ref{propindex}. 

\begin{proof}Suppose, by contradiction, that no such $\epsilon_1$ exists. Then there exists a decreasing sequence $(\epsilon_i)_{i\in\mathbb{N}}$ converging to $0$ and a sequence $(\gamma_i)_{i\in\mathbb{N}}$ such that:
$$ \cindex_{\epsilon_i}(\gamma_i)\geq n_\mQ+1 . $$
\noindent Since all these curves $\gamma_i$ lie in a hemisphere, and have an upper bound for their length and $\mathcal{C}_0$ is closed, by Lemma \ref{convergsubseq}, the sequence $(\gamma_i)_{i\in\mathbb{N}}$ has a convergent subsequence with limit $\bar{\gamma}\in\mathcal{C}_0$. To simplify notations, we now assume that the sequence $(\gamma_i)_{i\in\mathbb{N}}$ converges to $\bar{\gamma}$. So we have:
$$\cindex_{\epsilon_i}(\bar{\gamma})\geq n_\mQ+1, \quad\text{for all $i\in\mathbb{N}$.}$$
We shall use the curve $\bar{\gamma}$ to construct a critical curve of the same index, which is greater than $n_\mQ$, contradicting the definition of $n_\mQ$ on page \pageref{defnq}. 

We first consider the case when $x_0$ or $x_1$ is not zero. Since $\bar{\gamma}\in\mathcal{C}_0$, we take the vector $v_{\bar{\gamma}}$ such that $\vt_{\bar{\gamma}}(J)\in \bar{B}_{\frac{\pi}{2}}(v_{\bar{\gamma}})$. For each $x_k\neq 0$, denote by $\bar{J}_k\subset J$ the interval associated to $x_k$ as described in Step 4 above. Take a $t_k\in J_k$, we consider the following circles described below:
\begin{enumerate}
\item If $k \equiv 0 \pmod 4 $, we draw circles $\zeta_k^+$ and $\zeta_k^-$ of radius $\rho_0$ (measured on $\mathbb{S}^2$) tangent to $\bar{\gamma}$ at $\bar{\gamma}(t_k)$ from left and right respectively.
\item If $k \equiv 1 \pmod 4 $, we draw the circle $\zeta_k$ of radius $\rho_0$ tangent to $\bar{\gamma}$ at $\bar{\gamma}(t_k)$ from right.
\item If $k \equiv 2 \pmod 4 $, we draw the circle $\zeta_k^+$ and $\zeta_k^-$ of radius $\rho_0$ tangent to $\bar{\gamma}$ at $\bar{\gamma}(t_k)$ from left and right respectively.
\item If $k \equiv 3 \pmod 4 $, we draw the circle $\zeta_k$ of radius $\rho_0$ tangent to $\bar{\gamma}$ at $\bar{\gamma}(t_k)$ from left.
\end{enumerate}
\noindent We separate the next part into \emph{two} cases. The first case is for $n_\mQ$ even. In addition to the circles that we have defined previously, we also consider circles $\zeta_{-1}$ and $\zeta_{2(n_\mQ+1)+1}$ of radius $\rho_0$ tangent to $\bar{\gamma}$, respectively, at $\bar{\gamma}(0)$ from left and $\bar{\gamma}(L)$ from left. Using polar coordinates with $-v_{\bar{\gamma}}$ as axis, so that the $(-v_\gamma)$-parallel coordinate of the curve $\bar{\gamma}$ is non-decreasing with respect to the parameter of the curve. For each $k$ such that $\zeta_k$ or $\zeta_k^\pm$ is defined, denote by $(\theta_k,\varphi_k)\in [0,\pi]\times (-\pi,\pi)$ center of $\zeta_k$ and $(\theta_k^\pm,\varphi_k^\pm)\in [0,\pi]\times (-\pi,\pi)$ center of $\zeta_k^\pm$. For each $j\in\mathbb{Z}$, when a comparison is possible\footnote{Here we use convention that if the number $a$ is \emph{undefined} and the number $b$ is defined then $\min \{a,b\} = \max\{a,b\}=b$.}, we have:
\begin{align} \label{eqcompare}
\min \{\varphi_{4j}^-,\varphi_{4j+1},\varphi_{4j+2}^-\} & \geq \max\{\varphi_{4j+2}^+,\varphi_{4j+3},\varphi_{4j+4}^+\} \\ \label{eqcompare15}
\max \{\varphi_{4j+2}^+,\varphi_{4j+3},\varphi_{4j+4}^+\} & \leq \min\{\varphi_{4j+4}^-,\varphi_{4j+5},\varphi_{4j+6}^-\}.
\end{align}
\noindent Since the curve $\bar{\gamma}$ is contained in a hemisphere, recall that the distance from $\bar{\gamma}$ to center of $\zeta_k$ is greater or equal to $\rho_0$, so equations above and the manner that $\zeta_k$ are constructed implies, for all $j\in\mathbb{Z}$:
\begin{align} \label{eqcompare2}
\min \{\theta_{4j+2}^+,\theta_{4j+3},\theta_{4j+4}^+\} - \max\{\theta_{4j}^-,\theta_{4j+1},\theta_{4j+2}^-\} & \geq 2\rho_0 \\ \label{eqcompare25}
\min \{\theta_{4j+4}^-,\theta_{4j+5},\theta_{4j+6}^-\} - \max\{\theta_{4j+2}^+,\theta_{4j+3},\theta_{4j+4}^+\} & \geq 2\rho_0. 
\end{align}
\noindent Since we cannot have three consecutive zero values for $x_k$ for $k\in \left\{0,1,2,\ldots, 2\cdot\cindex(\bar{\gamma})\right\}$, using Equation (\ref{eqcompare2}) we have:
$$ L_1=d(p_1,q_1)= \theta_{2(n_\mQ+1)+1}- \ldots -\theta_{-1} \geq (2\rho_0)\cdot(\cindex(\bar{\gamma})+1)\geq (2\rho_0)\cdot (n_\mQ+2).  $$
Now we recall the definition of $\bar{L}_1$ on Equation (\ref{eqLD}). This implies: 
$$\bar{L}_1\geq 2\left\lfloor\frac{2\rho_0(n_\mQ+2)}{4\rho_0}+1\right\rfloor-1=n_\mQ+1 > n_\mQ = \bar{D}_1 = \bar{D}_2.$$
This contradicts with the definition of $n_\mQ$.

Now we analyze the second case: $n_\mQ$ as an odd number. We consider circles $\zeta_{1}$ and $\zeta_{2(n_\mQ+1)+1}$ of radius $\rho_0$ tangent to $\bar{\gamma}$, respectively, at $\bar{\gamma}(0)$ from left and $\bar{\gamma}(L)$ from \emph{right}. Again we use polar coordinates with axis $-v_{\bar{\gamma}}$. Equations (\ref{eqcompare}) and (\ref{eqcompare15}) still hold. Thus this implies Equations (\ref{eqcompare2}) and (\ref{eqcompare25}). Again using the fact that three consecutive zeroes cannot happen for $x_k$ for $k\in\{0,1,2,\ldots,2\cdots\cindex(\bar{\gamma})\}$, we have:
$$D_1=d(p_1,q_2)\geq (2\rho_0)\cdot(n_q+2) .$$
Recalling the definition of $\bar{D}_1$ in Equation (\ref{eqLD}). This implies:
$$\bar{D}_1\geq 2\left\lfloor\frac{2\rho_0\left(2\left\lfloor\frac{n_\mQ}{2}\right\rfloor+3\right)}{4\rho_0}-\frac{1}{2}\right\rfloor=n_\mQ+1 > n_\mQ = \bar{L}_1 = \bar{L}_2 .$$ 
Contradicting the definition of $n_\mQ$.

For the case $x_0$ and $x_1$ equals to zero, the procedure entirely is analogous; We need to show $\bar{L}_2=n_\mQ+1$ for $n_\mQ$ even, and $\bar{D}_2=n_\mQ+1$ for $n_\mQ$ odd. We follows exactly the same steps by drawing tangent circles $\zeta_k$ at $\bar{\gamma}(t_k)$. Additionally we consider circles $\zeta_1$ and $\zeta_{2(n_\mQ+1)+1}$ of radius $\rho_0$ tangent to $\bar{\gamma}$ at $\bar{\gamma}(0)$ and $\bar{\gamma}(L)$. The remaining argument is identical to previous case, so we omit it here.
\end{proof}

\begin{remark}\upshape
As an additional information, one may have noted that by doing the proof of Proposition \ref{propindex} more carefully, it is possible to construct a homotopy from $\bar{\gamma}$ to the critical curve that we have constructed. In fact, if $\gamma\in\mathcal{C}_0$ is a critical curve, $\cindex_0(\gamma)$ is the index of critical curve as defined on page \pageref{defnq}. However it is unnecessary for the result we are going to prove. Now with Proposition \ref{propindex} in hand, we are ready to define the application $\bar{G}$ in the next subsection.
\end{remark}

\subsection{Defining the map $G_\epsilon$}

We follow Step 5 of the construction of $G$ on page \pageref{step4}. First from Proposition \ref{propindex}, we take an $\epsilon$ such that $\cindex_\epsilon(\gamma)\leq n_\mQ$ for all $\gamma\in\mathcal{C}_0$. Now we recall that the sequence $(x_k)_{k\in\mathbb{N}}$ is given by the formulas in (\ref{eqextract}), and defined for this $\epsilon$. We call the sequence $(z_k)_{k\in\mathbb{N}}$ a \emph{good subsequence} of $(x_k)_{k\in\mathbb{N}}$ if it satisfy the following three conditions:
\begin{enumerate}\label{goodsub}
\item There exists an increasing function $\bar{k} : \mathbb{N}\to\mathbb{N}$ such that $ z_i = x_{\bar{k}(i)}$, for all $i\in\mathbb{N}$.
\item The function $\bar{k}$ also satisfy: $ \bar{k}(i)\mod 4 = i\mod 4$.
\item For sequence $(z_k)_{k\in\mathbb{N}}$ if, for some $k\in\mathbb{N}$, $k>0$ and $z_k = z_{k+1} = z_{k+2} = 0$ are satisfied, then, for all integer $l$ such that $l>k$, it holds that $z_l=0$.
\end{enumerate}

\noindent We define the \emph{length} and the \emph{sign} of a good subsequence as follows:
\begin{enumerate}
\item If $ z_k = 0$ for all $k\in\mathbb{N}$, we set $\length \big((z_k)_{k\in\mathbb{N}}\big) = 0$ and $\sign \big((z_k)_{k\in\mathbb{N}}\big) = 0$.
\item If the sequence is not zero and $z_0\neq 0$ or $z_1\neq 0$, we set $\sign \big((z_k)_{k\in\mathbb{N}}\big) = +1$ and:
 $$\length \big((z_k)_{k\in\mathbb{N}}\big) = \left\lceil\frac{\max\{k\in\mathbb{N};z_k\neq 0\}}{2}\right\rceil . $$
\item If the sequence is not zero and $z_0=z_1=0$, we set $\sign \big((z_k)_{k\in\mathbb{N}}\big) = -1$ and:
$$\length \big((z_k)_{k\in\mathbb{N}}\big) = \left\lceil\frac{\max\{k\in\mathbb{N};z_k\neq 0\}}{2}\right\rceil - 1 . $$
\end{enumerate}
\noindent From this definition, the sequence $ (x_k)_{k\in\mathbb{N}} $ is a good subsequence of itself (because of Properties (2) and (3) on page \pageref{step4}). We use the notation $\mathcal{G}_k$ to represent the set of all good subsequences of $(x_k)_{k\in\mathbb{N}}$ that have length $k$. Now we define the sequence $(y_j)_{j\in\mathbb{N}}$ given by the formula below:
\begin{equation} \label{Gcoord}
y_j(\gamma) = \sum_{(z_k)\in\mathcal{G}_{j}} \left(\sign\big((z_k)_{k\in\mathbb{N}}\big) \prod_{z_k\neq 0}z_k \right), \quad j\in\mathbb{N}.
\end{equation}
The application $G_\epsilon : \mathcal{C}_0\to\mathbb{R}^{n_\mQ}$ is defined as:
$$ G_\epsilon(\gamma) = \big(y_1(\gamma), y_2(\gamma),\ldots, y_{n_\mQ}(\gamma)\big), \text{ for all $\gamma\in\mathcal{C}_0$}. $$

\begin{lemma}\label{lemmabound} The application $G_\epsilon$ is continuous and $G_\epsilon(\gamma)\neq 0$ for all $\gamma\in\partial\mathcal{C}_0$.
\end{lemma}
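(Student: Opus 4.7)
The lemma has two parts, which I plan to prove separately: continuity of $G_\epsilon$ on $\mathcal{C}_0$, and non-vanishing on $\partial\mathcal{C}_0$. The continuous dependence of $v_\gamma$ on $\gamma$ established earlier in this subsection immediately gives that $\Xi_0(\epsilon,\gamma)$ and $\Xi_1(\epsilon,\gamma)$ depend continuously on $\gamma$ in Hausdorff distance. On the open subset of $\mathcal{C}_0$ where $\gamma$ meets $\partial\Xi_0$ and $\vt_\gamma$ meets $\partial\Xi_1$ transversally, the combinatorial data prescribed in Step 4 is locally constant, each entry $x_k$ (a length of a $\vt_\gamma$-arc or an area of an enclosed region) varies continuously in the $C^1$ topology, and hence so does each coordinate $y_j$ of $G_\epsilon$.

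The delicate part of continuity is at bifurcation curves where some intersection is tangential. Near such a $\gamma$, some $x_k$ transitions between $0$ and a strictly positive value, and the set $\mathcal{G}_j$ of good subsequences gains or loses members. I plan to show that the signed sum
\[
y_j(\gamma) = \sum_{(z_k) \in \mathcal{G}_j} \sign\bigl((z_k)\bigr) \prod_{z_k \neq 0} z_k
\]
is invariant across such transitions: any newly admitted good subsequence must either incorporate the small $x_k$ as a factor in its product (so its contribution vanishes in the limit) or be a reindexing of an existing subsequence compatible with the parity-mod-4 condition (so it corresponds to a pre-existing term); conversely, any subsequence that ceases to be admissible does so exactly because one of its factors is the vanishing $x_k$, and its product was already going to zero. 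A short combinatorial bookkeeping pairing the contributions on either side of each bifurcation will complete the argument.

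For non-vanishing on $\partial\mathcal{C}_0$, I distinguish the two ways $\gamma$ can lie on the boundary. Either $\gamma$ attains the containment condition with equality, touching $\partial\bar{B}_{\rho_0}(\Theta_{1,\gamma})$ and producing a non-trivial excursion into $\Xi_0$ with positive area $\mathcal{A}_k^\pm$, yielding a non-zero $x_{4k+1}$ or $x_{4k+3}$; or the hemispheric condition saturates and $\vt_\gamma$ makes a non-trivial arc in $\Xi_1$, yielding a non-zero $x_{4k}$ or $x_{4k+2}$. Combined with the tangent behavior at the endpoints imposed by the hypothesis on $\mQ$ in the main theorem, the sequence $(x_k)(\gamma)$ admits itself as a good subsequence of length at least one, with a definite sign. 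The crucial observation is that the sign of any good subsequence depends only on whether $z_0 = z_1 = 0$, a property preserved under reindexings satisfying the parity-mod-4 constraint of condition~(2), so every good subsequence contributing to the relevant $y_j$ has the same sign. No cancellation occurs and $y_j(\gamma) \neq 0$.

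The main obstacle is the combinatorial bookkeeping required for the bifurcation analysis in the continuity proof: each possible transition in the subdivisions (a component of $J_1^\pm$ or $J_2^\pm$ appearing, splitting, merging, or disappearing) must be matched with a corresponding modification of $\mathcal{G}_j$, and the contributions must be paired consistently on either side of the transition. Once this case analysis is completed, the non-vanishing reduces to the sign-consistency observation above and a direct examination of which $x_k$ must be non-zero on $\partial\mathcal{C}_0$.
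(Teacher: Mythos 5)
Your outline for non-vanishing is close in spirit to the paper's: the paper likewise observes that $\gamma\in\partial\mathcal{C}_0$ forces either $\gamma\cap\Xi_0\neq\emptyset$ or $\vt_\gamma\cap\Xi_1^+\neq\emptyset$ and $\vt_\gamma\cap\Xi_1^-\neq\emptyset$, concludes $1\leq\cindex_\epsilon(\gamma)\leq n_\mQ$ via Proposition~\ref{propindex}, and then asserts $y_{i_\gamma}\neq 0$ ``by a direct computation.'' Your sign-consistency observation is one reasonable way to realize that direct computation, though it deserves a bit more care: the sign of a good subsequence depends on whether $z_0=z_1=0$, which depends on which indices $\bar{k}(0),\bar{k}(1)$ the subsequence picks, not only on the original $(x_k)$; the reason cancellation does not occur at the top index $i_\gamma$ is really a parity/position constraint --- a subsequence of the ``wrong'' sign would have to place its last nonzero term at a position exceeding the largest nonzero index of $(x_k)$, which is incompatible with $\bar{k}$ being increasing and $\equiv i\pmod 4$.

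For continuity, however, you are heading down a genuinely different and substantially harder road than the paper. The paper does not attempt the bifurcation-by-bifurcation bookkeeping that you identify as your main obstacle. Instead it \emph{augments} the sequence: it fixes a decomposition $\Xi_0^\pm(\gamma)=\bigsqcup_i A_{\bullet}$, $\Xi_1^\pm(\gamma)=\bigsqcup_i A_{\bullet}$ into pieces whose indices do not shift as $\gamma$ varies, defines $\bar{x}_{\gamma,i}=\area(A_i)$ (resp.\ the analogous lengths), and observes that the sum in Equation~(\ref{Gcoord}) computed from the reduced sequence $(x_k(\gamma))$ equals the same sum computed from the augmented sequence $(\bar{x}_{\gamma,i})$. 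Since the augmented entries occupy fixed slots, there is no appearance/disappearance/reindexing of good subsequences as $\gamma$ moves, and the difference $y_j(\alpha)-y_j(\beta)$ is bounded term by term over a fixed combinatorial index set $\bar{\mathcal{G}}_j$ by the $\area(A_i\bigtriangleup B_i)$ quantities, giving $\mathcal{O}(\delta)$. This sidesteps exactly the case analysis you flag. Your approach could in principle be made to work, but you would have to confront the following concrete difficulty that your sketch glosses over: when a connected component of $J_1^\pm$ or $J_2^\pm$ is born, dies, splits, or merges, the reduced indices of all later nonzero $x_k$ shift, and the induced map between ``good subsequences before'' and ``good subsequences after'' is not simply a reindexing compatible with the mod-4 rule --- some subsequences that previously included a chain of zero entries (to satisfy rule (3)) will need to be re-paired non-trivially, and verifying that the signed products match in the limit is precisely the augmented-sequence computation in disguise. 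It is simpler to set up the fixed augmented indexing from the start, as the paper does.
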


\begin{proof} We start verifying the continuity. For this, it is sufficient to check that each coordinate $y_j(\gamma)$ defined by Formula (\ref{Gcoord}) is a continuous function. We will broaden the definition of the concept of \emph{good subsequence} and use some new notations. For each $j\in\mathbb{N}$, we consider the following set:
$$\mathcal{G}_j = \left\{ (k_0,k_1,\ldots,k_l); k_i\in\mathbb{N} \text{, $(k_i)$ is strictly increasing sequence and satisfies the properties below} \right\} .$$
\begin{enumerate}
\item $k_i \mod 4 \neq k_{i+1} \mod 4 $ for all $i \in \{1,2,\ldots,l\}$
\item We write $k_i= 4q_i+r_i$, with $q_i\in\mathbb{N}$ and $r_i\in\{0,1,2,3\}$. We define $\bar{\sigma}:\{0,1,2,3\}\to \{0,1\}$, with $\bar{\sigma}(0) = \bar{\sigma}(2) = 0$, $\bar{\sigma}(1) = \bar{\sigma}(3) = 1$ and $\sigma:\{0,1,2,3\}\times\{0,1,2,3\}\to\{0,1,2,3\}$ by the table below: 
\begin{center}
\begin{tabular}{ | c | c | c | c | c | }
\hline
$\sigma(a,b)$ & $0$ & $1$ & $2$ & $3$ \\
\hline
$0$ & $0$ & $1$ & $2$ & $3$ \\
\hline
$1$ & $3$ & $0$ & $1$ & $2$ \\
\hline
$2$ & $2$ & $3$ & $0$ & $1$ \\
\hline
$3$ & $1$ & $2$ & $3$ & $0$ \\
\hline
\end{tabular}
\end{center}
\noindent Each line of the table represents a value for $a$ and each column is a value for $b$. We define the \emph{length} of a finite sequence $(k_i)$ by:
$$ \length ((k_i)) = \left\lceil\frac{\bar{\sigma}(r_0)+\sum_{i=0}^{l-1} \sigma(r_i,r_{i+1})}{2}\right\rceil .$$
The second condition is that $\length((k_i))=j$. We also define $\sign (k_i) = +1 $ if $r_i= 0$ or $1$, otherwise we define $\sign (k_i) = -1$
\end{enumerate}
\noindent Also for any sequence $(x_i)_{i\in\mathcal{N}}$ we may define its \emph{good subsequence} using exactly the same conditions as on page \pageref{goodsub}. 

Consider two curves $\alpha$ and $\beta$ in $\mathcal{C}_0$ such that $d(\alpha,\beta)<\delta$. This implies that sets $\Xi_0^+(\alpha)$, $\Xi_0^-(\alpha)$, $\Xi_1^+(\alpha)$, $\Xi_1^-(\alpha)$ and $\Xi_0^+(\beta)$, $\Xi_0^-(\beta)$, $\Xi_1^+(\beta)$, $\Xi_1^-(\beta)$ are close to each other, respectively, in sense that their exclusion is small. To be precise about the last statement we can rewrite these sets into:
\begin{align*}
\Xi_0^+(\alpha) = \bigsqcup_{i\in\mathbb{N}}A_{4i+1}, \quad \Xi_1^+(\alpha) = \bigsqcup_{i\in\mathbb{N}}A_{4i}, \quad \Xi_0^-(\alpha) = \bigsqcup_{i\in\mathbb{N}}A_{4i+3} \quad \text{and} \quad \Xi_1^-(\alpha) = \bigsqcup_{i\in\mathbb{N}}A_{4i+2}.\\
\Xi_0^+(\beta) = \bigsqcup_{i\in\mathbb{N}}B_{4i+1}, \quad \Xi_1^+(\beta) = \bigsqcup_{i\in\mathbb{N}}B_{4i}, \quad \Xi_0^-(\beta) = \bigsqcup_{i\in\mathbb{N}}B_{4i+3} \quad \text{and} \quad \Xi_1^-(\beta) = \bigsqcup_{i\in\mathbb{N}}B_{4i+2}.
\end{align*}
So that the area of $A_i\bigtriangleup B_i$ is small for each $i\in\mathbb{N} $. Now we define sequences $(\bar{x}_{\alpha,i})_{i\in\mathbb{N}}$ and $(\bar{x}_{\beta,i})_{i\in\mathbb{N}}$, with $\bar{x}_{\alpha,i} = \area (A_i) $ and $\bar{x}_{\beta,i} = \area (B_i)$. Note that these sequences are the augmented version of the original sequences $(x_i(\alpha))_{i\in\mathbb{N}}$ and $(x_i(\beta))_{i\in\mathbb{N}}$, in the sense that we have the following 2 equations, for all $j\in\mathbb{N}$:
\begin{align*}
\sum_{(z_k)\in\mathcal{G}_{j}(\alpha)} \left(\sign\big((z_k)_{k\in\mathbb{N}}\big) \prod_{z_k\neq 0}z_k \right) = & \sum_{(z_k)\in\mathcal{G}_{\alpha,j}} \left(\sign\big((z_k)_{k\in\mathbb{N}}\big) \prod_{z_k\neq 0}z_k \right). \\
\sum_{(z_k)\in\mathcal{G}_{j}(\beta)} \left(\sign\big((z_k)_{k\in\mathbb{N}}\big) \prod_{z_k\neq 0}z_k \right) = & \sum_{(z_k)\in\mathcal{G}_{\beta,j}} \left(\sign\big((z_k)_{k\in\mathbb{N}}\big) \prod_{z_k\neq 0}z_k \right).  
\end{align*}

\noindent In the equations above, the sets $\mathcal{G}_j(\alpha)$, $\mathcal{G}_j(\beta)$ stand for the set of good subsequences of length $j$ of $(\bar{x}_{\alpha,i})$, $(\bar{x}_{\beta,i})$, respectively, the sets $\mathcal{G}_{\alpha,j}$, $\mathcal{G}_{\beta,j}$ stand for the set of good subsequences of length $j$ of $(x_i(\alpha))$, $(x_i(\beta))$, respectively. 

So these equations imply:
\begin{align*}
y_j(\alpha)-y_j(\beta) & = \sum_{(z_k)\in\mathcal{G}_{j}(\alpha)} \left(\sign\big((z_k)_{k\in\mathbb{N}}\big) \prod_{z_k\neq 0}z_k \right) - \sum_{(z_k)\in\mathcal{G}_{j}(\beta)} \left(\sign\big((z_k)_{k\in\mathbb{N}}\big) \prod_{z_k\neq 0}z_k \right) \\
& = \sum_{(k_i)\in\bar{\mathcal{G}}_{j}} \sign(k_0) \left(\lambda\big((k_i),(\bar{x}_{\alpha,i})\big)- \lambda\big((k_i),(\bar{x}_{\beta,i})\big) \right) \\
& \leq \sum_{(k_i)\in\bar{\mathcal{G}}_{j}} \left|\lambda\big((k_i),(\bar{x}_{\alpha,i})\big)- \lambda\big((k_i),(\bar{x}_{\beta,i})\big) \right| \\
& \leq \mathcal{O}(\delta),
\end{align*}
\noindent where $\mathcal{O}:(0,\delta_1)\to \mathbb{R}$ is a function such that $\lim_{\delta\to 0}\mathcal{O}(\delta) = 0$.

For the second part of assertion, note that if $\gamma\in\partial\mathcal{C}_0$ then
$$ \gamma\cap\Xi_0 \neq \emptyset \quad \text{or} \quad \left\{\begin{array}{l} \vt_\gamma\cap\Xi_1^+\neq\emptyset \\
\vt_\gamma\cap\Xi_1^-\neq\emptyset
\end{array}\right.. $$
\noindent So the sequence $(x_j(\gamma))_{j\in\mathbb{N}}$ constructed is so that the $\cindex_\epsilon(\gamma)\geq 1$ and by Proposition \ref{propindex} we have $\cindex_\epsilon(\gamma)\leq n_\mQ$. We denote $i_\gamma = \cindex_\epsilon(\gamma)$. By a direct computation we obtain $ y_{i_\gamma} \neq 0 $. This implies $G_\epsilon(\gamma)=(y_1,\ldots,y_{i_\gamma}, 0, \ldots, 0)\neq 0$.
\end{proof}

As an immediate consequence of Lemma \ref{lemmabound}, we have:
\begin{corollary}\label{endg} There exists a $R_0>0$ such that $|G_\epsilon(\gamma)|>R_0$ for all $\gamma\in\partial\mathcal{C}_0$.
\end{corollary}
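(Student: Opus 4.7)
The plan is to derive the corollary from Lemma \ref{lemmabound} by a standard contradiction and compactness argument. Suppose no such $R_0>0$ exists. Then for every $n\in\mathbb{N}$ there is a curve $\gamma_n\in\partial\mathcal{C}_0$ with $|G_\epsilon(\gamma_n)|<\frac{1}{n}$, so $|G_\epsilon(\gamma_n)|\to 0$. The goal is to extract a subsequential limit $\bar\gamma\in\partial\mathcal{C}_0$ and contradict the nonvanishing part of Lemma \ref{lemmabound}.

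The first step is to apply Lemma \ref{convergsubseq} to the sequence $(\gamma_n)$. Every $\gamma\in\mathcal{C}_0\subset\Lspace$ has geodesic curvature bounded by $\kappa_0$, lies inside the bounded region $\bar B_{\rho_0}(\Theta_{1,\gamma})$ (which is uniformly contained in a hemisphere of $\mathbb{S}^2$), and therefore has length bounded above by a constant $L_0$ depending only on $\rho_0$ and the diameter of that hemispheric region. Thus the three hypotheses of Lemma \ref{convergsubseq} apply uniformly in $n$, and we may pass to a subsequence (denoted again by $\gamma_n$) converging in the $C^1$ topology to some $\bar\gamma\in\mathcal{I}(\mI,\mQ)$ with $\kappa^\pm_{\bar\gamma}\in[-\kappa_0,\kappa_0]$.

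The second step is to verify that $\bar\gamma\in\partial\mathcal{C}_0$. The hemispheric condition and the containment $\gamma_n(J)\subset \bar{B}_{\rho_0}(\Theta_{1,\gamma_n})$ pass to the limit since $\gamma\mapsto v_\gamma$ is continuous (this was proved immediately after the definition of $v_\gamma$) and both conditions are closed. Moreover each $\gamma_n\in\partial\mathcal{C}_0$ satisfies a boundary condition of the form $\gamma_n(s_n)\in\partial\bar{B}_{\rho_0}(\Theta_{1,\gamma_n})$ for some $s_n\in J$; passing to a further subsequence so that $s_n\to \bar s$, the corresponding closed condition holds for $\bar\gamma$, placing it in $\partial\mathcal{C}_0$.

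Finally, Lemma \ref{lemmabound} supplies the continuity of $G_\epsilon$, which gives $G_\epsilon(\bar\gamma)=\lim_{n\to\infty}G_\epsilon(\gamma_n)=0$. But the same lemma asserts $G_\epsilon(\bar\gamma)\neq 0$ since $\bar\gamma\in\partial\mathcal{C}_0$, a contradiction. The principal obstacle will be the delicate issue that the limit $\bar\gamma$ might a priori leave $\Lspace$ and enter $\cLspace\smallsetminus\Lspace$ (by developing a curvature equal to $\pm\kappa_0$ at some point), in which case the continuity of $G_\epsilon$ and the description of $\partial\mathcal{C}_0$ must be interpreted with some care; this is handled by noting that the formulas defining $G_\epsilon$ depend only on the sets $\Xi_0,\Xi_1$ and on lengths and areas, all of which are well-defined and continuous under $C^1$ limits inside $\cLspace$.
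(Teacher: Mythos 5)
Your argument is exactly what the paper means by calling this ``an immediate consequence'' of Lemma \ref{lemmabound}: compactness of $\partial\mathcal{C}_0$ (obtained via Lemma \ref{convergsubseq}, which the paper invokes for the same purpose in the proof of Proposition \ref{propindex}) combined with the continuity and nonvanishing of $G_\epsilon$ on $\partial\mathcal{C}_0$ forces a positive lower bound, and your contradiction framing is equivalent to taking the minimum on a compact set. Your closing remark about limits possibly landing in $\cLspace\smallsetminus\Lspace$ identifies a subtlety the paper leaves implicit, and your resolution (that the length/area formulas defining $G_\epsilon$ extend continuously under $C^1$-limits within $\cLspace$) is the appropriate one.
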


So take $R_0$ from the lemma above, we define $\bar{G}:\mathcal{C}_0\to\bar{B}_1(0)\subset\mathbb{R}^{n_\mQ}$ by setting:
$$\bar{G}(\gamma)=\left\{\begin{array}{ll} \dfrac{1}{R_0} G_\epsilon(\gamma) \quad & \text{if $ |G_\epsilon(\gamma)|\leq R_0 $.} \\[1.2em] 
 \dfrac{1}{|G_\epsilon(\gamma)|} G_\epsilon(\gamma) \quad & \text{if $ |G_\epsilon(\gamma)|> R_0 $.} 
\end{array}\right.$$

\noindent Consider the surjective map $p:\bar{B}_1(0)\to\mathbb{S}^{n_\mQ}$, defined as:
 $$ p\left(a_1,a_2,\ldots,a_{n_\mQ}\right) = \left\{
 \begin{array}{lrr} \dfrac{1}{(n_\mQ)^\frac{1}{2}}\left(\sin (\pi a_1),\sin(\pi a_2),\ldots,\sin(\pi a_{n_\mQ}),\left[\sum_{i=1}^{n_\mQ}\cos^2(\pi a_i)\right]^{\frac{1}{2}}\right) \mkern-188mu &   & \\  & \text{if $ \sum_{i=1}^{n_\mQ}\cos^2 (\pi a_i) \geq 0 $. }& \\[1.2em]
 \dfrac{1}{(n_\mQ)^\frac{1}{2}}\left(\sin (\pi a_1),\sin(\pi a_2),\ldots,\sin(\pi a_{n_\mQ}),-\left[-\sum_{i=1}^{n_\mQ}\cos^2 (\pi a_i)\right]^{\frac{1}{2}}\right) \mkern-188mu&   & \\  & \text{if $ \sum_{i=1}^{n_\mQ}\cos^2(\pi a_i) < 0 $. }&
 \end{array}\right.
 $$

\noindent We put $G:\Lspace\to\mathbb{S}^{n_\mQ}$ as:
\begin{equation*}
G(\gamma)=\left\{\begin{array}{ll} \left(p\circ\bar{G}\right)(\gamma) \quad & \text{if $ \gamma\in\mathcal{C}_0 $.} \\ 
 G(\gamma) = (0,0,\ldots,0,-1) \quad & \text{if $\gamma\in\Lspace\smallsetminus\mathcal{C}_0 $.} 
\end{array}\right.
\end{equation*}
\noindent This is the definition for $G$, which concludes the second part of the proof of the main theorem.

\newpage
\section{Non-triviality of maps $F$ and $G$}\label{sec:gcircf}

This is the last part of the proof of the main theorem. We shall verify that the composition $G\circ F:\mathbb{S}^{n_\mQ}\to\mathbb{S}^{n_\mQ}$ has degree $1$, with the applications $F$ and $G$ defined in Sections \ref{sec:mapF} and \ref{sec:mapG}. Thus $[F]\in\ho_{n_\mQ}\big(\Lspace\big)$ is a non-zero element. Since we saw in the end of the Section \ref{sec:mapF} that $[\boldsymbol{i}\circ F]$ is trivial in $\ho_k\big(\mathcal{I}(\mQ)\big)$, this implies that $\Lspace$ is homotopically equivalent to $\Omega\mathbb{S}^3\vee\mathbb{S}^{n_\mQ}\vee E $ (where $E$ is a space yet to be discovered).

\subsection{The degree of $G\circ F:\mathbb{S}^{n_\mQ}\to\mathbb{S}^{n_\mQ}$}

First we use some of notations as in the last two Sections. We recall the definition of the application $F$ in Section \ref{sec:mapF}. Take an $a\in\mathbb{R}^{n_\mQ}$, $a=(a_1,a_2,\ldots,a_{n_\mQ})$. $\gamma = \bar{F}(a)$ is defined as a concatenation of $(n_\mQ+1)$ curves $\alpha_0,\ldots,\alpha_{n_\mQ}$. We also recall that in Section \ref{sec:mapG}, for $\gamma\in\mathcal{C}_0$, in order to obtain $G(\gamma)$, we constructed sequences $x=\big(x_i(\gamma)\big)_{i\in\mathbb{N}}$ and $y=\bar{G}\big(\bar{F}(a)\big)=\big(y_i(\gamma)\big)_{n\in\mathbb{N}}$. 

Note that if $a\in\mathbb{R}^{n_\mQ}$ is such that $\gamma\in\Lspace\smallsetminus\mathcal{C}_0$, then $G(\gamma)=(0,0,\ldots,0,-1)$. Thus we shall focus on $a\in\mathbb{R}^{n_\mQ}$ such that $\gamma\in\mathcal{C}_0$. From now on, we will assume that $\gamma\in\mathcal{C}_0$. We extract from $F(a)$ a ``non-reduced'' sequence $w=(w_0,w_1,\ldots,w_{8n_\mQ-1})$ such that its reduced version is $z$. The sequence $w$ is defined as follows.

\vspace{\cvspace}
\noindent \textbf{Step 1:} We consider the sets $\Xi_0^+$, $\Xi_0^-$, $\Xi_1^+$ and $\Xi_1^-$ defined for $\gamma$. For each $k\in\{0,\ldots,n_\mQ\}$, we consider the following subsets of $\mathbb{R}$, $\tj_{k,0}$, $\tj_{k,1}$, $\tj_{k,2}$, $\tj_{k,3}$, $\tj_{k,4}$ and $\tj_{k,5}$, defined by the following properties:
\begin{enumerate}
\item $\tj_{k,0}<\tj_{k,1}<\tj_{k,2}<\tj_{k,3}<\tj_{k,4}<\tj_{k,5}$.
\item $\alpha_k\big(\tj_{k,0}\big)=\Xi_1^+\cap\big(\img (\vt_{\alpha_k})\big)$ or $\emptyset$, $\alpha_k\big(\tj_{k,1}\big)=\Xi_0^+\cap\big(\img (\alpha_k)\big)$ of $\emptyset$, $\alpha_k\big(\tj_{k,2}\big)=\Xi_1^-\cap\big(\img (\vt_{\alpha_k})\big)$, $\alpha_k\big(\tj_{k,3}\big)=\Xi_0^-\cap\big(\img (\alpha_k)\big)$, $\alpha_k\big(\tj_{k,4}\big)=\Xi_1^+\cap\big(\img (\vt_{\alpha_k})\big)$ or $\emptyset$ and $\alpha_k\big(\tj_{k,5}\big)=\Xi_0^+\cap\big(\img (\alpha_k)\big)$ or $\emptyset$. 
\item If $\tj_{k,2}$ and $\tj_{k,3}$ are both empty sets then $\tj_{k,4}$ and $\tj_{k,5}$ are both empty.
\end{enumerate}
The existence of the sets with 3 properties above is due to the following proposition:
\begin{proposition} For each $k\in\{0,\ldots,n_\mQ\}$, the following 2 cases cannot happen for $\alpha_k$:
\begin{enumerate}
\item There exist numbers $s_1<s_2<s_3$ such that the next 3 properties are satisfied: 
\begin{enumerate}
\item $\alpha_k(s_1)\in\Xi^+_0$ or $\vt_{\alpha_k}(s_1)\in\Xi^+_1$.
\item $\alpha_k(s_2)\in\Xi^-_0$ or $\vt_{\alpha_k}(s_2)\in\Xi^-_1$.
\item $\alpha_k(s_3)\in\Xi^+_0$ or $\vt_{\alpha_k}(s_3)\in\Xi^+_1$.
\end{enumerate}
\item There exist numbers $s_1<s_2<s_3$ such that the next 3 properties are satisfied: 
\begin{enumerate}
\item $\alpha_k(s_1)\in\Xi^-_0$ or $\vt_{\alpha_k}(s_1)\in\Xi^-_1$.
\item $\alpha_k(s_2)\in\Xi^+_0$ or $\vt_{\alpha_k}(s_2)\in\Xi^+_1$.
\item $\alpha_k(s_3)\in\Xi^-_0$ or $\vt_{\alpha_k}(s_3)\in\Xi^-_1$.
\end{enumerate}
\end{enumerate}
\end{proposition}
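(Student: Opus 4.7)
The plan is to carry out a case analysis on the structure of $\alpha_k$ dictated by Construction \ref{propsegm}. Recall that, depending on the position of $x_{k+1}$ relative to $[\tilde{x}^1_{k+1},\tilde{x}^2_{k+1}]$, $\alpha_k$ is either a length-minimizing CSC curve in $\bar{\mathcal{L}}_{\rhot}(\mQ_k,\mQ_{k+1})$, or a concatenation of two short arcs of radius $\rhot$ at the endpoints together with a single large middle arc of radius $(2k+1)\rhot$ centered at a fixed point $c\in\{\pta,\ptb\}$. In either case $\alpha_k$ is a concatenation of at most three arcs of circles whose centers lie on the geodesic through $\pta,\ptb$.

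First I would show that the middle large arc, whose center $c$ is chosen once and for all within each $\alpha_k$, has its image entirely on one side of the plane $\mathcal{P}$ used to define $\Xi_0^\pm$. Since $\mathcal{P}$ contains the $v_\gamma$-axis and crosses $\gamma_0$, and $\pta,\ptb$ lie on opposite sides of $\mathcal{P}$ (by the hypothesis $\langle q_i,e_2\rangle>0$ and the definition of $v_\gamma$), the middle arc stays within the hemisphere of $\mathcal{P}$ containing $c$, so it meets $\Xi_0$ only in one component $\Xi_0^+$ or $\Xi_0^-$. For the tangent vector, the explicit formula $\zeta_{c,r}'(s)\propto -\sin s\cdot u_1(c)+\cos s\cdot u_2(c)$ shows that the tangent to any circular arc with axis $c$ traces a great circle perpendicular to $c$, and the thin strip $\Xi_1=\bar{B}_{\pi/2}(v_\gamma)\setminus B_{\pi/2-\epsilon}(v_\gamma)$ is itself a neighbourhood of the great circle perpendicular to $v_\gamma$; using the maximality property of $v_\gamma$ on $\mathcal{Q}_{\mQ,2}$ together with the hypothesis that the length-minimizing CSC curve sits inside $B_{\rhot}(\square p_1q_1q_2p_2)$, one verifies that $v_\gamma$ is close enough to $\pm c$ that the two perpendicular great circles intersect only on one side of $\mathcal{P}$. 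Hence the tangent of the middle arc also meets $\Xi_1$ in only one of $\Xi_1^\pm$.

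Next I would verify that each of the two short endpoint arcs (of radius $\rhot$) cannot oscillate across $\mathcal{P}$. These arcs have length bounded above by the short angular displacement dictated by the control curves $\spL_i,\spR_i$ (always less than $2\pi\sin\rhot$), whereas entering and then exiting the strip $\Xi_0$ or $\Xi_1$ on opposite sides of $\mathcal{P}$ would require crossing $\mathcal{P}$ twice inside the strip, hence would demand traversing angular distances of order $\rho_0$ or $\pi/2$. For $\epsilon$ small enough this is impossible within a single $\rhot$-arc, so each endpoint arc contributes intersections with $\Xi_0\cup\Xi_1$ only on one side. Concatenating the three sub-arc analyses, the signs visited by $\alpha_k$ in the combined strips can change sign across consecutive arcs but cannot return to the original sign, ruling out both $+,-,+$ and $-,+,-$ patterns.

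The main obstacle will be the tangent-vector analysis on the long middle arc: when $|x_{k+1}|$ is large the arc wraps its underlying circle many times, so the tangent re-traces the same great circle repeatedly, and a priori this great circle could meet both $\Xi_1^+$ and $\Xi_1^-$. The crux is therefore the quantitative estimate that the great circle perpendicular to $c$ stays on a single side of $\mathcal{P}$ within the strip $\Xi_1$. This is where the geometric hypotheses of Theorem \ref{mainTheorem} (convexity of $\square p_1q_1q_2p_2$, existence of a CSC curve in $B_{\rhot}(\square p_1q_1q_2p_2)$, and $\langle q_i,e_2\rangle>0$) play an essential role: together with the maximizing property \eqref{vgamma} of $v_\gamma$, they force $v_\gamma$ to be aligned with $\pm c$ up to an error small enough that the two great circles in question do not cross $\mathcal{P}$ inside $\Xi_1$. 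Once this quantitative alignment is established, the remaining verifications reduce to explicit trigonometric estimates on the images of the arcs.
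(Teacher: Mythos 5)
Your overall plan — decompose $\alpha_k$ into its constituent arcs and control each one — is the right direction, and it matches what the paper itself leaves to the reader ("directly verified by a careful examination" of Construction \ref{propsegm}). But two steps do not hold up.

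The crucial gap is the final sentence: "the signs visited by $\alpha_k$ \dots can change sign across consecutive arcs but cannot return to the original sign." This is a non sequitur. Knowing that each of the three sub-arcs meets $\Xi_0 \cup \Xi_1$ on only one side of $\mathcal{P}$ is perfectly consistent with the three arcs contributing, in order, $+$, $-$, $+$: nothing forces two consecutive one-sided arcs sharing a $C^1$-matching endpoint to be on the same side (the shared Frenet frame may sit exactly on or arbitrarily near $\mathcal{P}$), and the sign can flip both at the first junction and at the second. "At most one sign change" is precisely the content of the proposition, so you are assuming what you must prove. To close the gap you would need to read off, from the explicit orientation data ($\circlearrowleft$/$\circlearrowright$), the choice of center $\pta$ vs.\ $\ptb$, and the even/odd cases and $x_{i+1}\lessgtr\tilde{x}^j_{i+1}$ branches in Construction \ref{propsegm}, that the list of sides visited is always monotone — e.g.\ $+,-,-$ or $-,-,+$, never $+,-,+$.

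The one-sidedness claims themselves are also shaky. The circle $\zeta_{c,(2k+1)\rhot}$ with $c\in\{\pta,\ptb\}$ has its center at distance only $\rhot$ from $\mathcal{P}$ but radius $(2k+1)\rhot$, so it crosses $\mathcal{P}$; which side the selected \emph{arc} lies on depends on which arc it is, and with the loops in item (b) of Construction \ref{propsegm} it may wind and visit both sides unless the restriction to $\gamma\in\mathcal{C}_0$ (made explicit at the start of Section~\ref{sec:gcircf}, but not invoked in your sketch) is used to bound it. Likewise, "the two perpendicular great circles intersect only on one side of $\mathcal{P}$" is impossible: distinct great circles always meet in a pair of antipodal points, and since $\mathcal{P}$ passes through the origin these lie on opposite sides of $\mathcal{P}$ (or both lie on $\mathcal{P}$). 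What you really need is a bound on the \emph{angular extent} of the middle arc, so that its tangent great circle is traversed only through one of its two crossings of the annulus $\Xi_1$; the alignment of $v_\gamma$ with $\pm c$ does not by itself do this. In short, the geometric estimates are asserted, not proved, and even granting them the concatenation argument is incomplete.
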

This proposition may be directly verified by a careful examination on the definition of $\alpha_k$'s from the construction of $F$ on page \pageref{segment}.

\vspace{\cvspace}
\noindent \textbf{Step 2:} For each $k\in\{0,\ldots,n_\mQ\}$ and $i\in\{1,5\}$, we define $\mathcal{A}^+_{\alpha_k,i}$ as empty if $\tj_{k,i}$ is empty. Otherwise we define it as the the region on $\mathbb{S}^2$ delimited by $\alpha_k\big(\tj_{k,i}\big)$, $\partial\Xi_0^+$ and the only two circles centered at $v_\gamma$ passing through each of the endpoints of $\alpha_k|_{\tj_{k,i}}$. We also define, for each $k\in\{0,\ldots,n_\mQ\}$, the set $\mathcal{A}^+_{\alpha_k,3}$ as empty if $\tj_{k,3}$ is empty. Otherwise we define it as the the region on $\mathbb{S}^2$ delimited by $\alpha_k\big(\tj_{k,3}\big)$, $\partial\Xi_0^-$ and the only two circles centered at $v_\gamma$ passing through each of the two endpoints of $\alpha_k|_{\tj_{k,3}}$.

\vspace{\cvspace}
\noindent \textbf{Step 3:} We set $w_k=\length\big(\vt_{\alpha_k}|_{\tj_{k,0}}\big)$, $w_{k+1}=\area\big(\mathcal{A}_{\alpha_k,1}^+\big)$, $w_{k+2}=\length\big(\vt_{\alpha_k}|_{\tj_{k,2}}\big)$, $w_{k+3}=\area\big(\mathcal{A}_{\alpha_k,3}^-\big)$, $w_{k+4}=\length\big(\vt_{\alpha_k}|_{\tj_{k,4}}\big)$, $w_{k+5}=\area\big(\mathcal{A}_{\alpha_k,5}^+\big)$, $w_{k+6}=w_{k+7}=0$. This defines a sequence $w=(w_0,w_1,\ldots,w_{8n_\mQ-1})$. Let $\mathcal{G}_j(w)$ and $\mathcal{G}_j(x)$ be the sets of \emph{good subsequences} of $w$ and $x$ respectively (see definition on page \pageref{goodsub}), it is easy to check that:
\begin{equation}\label{eqxeqw}
y_j=\sum_{(z_k)\in\mathcal{G}_{j}(x)} \left(\sign\big((z_k)_{k\in\mathbb{N}}\big) \prod_{z_k\neq 0}z_k \right) =\sum_{(z_k)\in\mathcal{G}_{j}(w)} \left(\sign\big((z_k)_{k\in\mathbb{N}}\big) \prod_{z_k\neq 0}z_k \right), \quad j\in\{1,\ldots,n_\mQ\} .
\end{equation}

\noindent So to understand the behavior of $y_j$'s, we will study the sequence $w$ in the next step.


\vspace{\cvspace}
\noindent \textbf{Step 4:} For each $k\in\{0,1,\ldots,n_\mQ\}$ we say that the curve $\alpha_k$ is of type:
\begin{enumerate}
\item $+-$ if $w_{8k+0}> 0$ or $w_{8k+1}>0$, $w_{8k+2}>0$ or $w_{8k+3}>0$ and $w_{8k+i}=0$ for all $i\in\{4,5,6,7\}$.
\item $-+$ if $w_{8k+2}> 0$ or $w_{8k+3}>0$, $w_{8k+4}>0$ or $w_{8k+5}>0$ and $w_{8k+i}=0$ for all $i\in\{0,1,6,7\}$.
\item $+$ if $w_{8k+0}> 0$ or $w_{8k+1}>0$ and $w_{8k+i}=0$ for all $i\in\{2,3,4,5,6,7\}$.
\item $-$ if $w_{8k+2}> 0$ or $w_{8k+3}>0$ and $w_{8k+i}=0$ for all $i\in\{0,1,4,5,6,7\}$.
\item $0$ if $w_{8k+i}=0$ for all $i\in\{0,1,\ldots,7\}.$
\end{enumerate}
 Now we observe the behavior of $w$ based on the values of $a$. We have the following relations:
\begin{enumerate}
\item If $a_1\geq 0$ then $\alpha_0$ is of type $-+$, $+$ or $0$. Moreover, there exists a constant $C$ such that $a_1>C$ implies $\alpha_0$ is of type $-+$ or $+$.
\item If $a_1\leq 0$ then $\alpha_0$ is of type $+-$, $-$ or $0$. Moreover, there exists a constant $C$ such that $a_1<C$ implies $\alpha_0$ is of type $+-$ or $-$.
\item If, for $k\in\{1,\ldots,n_\mQ\}$, $a_{k-1}\geq 0$ and $a_{k}\geq 0$ then $\alpha_k$ is of type $+$ or $0$. Moreover, there exists a constant $C>0$ such that $\max (a_{k-1},a_k)>C$ implies $\alpha_k$ is of type $+$.
\item If, for $k\in\{1,\ldots,n_\mQ\}$, $a_{k-1}\geq 0$ and $a_{k}\leq 0$ then $\alpha_k$ is of type $+-$, $+$, $-$ or $0$. Moreover, there exists constants $C_1$ and $C_2$ such that, if $a_{k-1}>C_1$ then $\alpha_k$ is of type $+-$ or $+$ and if $a_k<C_2$ then $\alpha_k$ is of type $+-$ or $-$.
\item If, for $k\in\{1,\ldots,n_\mQ\}$, $a_{k-1}\leq 0$ and $a_{k}\geq 0$ then $\alpha_k$ is of type $-+$, $-$, $+$ or $0$. Moreover, there exists constants $C_1$ and $C_2$ such that, if $a_{k-1}<C_1$ then $\alpha_k$ is of type $-+$ or $-$ and if $a_k>C_2$ then $\alpha_k$ is of type $-+$ or $+$.
\item If, for $k\in\{1,\ldots,n_\mQ\}$, $a_{k-1}\leq 0$ and $a_{k}\leq 0$ then $\alpha_k$ is of type $-$ or $0$. Moreover, there exists a constant $C<0$ such that $\min (a_{k-1},a_k)<C$ implies $\alpha_k$ is of type $-$.
\item If $a_{n_\mQ}\geq 0$ then $\alpha_{n_\mQ}$ is of type $+-$, $+$ or $0$. Moreover, there exists a constant $C$ such that $a_{n_\mQ}>C$ implies $\alpha_{n_\mQ}$ is of type $+-$ or $+$.
\item If $a_{n_\mQ}\leq 0$ then $\alpha_{n_\mQ}$ is of type $-+$, $-$ or $0$. Moreover, there exists a constant $C$ such that $a_{n_\mQ}<C$ implies $\alpha_{n_\mQ}$ is of type $-+$ or $-$. 
\end{enumerate}

\vspace{\cvspace}
\noindent \textbf{Step 5:} Given two finite sequences $b=(b_0,b_1,\ldots,b_7)$ and $c=(c_0,c_1,\ldots,c_7)$. If both $b$ and $c$ are of type $+-$, we write $b\succeq c$ if $b_i\geq c_i$ for all $i\in\{0,1,2,3\}$. Moreover, if $b_j>c_j$ for some $j\in\{0,1,2,3\}$, we write $b\succ c$. 

If both $b$ and $c$ are of type $-+$, we also write $b\preceq c$ if $b_i\geq c_i$ for all $i\in\{2,3,4,5\}$. Moreover, if $b_j>c_j$ for some $j\in\{2,3,4,5\}$, we write $b\prec c$.
Given two numbers $a,\ba\in\mathbb{R}^{n_\mQ}$, $a=\big(a_1,a_2,\ldots,a_{n_\mQ}\big)$ and $\ba=\big(\ba_1,\ba_2,\ldots,\ba_{n_\mQ}\big)$ such that $F(a)$ and $F(\ba)$ are both of maximal index. If for some $i\in\{0,1,2,\ldots,n_\mQ\}$, $a_j=\ba_j$ for all $j\neq i$ and $a_i>\ba_i>0$ then:
\begin{align*}
\big(w_{8(i-1)}(a),w_{8(i-1)+1}(a),\dots,w_{8(i-1)+7}(a)\big) & \prec & & \big(w_{8(i-1)}(\ba),w_{8(i-1)+1}(\ba),\dots,w_{8(i-1)+7}(\ba)\big), \\
\big(w_{8i}(a),w_{8i+1}(a),\dots,w_{8i+7}(a)\big) & \succ & & \big(w_{8i}(\ba),w_{8i+1}(\ba),\dots,w_{8i+7}(\ba)\big), \\
\big(w_{8j}(a),w_{8j+1}(a),\dots,w_{8j+7}(a)\big) & \preceq & & \big(w_{8j}(\ba),w_{8j+1}(\ba),\dots,w_{8j+7}(\ba)\big) \\ 
& & & \quad \forall j\neq i, \text{$j$ with the same parity as $(i-1)$}, \\
\big(w_{8j}(a),w_{8j+1}(a),\dots,w_{8j+7}(a)\big) & \succeq & & \big(w_{8j}(\ba),w_{8j+1}(\ba),\dots,w_{8j+7}(\ba)\big) \\
& & & \quad \forall j\neq i, \text{$j$ with the same parity as $i$}.
\end{align*}
 On the other hand, if for some $i\in\{0,1,2,\ldots,n_\mQ\}$, $a_j=\ba_j$ for all $j\neq i$ and $a_i<\ba_i<0$ then:
\begin{align*}
\big(w_{8(i-1)}(a),w_{8(i-1)+1}(a),\dots,w_{8(i-1)+7}(a)\big) & \succ & & \big(w_{8(i-1)}(\ba),w_{8(i-1)+1}(\ba),\dots,w_{8(i-1)+7}(\ba)\big), \\
\big(w_{8i}(a),w_{8i+1}(a),\dots,w_{8i+7}(a)\big) & \prec & & \big(w_{8i}(\ba),w_{8i+1}(\ba),\dots,w_{8i+7}(\ba)\big), \\
\big(w_{8j}(a),w_{8j+1}(a),\dots,w_{8j+7}(a)\big) & \succeq & & \big(w_{8j}(\ba),w_{8j+1}(\ba),\dots,w_{8j+7}(\ba)\big) \\ 
& & & \quad \forall j\neq i, \text{$j$ with the same parity as $(i-1)$}, \\
\big(w_{8j}(a),w_{8j+1}(a),\dots,w_{8j+7}(a)\big) & \preceq & & \big(w_{8j}(\ba),w_{8j+1}(\ba),\dots,w_{8j+7}(\ba)\big) \\
& & & \quad \forall j\neq i, \text{$j$ with the same parity as $i$}.
\end{align*}

\vspace{\cvspace}
\noindent \textbf{Step 6:} By Sard's Theorem for $\bar{G}\circ F$, we take a regular value $y=(y_1,\ldots,y_{n_\mQ})$ close to the axis generated by the vector $(0,\ldots,0,1)$ such that $0<|y|<1$ (that implies $\gamma\in\mathcal{C}_0$, see page \pageref{endg}) and $y_{n_\mQ}\neq 0$. From Equation (\ref{Gcoord}):
$$ y_{n_\mQ} = \sum_{(z_k)\in\mathcal{G}_{n_\mQ}} \left(\sign\big((z_k)_{k\in\mathbb{N}}\big) \prod_{z_k\neq 0}z_k \right)\neq 0.$$

\noindent This implies that the curve is of type $+-+-+-\cdots$ or $-+-+-+\cdots$ of maximal index, that is $n_\mQ$. We shall prove that there is only one $a\in\mathbb{R}^{n_\mQ}$ such that $\bar{G}\circ F(a)=y$. We recall Equation (\ref{eqxeqw}), for each $j\in\{1,2,\ldots,n_\mQ\}$: 
\begin{align*}
y_j & = \sum_{(z_k)\in\mathcal{G}_{j}(w)} \left(\sign\big((z_k)_{k\in\mathbb{N}}\big) \prod_{z_k\neq 0}z_k \right) \\
& = \sum_{(k_i)\in\tilde{\mathcal{G}}_{j}} \left(\sign\big((k_i)\big) \prod_{i}w_{k_i}\big(a\big) \right) \\
& \simeq \sum_{(l_i)\in\mathcal{S}_{j}} \left((-1)^{l_1} \prod_{i}f_{l_i}\big(a_{l_i}\big) \right) \\
& \simeq \sum_{(l_i)\in\mathcal{S}_{j}} \left((-1)^{l_1} \prod_{i}a_{l_i} \right)
\end{align*}

The third and fourth lines above are a homotopic equivalence, where $\tilde{\mathcal{G}}_j$ is the set of all good subsequences of length $j$ of the sequence $(0,1,2,\ldots,8n_\mQ-1)$ and $S_j$ are strictly increasing subsequences of $(1,2,\ldots,n_\mQ)$ that have length $j$. $f_{k_i}$ is a non-decreasing function and there exists a $R>0$ such that $f_{k_i}(t)>R$ for all $t$ sufficiently large and $f_{k_i}(t)<-R$ for all $t$ sufficiently small. The third line follows from the properties in Step 5. Since there is only one $\bar{G}(F(a))=y$, we deduce that $\bar{G}\circ F$ has degree $1$, and thus $G\circ F$ has degree $1$.

\ifnum\aditcont=1 

\newpage
\section{Appendix: on the hypothesis of the main theorem and related topics} \label{sec:def}
We first present a criterion to determine for which $\mQ\in\SO$, the space $\Lspace$ is homotopically equivalent to $\mathcal{I}(\mI,\mQ)$. In the second subsection we give an explicit method to calculate the length of a CSC curve in a space of the type $\bar{\mathcal{L}}_\rho(\mI,\mQ)$. In the last subsection, we present a version of proof to show that the length-minimizing curve in $\cLspace$ is composed only by arc of circles of radius $\rho_0$ and geodesics. This is a property to Theorem \ref{perez} proven by F. Monroy-Pérez in \cite{monroy}.

\subsection{Cases in which $\boldsymbol{i}:\Lspace\hookrightarrow\mathcal{I}(\mI,\mQ)$ is a homotopical equivalence }\label{trivialresult}

Here we give a sufficient condition for the natural inclusion $\boldsymbol{i}:\Lspace\hookrightarrow\mathcal{I}(\mI,\mQ)$ to be a homotopical equivalence. More precisely, we show that for some $\mQ\in\SO$ there is an obvious way to add loops simultaneously and continuously to all curves in $\Lspace$ which is a sufficient condition for the equivalence. 

We reuse the notation for the rotation matrix defined in Equation (\ref{eqrotation}). For each $\theta\in(\rho_0,\pi-\rho_0)$, $\vartheta\in (\rho_0,\pi-\rho_0)$ and $\rho\in[0,2\pi)$ consider $\tilde{\mQ}\in\SO$ given by:
\begin{equation}\label{eqQ}
\tilde{\mQ}(\theta,\vartheta,\rho)=\left(\begin{array}{ccc} | & | & | \\
\big[(\rot_\rho(v)\big](p) & \big[(\rot_{\rho+\frac{\pi}{2}}(v)\big](q) & \big[(\rot_\rho(v)\big](p) \times \big[(\rot_{\rho+\frac{\pi}{2}}(v)\big](q) \\
| & | & |
\end{array} \right),
\end{equation}

\noindent where $v=(-\cos\theta,0,-\sin\theta)$, $p=(\cos(\theta+\vartheta),0,\sin(\theta+\vartheta))$ and $q=(-\sin\theta , 0, \cos\theta)$. Consider the subset of $\SO$ consisting of all matrices above, that is:

$$ \mathfrak{C} = \big\{\tilde{\mQ}(\theta,\vartheta,\rho)\in\SO  ;  \theta\in(\rho_0,\pi-\rho_0), \vartheta\in (\rho_0,\pi-\rho_0),\rho\in[0,2\pi)\big\}.$$

\begin{figure}[htb]
\centering
\begin{tikzpicture}
 \node[anchor=south west,inner sep=0] at (-4,0) {\includegraphics[width=8cm]{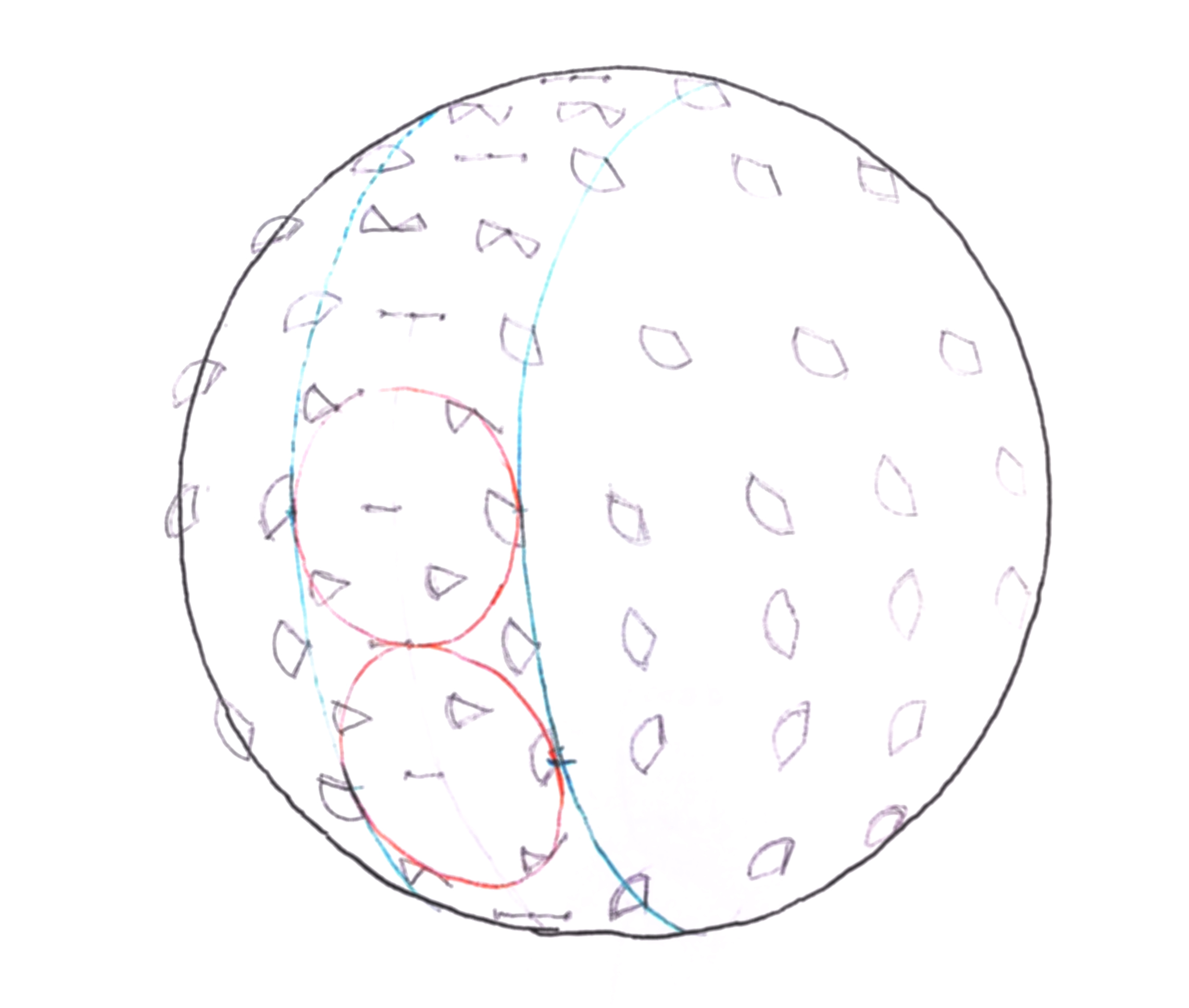}};
  \node[anchor=south west,inner sep=0] at (4,0) {\includegraphics[width=8cm]{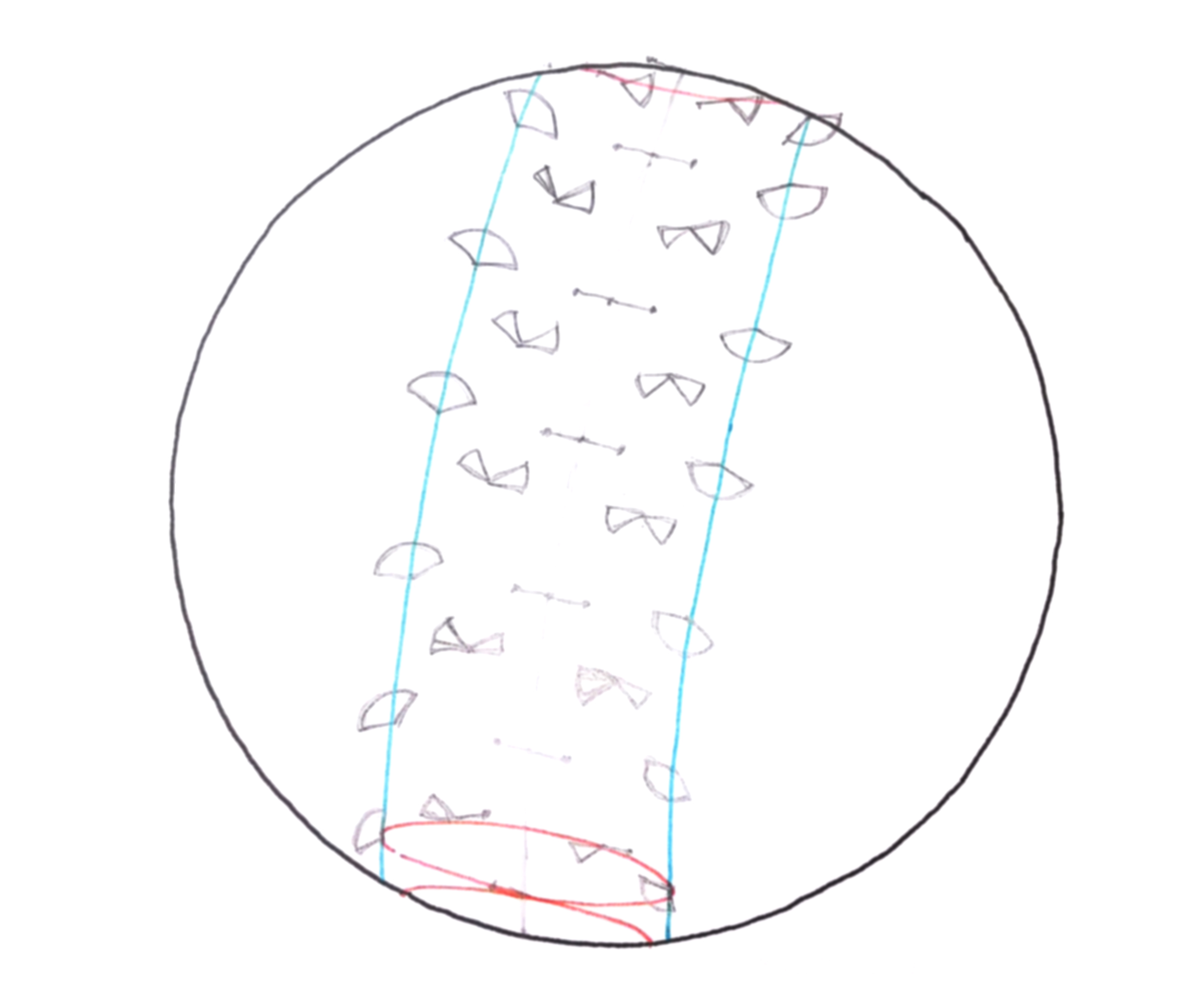}};
\end{tikzpicture}
\caption{This is an illustration of matrices in $\mathfrak{C}$ viewed as tangent vectors with the base point on the sphere. In the images above, the red circles are left and right tangent circles at $\mI$ of radius $\rho$, the fan shaped pieces on the surface of the sphere represents all possible tangent vectors that are in $\mathfrak{C}$.}
\label{fig:leques}
\end{figure}

We view $\SO$ as the unit tangent bundle on the sphere. For each $\tilde{\mQ}(\theta,\vartheta,\rho)\in\mathfrak{C}$ there is an obvious axis $v = (-\cos\theta,0,-\sin\theta)$ in which the unitary vector  field $V$, defined in $\mathbb{S}^2-\{v,-v\}$, tangent to the anticlockwise rotation around this axis satisfy the following three properties:
\begin{enumerate}
\item $V(e_1)= -e_2$.
\item $V(\tilde{\mQ}e_1) = \tilde{\mQ}e_2$.
\item $\rho < d(v,e_1),d(v,\tilde{\mQ}e_1) < \pi-\rho$.
\end{enumerate}

This property guarantees that we can attach the same arcs of circles on the endpoints of all curves in $\mathcal{L}_{\rho}(\tilde{\mQ})$ simultaneously. More details are in the demonstration of the proposition below.

\begin{proposition} If $\mQ\in\mathfrak{C}$ then the natural inclusion $\boldsymbol{i}:\Lspace\hookrightarrow\mathcal{I}(\mI,\mQ)$ is a homotopic equivalence.
\end{proposition}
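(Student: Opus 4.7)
The plan is to prove that $\boldsymbol{i}$ induces an isomorphism on all homotopy groups; since both $\Lspace$ and $\mathcal{I}(\mI,\mQ)$ have the homotopy type of CW complexes, this upgrades to a genuine homotopy equivalence. The geometric content of the hypothesis $\mQ\in\mathfrak{C}$ is that there exists a common axis $v\in\mathbb{S}^2$ such that the intrinsic radii $r_1=d(v,e_1)$ and $r_2=d(v,\mQ e_1)$ lie in $(\rho_0,\pi-\rho_0)$, and the prescribed initial and final tangent vectors are, up to sign, tangent to the $v$-parallels $C_1=\zeta_{v,r_1}$ and $C_2=\zeta_{v,r_2}$ at $e_1$ and $\mQ e_1$ respectively. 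Consequently full revolutions of $C_1$ and $C_2$ have constant geodesic curvatures $\pm\cot r_i\in(-\kappa_0,+\kappa_0)$, so such revolutions can be attached at the endpoints of any curve in $\Lspace$ without leaving $\Lspace$.

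Surjectivity of $\boldsymbol{i}_\ast:\pi_k(\Lspace)\to\pi_k(\mathcal{I}(\mI,\mQ))$ does not need the hypothesis $\mQ\in\mathfrak{C}$: given $g:\mathbb{S}^k\to\mathcal{I}(\mI,\mQ)$, set $\tilde g=g^{[\flat(2n)]}$ for $n$ sufficiently large. As in the proof of Proposition~\ref{prop64}, $\tilde g$ takes values in $\Lspace$, and Lemmas~\ref{lem61} and~\ref{lemspre} combine to give $g\simeq\boldsymbol{i}\circ\tilde g$ in $\mathcal{I}(\mI,\mQ)$, so the class $[g]\in\pi_k(\mathcal{I}(\mI,\mQ))$ is represented by $\boldsymbol{i}_\ast[\tilde g]$.

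For injectivity, the key step is to show that for every continuous $f:K\to\Lspace$ and every continuous $t_0:K\to(0,1)$ one has $f\simeq f^{[t_0\#2]}$ in $\Lspace$. Using $\mQ\in\mathfrak{C}$, I would first define a continuous operator $\Phi:\Lspace\to\Lspace$ that prepends one revolution of $C_1$ at $t=0$ and appends one revolution of $C_2$ at $t=1$; the operator is well defined because the attached revolutions match the prescribed initial and final tangent vectors, and $f\simeq\Phi(f)$ in $\Lspace$ by a shrinking homotopy that contracts each revolution to a point (feasible because the revolutions have curvature strictly interior to $(-\kappa_0,+\kappa_0)$). Then the two end revolutions of $\Phi(f)$ can be slid continuously inward along $f(p)$ until they collide at time $t_0(p)$, producing $f^{[t_0\#2]}$; the sliding stays in $\Lspace$ by the same curvature margin, mirroring the slide-loop mechanism that underlies Lemma~\ref{lem62}.

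Granted $f\simeq f^{[t_0\#2]}$ in $\Lspace$, iterating with Lemma~\ref{lem62} yields $f\simeq f^{[t_0\#2n]}$ in $\Lspace$ for every $n\geq1$. Injectivity then follows from Proposition~\ref{prop64}: if $\boldsymbol{i}\circ f$ is null-homotopic in $\mathcal{I}(\mI,\mQ)$ then some $f^{[t_0\#2n]}$ is null-homotopic in $\Lspace$, hence so is $f$. The main obstacle I foresee is the sliding step inside $\Lspace$: carrying out the sliding continuously in the parameter $p\in K$ and verifying that every intermediate curve remains in $\Lspace$ requires a careful geometric construction exploiting the explicit axis $v$ and circles $C_1,C_2$ provided by the hypothesis $\mQ\in\mathfrak{C}$; the open curvature margin $r_i\in(\rho_0,\pi-\rho_0)$ is precisely what guarantees the room at the endpoints needed to initiate this construction.
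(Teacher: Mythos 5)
Your proposal follows essentially the same route as the paper: use the axis $v$ and circles $C_1,C_2$ supplied by $\mQ\in\mathfrak{C}$ to grow loops at the curve's endpoints (the paper packages this into a single explicit one-parameter family $\gamma_\tau$ rotating the curve around $v$ while arcs of $C_1,C_2$ fill in the ends), then slide them to an interior time and invoke Proposition~\ref{prop64}. The sliding step you flag as the main obstacle is in fact unproblematic and does not need $\mQ\in\mathfrak{C}$: once the loops are great-circle loops, $\gamma^{[t\#2]}$ differs from $\gamma$ only by a reparametrization plus a spliced-in zero-curvature arc, so $\gamma^{[t\#2]}\in\Lspace$ for every $t$ automatically and the path $t\mapsto\gamma^{[t\#2]}$ stays in $\Lspace$; the hypothesis $\mQ\in\mathfrak{C}$ is needed only to initiate the loops at the endpoints, which is precisely what your operator $\Phi$ (resp. the paper's $\gamma_\tau$) accomplishes.
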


\begin{proof}
Given a $\mQ\in\mathfrak{C}$, there exists $\theta\in(\rho_0,\pi-\rho_0)$, $\vartheta\in (\rho_0,\pi-\rho_0)$ and $\rho\in[0,2\pi)$ such that Equation (\ref{eqQ}) holds. For each continuous map $f:K\to\Lspace$ and given a $p\in K$, denote $f(p)=\gamma:[0,1]\to\mathbb{S}^2$ and $\gamma\in\Lspace$. Consider the family of curves given by $\gamma_\tau$, where $\tau\in [0,+\infty)$ defined in the equation below.
$$ \gamma_\tau (t) = \left\{ \begin{array}{ll} \zeta_{-v,\theta,\circlearrowleft}(t\tau) \quad & t\in[0,1] \\
\big[\rot_\tau(-v)\big](\gamma(t-1)) \quad & t\in[1,2] \\
\zeta_{-v,\theta+\vartheta,\circlearrowright}((t-2)\tau) \quad & t\in[2,3]
\end{array}
\right. $$
The first and the last curve in the concatenation above are arc of circles, for definitions, refer Equations (\ref{circle1}) and (\ref{circle2}). These circles are reparametrized so that the endpoint of $\zeta_{-v,\theta,\circlearrowleft}$ is $\big[\rot_\tau(-v)\big](\gamma(0))$ and the start point of $\zeta_{-v,\theta+\vartheta,\circlearrowright}$ is $\big[\rot_\tau(-v)\big](\gamma(1))$.

Thus, for any integer $n$, we have a homotopy between $f$ and $f^{[t_0\# 2n]}$ in $\Lspace$. This homotopy is defined by $H_n:K\times [0,2n\pi+1]\to \Lspace$, 
$$H(p,\tau)=\left\{\begin{array}{ll} \gamma_\tau \quad & \tau\in [0,2n\pi] \\ 
\tilde{\gamma}_{n,\tau} \quad & \tau\in [2n\pi,2n\pi+1]
\end{array}\right. $$
where $\tilde{\gamma}_{n,\tau}$ is sliding the loops $\zeta_{-v,\theta,\circlearrowleft}$ and $\zeta_{-v,\theta+\vartheta,\circlearrowright}$ to the position $t_0$ then transform them into loops of great circles.

By Proposition \ref{prop64}, if $f:K\to\Lspace$ is homotopic to a constant in $\mathcal{I}(\mI,\mQ)$, then there exists an $n\geq 1$ such that $f^{[t_0\# 2n]}$ is homotopic to a constant in $\Lspace$. Since $f^{[t_0\# 2n]}$ and $f$ are homotopics in $\Lspace$. This proves that $f$ is homotopic to a constant in $\Lspace$.

Conversely, it is trivial that any $f:K\to\Lspace$ is homotopic to a constant in $\Lspace$ imply that $f$ is homotopic to a constant in $\mathcal{I}(\mI,\mQ)$. So the inclusion map $\boldsymbol{i}:\Lspace\hookrightarrow \mathcal{I}(\mI,\mQ)$ is a homotopic equivalence.
\end{proof}

\subsection{Oriented circles and some basic properties}

In the hypothesis of the main theorem we considered a CSC curve. The purpose of this subsection is to define concepts and basic properties to compute the length of a CSC curve.
\begin{definition}[Oriented circle] An \emph{oriented circle} on $\mathbb{S}^2$ is a curve given by 
$$\mathcal{C}(\eta) = M(\sin\xi,\cos\xi\cos\eta,\cos\xi\sin\eta) \text{, with $\eta\in [0,2\pi]$ and $M\in\SO$.}$$ 
Here $\xi\in(0,\pi)$ is the radius of the circle (measured on sphere).
\end{definition}

Provided an oriented circle of radius $\rho_0$, we look at the vector field generated by tangent vectors of geodesic segment of length $\pi$ starting tangentially at the circle. This vector field is defined in entire $\mathbb{S}^2$ except the two open discs of radius $\rho_0$. We denote $\mathcal{C}_1(v)$ the \emph{counter-clockwise} oriented circle centered at the point $v\in\mathbb{S}^2$ with radius $\rho_0$, in the same manner, we use $\mathcal{C}_2(v)$ to denote the \emph{clockwise} oriented circle centered at $v$ with radius $\rho_0$. The following proposition is straightforward.

\begin{proposition}[properties of oriented circles] Given two oriented circles of the same radius $0<r<\frac{\pi}{2}$ and the opposite orientation on sphere $\mathbb{S}^2$, $\mathcal{C}_1(p)$ and $\mathcal{C}_2(q)$, then:
\begin{itemize}
\item If $d\left(p,q\right)<2r$ then there is no geodesic tangent to both $\mathcal{C}_1(p)$ and $\mathcal{C}_2(q)$ with the same orientation as in both circles.
\item If $d\left(p,q\right)\geq 2r$ and $q\neq -p$ then there are two geodesics tangent to both $\mathcal{C}_1(p)$ and $\mathcal{C}_2(q)$ with the same orientation as in both circles.
\item If $q=-p$ then every geodesic tangent to $\mathcal{C}_1(p)$ is also tangent to $\mathcal{C}_2(q)$. In this case, $\mathcal{C}_1(p)=-\mathcal{C}_2(q)$.
\end{itemize}
Given two oriented circles of the same radius $0<r<\frac{\pi}{2}$ and the same orientation on $\mathbb{S}^2$, $\mathcal{C}_1(p)$ and $\mathcal{C}_1(q)$, then:
\begin{itemize}
\item If $d\left(p,-q\right)<2r$ then there is no geodesic tangent to both $\mathcal{C}_1(p)$ and $\mathcal{C}_1(q)$ with the same orientation as in both circles.
\item If $d\left(p,-q\right)\geq 2r$ and $q\neq p$ then there are two geodesics tangent to both $\mathcal{C}_1(p)$ and $\mathcal{C}_1(q)$ with the same orientation as in both circles.
\item If $q=p$ then every geodesic tangent to $\mathcal{C}_1(p)$ is also tangent to $\mathcal{C}_1(q)$. In this case, $\mathcal{C}_1(p)=\mathcal{C}_1(q)$.
\end{itemize}
\end{proposition}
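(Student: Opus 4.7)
The plan is to translate the tangency condition into an intersection problem between two small spherical circles. The key lemma is that for a counter-clockwise oriented circle $\mathcal{C}_1(c)$ of radius $r$, the set of positive poles $P\in\mathbb{S}^2$ of oriented great circles tangent to $\mathcal{C}_1(c)$ with matching orientation is exactly the spherical circle $\partial B_{\pi/2-r}(c)$; for the clockwise oriented $\mathcal{C}_2(c)$, the corresponding pole locus is $\partial B_{\pi/2-r}(-c)$. Here the positive pole of an oriented great circle $G$ is the unique $P\in\mathbb{S}^2$ such that the tangent to $G$ at each $x\in G$ equals $P\times x$.

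I would prove this lemma by a short computation in $\mathbb{R}^3$. At a tangency point $x\in\mathcal{C}_1(c)$ the CCW tangent of $\mathcal{C}_1(c)$ is proportional to $c\times x$, while the tangent of the oriented great circle with positive pole $P$ at $x$ is $P\times x$; matching the two directions up to a positive scalar, together with $\langle P,x\rangle=0$ and $|P|=1$, forces $P=(c-\cos r\cdot x)/\sin r$, after which $\langle P,c\rangle=\sin r$ yields $d(c,P)=\pi/2-r$. The map $x\mapsto P$ is a bijection between $\mathcal{C}_1(c)$ and $\partial B_{\pi/2-r}(c)$. Reversing the circle's orientation replaces $c\times x$ by $(-c)\times x$, so the same formula with $-c$ in place of $c$ governs $\mathcal{C}_2(c)$.

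With the lemma in hand, counting common tangent great circles that have matching orientations at both tangency points reduces to counting intersection points of two spherical circles of common radius $R=\pi/2-r\in(0,\pi/2)$: two such circles with distinct centers $a,b$ meet in two points if $d(a,b)<2R$, in one point if $d(a,b)=2R$, and not at all if $d(a,b)>2R$; if $a=b$ they coincide. For the opposite-orientation pair $(\mathcal{C}_1(p),\mathcal{C}_2(q))$ the centers are $(a,b)=(p,-q)$ with $d(a,b)=\pi-d(p,q)$, so $d(a,b)\leq 2R$ becomes $d(p,q)\geq 2r$ and $a=b$ is $q=-p$; the three listed cases follow (the boundary $d(p,q)=2r$ yielding a double root absorbed into the `$\geq 2r$' case). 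For the same-orientation pair the centers are $(a,b)=(p,q)$, so the controlling inequality is on $d(p,-q)$ versus $2r$, giving the symmetric trio. There is no real obstacle; the only bookkeeping point is the identity $-(c\times x)=(-c)\times x$ that yields the pole locus for $\mathcal{C}_2$ centered at $-c$ rather than $c$.
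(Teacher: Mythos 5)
Your proposal is correct, but note that the paper itself gives no proof at all: the proposition is simply declared ``straightforward,'' so there is no argument to compare against. Your pole-locus reduction is the right way to make the claim rigorous, and the bijection $x\mapsto P=(c-\cos r\cdot x)/\sin r$ (landing on $\partial B_{\pi/2-r}(c)$) is a clean key lemma. Two small points worth tightening if this were to be written out. First, the shortcut ``the same formula with $-c$ in place of $c$'' for $\mathcal{C}_2(c)$ is a bit loose, since the spherical radius from $-c$ to a point of $\mathcal{C}_2(c)$ is $\pi-r$, not $r$; the cleaner observation is simply $P_{\mathcal{C}_2}=-P_{\mathcal{C}_1}$ at each tangency point, whence the pole locus is the antipodal image $\partial B_{\pi/2-r}(-c)$. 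Second, at the borderline $d(p,q)=2r$ (resp.\ $d(p,-q)=2r$) the two pole circles are internally tangent and there is exactly one common tangent great circle, not two; the proposition as stated lumps this into the ``$\geq 2r$'' clause, and you are right to flag this as a ``double root'' rather than treating it as a genuine second solution.
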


We are interested in studying such vector field defined for the following circles $\mathcal{C}_1(\p1)$, $\mathcal{C}_2(\p2)$, $\mathcal{C}_1(\q1)$ and $\mathcal{C}_2(\q2)$. Note that the first two circles are tangent to each other at the point $e_1$ and have direction $e_2$, and the last two circles are tangents at $Qe_1$ with tangent direction $Qe_2$. For simplicity in the next theorem and its proof we denote:
\begin{align*}
\mathcal{C}_1 & \coloneqq \mathcal{C}_1(\p1) & \mathcal{C}_2 &\coloneqq \mathcal{C}_2(\p2) \\
\mathcal{C}_3 & \coloneqq \mathcal{C}_1(\q1) & \mathcal{C}_4 &\coloneqq \mathcal{C}_2(\q2)
\end{align*}

\subsection{Computing the length of candidates for $\gamma_0$}
We re-enunciate the adapted version of Theorem \ref{perez} by F. Monroy-Perez in \cite{monroy}.

\begin{theorem} Let $\rho\in \left(0,\frac{\pi}{2}\right]$ and $\kappa=\cot\rho$. Every length-minimizing curve in $\bar{\mathcal{L}}_\rho(\mI,\mQ)$ is a concatenation of at most three pieces of arcs with constant curvature equal to $+\kappa$, $-\kappa$ and $0$. Moreover,
\begin{enumerate}
\item If the length-minimizing curve contains a geodesic arc, then it is of the form CSC.
\item If the length-minimizing curve is of the form CCC. Let $\alpha$, $\lambda$ and $\beta$ be angles of the first, the second and the third arc respectively. Then 
\begin{enumerate}
\item $\min\{\alpha,\beta\}<\pi\sin\rho$.
\item $ \lambda > \pi $.
\item $\max\{\alpha,\beta\}<\lambda $.
\end{enumerate}
\end{enumerate}
\end{theorem}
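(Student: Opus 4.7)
The plan is to treat length minimization in $\bar{\mathcal{L}}_\rho(\mI,\mQ)$ as a left-invariant optimal control problem on $\SO$ and invoke Pontryagin's Maximum Principle (PMP). An arc-length-parametrized admissible curve lifts to its Frenet frame $\mathfrak{F}:[0,L]\to\SO$, which satisfies the control system $\mathfrak{F}'(s)=\mathfrak{F}(s)\bigl(\bar{\vet{e}}_3+u(s)\bar{\vet{e}}_1\bigr)$ with measurable $|u(s)|\leq\kappa$. Existence of a length-minimizer follows from Filippov's compactness theorem after passing to the corresponding Sobolev class; the analysis below will show each minimizer is $C^0$ and piecewise smooth, hence lies in $\bar{\mathcal{L}}_\rho$.

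Applying PMP, I would introduce the left-trivialized covector $\lambda(s)=(h_1,h_2,h_3)\in\mathbb{R}^3\cong\text{so}_3(\mathbb{R})^*$ and form the Hamiltonian $H=h_3+u\,h_1-\lambda_0$. Maximization in $|u|\leq\kappa$ yields the bang-bang law $u^*(s)=\kappa\,\sign(h_1(s))$ off the switching set $\{h_1=0\}$. The Lie--Poisson equations give $\dot h_1=-h_2,\ \dot h_2=h_1-u\,h_3,\ \dot h_3=u\,h_2$. On a bang subarc the triple $(h_1,h_2,h_3)$ rotates at angular speed $\omega=\sqrt{1+\kappa^2}=\csc\rho$ about a fixed axis, so $h_1$ is sinusoidal with period $2\pi\sin\rho$. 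On a singular subarc, $h_1\equiv 0$ forces $h_2\equiv 0$ by differentiation and then $u\cdot h_3\equiv 0$; ruling out the trivial adjoint forces $u\equiv 0$, so singular subarcs are geodesics. Junction analysis at $h_1=0$ then shows that a singular subarc must join bang subarcs tangent to it and that two singular subarcs cannot concatenate directly, which gives item~(1). The bound of at most three total subarcs follows from the fact that after two consecutive sign changes of $h_1$ the Frenet trajectory reaches the first conjugate point of the Finslerian structure induced on $\SO$ by the control system, beyond which length-minimality fails.

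For item~(2), each inequality is established by constructing a strictly shorter admissible competitor. If $\lambda\leq\pi$, an infinitesimal rotation of the middle circle about the great-circle axis through its two tangency points produces an admissible variation whose length derivative is strictly negative. If $\max\{\alpha,\beta\}\geq\lambda$, a symmetric swap of the longer outer arc with the central arc via the spherical reflection that interchanges the corresponding circle centers yields a shorter admissible CCC curve. If $\min\{\alpha,\beta\}\geq\pi\sin\rho$, the three tangent circles of radius $\rho$ enclose a lens region on which the extremal is forced to form a small loop that can be excised using a CS or SC replacement of strictly smaller length. The main obstacle will be this last excision: Monroy-P\'erez's identities for $\rho=\pi/4$ simplify because $\sin\rho=\cos\rho$, but for general $\rho\in(0,\pi/2]$ one must verify via spherical trigonometry that the competitor remains within the curvature bound $|u|\leq\kappa$ and matches the prescribed Frenet frames at the endpoints. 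Handling this requires parametrizing the three circle centers by Delaunay-type coordinates and checking monotonicity of total length along the constructed variation.
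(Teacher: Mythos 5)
The paper does not itself prove this theorem; it presents it as an adaptation of Monroy-P\'erez's Theorem~6.1 in~\cite{monroy} and merely asserts that the original argument for $\rho=\pi/4$ extends. Your PMP/Lie--Poisson formulation on $\SO$ is exactly the method of that cited source, and the ingredients you derive are correct: the Hamiltonian $h_3+uh_1-\lambda_0$, the bang-bang law $u^*=\kappa\operatorname{sign}(h_1)$, the period $2\pi\sin\rho$ of $h_1$ on bang arcs, and the identification of singular extremals with geodesics. The paper's appendix does contain a self-contained, more elementary alternative --- the Birkhoff-type curve-shortening argument of Subsection~\ref{Shortening} --- but that route only recovers the piecewise bang/geodesic structure of a minimizer and deliberately does not yield the ``at most three pieces'' bound or the inequalities of item~(2).

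The genuine gap in your sketch is precisely there. The line ``after two consecutive sign changes of $h_1$ the Frenet trajectory reaches the first conjugate point'' is false as written: a CCC minimizer already exhibits two switches of $h_1$, and item~(2) of the very theorem you are proving asserts that CCC minimizers exist, so the first conjugate point cannot sit at the second switch. What must actually be shown is that extremals with three or more switches, and extremals of type SCS, are never length-minimizing; this requires either the bang-bang Jacobi/conjugate-point analysis that Monroy-P\'erez carries out explicitly, or a direct shorter-competitor construction, and your phrase ``Finslerian structure induced on $\SO$'' papers over the fact that conjugate-point theory for bang-bang extremals is not the classical Riemannian one. Similarly, for item~(2) you offer only a one-line competitor for each inequality (infinitesimal rotation of the middle circle, symmetric swap, lens excision) without verifying that the competitor remains admissible, stays in the curvature band $|u|\le\kappa$, and matches the prescribed endpoint frames --- you flag this yourself as ``the main obstacle.'' Until those steps are supplied, the proposal is a plan for the Monroy-P\'erez proof rather than a proof.
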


To determine whether a Dubins' curve is unique, we need to compare the length of each candidate. Here we assume that $\langle q_1, e_2 \rangle > 0$, $\langle q_2, e_2 \rangle > 0$ and $n_\mQ\geq 1$ (this implies $\mathcal{C}_1\cap\mathcal{C}_4=\mathcal{C}_2\cap\mathcal{C}_3=\emptyset$). By Corollary \ref{cor16}, there are essentially $8$ candidates for the shortest curve which are of type CSC (see Figure \ref{fig:candidates}). For each of $4$ cases below, there are two different choices for the geodesic segment:
\begin{enumerate}
\item Concatenation of an arc of circle $\mathcal{C}_1$, a geodesic segment and an arc of circle  $\mathcal{C}_3$.
\item Concatenation of an arc of circle $\mathcal{C}_1$, a geodesic segment and an arc of circle  $\mathcal{C}_4$.
\item Concatenation of an arc of circle $\mathcal{C}_2$, a geodesic segment and an arc of circle  $\mathcal{C}_3$.
\item Concatenation of an arc of circle $\mathcal{C}_2$, a geodesic segment and an arc of circle  $\mathcal{C}_4$.
\end{enumerate}

To calculate the length of these candidates we need some elementary formulas from spherical trigonometry. Denote a CSC curve by $\gamma$, we denote the angle of the first arc by $\alpha$, the second arc by $\theta$ and the third arc by $\beta$. Then:
\begin{equation}\label{eqlength1}
\length(\gamma) = \theta + (\alpha+\beta)\sin\rho_0 .
\end{equation}
We shall explicit the relation between the numbers $\theta$, $\alpha$ and $\beta$, and the final frame $\mQ$ which appears in the definition of $\cLspace$. We start with Case 1 (Case 4 is analogous).

\begin{figure}[htb]
\centering
\begin{tikzpicture}
\begin{scope}[xshift = -6cm,decoration={
    markings,
    mark=at position 0.55 with {\arrow{>}}}]
\draw[thick,postaction={decorate}] (0,0.5) ++(190:0.5) arc (190:270:0.5) -- (2,0) arc (270:320:0.5);
\draw[dashed] (0,.5) node {$\mathcal{C}_1$} circle (.5) ;
\draw[dashed] (2,.5) node {$\mathcal{C}_3$} circle (.5) ;
\end{scope}
\begin{scope}[xshift = -2cm,decoration={
    markings,
    mark=at position 0.55 with {\arrow{>}}}]
\draw[thick,postaction={decorate}] (0,0.5) ++(190:0.5) arc (190:270:0.5) -- (2,0) arc (90:20:0.5);
\draw[dashed] (0,.5) node {$\mathcal{C}_1$} circle (.5) ;
\draw[dashed] (2,-.5) node {$\mathcal{C}_4$} circle (.5) ;
\end{scope}
\begin{scope}[xshift = +2cm,decoration={
    markings,
    mark=at position 0.55 with {\arrow{>}}}]
\draw[thick,postaction={decorate}] (0,-.5) ++(170:0.5) arc (170:90:0.5) -- (2,0) arc (270:320:0.5);
\draw[dashed] (0,-.5) node {$\mathcal{C}_2$} circle (.5) ;
\draw[dashed] (2,.5) node {$\mathcal{C}_3$} circle (.5) ;
\end{scope}
\begin{scope}[xshift = +6cm,decoration={
    markings,
    mark=at position 0.55 with {\arrow{>}}}]
\draw[thick,postaction={decorate}] (0,-.5) ++(170:0.5) arc (170:90:0.5) -- (2,0) arc (90:20:0.5);
\draw[dashed] (0,-.5) node {$\mathcal{C}_2$} circle (.5) ;
\draw[dashed] (2,-.5) node {$\mathcal{C}_4$} circle (.5) ;
\end{scope}
\end{tikzpicture}
\caption{These are example of curves for Cases 1-4 respectively, from left to right.}
\label{fig:candidates}
\end{figure}
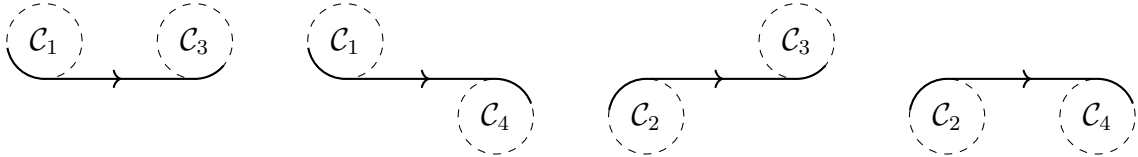

Set $a$ as the endpoint of the first arc of $\gamma$, $b$ as the start point of the last arc of $\gamma$. Draw two great circles. First one starts from $a$ and passes through $p_1$ by the shortest arc. Second one starts from $b$ and passes through $q_1$ by the shortest arc. These two great circles meet by first the time at a point which we shall call it $c$ (see Figure \ref{fig:theta}).

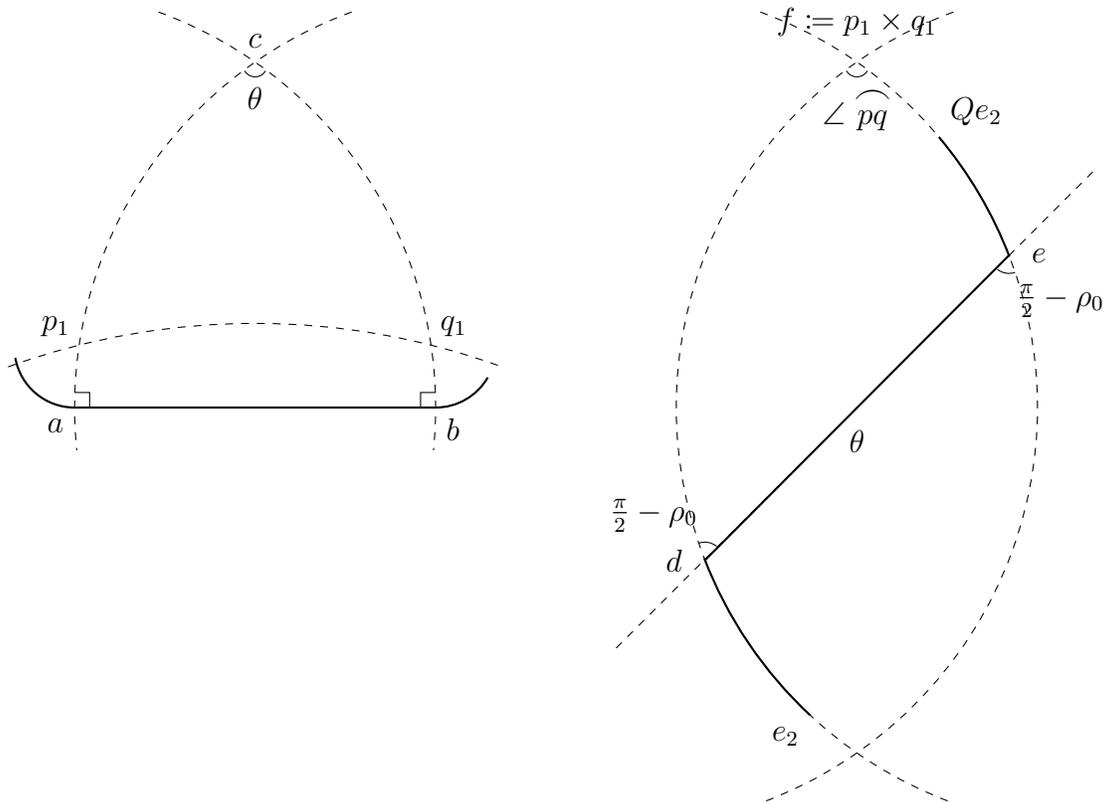
\begin{figure}[htb]
\centering
\begin{tikzpicture}
\begin{scope}[xshift = -4cm, scale=.8]
\clip(-2,-.7) rectangle (8,7);
\draw[thick] (0,1) ++(190:1) arc (190:270:1) -- (6,0) arc (270:330:1);
\draw[dashed] (7,0) ++(110:7) arc (110:187:7);
\draw[dashed] (-1,0) ++(70:7) arc (70:-7:7);
\draw[dashed] (3,-10.6) ++(110:12) arc (110:70:12);
\draw (0,.25) -- (.25,.25) -- (.25,0);
\draw (6,.25) -- (5.75,.25) -- (5.75,0);
\node[anchor=north east] (a) at (0,0) {$a$};
\node[anchor=north west] (b) at (6,0) {$b$};
\node[anchor=south] (c) at (3,5.8) {$c$};
\node[anchor=south east] (p1) at (.1,1.0) {$p_1$};
\node[anchor=south west] (q1) at (5.9,1.0) {$q_1$};
\draw (3,5.7) ++(210:.2) arc (210:330:.2) node[pos=.5,anchor=north]{$\theta$};
\end{scope}
\begin{scope}[xshift = +4cm, scale=.8]
\clip(-2,-7) rectangle (8,7);
\draw[dashed] (7,0) ++(110:7) arc (110:250:7);
\draw[thick] (7,0) ++(227:7) arc (227:201:7) node[pos=0,anchor=north east]{$e_2$} node[pos=.99,label=left:$d$] (a) {};
\draw[dashed] (-1,0) ++(70:7) arc (70:-70:7);
\draw[thick] (-1,0) ++(40:7) arc (40:21:7) node[pos=0,anchor=south west]{$Qe_2$} node[pos=.99,label=right:$e$] (b) {};
\draw[dashed] (-1,-4) -- (7,4);
\node[anchor=south] (pq) at (3,6.0) {$f\coloneqq p_1\times q_1$};
\draw[thick] (a.center) -- (b.center) node[pos=.5,label=below:$\theta$]{};
\draw (3,5.7) ++(210:.2) arc (210:330:.2) node[pos=.5,anchor=north]{$\angle \arc{pq}$};
\draw (a) ++(50:.3) arc (50:110:.3) node[pos=.5,anchor=south east]{$\frac{\pi}{2}-\rho_0$};
\draw (b) ++(230:.3) arc (230:290:.3) node[pos=.5,anchor=north west]{$\frac{\pi}{2}-\rho_0$};
\end{scope}
\end{tikzpicture}
\caption{These are illustrations for Case 1. In the illustration, the circle containing arc $\arc{ac}$ and the circle containing arc $\arc{bc}$ represent geodesics on sphere. On the right illustration, it shows tangent vectors of $\mathbb{S}^2$ translated so that its base point is at the origin. The thick curve is trajectory of $\gamma'$. All curves in the image are segments of great circles on sphere.}
\label{fig:theta}
\end{figure}

\begin{figure}[htb]
\centering
\begin{tikzpicture}
 \node[anchor=south west,inner sep=0] at (0,0) {\includegraphics[width=8cm]{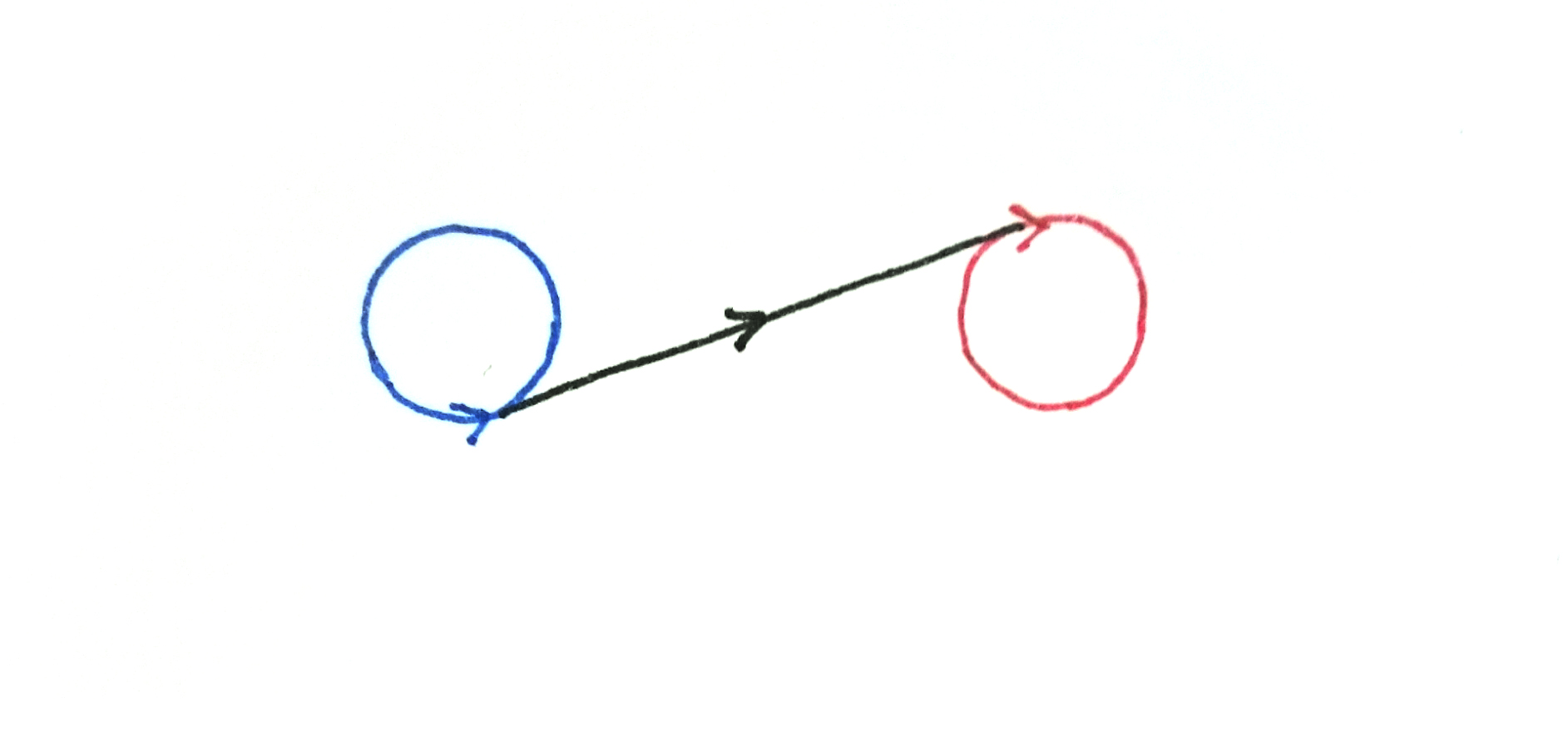}};
  \node[anchor=south west,inner sep=0] at (0,-2.5) {\includegraphics[width=8cm]{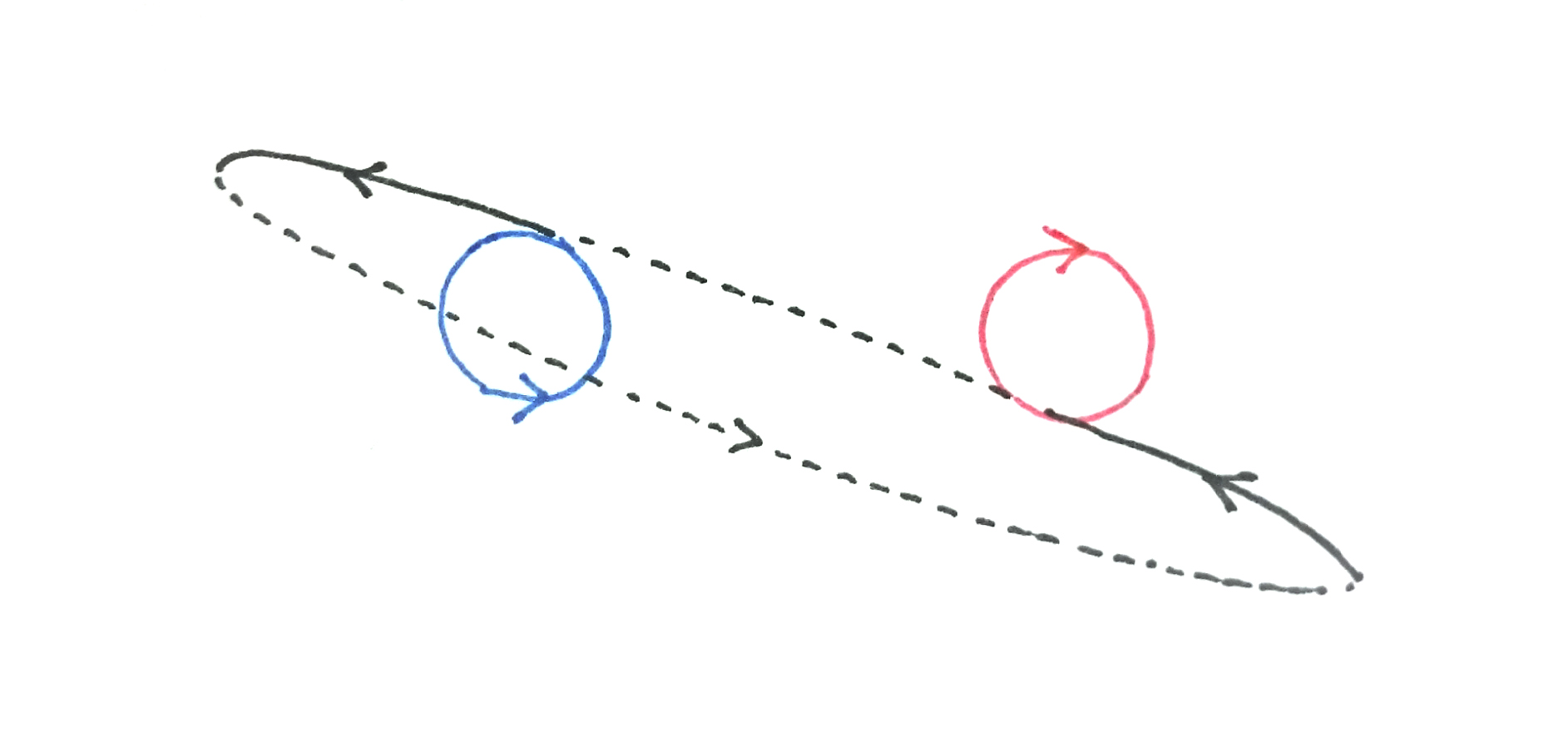}};
   \node[anchor=south west,inner sep=0] at (8.5,0) {\includegraphics[width=8cm]{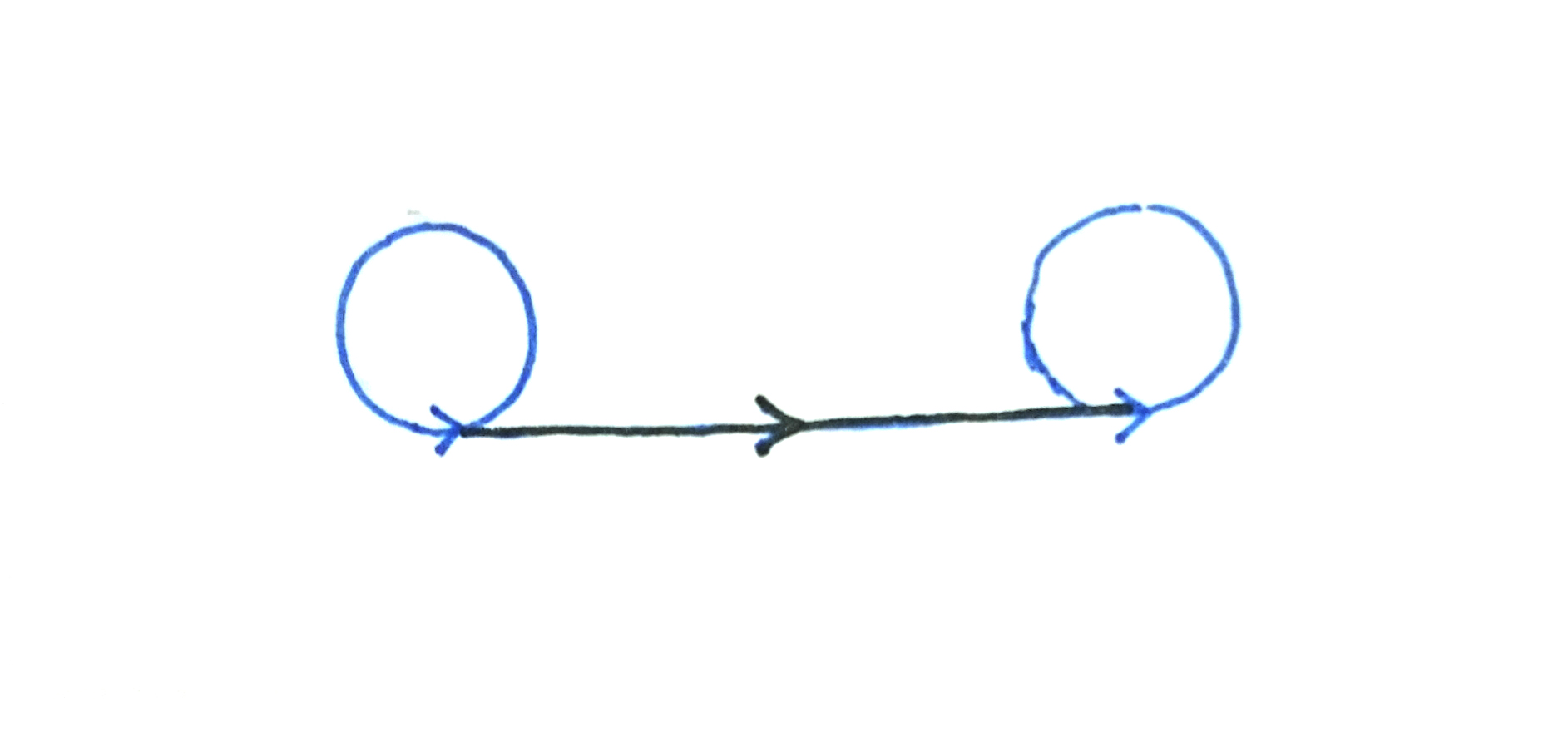}};
  \node[anchor=south west,inner sep=0] at (8.5,-2.5) {\includegraphics[width=8cm]{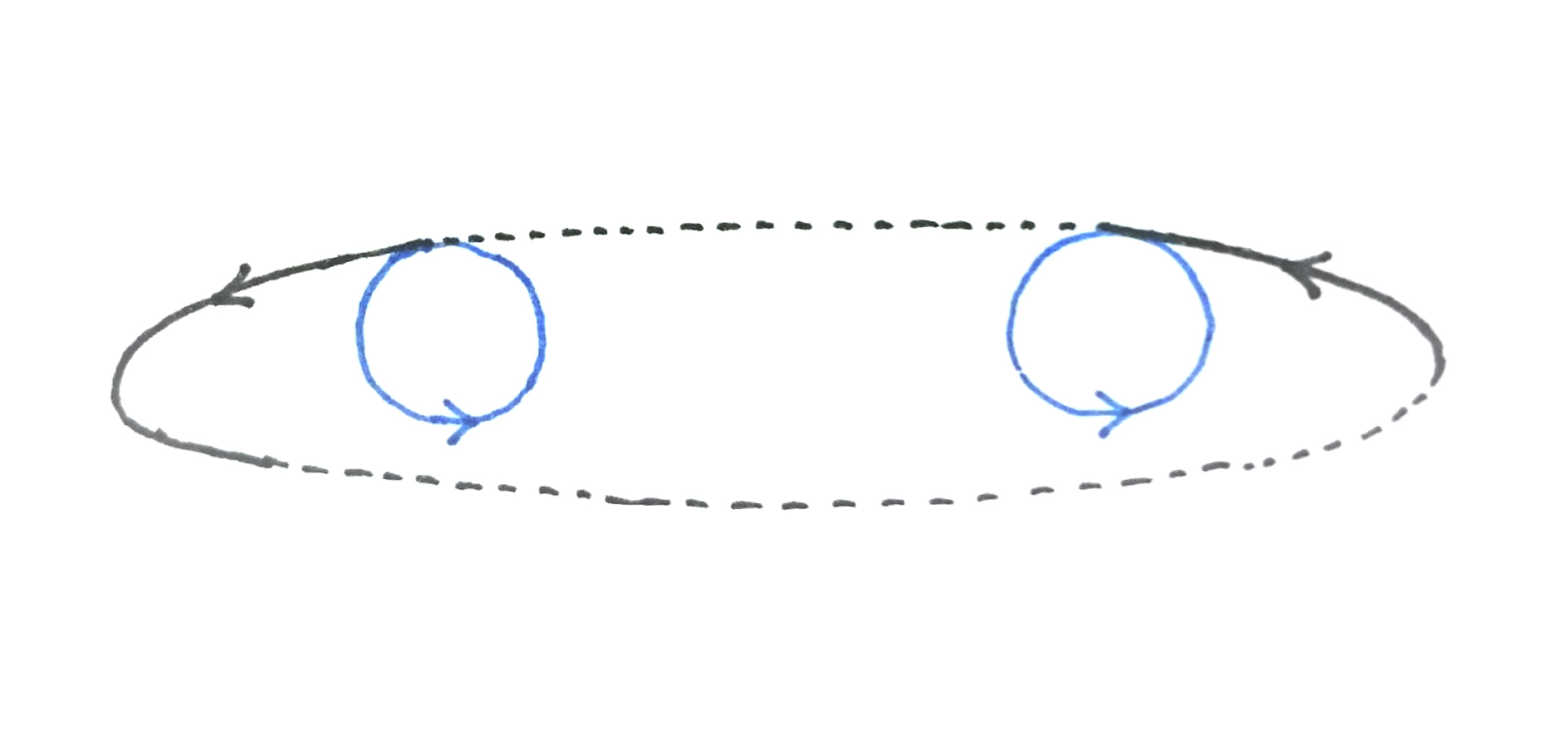}};
   \node[anchor=south west,inner sep=0] at (0,-10.5) {\includegraphics[width=8cm]{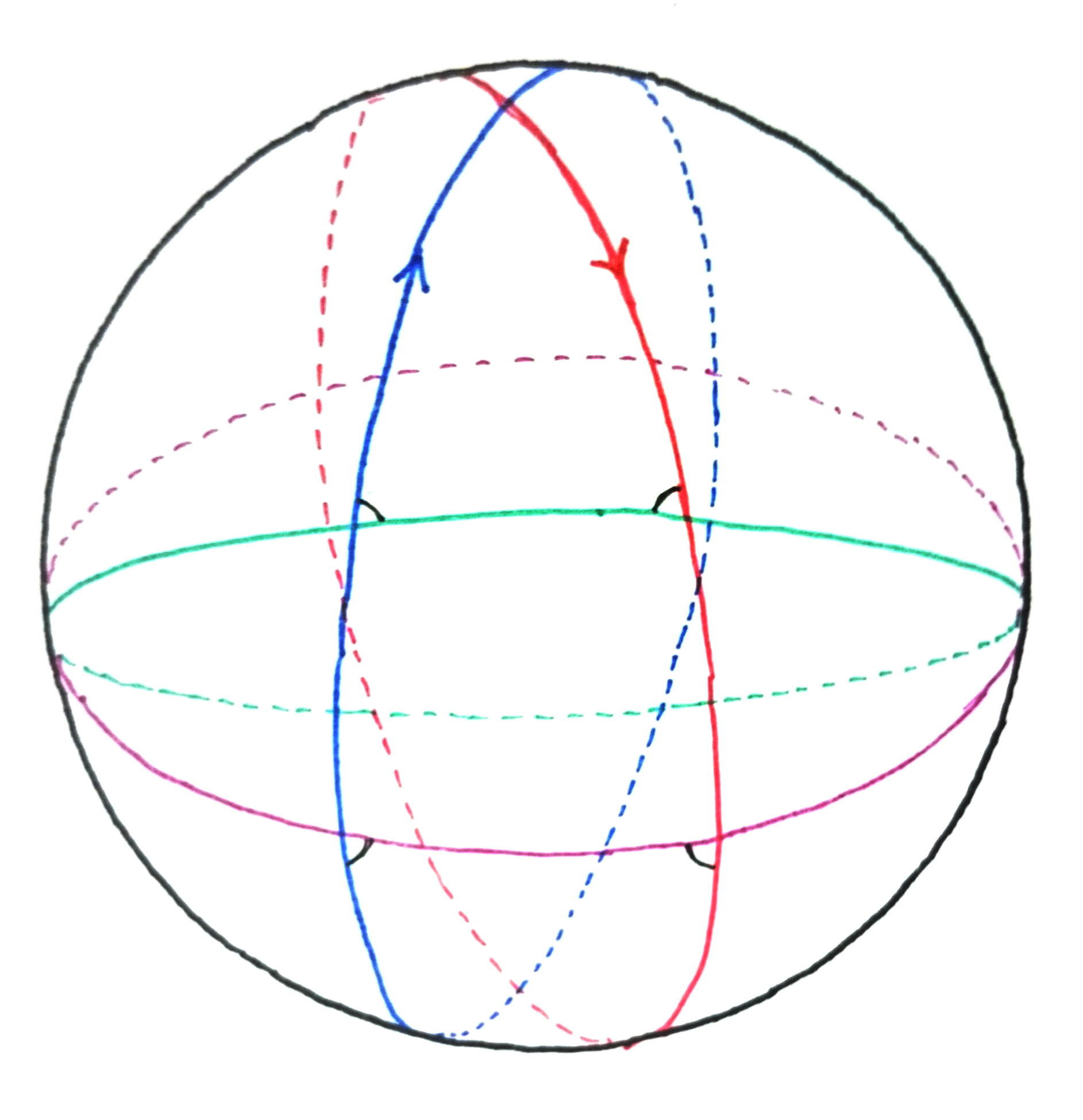}};
  \node[anchor=south west,inner sep=0] at (8.5,-10.5) {\includegraphics[width=8cm]{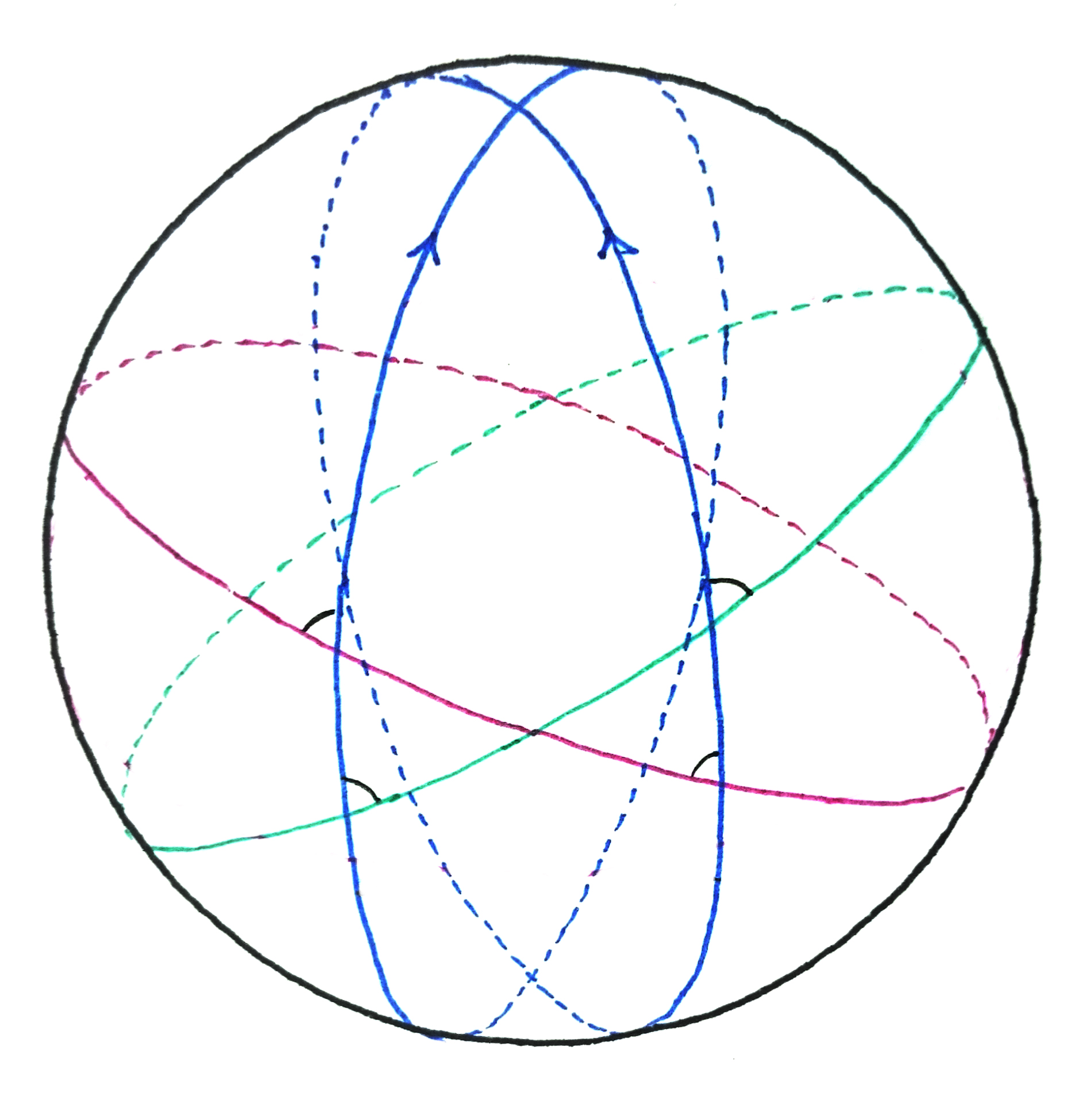}};
  \node[anchor=south west,inner sep=0] at (0,3) {Configuration 1};
  \node[anchor=south west,inner sep=0] at (0,0.5) {Configuration 2};
  \node[anchor=south west,inner sep=0] at (8.5,3) {Configuration 3};
  \node[anchor=south west,inner sep=0] at (8.5,0.5) {Configuration 4};
\end{tikzpicture}
\caption{This figure illustrates ``railroads'' of trajectories of tangent vector for the all four configurations above. All curves in two images above are great circles on sphere and marked angles measures exactly $\frac{\pi}{2}-\rho_0$.}
\label{fig:curves}
\end{figure}

Note that on the triangle $\bigtriangleup abc$ we have the angle $\angle cab = \frac{\pi}{2}$ and $\overline{bc}=\frac{\pi}{2}$, by Sine rule, we get $\angle bca = \overline{ab}=\theta$. Also, observe that $\overline{cp_1} = \overline{ca}-\overline{p_1a} = \frac{\pi}{2} - \rho_0$, $\overline{cq_1} = \overline{cb}-\overline{q_1b} = \frac{\pi}{2} - \rho_0$. Now applying Cosine rule on triangle $\bigtriangleup p_1q_1c$, and considering previous relations, we deduce the following equation for $\theta$: 
\begin{equation}\label{eqlength21}
\cos\theta = \frac{\langle p_1,q_1 \rangle}{\cos^2\rho_0}-\tan^2\rho_0 .
\end{equation}

Next, we need to write $(\alpha + \beta)$ in terms of known parameters. For this we look at the $\gamma'$ translated into $\mathbb{S}^2$ as shown on the right-hand side in Figure \ref{fig:theta}. First we observe since $\gamma$ is concatenation of three arcs of circles, its derivative $\gamma'$ may be split into three geodesic segments. With the first segment lies in the great circle perpendicular to $p_1$ and the last segment lies in the great circle perpendicular to $q_1$. Obviously, these two great circles intersect each other at points $\pm p_1\times q_1$ and the angle between them is $\angle \arc{p_1q_2}$. We denote $f\coloneqq p_1\times q_1$. On the other hand, the middle segment $\overline{de}$ has length $\theta$. Considering the variation on radius of curvature of $\gamma$, we deduce that $\angle fde = \frac{\pi}{2}-\rho_0$ and $\angle def = \frac{\pi}{2}+\rho_0$. Denote by $L$ the length of the shorter arc from $f$ to $e$. By applying Sine rule on triangle $\bigtriangleup fde$, we obtain:
\begin{equation}\label{eqlength31}
\sin L = \frac{\cos\rho_0\cdot\sin\theta}{\sin(\angle \arc{p_1q_1})} .
\end{equation}

We also have relations:
\begin{equation}\label{eqlength41}
\alpha = L - \angle\arc{e_2(-f)} \quad \text{and} \quad \beta = L - \angle\arc{fQe_2} \quad \text{where $f=p_1\times q_1$.}
\end{equation}

Since the values of $\angle\arc{e_2(-f)}$ and $\angle\arc{fQe_2}$ may be calculated directly in terms of $\mQ$, so Equations (\ref{eqlength1}), (\ref{eqlength21}), (\ref{eqlength31}) and (\ref{eqlength41}) are complete formulas that give the length of $\gamma$ for Case 1. Although they are technically calculable, these formulas involve taking several times inverse of trigonometric functions. It is not clear for the author whether these formulas may be simplified into shorter expressions.

Cases 2 and 3 are also analogous. Here we present the demonstration for Case 2. The procedure is similar to Case 1. Set $a$ as the endpoint of the first arc of $\gamma$, $b$ as the start point of the last arc of $\gamma$. Draw two great circles. First one starts from $a$, and passes through $p_1$ by the shortest arc. Second one, in contrast with the previous case, starts from $q_1$, and passes through $b$ by the shortest arc. These two great circles meet by the first time at a point which we call $c$ (see Figure \ref{fig:theta2}).

\begin{figure}[htb]
\centering
\begin{tikzpicture}
\begin{scope}[xshift = -4cm, scale=.8]
\clip(-1.3,-7) rectangle (8,7);
\draw[thick] (0,1) ++(190:1) arc (190:270:1) -- (6,0) arc (90:30:1);
\draw[dashed] (7,0) ++(110:7) arc (110:250:7);
\draw[dashed] (-1,0) ++(70:7) arc (70:-70:7);
\draw[dashed] (-1,1.5) -- (7,-1.5);
\draw (0,.25) -- (.25,.25) -- (.25,0);
\draw (6,.25) -- (5.75,.25) -- (5.75,0);
\node[anchor=north east] (a) at (0,0) {$a$};
\node[anchor=north west] (b) at (6,0) {$b$};
\node[anchor=south] (c) at (3,5.8) {$c$};
\node[anchor=north east] (p1) at (.1,1.2) {$p_1$};
\node[anchor=south west] (q2) at (5.9,-1.2) {$q_2$};
\draw (3,5.7) ++(210:.2) arc (210:330:.2) node[pos=.5,anchor=north]{$\theta$};
\end{scope}
\begin{scope}[xshift = +4cm, scale=.8]
\clip(-2,-7) rectangle (8,7);
\draw[dashed] (7,0) ++(110:7) arc (110:250:7);
\draw[thick] (7,0) ++(200:7) arc (200:172:7) node[pos=0,anchor=north east]{$e_2$} node[pos=.99,label=left:$d$] (d) {};
\draw[dashed] (-1,0) ++(70:7) arc (70:-70:7);
\draw[thick] (-1,0) ++(-11:7) arc (-11:8:7) node[pos=0,anchor=north west]{$Qe_2$} node[pos=.99,label=right:$e$] (e) {};
\draw[thick](d.center) to[bend left] node[pos=.5,anchor=north]{$\theta$} (e.center) ;
\draw [dashed] (d.center) -- (-1.3,0);
\draw [dashed] (e.center) -- (7.3,0);
\node[anchor=south] (pq) at (3,6.0) {$f\coloneqq p_1\times q_1$};
\draw[thick] (a.center) -- (b.center) node[pos=.5,label=below:$\theta$]{};
\draw (3,5.7) ++(210:.2) arc (210:330:.2) node[pos=.5,anchor=north]{$\angle \arc{pq}$};
\draw (d) ++(30:.3) arc (30:85:.3) node[pos=1,anchor=south]{$\frac{\pi}{2}-\rho_0$};
\draw (e) ++(280:.3) arc (280:325:.3) node[pos=0,anchor=north west]{$\frac{\pi}{2}-\rho_0$};
\end{scope}
\end{tikzpicture}
\caption{These are illustrations for Case 2. In the illustration, the circle containing arc $\arc{ac}$ and the circle containing arc $\arc{bc}$ represent geodesics on sphere. On the right illustration, it shows tangent vectors of $\mathbb{S}^2$ translated so that its base point is at the origin. The thick curve is trajectory of $\gamma'$. All curves in the image are segments of great circles on sphere.}
\label{fig:theta2}
\end{figure}

Note that on the triangle $\bigtriangleup abc$, $\angle cab = \frac{\pi}{2}$ and $\overline{bc}=\frac{\pi}{2}$, and by applying the sine rule, we obtain $\angle bca = \overline{ab}=\theta$. Also, observe that $\overline{cp_1} = \overline{ca}-\overline{p_1a} = \frac{\pi}{2} - \rho_0$, $\overline{cq_2} = \overline{cb}+\overline{q_2b} = \frac{\pi}{2} + \rho_0$. Now applying the cosine rule on the triangle $\bigtriangleup p_1q_2c$, and considering the previous relations, we deduce the following equation for $\theta$: 
\begin{equation}\label{eqlength22}
 \cos\theta = \frac{\langle p_1,q_2 \rangle}{\cos^2\rho_0}-\tan^2\rho_0 .
\end{equation}

Now we proceed to write $(\alpha + \beta)$ in terms of the known parameters. For this we look at the $\gamma'$ translated into $\mathbb{S}^2$ as shown in Figure \ref{fig:theta2}. Again the derivative $\gamma'$ may be split into three geodesic segments: the first segment lies in the great circle perpendicular to $p_1$, the last segment lies in the great circle perpendicular to $q_2$ and the middle segment has length $\theta$. Two great circles intersect each other at points $\pm p_1\times q_2$ and the angle between them is $\angle \arc{p_1q_2}$. We denote $f\coloneqq p_1\times q_1$. Considering the variation on radius of curvature of $\gamma$, we deduce that $\angle fde = \angle def = \frac{\pi}{2}-\rho_0$. Let $L$ the length of the short arc from $f$ to $e$. By applying the sine rule on the triangle $\bigtriangleup fde$, we obtain:
\begin{equation}\label{eqlength32}
 \sin L = \frac{\cos\rho_0\cdot\sin\theta}{\sin(\angle \arc{p_1q_2})} . 
\end{equation}

We also have relations:
\begin{equation}\label{eqlength42}
 \alpha = \angle\arc{fe_2} - L \quad \text{and} \quad \beta = \angle\arc{fQe_2} - L \quad \text{where $f=p_1\times q_1$.}.
\end{equation}

Since the values of $\angle\arc{fe_2}$ and $\angle\arc{fQe_2}$ may be calculated directly in terms of $\mQ$, these are formulas to obtain the length of $\gamma$ for Case 2.

Given a $\mQ\in\SO$ such that $\mathcal{C}_1\cap\mathcal{C}_4$ and $\mathcal{C}_1\cap\mathcal{C}_4$ consist of, at most, one point (i.e. circles are tangent). Formulas (\ref{eqlength1}), (\ref{eqlength21}), (\ref{eqlength31}), (\ref{eqlength41}), (\ref{eqlength22}), (\ref{eqlength32}) and (\ref{eqlength42}) permit us to obtain exact length of all possible CSC curves thus to determine which curve is length-minimizing. It is unclear for the author if those formulas may be simplified.

 \else
 \fi


\subsection{Curve shortening}\label{Shortening}

We will use the following parallel-meridian coordinates on sphere. Let $v\in\mathbb{S}^2$, each vector $u\in\mathbb{S}^2$ may be written as $(\theta(u),\varphi(u))\in [0,\pi]\times [-\pi,+\pi)$ with $\theta(u)=d(u,v)$. These values are unique if, and only if, $u\neq \{-v,v\}$. We often refer the coordinate $\theta(u)$ as $v$\emph{-parallel} coordinate of the vector $u$.

Let us $v\in\mathbb{S}^2$, we define a vector field in $\mathbb{S}^2\smallsetminus\{v,-v\}$, given by $W_v(w)\coloneqq v \times P(w)$ where $\times$ is the usual cross product of vectors in $\mathbb{R}^3$ and $\mP$ is the normalized projection of $w$ onto plane perpendicular to $v$.

This subsection is for the characterization of length-minimizing curves. We use an idea similar and inspired by Birkhoff curve shortening (see \cite{birkh}). Similar ideas and related studies by others may be found in \cite{monroy}, \cite{ayala}, \cite{ayala2}, \cite{ayala3}, \cite{ayala4} and \cite{ayala5}. Given a curve $\gamma\in\cLspace$, let $L_0$ be the length of $\gamma$, we construct a new curve that is shorter than or has the same length as the original curve, by the following process: we separate the curve in small sections, each section, except the first and last (that have length $\leq l$), have the fixed length $l$, with $l\leq\pi\sin\rho_0$. Then we replace each section for another segment that minimizes the length with the same starting and ending Frenet frames as before. The resulting curve will be shorter or has the same length as $\gamma$. Moreover, both curves are homotopical to each other.

It is enough to prove that each section is homotopic:
\begin{lemma}\label{sectionhomotopy} Let $\mP,\mQ\in\SO$ and let $\alpha\in\bar{\mathcal{L}}_{-\kappa_0}^{+\kappa_0}(\mP,\mQ)$ be a curve of length $l$, with $l\leq\pi\sin\rho_0$, and suppose also that $\rho_0 \leq \frac{\pi}{4}$. Let $\alpha_0$ be the shortest curve in $\bar{\mathcal{L}}_{-\kappa_0}^{+\kappa_0}(\mP,\mQ)$. Then $\alpha_0$ and $\alpha$ are homotopic within $\bar{\mathcal{L}}_{-\kappa_0}^{+\kappa_0}(\mP,\mQ)$.
\end{lemma}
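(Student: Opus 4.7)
The plan is to exploit the shortness hypothesis $l \leq \pi\sin\rho_0$ to reduce the problem to a contractibility statement about a space of curves confined to a small convex region, where a convex-combination argument on curvature functions applies. Two immediate confinements follow from the hypotheses. First, the total turning angle of $\vt_\gamma$ for any $\gamma \in \bar{\mathcal{L}}^{+\kappa_0}_{-\kappa_0}(\mP,\mQ)$ of length at most $l$ is at most $\kappa_0\,l \leq \pi\cos\rho_0 < \pi$, using $\rho_0 \leq \pi/4$. Second, the image of such a $\gamma$ lies in a geodesic ball of radius at most $l/2 \leq (\pi\sqrt{2})/4 < \pi/2$ around its midpoint. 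Since $\alpha_0$ is length-minimizing, its length is bounded by $l$ as well, and by Theorem \ref{perez} it is a CSC or CCC concatenation of at most three arcs. Hence both $\alpha$ and $\alpha_0$ lie in a common open hemisphere $\mathcal{H}_v$ for some $v \in \mathbb{S}^2$.

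I would then apply the stereographic projection $\Psi\colon \mathcal{H}_v \to \mathbb{R}^2$ centered at $-v$. Under $\Psi$, the curves $\Psi\circ\alpha$ and $\Psi\circ\alpha_0$ become $C^1$ immersed plane curves with prescribed initial and final Frenet frames and with geodesic curvature bounded in absolute value by some constant $\tilde\kappa$ depending on $\kappa_0$ and the size of the relevant compact subregion of $\mathcal{H}_v$ (since $\Psi$ is a diffeomorphism of a compact region, the pointwise distortion between spherical and planar geodesic curvatures is uniformly bounded above and below). By the planar results of Saldanha and Zühlke (see \cite{salzuh2}), the space of $C^1$ immersed plane curves with given initial and final Frenet frames and curvature constrained in a symmetric compact interval, when restricted to curves of bounded length lying in a convex planar region, is connected (indeed contractible). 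This yields a homotopy $\tilde H$ between $\Psi\circ\alpha$ and $\Psi\circ\alpha_0$ inside the planar curvature-constrained space; pulling back via $\Psi^{-1}$ gives a candidate homotopy $H = \Psi^{-1}\circ \tilde H$ on the sphere joining $\alpha$ to $\alpha_0$.

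The main obstacle is guaranteeing that $H_t \in \bar{\mathcal{L}}^{+\kappa_0}_{-\kappa_0}(\mP,\mQ)$ throughout, i.e., that no intermediate curve violates the spherical curvature bound $|\kappa| \leq \kappa_0$. Because stereographic projection distorts curvature, the planar curvature bound must be chosen slightly tighter. The hypothesis $\rho_0 \leq \pi/4$ enters crucially here: it provides a uniform safety margin so that one can choose $\tilde\kappa = \kappa_0 - \epsilon$ on the planar side and still have the lifted spherical homotopy satisfy $|\kappa_{H_t}| \leq \kappa_0$. A more self-contained alternative, which I would pursue if the curvature-distortion bookkeeping becomes unwieldy, is to construct the homotopy intrinsically on the sphere: parametrize both curves by arc length on $[0,l]$ (extending $\alpha_0$ by a constant curve of length $l - \length(\alpha_0)$ if needed), form the convex combination $\kappa_t(s) = (1-t)\kappa_\alpha(s) + t\kappa_{\alpha_0}(s)$ of their curvature functions, and correct the resulting family by composing with a small path in $\SO$ so that the endpoint frame returns to $\mQ$. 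This correction is available precisely when the curves are short, because in that regime the endpoint map on curvature functions is a submersion near $\mQ$ and its fiber is a smooth contractible submanifold of $L^\infty([0,l],[-\kappa_0,\kappa_0])$.
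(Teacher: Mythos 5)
Your proposal does not follow the paper's argument, and neither of the two routes you sketch is carried through to a point where it clearly closes.

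The paper's proof is considerably more elementary and intrinsic than either of your alternatives. It first shows, via a contradiction argument with tangent circles, that there exists a vector $v\in\mathbb{S}^2\cap\alpha(0)^\perp$ with $\langle\alpha'(t),v\rangle\geq 0$ for all $t$ — a statement about the \emph{tangent indicatrix} $\vt_\alpha$, not merely about the image of $\alpha$ lying in a hemisphere. This is precisely the point that makes the rest work: in polar coordinates $(\theta,\varphi)$ with axis $v$, both $\alpha$ and the length-minimizer $\alpha_0$ become graphs $\varphi(\theta)$ over a common $\theta$-interval, and the homotopy is the straight-line interpolation $\varphi_s = s\varphi_\alpha + (1-s)\varphi_{\alpha_0}$. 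No projection to the plane, no appeal to an external contractibility theorem, no correction of endpoints. Your first confinement claim (``total turning $\leq\kappa_0 l$'') is also not right on the sphere: $|\vt_\gamma'(s)| = \sqrt{1+\kappa_\gamma^2(s)}$, not $|\kappa_\gamma(s)|$, so the relevant estimate is $\length(\vt_\gamma)\leq l/\sin\rho_0\leq\pi$, and even this only yields containment in a closed hemisphere, not the sharper statement with $v\perp\alpha(0)$ that the graph parametrization requires.

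Both of your proposed routes have a genuine gap at the decisive step. In the stereographic route, after projecting, the constraint $|\kappa_\gamma|\leq\kappa_0$ on the sphere does not become a constraint $|\kappa|\leq\tilde\kappa$ with a \emph{constant} $\tilde\kappa$ on the plane; the bound is position-dependent, so the planar space to which you appeal in \cite{salzuh2} is simply not the space in which the projected curves live. Consequently there is no constant ``safety margin'' afforded by $\rho_0\leq\pi/4$, and a planar homotopy obtained from that theorem may well lift to spherical curves violating the curvature bound at some $t$. In your intrinsic route, the sentence ``the endpoint map on curvature functions is a submersion near $\mQ$ and its fiber is a smooth contractible submanifold of $L^\infty([0,l],[-\kappa_0,\kappa_0])$'' is essentially the conclusion of the lemma in disguise: the fiber over $\mQ$ (intersected with the constraint) \emph{is} $\bar{\mathcal{L}}_{-\kappa_0}^{+\kappa_0}(\mP,\mQ)$ for short curves, and its connectedness/contractibility is what must be established. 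You would need to prove, not assert, that the correction path stays inside the constrained fiber for all $t$, which is exactly the hard part. Neither gap looks like a routine repair.
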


\begin{proof} Applying the transformation $\mP^T$ to the curve $\alpha$, we may assume without loss of generality that $\mP=\mI$. Since $\length (\alpha) \leq \pi\sin\rho_0$, $\alpha$ lies inside a ball of radius $\pi\sin\rho_0$ centered at $\alpha(0)=(1,0,0)$. First, we show that there exists a vector $v \in \mathbb{S}^2 \cap (1,0,0)^{\bot} $ such that $ \langle\alpha'(t), v \rangle \geq 0 $ for all $t\in [0,1]$. 

Suppose there is no such $v$. Then there exists a $u\in\mathbb{S}^2\cap (1,0,0)^\bot$ such that we can find $t_1,t_2\in [0,1]$, $t_1<t_2$ satisfying following 3 properties:
\begin{enumerate}
\item $\langle\alpha'(t_i),u \rangle = 0$, for $i=1,2$.
\item $\langle \alpha'(t_1)\times(1,0,0),u\rangle$, $\langle \alpha'(t_2)\times(1,0,0),u\rangle$ have opposite signs.
\item $\langle \alpha'(t),u \rangle >0$ for all $t\in [t_1,t_2]$.
\end{enumerate}
Now we check that the length of segment $\alpha([t_1,t_2])$ is greater or equal to $\pi\sin\rho_0$.
We suppose without loss of generality that $\langle \alpha(t_1),u \rangle < \langle \alpha(t_2),u \rangle$ and $\langle\alpha(t_1)\times (1,0,0),u\rangle > 0$. We consider the circles $C_1$ and $C_2$ of radius $\rho_0$ tangent to $\alpha$ at points $\alpha(t_1)$ and $\alpha(t_2)$ respectively, and that these circles lie on the right side of the curve. The curve $\alpha$ cannot cross neither of two circles $C_1$ and $C_2$. Properties 1-3 imply that the length of $\alpha([t_1,t_2])$ must be greater or equal to half turn of either $C_1$ or $C_2$. This implies that its length must be greater than or equal to $\pi\sin\rho_0$. Thus $t_1=0$, $t_2=1$. So $\alpha$ is an arc of a circle of radius $\rho_0$ which clearly has the direction $(0,1,0)$ satisfying the desired $\langle \alpha'(t),(0,1,0)\rangle\geq 0$.

Since there exists a $v\in\mathbb{S}^2\cap (1,0,0)^\bot$ for all $t\in [0,1]$, we parametrize both $\alpha$ and $\alpha_0$ by polar coordinates with $v$ as axis:
\begin{align*}
\alpha(t) & = \big(\theta(t),\varphi_\alpha(t)\big)\\
\alpha_0(t) & = \big(\theta(t),\varphi_{\alpha_0}(t)\big)
\end{align*}
We define the homotopy from $\alpha_0$ to $\alpha$ as $\alpha_s(t)=\big(\theta(t),s\varphi_\alpha(t)+(1-s)\varphi_{\alpha_0}(t)\big)$. It is easy to check that $\alpha_s\in\mathcal{L}_{\rho_0}(\mI,\mQ)$ for all $s\in [0,1]$.
\end{proof}

Now given a curve $\gamma$, we will construct a sequence of curves $(\eta_n)_{n\in\mathbb{N}}$ by the following the method below. First we consider a sequence of numbers $(l_k)_{k\in\mathbb{N}}$ that is dense in the interval $[0,1]$ and whose set of accumulation points is the entire $[0,1]$:

\begin{align*}
& m_{1,1} = \frac{1}{2} & & m_{1,2} = 1 & && && & && \\
& m_{2,1} = \frac{1}{4} & & m_{2,2} = \frac{2}{4} && m_{2,3} = \frac{3}{4} && m_{2,4} = 1 && && \\
& m_{3,1} = \frac{1}{8} & & m_{3,2} = \frac{2}{8} && m_{3,3} = \frac{3}{8} && m_{3,4} = \frac{4}{8} && \hdots && m_{3,8} = 1 & \\
 & \quad\quad\vdots && \quad\quad\vdots && \quad\quad\vdots && \quad\quad\vdots &&  && \quad\quad\vdots & \\
& m_{k,1} = \frac{1}{2^k} & &  m_{k,2} = \frac{2}{2^k} && m_{k,3} = \frac{3}{2^k} && m_{k,4} = \frac{4}{2^k} && \hdots && m_{k,2^k} = 1 & \\
 & \quad\quad\vdots & & \quad\quad\vdots && \quad\quad\vdots && \quad\quad\vdots &&  && \quad\quad\vdots &
\end{align*}

Define the sequence $(l_k)_{k\in\mathbb{N}}$ by setting $l_1 = m_{1,1}$, $l_2 = m_{1,2}$, $l_3 = m_{2,1}$, $l_4 = m_{2,2}$, $l_5 = m_{2,3}$, $l_6 = m_{2,4}$, $l_7 = m_{3,1}$, $l_8=m_{3,2}$, so on. Set $\eta_0\coloneqq\gamma$, for each $n\geq 0$ we define each curve $\eta_{n+1}$ as the curve $\eta_n$ separated into sections of length $l_{n+1}\pi\sin\rho_0$, $\pi\sin\rho_0$, $\pi\sin\rho_0$, \dots, $\pi\sin\rho_0$, $m_n \leq \pi\sin\rho_0$, respectively, where $m_n$ is the remaining length at the end of the curve $\eta_n$. Then we replace each section by a segment that minimizes the length. By Lemma \ref{sectionhomotopy}, since each small segment is homotopical to its replacement within $\bar{\mathcal{L}}_{-\kappa_0}^{+\kappa_0}$, we conclude that $\eta_{n+1}$ and $\eta_n$ are homotopical in $\bar{\mathcal{L}}_{-\kappa_0}^{+\kappa_0}(\mI,\mQ)$. 

Since $\cLspace$ is closed, each $\eta_n$ has limited curvature and the length of $\eta_n$ is non-increasing. Thus the sequence $(\eta_n)_{n\in\mathbb{N}}$ has a convergent subsequence, denote by $\tilde{\gamma}\in\cLspace$ the limit of this subsequence. The following proposition is an immediate consequence of the construction.

\begin{proposition} Let $\tilde{\gamma}$ be a limit obtained by the shortening process above. Then $\tilde{\gamma}$ consists of concatenation of the following segments:
\begin{itemize}
\item The first segment has curvature $\pm \kappa_0$.
\item The segments in the middle are either: geodesics or arcs of circle with curvature $\pm\kappa_0$ length $\geq\pi\sin\rho_0$.
\item The last segment has curvature $\pm \kappa_0$.
\end{itemize}
\end{proposition}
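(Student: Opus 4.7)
The plan is to exploit the density of the subdivision points to show that, in the limit, every short subarc of $\tilde{\gamma}$ is itself length-minimizing in $\bar{\mathcal{L}}^{+\kappa_0}_{-\kappa_0}$ with its own endpoints' Frenet frames, and then feed this into Theorem~\ref{perez} applied inside very small windows. Reparametrize each $\eta_n$ and the limit $\tilde{\gamma}$ by arc length. Since $L_{\eta_n}$ is non-increasing and non-negative it converges, and in particular $L_{\eta_n}-L_{\eta_{n+1}}\to 0$. Record the subdivision points used at stage $n$: $s_n^{(0)}=0$, $s_n^{(1)}=l_{n+1}\pi\sin\rho_0$, $s_n^{(k+1)}=s_n^{(k)}+\pi\sin\rho_0$ for $k\geq 1$. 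Because $(l_n)_{n\in\mathbb{N}}$ accumulates at every point of $[0,1]$, for any $a\in[0,L_{\tilde{\gamma}})$ we can extract a subsequence along which a subdivision point $s_{n_k}^{(j_k)}$ converges to $a$ and $s_{n_k}^{(j_k+1)}\to a+\pi\sin\rho_0$.

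Next I would argue local minimality. Along such a subsequence, $\eta_{n_k+1}$ restricted to $[s_{n_k}^{(j_k)},s_{n_k}^{(j_k+1)}]$ is by construction the length-minimizer in $\bar{\mathcal{L}}^{+\kappa_0}_{-\kappa_0}$ between the corresponding boundary Frenet frames. The total length saved at stage $n_k+1$ tends to zero, so the replacement of this specific section cannot shorten it by more than $L_{\eta_{n_k}}-L_{\eta_{n_k+1}}\to 0$. Pass to the limit using the convergence of Frenet frames and the continuous dependence of the length-minimizer on its boundary frames (a consequence of Corollaries~\ref{cor15} and~\ref{cor16} combined with a standard compactness argument \emph{à la} Lemma~\ref{convergsubseq}): the arc $\tilde{\gamma}|_{[a,a+\pi\sin\rho_0]}$ must itself be length-minimizing in $\bar{\mathcal{L}}^{+\kappa_0}_{-\kappa_0}(\mathfrak{F}_{\tilde{\gamma}}(a),\mathfrak{F}_{\tilde{\gamma}}(a+\pi\sin\rho_0))$. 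Varying $a$, every subarc of $\tilde{\gamma}$ of length at most $\pi\sin\rho_0$ is a length-minimizer.

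Then I would invoke Theorem~\ref{perez} on each such subarc. The length-minimizer is CSC or CCC, but a CCC has its middle arc of angle exceeding $\pi$ and therefore length exceeding $\pi\sin\rho_0$, so it cannot fit inside a window of length $\leq\pi\sin\rho_0$. Hence every sufficiently short subarc of $\tilde{\gamma}$ is CSC, which forces the geodesic curvature of $\tilde{\gamma}$ to take values only in $\{-\kappa_0,0,+\kappa_0\}$. Sliding the windows exhibits $\tilde{\gamma}$ globally as a concatenation of arcs of constant curvature drawn from this set.

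Finally I would derive the length constraint. Suppose a \emph{middle} arc of $\tilde{\gamma}$ with curvature $\kappa_0$ had length $\ell<\pi\sin\rho_0$. Place a window of length $\ell+2\delta\leq\pi\sin\rho_0$ strictly containing this arc together with small pieces of its two neighbours; by the previous step, the window is CSC. Examining the possible flanking patterns (two geodesics produces $SCS$, a geodesic and an opposite-curvature $C$ produces $SCC$ or $CCS$, two opposite-curvature $C$'s produces a CCC window of length $\leq\pi\sin\rho_0$) each leads to a contradiction with Theorem~\ref{perez}: the first two are not CSC, and the third violates the angle condition forced on the middle arc of a CCC. Hence all middle $C$ arcs of $\tilde{\gamma}$ have length $\geq\pi\sin\rho_0$. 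The statement about the first and last segments being $C$ arcs follows from a similar window analysis at the endpoints. The main obstacle is the second step: turning the vanishing of per-step length-savings into genuine length-minimality of an arbitrary short subarc, the subtlety being that the subdivision pattern shifts with $n$, so one must carefully select a subsequence whose subdivision points converge to prescribed positions and then invoke continuous dependence of the length-minimizer on its boundary Frenet frames.
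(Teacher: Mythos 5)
The paper itself gives no argument for this proposition---it is simply asserted to be ``an immediate consequence of the construction''---so you are filling in what the authors left implicit. Your strategy (use the density of the offsets $(l_n)$ in $[0,1]$ to force every subarc of $\tilde{\gamma}$ of length at most $\pi\sin\rho_0$ to be a length-minimizer between its own boundary Frenet frames, then feed short windows into the Dubins-type classification) is a sound way to do this, and the flanking-pattern analysis at the end is exactly the right mechanism for extracting the length constraint $\geq\pi\sin\rho_0$ on interior circular arcs. The Birkhoff-style ``vanishing per-step savings plus shifting subdivisions $\Rightarrow$ local minimality of the limit'' argument is standard and, with the care you indicate about selecting subsequences whose subdivision points converge, it does go through.

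Three imprecisions deserve attention. First, ruling out a CCC window requires the angle condition $\lambda>\pi$ on the middle arc; this is stated only in the appendix's restatement of Monroy-P\'{e}rez's theorem, not in Theorem~\ref{perez} as labelled in the body of the paper, so you should cite the full appendix version (or \cite{monroy} directly) rather than the truncated Theorem~\ref{perez}. Second, what your limiting argument actually needs is continuity of the \emph{minimum length} $\ell^*(\mat{P},\mat{Q})$ as a function of the boundary frames, not continuity of the minimizer itself (which need not be unique); this is a consequence of the compactness Lemma~\ref{convergsubseq} (for lower semicontinuity) together with an elementary perturbation argument (for upper semicontinuity), whereas Corollaries~\ref{cor15} and~\ref{cor16} only describe the CSC/CCC type of the minimizer for the specific pair $(\mI,\mQ)$ under the theorem's hypotheses and say nothing about arbitrary boundary frames. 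Third, your final sentence asserts that the first and last segments are genuine $C$ arcs, but the window $[0,\pi\sin\rho_0]$ being CSC only gives that the outer segment is the outer $C$ of a CSC minimizer, which is permitted to be degenerate; thus $\tilde{\gamma}$ can perfectly well begin or end with a geodesic (for instance, if the input curve $\gamma$ is already a geodesic, the shortening process is stationary and $\tilde{\gamma}=\gamma$). The proposition's phrasing implicitly allows this degenerate reading, and your proof should too, rather than claiming the endpoint segments necessarily have curvature $\pm\kappa_0$.
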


\subsection{Remarks on the definition and the topology of $\Lspace$}\label{appdeftop}

As seen in the beginning of the Section \ref{sec2}, recall that the definition of $\Lspace$ differs from the definition in \cite{salzuh}. In this subsection we show that the space $\Lspace$ is a subset of the former space that is weakly homotopically equivalent to it. We will give, in this subsection, another equivalent definition of the space $\Lspace$. For sake of completeness, we introduce the definition given in \cite{salzuh} here. 

\begin{definition} A function $f:[a,b]\to\real$ is said to be of class $H^1$ if it is an indefinite integral of some $g\in L^2[a,b]$. We extend this definition to maps $F:[a,b]\to\real^n$ by saying that $F$ is of class $H^1$ if and only if each of its component functions is of class $H^1$.
\end{definition}

Since $L^2[a,b]\subset L^1[a,b]$, an $H^1$ function is absolutely continuous and differentiable almost everywhere. Given a  $\mat{P}\in\SO$ and a $L^2[0,1]$ map $\Lambda:[0,1]\to \mathfrak{so}_3(\mathbb{R})$ of the form:
\begin{equation*}
\Lambda(t)=\left(\begin{array}{ccc} 0 & -v(t) & 0 \\
v(t) & 0 & -w(t) \\
0 & w(t) & 0 
\end{array}\right),
\end{equation*}

\noindent let $\Phi: [0,1]\to \SO$ be the unique solution to the initial value problem:
\begin{equation}\label{ivp}
\Phi^\prime(t) = \Phi(t)\Lambda(t),\quad \Phi(0)= \mat{P} .
\end{equation}
Define $\gamma:[0,1]\to\mathbb{S}^2$ to be the smooth curve given by $\gamma(t)=\Phi(t)e_1$. Then $\gamma$ is regular if and only if $v(t)\neq 0$ for all $t\in [0,1]$, and it satisfies $\Phi_\gamma = \Phi$ if and only if $v(t)>0$ for all $t$. Let $\mathbf{E}=L^2[0,1]\times L^2[0,1]$ and let $h:(0,+\infty)\to\real $ be the smooth diffeomorphism
$$ h(t) = t - t^{-1} .$$
For each pair $\kappa_1<\kappa_2\in\real$, let $h_{\kappa_1,\kappa_2}:(\kappa_1,\kappa_2)\to\real$ be the smooth diffeomorphism
$$h_{\kappa_1,\kappa_2}(t) = (\kappa_1-t)^{-1} + (\kappa_2 - t)^{-1} $$
and, similarly, set
\begin{align*}
h_{-\infty,+\infty}:\real\to\real & \quad h_{-\infty,+\infty}(t)=t \\
h_{-\infty,\kappa_2}:(-\infty,\kappa_2)\to\real & \quad h_{-\infty,\kappa_2}(t)=t+(\kappa_2-t)^{-1} \\
h_{\kappa_1,+\infty}:(\kappa_1,+\infty)\to\real & \quad h_{\kappa_1,+\infty}(t)=t+(\kappa_1-t)^{-1} .
\end{align*}

\begin{definition} Let $\kappa_1,\kappa_2$ satisfy $-\infty\leq \kappa_1<\kappa_2\leq +\infty$. A curve $\gamma:[0,1]\to\mathbb{S}^2$ will be called \emph{$(\kappa_1,\kappa_2)$-admissible} if there exist $\mat{P}\in\SO$ and a pair $(\hat{v},\hat{w})\in\mathbf{E}$ such that $\gamma(t)=\Phi(t)e_1$ for all $t\in [0,1]$, where $\Phi$ is the unique solution to the initial value problem \eqref{ivp}, with $v,w$ given by
\begin{equation}
v(t)=h^{-1}(\hat{v}(t)),\quad w(t)=h^{-1}_{\kappa_1,\kappa_2}(\hat{w}(t)).
\end{equation}
When it is not important to keep track of the bounds $\kappa_1,\kappa_2$, we shall simply say that $\gamma$ is \emph{admissible}.
\end{definition}

\begin{definition} Let $-\infty\leq\kappa_1<\kappa_2\leq +\infty$ and $\mat{P},\mat{Q}\in\SO$. Define $\boldsymbol{L}_{\kappa_1}^{\kappa_2}(\mat{P},\mat{Q})$ to be the set of all $(\kappa_1,\kappa_2)$-admissible curves $\gamma$ such that
$$\Phi_\gamma(0) = \mat{P} \quad \text{and} \quad \Phi_\gamma(1)=\mat{Q},$$
where $\Phi_\gamma$ is the frame of $\gamma$. This set is identified with $\mathbf{E}$ via correspondence $\gamma\leftrightarrow (\hat{v},\hat{w})$, and this defines a (trivial) Hilbert manifold structure on $\boldsymbol{L}_{\kappa_1}^{\kappa_2}(\mat{P},\mat{Q})$.
\end{definition}

We consider $\mathbf{E}_{\infty}=L^\infty[0,1]\times L^\infty[0,1]\subset \mathbf{E}$. 

\begin{definition} Let $\kappa_1,\kappa_2$ satisfy $-\infty\leq \kappa_1<\kappa_2\leq +\infty$. A curve $\gamma:[0,1]\to\mathbb{S}^2$ will be called \emph{$(\kappa_1,\kappa_2)$-strongly admissible} if thre exist $\mat{P}\in\SO$ and a pair $(\hat{v},\hat{w})\in\mathbf{E}_{\infty}$ such that $\gamma(t)=\Phi(t)e_1$ for all $t\in [0,1]$, where $\Phi$ is the unique solution to the initial value problem \eqref{ivp}, with $v,w$ given by
\begin{equation}
v(t)=h^{-1}(\hat{v}(t)),\quad w(t)=h^{-1}_{\kappa_1,\kappa_2}(\hat{w}(t)).
\end{equation}
When it is not important to keep track of the bounds $\kappa_1,\kappa_2$, we shall simply say that $\gamma$ is \emph{strongly admissible}.
\end{definition}

\begin{definition} Let $-\infty\leq\kappa_1<\kappa_2\leq +\infty$ and $\mat{P},\mat{Q}\in\SO$. Define $\mathfrak{L}_{\kappa_1}^{\kappa_2}(\mat{P},\mat{Q})$ to be the set of all $(\kappa_1,\kappa_2)$-strongly admissible curves $\gamma$ such that
$$\Phi_\gamma(0) = \mat{P} \quad \text{and} \quad \Phi_\gamma(1)=\mat{Q},$$
where $\Phi_\gamma$ is the frame of $\gamma$. This set is identified with $\mathbf{E}_\infty$ via correspondence $\gamma\leftrightarrow (\hat{v},\hat{w})$, and this defines a (trivial) Banach manifold structure on $\mathfrak{L}_{\kappa_1}^{\kappa_2}(\mat{P},\mat{Q})$.
\end{definition}

It follows from the definitions that $\mathfrak{L}_{\kappa_1}^{\kappa_2}(\mat{P},\mat{Q})\subsetneq\boldsymbol{L}_{\kappa_1}^{\kappa_2}(\mat{P},\mat{Q})$. We shall prove that $\mathfrak{L}_{\kappa_1}^{\kappa_2}(\mat{P},\mat{Q})=\mathcal{L}_{\kappa_1}^{\kappa_2}(\mat{P},\mat{Q})$.

\begin{proposition} Let $-\infty<\kappa_1<\kappa_2< +\infty$ and $\mat{P},\mat{Q}\in\SO$. Then $\mathfrak{L}_{\kappa_1}^{\kappa_2}(\mat{P},\mat{Q})=\mathcal{L}_{\kappa_1}^{\kappa_2}(\mat{P},\mat{Q})$.
\end{proposition}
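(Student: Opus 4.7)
The strategy is to prove the two set-theoretic inclusions separately, unwinding the definitions so that the analytic hypotheses on the pair $(\hat{v},\hat{w})$ translate into the pointwise curvature bounds defining $\mathcal{L}_{\kappa_1}^{\kappa_2}(\mP,\mQ)$, and conversely.

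\emph{Forward inclusion} $\mathfrak{L}_{\kappa_1}^{\kappa_2}(\mP,\mQ)\subseteq\mathcal{L}_{\kappa_1}^{\kappa_2}(\mP,\mQ)$. Given $(\hat{v},\hat{w})\in\mathbf{E}_\infty$, recover $v=h^{-1}(\hat{v})$ and $w=h_{\kappa_1,\kappa_2}^{-1}(\hat{w})$; the continuity of $h^{-1}$ and $h_{\kappa_1,\kappa_2}^{-1}$ on bounded sets yields $0<v_-\leq v(t)\leq v_+$ and $\kappa_1<w_-\leq w(t)\leq w_+<\kappa_2$ almost everywhere. Then $\Lambda\in L^\infty([0,1],\mathfrak{so}_3)$, so the solution $\Phi$ of $\Phi'=\Phi\Lambda$ lies in $W^{1,\infty}([0,1],\SO)$ and is therefore Lipschitz. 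Consequently $\vt_\gamma=\Phi e_2$ is Lipschitz as a function of the parameter, and reparametrizing by arc-length produces a $C^1$ immersed curve representing the equivalence class of $\gamma$. For the curvature bounds I would invoke a spherical comparison: at points where $\Phi$ is differentiable the signed rate of turning of $\vt_\gamma$ with respect to arc-length equals $w/v$, which is essentially bounded strictly inside $(\kappa_1,\kappa_2)$. Comparing $\gamma$ locally with the tangent circles of radii $\arccot(\esssup w/v)$ and $\arccot(\essinf w/v)$ and using the characterization of $\kappa_\gamma^\pm$ via one-sided osculating circles then forces $\kappa_1<\kappa_\gamma^-(t)\leq\kappa_\gamma^+(t)<\kappa_2$ at \emph{every} $t$, not only almost everywhere.

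\emph{Reverse inclusion} $\mathcal{L}_{\kappa_1}^{\kappa_2}(\mP,\mQ)\subseteq\mathfrak{L}_{\kappa_1}^{\kappa_2}(\mP,\mQ)$. Take $\gamma\in\mathcal{L}_{\kappa_1}^{\kappa_2}(\mP,\mQ)$. By Lemma~\ref{semicontinuity}, $\kappa_\gamma^+$ is upper semicontinuous and $\kappa_\gamma^-$ is lower semicontinuous on the compact interval $[0,1]$, so each attains its extremum. The pointwise strict inequalities $\kappa_\gamma^+<\kappa_2$ and $\kappa_\gamma^->\kappa_1$ then promote to \emph{uniform} bounds $\kappa_1<c_1\leq\kappa_\gamma^-(t)\leq\kappa_\gamma^+(t)\leq c_2<\kappa_2$. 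These uniform bounds imply that $\vt_\gamma$ is Lipschitz as a function of arc-length, so a classical curvature $\kappa(s)\in[c_1,c_2]$ exists almost everywhere. I would now select the representative of $[\gamma]$ for which the pair $(v,w)$ takes values in the right compact rectangle: choose a reparametrization $t\mapsto s(t)$ with speed $v(t)$ prescribed to make $w(t):=v(t)\kappa(s(t))$ essentially bounded and strictly inside $(\kappa_1,\kappa_2)$, while maintaining $\int_0^1 v\,dt=L$ (the length) and $v\geq v_->0$. Setting $\hat{v}=h(v)$ and $\hat{w}=h_{\kappa_1,\kappa_2}(w)$ then produces the required element of $\mathbf{E}_\infty$.

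The main obstacle is precisely the last step of the reverse inclusion: exhibiting a parametrization for which both $v$ and $w$ are bounded and $w$ avoids the forbidden boundary values $\kappa_1,\kappa_2$, while simultaneously realising the correct total length and the initial/final frames. The hypothesis that $\kappa_\gamma^\pm$ stays uniformly strictly inside $(\kappa_1,\kappa_2)$ is exactly what makes the pointwise inequality $w=v\kappa\in(\kappa_1,\kappa_2)$ achievable together with an $L^\infty$ bound on $v$; slowing the parametrization at points where $|\kappa|$ is largest and compensating elsewhere yields the desired $(\hat{v},\hat{w})$. Once this reparametrization is constructed, the remaining verifications are routine checks that the two ways of measuring curvature (the osculating-circle definition of $\kappa_\gamma^\pm$ and the frame-equation definition via $w/v$) agree on the resulting $C^1$ representative.
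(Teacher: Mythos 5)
Your overall strategy matches the paper's: prove the two inclusions separately, translating between the pointwise bounds on $\kappa_\gamma^\pm$ defining $\mathcal{L}_{\kappa_1}^{\kappa_2}(\mP,\mQ)$ and the $L^\infty$ conditions on $(\hat v,\hat w)$ defining $\mathfrak{L}_{\kappa_1}^{\kappa_2}(\mP,\mQ)$. Your observation that semicontinuity of $\kappa_\gamma^\pm$ on the compact interval $[0,1]$ yields \emph{uniform} bounds $\kappa_1<\bar\kappa_1\le\kappa_\gamma^-\le\kappa_\gamma^+\le\bar\kappa_2<\kappa_2$ is correct and worth stating explicitly. That said, in both directions you leave as a sketch the part that actually carries the weight. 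In the forward inclusion, the ``spherical comparison'' you invoke is exactly the Taylor-with-integral-remainder computation in the paper: from $\int_0^s O(t)\,dt=o(s)$ and $w<\bar\kappa_2$ a.e.\ one verifies that the circle of radius $\arccot\bigl(\tfrac{\kappa_2+\bar\kappa_2}{2}\bigr)$ is a left tangent circle at every $s$, which is what upgrades an a.e.\ curvature bound to the \emph{everywhere} inequalities $\kappa_\gamma^+<\kappa_2$ and $\kappa_\gamma^->\kappa_1$. Symmetrically, in the reverse inclusion the assertion that uniform osculating-circle bounds force $\vt_\gamma$ to be Lipschitz is the heart of Part 2 and requires the analogous difference-quotient estimates along the three directions $\vet{n}_\gamma$, $\vt_\gamma$, and $\gamma$.

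More substantively, what you single out as ``the main obstacle''---choosing a speed $v$ so that $w=v\kappa$ stays strictly inside $(\kappa_1,\kappa_2)$---is a false difficulty, and I suspect you were misled by a typo in the displayed $\Lambda$: the $(2,3)$ and $(3,2)$ entries should read $\mp v(t)w(t)$, making $w$ the geodesic curvature itself, independent of the parametrization. This is what makes the Part 1 step ``reparametrize so that $v\equiv 1$'' harmless, and it is consistent with how $w$ is used in Part 2 of the paper's proof. Under the intended definition, once $\kappa(s)\in[\bar\kappa_1,\bar\kappa_2]$ a.e., the constant-speed parametrization on $[0,1]$ directly gives $(\hat v,\hat w)=\bigl(h(L),h_{\kappa_1,\kappa_2}(\kappa)\bigr)\in\mathbf{E}_\infty$, with no balancing needed. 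Moreover, under the literal reading your balancing scheme cannot succeed: if $\kappa\equiv\bar\kappa_2$ on all of $[0,1]$ then the constraint $v\kappa<\kappa_2$ forces $\int_0^1 v\,dt<\kappa_2/\bar\kappa_2$, so a sufficiently long arc of near-extremal constant curvature admits no admissible $v$. The genuinely hard step of Part 2 is the Lipschitz estimate; once that is in place, producing the pair $(\hat v,\hat w)$ is routine.
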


\begin{proof}[Proof Part 1]
We begin by showing that $\mathfrak{L}_{\kappa_1}^{\kappa_2}(\mat{P},\mat{Q})\subseteq\mathcal{L}_{\kappa_1}^{\kappa_2}(\mat{P},\mat{Q})$. Given a curve $\gamma\in\mathfrak{L}_{\kappa_1}^{\kappa_2}(\mat{P},\mat{Q})$, $\gamma:I\to\mathbb{S}^2$ and $t_0\in I$, we show that $\kappa_1 < \kappa_\gamma^-(t_0)\leq\kappa_\gamma^+(t_0) < \kappa_2$. After a reparametrization, we suppose without loss of generality that, $t_0=0$ and $v(t)=\|\gamma'(t)\|\equiv 1$. Applying the Taylor's formula with integral remainder:
\begin{equation}\label{taylor}
\gamma(s) = \gamma(0) + \vet{t}_\gamma(0)\cdot s + R_1(s), \quad R_1(s)=\int_0^s \dot{\vet{t}}_\gamma(t)(s-t)\, dt.
\end{equation}
Given a $\rho\in (0,\pi)$, the center of the left tangent circle of radius $\rho$ is given by 
$$\vet{v}_1 = \gamma(0)\cos\rho + \vet{n}_\gamma(0)\sin\rho .$$
We consider the function
$$ g_1(s)=\langle\gamma(s),\vet{v}_1\rangle .$$
We shall verify that $s=0$ is a local minimum of $g_1$. Firstly, from the definition $g_1'(0)=\langle\vet{t}_\gamma(0),\vet{v}_1\rangle =0$. Now we give estimation on the second order variation at $s=0$:
$$\frac{g'_1(s)-g'_1(0)}{s} = \frac{1}{s}\left\langle\vet{t}_\gamma(s)-\vet{t}_\gamma(0),\vet{v}_1 \right\rangle .$$
From Equation \eqref{taylor},
$$ \gamma'(s)= \vet{t}_\gamma(s) = \vet{t}_\gamma(0)+R_1'(s). $$
\noindent Here we compute directly from the definition of the term $R_1'(s)$:
\begin{align*}
V(h) & = \int_0^{s+h}\dot{\vet{t}}_\gamma(t)(s+h-t)\,dt - \int_0^s \dot{\vet{t}}_\gamma(t)(s-t)\,dt \\
 & = \int_0^{s}\dot{\vet{t}}_\gamma(t)h \, dt + \int_s^{s+h}\dot{\vet{t}}_\gamma(t)h \, dt + \int_s^{s+h} \dot{\vet{t}}_\gamma(t)(s-t)\,dt .
\end{align*}
Dividing the above term by $h$ and taking the limit $h\to 0$ we obtain:
$$ R_1'(s) = \lim_{h\to 0} \frac{V(h)}{h} = \int_0^s\dot{\vet{t}}_\gamma(t)\, dt. $$
Since:
\begin{equation}\label{eqt}
\int_0^s \dot{\vet{t}}_\gamma(t)\, dt = \int_0^s\big(-\gamma(t)+w(t)\vet{n}_\gamma(t)\big)\, dt .
\end{equation}
For simplicity, denote $\gamma=\gamma(0)$, $\vet{t}_\gamma = \vet{t}_\gamma(0) $, $\vet{n}_\gamma=\vet{n}_\gamma(0)$ and $w=w(t)$. We will also use a form of big $O$ notation, this means $O(t)$ will be used to denote a map such that
$$ \limsup_{t\to 0}\frac{\|O(t)\|}{t} < \infty .$$
By definition
\begin{align*}
\vet{n}_\gamma(t) & = \gamma(t)\times\vet{t}_\gamma(t) \\
& = \big(\gamma+O(t)\big)\times\big(\vet{t}_\gamma+O(t)\big)\\
& = \vet{n}_\gamma + O(t).
\end{align*}
Substituting the expressions into Equation \eqref{eqt} we obtain:
\begin{align*}
\vet{t}_\gamma(s)-\vet{t}_\gamma(0) = R_1'(s) & = \int_0^s \big((-\gamma+O(t))+(w\vet{n}_\gamma+wO(t))\big)\, dt \\
& = \int_0^s \big(-\gamma+w\vet{n}_\gamma+O(t)\big)\,dt
\end{align*}
Thus
\begin{align}
\frac{g_1'(s)-g_1'(0)}{s} & = \frac{1}{s}\left\langle\vet{t}_\gamma(s)-\vet{t}_\gamma(0),\vet{v}_1\right\rangle \\
& = \frac{1}{s}\left\langle \int_0^s \big(-\gamma+w\vet{n}_\gamma+O(t)\big)\,dt , \gamma\cos\rho+\vet{n}_\gamma\sin\rho \right\rangle \\ \label{eqlast}
& = \frac{1}{s}\left(\int_0^s (w\sin\rho-\cos\rho)\,dt + \left\langle \int_0^s O(t)\, dt,\vet{v}_1 \right\rangle \right). 
\end{align}
Since $w(t)=h_{\kappa_1\kappa_2}^{-1}(\hat{w}(t))$ for some $\hat{w}(t)\in L^\infty(I)$. There exists a $\bar{\kappa}_2<\kappa_2$ such that $w(t)<\bar{\kappa}_2$ almost everywhere. On the other hand
$$\lim_{s\to 0}\frac{1}{s}\int_0^s O(t)dt = 0 .$$
Taking $\rho=\arccot\left(\frac{\kappa_2+\bar{\kappa}_2}{2}\right)$ in Equation \eqref{eqlast}, the first integral is a negative number. We conclude that:
$$\limsup_{s\to 0}\left(\frac{g_1'(s)-g_1'(0)}{s}\right) < 0  \quad \text{for the $\rho$ chosen above.}$$
Hence $s=0$ is a local minimum of $g_1$, this implies that it is a local maximum of $d(\gamma(s) ,\vet{v}_1)$. Because $d(\gamma(0),\vet{v}_1)=\rho$, the circle with the center at $\vet{v}_1$ radius $\rho$ is a left tangent circle of the curve $\gamma$ at $s=0$. Thus $\kappa_\gamma^+(0)\leq \frac{\kappa_2+\bar{\kappa}_2}{2} < \kappa_2 $. The procedure to show that $\kappa_1<\kappa_\gamma^-(0)$ is analogous. This finishes the proof of the first inclusion.
\end{proof}

\begin{proof}[Proof Part 2] Now we prove that $\mathcal{L}_{\kappa_1}^{\kappa_2}(\mat{P},\mat{Q})\subseteq\mathfrak{L}_{\kappa_1}^{\kappa_2}(\mat{P},\mat{Q})$. Given a $C^1$ curve $\gamma\in\mathcal{L}_{\kappa_1}^{\kappa_2}(\mat{P},\mat{Q})$, we assume without loss of generality that it is parametrized by arc-length $\|\gamma'(s)\|\equiv 1$ and we shall prove that $\vet{t}_\gamma$ is a Lipschitz continuous function. 

Let $s\in I$, by definition, $\kappa_\gamma^+(s)<\kappa_2$ and the upper-semicontinuity of $\kappa_\gamma^+$ (see Lemma \ref{semicontinuity}) implies that there exists a $\bar{\kappa}_2<\kappa_2$, $\rho = \arccot\bar{\kappa}_2$, $\vet{v}_1=\gamma(s)\cos\rho+\vet{n}_\gamma(s)\sin\rho$ such that the circle with center $\vet{v}_1$ and radius $\rho$ is tangent to $\gamma$ from the left at $s$. So $s$ is a local maximum point of the function $g_1(s)=\langle\gamma(s),\vet{v}_1\rangle$, from the part 1 of the demonstration we saw that $g_1'(s)=0$ and $g_1$ is a $C^1$ function and
$$ \limsup_{h\to 0}\frac{g_1'(s+h)-g_1'(s)}{h} \leq 0.$$
By the same calculations as in part 1 of the demonstration, we deduce from the above inequality:
\begin{equation}\label{eqlip1}
\limsup_{h\to 0}\left\langle \frac{\vet{t}_\gamma(s+h)-\vet{t}_\gamma(s)}{h},\vet{n}_\gamma(s) \right\rangle \leq \bar{\kappa}_2 < \kappa_2. 
\end{equation}
Analogously, from $\kappa_1<\kappa_\gamma^-(s)$ and the lower semi-continuity of $\kappa_\gamma^-$ we deduce that:
\begin{equation}\label{eqlip2}
\kappa_1 < \bar{\kappa}_1 \leq \liminf_{h\to 0}\left\langle \frac{\vet{t}_\gamma(s+h)-\vet{t}_\gamma(s)}{h},\vet{n}_\gamma(s) \right\rangle . 
\end{equation}
On the other hand, $\langle \vet{t}_\gamma(s),\vet{t}_\gamma(s) \rangle = \|\gamma'(s)\|^2 \equiv 1$ implies that:
\begin{equation}\label{eqlip3}
\lim_{h\to 0}\left\langle \frac{\vet{t}_\gamma(s+h)-\vet{t}_\gamma(s)}{h},\vet{t}_\gamma(s) \right\rangle = 0. 
\end{equation}
By differentiating the expression $\|\gamma(s)\|^2 \equiv 1 $ twice, we obtain
\begin{equation}\label{eqlip4}
\lim_{h\to 0}\left\langle\frac{\vet{t}_\gamma(s+h)-\vet{t}_\gamma(s)}{h},\gamma(s)\right\rangle = -1.
\end{equation}
Inequalities \eqref{eqlip1}, \eqref{eqlip2} and Equations \eqref{eqlip3}, \eqref{eqlip4} implies that $\vet{t}_\gamma(s)$ is a Lipschitz continuous function. Thus $\vet{t}_\gamma(s)$ is absolutely continuous, it has a derivative $\dot{\vet{t}}_\gamma(s)$ almost everywhere and
\begin{equation}
\vet{t}_\gamma(s+h)=\vet{t}_\gamma(s)+\int_s^{s+h}\dot{\vet{t}}_\gamma(t)\,dt .
\end{equation}
Again, it follows from \eqref{eqlip1}, \eqref{eqlip2}, \eqref{eqlip3} and \eqref{eqlip4} that there exists a function $w: I\to [\bar{\kappa}_1,\bar{\kappa}_2]\subset (\kappa_1,\kappa_2)$ such that
$$ \dot{\vet{t}}_\gamma(s)= -\gamma(s)+w(s)\vet{n}_\gamma(s) \quad \text{almost everwhere}$$
and $w(s)\in h_{\kappa_1,\kappa_2}(L^\infty(I)) $. This means that $\gamma$ is $(\kappa_1,\kappa_2)$-strongly admissible concluding that $\gamma\in \mathfrak{L}_{\kappa_1}^{\kappa_2}(\mat{P},\mat{Q})$.
\end{proof}

Since the main results in this article are about the spaces of form $\Lspace = \mathcal{L}_{-\kappa_0}^{+\kappa_0}(\mat{I},\mat{Q})$, with $\cot\rho_0 = \kappa_0<\infty$, the questions about the relationships between the spaces of curves for the cases $\kappa_1=-\infty$ and $\kappa_2=+\infty$ are open for future researches.

In spite of the spaces $\mathcal{L}_{\kappa_1}^{\kappa_2}(\mat{P},\mat{Q})$ and $\boldsymbol{L}_{\kappa_1}^{\kappa_2}(\mat{P},\mat{Q})$ being different, they are homotopically equivalent due to the following propositions. We recall the definition of $\mathcal{C}_{\kappa_1}^{\kappa_2}(\mat{P},\mat{Q})$ below

\begin{definition} Let $-\infty \leq \kappa_1 < \kappa_2 \leq +\infty$, consider the matrices $\mat{P},\mat{Q}\in\SO$ and $r\in\{2,3,\ldots,\infty\}$. Define $\mathcal{C}_{\kappa_1}^{\kappa_2}(\mat{P},\mat{Q})$ to be the set, furnished with the $C^r$ regular curves $\gamma:[0,1]\to\mathbb{S}^2$ such that:
\begin{enumerate}
\item $\Phi_\gamma(0)=\mat{P}$ and $\Phi_\gamma(1)=\mat{Q}$;
\item $\kappa_1<\kappa_\gamma(t)<\kappa_2$ for each $t\in [0,1]$.
\end{enumerate}
\end{definition}

\begin{lemma} Let $r\in\{2,3,\ldots,\infty\}$. Then the subset $\mathcal{C}_{\kappa_1}^{\kappa_2}(\mat{P},\mat{Q})$ is dense in $\mathcal{L}_{\kappa_1}^{\kappa_2}(\mat{P},\mat{Q})$.
\end{lemma}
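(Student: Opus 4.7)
The plan is to combine a mollification of the $L^\infty$ controls with a finite-dimensional endpoint correction, following the standard controllability philosophy used for such density statements in ODE-defined path spaces.

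\textbf{Step 1 (Setup via strongly admissible controls).} By the preceding proposition, $\mathcal{L}_{\kappa_1}^{\kappa_2}(\mat{P},\mat{Q})=\mathfrak{L}_{\kappa_1}^{\kappa_2}(\mat{P},\mat{Q})$, so a given $\gamma$ is encoded by $(\hat v,\hat w)\in\mathbf{E}_\infty$. Setting $v=h^{-1}(\hat v)$ and $w=h_{\kappa_1,\kappa_2}^{-1}(\hat w)$, there exist constants $m,M>0$ and $\bar\kappa_1,\bar\kappa_2$ with $\kappa_1<\bar\kappa_1\leq w(t)\leq\bar\kappa_2<\kappa_2$ and $m\leq v(t)\leq M$ almost everywhere on $[0,1]$. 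Fix once and for all a slightly larger compact window $[\tilde\kappa_1,\tilde\kappa_2]\subset(\kappa_1,\kappa_2)$ with $\bar\kappa_i$ in its interior, and an analogous speed window $[\tilde m,\tilde M]$.

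\textbf{Step 2 (Mollification).} Extend $v,w$ to $\mathbb{R}$ by reflection across $\{0,1\}$ (preserving the $L^\infty$ bounds) and let $\phi_n$ be a standard smoothing kernel. Set $v_n=\phi_n*v$ and $w_n=\phi_n*w$ restricted to $[0,1]$. These are $C^\infty$ functions still taking values in $[\tilde m,\tilde M]$ and $[\tilde\kappa_1,\tilde\kappa_2]$ respectively, and $v_n\to v$, $w_n\to w$ in $L^2[0,1]$ (hence in $L^1$). Let $\Phi_n$ solve the initial value problem \eqref{ivp} with $\mat P$ and controls $(v_n,w_n)$; the corresponding curve $\gamma_n(t)=\Phi_n(t)e_1$ is $C^{r+1}$ with curvature $w_n$ strictly inside $(\kappa_1,\kappa_2)$, so $\gamma_n\in\mathcal{C}^{r}_{\kappa_1}{}^{\kappa_2}(\mat P,\cdot)$ for every finite $r$. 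By Gronwall applied to the difference of the two linear ODEs, $\sup_{t\in[0,1]}\|\Phi_n(t)-\Phi(t)\|\to 0$; in particular $\mat R_n:=\Phi_n(1)\to \mat Q$.

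\textbf{Step 3 (Finite-dimensional endpoint correction).} The residual error is $\mat E_n:=\mat Q^{-1}\mat R_n=\exp(X_n)$ with $X_n\to 0$ in $\mathfrak{so}_3(\mathbb{R})$. I correct it by perturbing the smooth controls on three disjoint closed subintervals $I_1,I_2,I_3\subset[0,1]$, chosen inside a subinterval where $v_n,w_n$ stay safely in the interior of their windows (possible for all large $n$ because convergence in $L^\infty$ fails only on a set we can avoid by continuity of the smoothed controls). On $I_k$ I add a pair of $C^\infty$ bump perturbations $s_k\,\alpha_k(t)$ and $\tau_k\,\beta_k(t)$ to $v_n$ and $w_n$ respectively, with $\alpha_k,\beta_k$ supported in $I_k$. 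The endpoint map
\[
F_n:(s,\tau)\in\mathbb{R}^3\times\mathbb{R}^3\longmapsto \mat Q^{-1}\,\Phi_{n,s,\tau}(1)\in\SO
\]
is smooth, with $F_n(0,0)=\mat E_n$. A direct computation of the variation of the ODE solution shows that $dF_n|_{(0,0)}$ sends $(s_k,\tau_k)$ to a linear combination of terms of the form $\mat Q^{-1}\Phi_n(t)\,A(v_n,w_n,t)\,\Phi_n(t)^{-1}\mat Q$ where $A$ ranges over $\bar{\vet e}_3$ (tangent direction) and $w_n\bar{\vet e}_1$ pieces coming from the $\hat v,\hat w$ variations. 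As $n\to\infty$ these integrands approach their analogues for the original $\gamma$. Choosing the bumps $\alpha_k,\beta_k$ and the intervals $I_k$ so that the three matrices generating $\mathfrak{so}_3(\mathbb{R})$ appear among the columns (which works generically because, with $v_n>0$, the control distribution together with its first Lie bracket spans $\mathfrak{so}_3$), $dF_n|_{(0,0)}$ has rank three. The implicit function theorem then yields, for every sufficiently large $n$, parameters $(s^{(n)},\tau^{(n)})\to 0$ with $F_n(s^{(n)},\tau^{(n)})=\mathbf 1$, i.e.\ $\Phi_{n,s^{(n)},\tau^{(n)}}(1)=\mat Q$. Because the perturbations are $C^\infty$ and vanish to any order at the endpoints of $I_k$, the corrected curve $\tilde\gamma_n$ is still $C^r$, its frame starts at $\mat P$ and ends at $\mat Q$, and for large $n$ the perturbed controls still lie in $(\kappa_1,\kappa_2)$ pointwise.

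\textbf{Step 4 (Convergence in the topology of $\mathcal{L}_{\kappa_1}^{\kappa_2}(\mat P,\mat Q)$).} In the chart identifying $\mathcal{L}_{\kappa_1}^{\kappa_2}(\mat P,\mat Q)$ with $\mathbf E=L^2[0,1]^2$, the perturbed controls satisfy $(\hat v_n^{\mathrm{corr}},\hat w_n^{\mathrm{corr}})\to(\hat v,\hat w)$ in $\mathbf E$: the mollification piece converges by standard arguments, and the correction piece tends to zero in $L^\infty$ hence in $L^2$. Therefore $\tilde\gamma_n\to\gamma$ in $\mathcal{L}_{\kappa_1}^{\kappa_2}(\mat P,\mat Q)$, proving density.

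\textbf{Main obstacle.} The genuinely non-routine point is the controllability claim in Step 3: that on small subintervals one can choose three pairs of bump perturbations whose differential spans all of $\mathfrak{so}_3(\mathbb{R})$. I expect this to reduce to the Lie bracket identity $[\bar{\vet e}_2+w\bar{\vet e}_3,\bar{\vet e}_3]=\bar{\vet e}_1$ combined with the two direct generators, and to go through uniformly in $n$ because the smoothed controls stay bounded in the interior of their windows.
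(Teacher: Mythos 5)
Your proposal is correct in substance, but it is worth noting that the paper's own ``proof'' is simply a citation to Lemma~1.8 of Saldanha--Zühlke \cite{salzuh}, which establishes density of smooth curves among admissible ones in exactly this setting; so where the paper defers, you supply a complete independent argument. The philosophy (mollify the $L^\infty$ controls, then correct the endpoint frame by a finite-dimensional implicit-function argument) is the standard one and matches what the cited lemma does.

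Two small technical remarks on your Step~3. First, the Lie-bracket identity you flag as the ``main obstacle'' is mislocated: the drift in the Frenet ODE is $\Lambda = v\,\bar{\vet e}_3 + w\,\bar{\vet e}_1$, so the control distribution already contains $\bar{\vet e}_3$ and $\bar{\vet e}_1$ directly, and the bracket you need is simply $[\bar{\vet e}_1,\bar{\vet e}_3]=-\bar{\vet e}_2$, not anything involving a $\bar{\vet e}_2$-term in the drift. Second, a cleaner way to see that the differential of the endpoint map has rank three (so that the implicit function theorem applies) is the adjoint-orbit argument: if some nonzero $Y\in\mathfrak{so}_3$ were orthogonal to $\Phi(s)\bar{\vet e}_1\Phi(s)^{-1}$ and $\Phi(s)\bar{\vet e}_3\Phi(s)^{-1}$ for all $s$, then $Z(s):=\Phi(s)^{-1}Y\Phi(s)$ would have to lie in the $\bar{\vet e}_2$-line for all $s$; differentiating, $Z'=[Z,\Lambda]=c(s)(v\bar{\vet e}_1-w\bar{\vet e}_3)$ while also $Z'=c'(s)\bar{\vet e}_2$, which forces $c\,v\equiv 0$, hence $c\equiv 0$ since $v>0$, i.e.\ $Y=0$. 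This shows that the span is automatically all of $\mathfrak{so}_3$ once $v>0$, so the ``generic choice of bumps'' hedge is unnecessary: any three bumps supported where the control $\delta\Lambda\mapsto\int_0^1\Phi\,\delta\Lambda\,\Phi^{-1}\,dt$ is not degenerate will do, and this is automatic. With this tightening your proof is complete.
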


\begin{proof}
It follows from the Lemma (1.8) in \cite{salzuh}.
\end{proof}

\begin{theorem}\label{thmequiv}
Let $-\infty<\kappa_1<\kappa_2<\infty$, consider the matrices $\mat{P},\mat{Q}\in\SO$ and $r\in\{2,3,\ldots,\infty\}$. Then the set inclusion $\boldsymbol{i}:\mathcal{C}_{\kappa_1}^{\kappa_2}(\mat{P},\mat{Q})\hookrightarrow\mathcal{L}_{\kappa_1}^{\kappa_2}(\mat{P},\mat{Q}) $ is a homotopy equivalence. Therefore, the sets $\mathcal{C}_{\kappa_1}^{\kappa_2}(\mat{P},\mat{Q})$ and $\mathcal{L}_{\kappa_1}^{\kappa_2}(\mat{P},\mat{Q})$ are homeomorphic.
\end{theorem}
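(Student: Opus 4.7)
The plan is to prove the homotopy equivalence by a mollification-plus-correction retraction from $\mathcal{L}_{\kappa_1}^{\kappa_2}(\mat{P},\mat{Q})$ onto $\mathcal{C}_{\kappa_1}^{\kappa_2}(\mat{P},\mat{Q})$, and then upgrade to a homeomorphism by invoking the classification of separable infinite-dimensional manifolds. Throughout I will use the identification $\mathcal{L}_{\kappa_1}^{\kappa_2}(\mat{P},\mat{Q})=\mathfrak{L}_{\kappa_1}^{\kappa_2}(\mat{P},\mat{Q})$ provided by the preceding proposition, so that each curve $\gamma$ corresponds to a pair $(\hat v,\hat w)\in L^\infty[0,1]\times L^\infty[0,1]$ producing, via $v=h^{-1}(\hat v)$, $w=h_{\kappa_1,\kappa_2}^{-1}(\hat w)$ and the ODE \eqref{ivp} with $\Phi(0)=\mat{P}$, a curve $\gamma(t)=\Phi(t)e_1$ with $\Phi(1)=\mat{Q}$.

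First I would construct a smoothing deformation. Extend $\hat v,\hat w$ to $\mathbb{R}$ (say by even reflection across $0$ and $1$), pick a standard nonnegative mollifier $\phi_\tau$ of scale $\tau\in(0,\tau_0]$, and define $\hat v^{\,\tau}=\phi_\tau*\hat v$, $\hat w^{\,\tau}=\phi_\tau*\hat w$. Since convolution with a probability density preserves $L^\infty$ norms, the resulting $v^\tau$ and $w^\tau$ take values in the same compact subinterval of $(0,\infty)$ and of $(\kappa_1,\kappa_2)$ as the originals (indeed $h_{\kappa_1,\kappa_2}^{-1}(L^\infty)$ lies in a compact subinterval of $(\kappa_1,\kappa_2)$). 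Hence the resulting $\gamma^\tau$ is $(\kappa_1,\kappa_2)$-strongly admissible and of class $C^\infty$. The only defect of $\gamma^\tau$ is that its terminal frame $\Phi^\tau(1)$ drifts from $\mat{Q}$ by an amount that tends to the identity as $\tau\to 0$.

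Second, I would compensate this drift by a localized correction. A standard controllability argument shows that the end-frame map $\mathbf{E}_\infty\to\SO$, $(\hat v,\hat w)\mapsto\Phi(1)$, is a smooth submersion; in fact, for any fixed nondegenerate interval $[a,b]\subset(0,1)$ on which $\gamma$ is defined, bumping $\hat v$ and $\hat w$ on $[a,b]$ already generates all of $\mathfrak{so}_3(\mathbb{R})$ at the level of the first variation of $\Phi(1)\Phi(1-)^{-1}$. Choose such an interval continuously in $\gamma$ (for a family parameterized by a compact set $K$ this is no problem, since $w$ is uniformly strictly inside $(\kappa_1,\kappa_2)$), and apply the implicit function theorem to build, for each $\tau$ small, a correction $(\delta\hat v^{\tau},\delta\hat w^{\tau})$ supported in $[a,b]$, depending continuously on $\gamma$ and $\tau$ and of size $O(\|\Phi^\tau(1)\mat{Q}^{-1}-\mat{I}\|)$, which restores $\Phi(1)=\mat{Q}$. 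Concatenating the smoothing and the correction and letting $\tau$ run from $\tau_0$ down to $0$ yields a continuous map
\begin{equation*}
H:\mathcal{L}_{\kappa_1}^{\kappa_2}(\mat{P},\mat{Q})\times[0,1]\longrightarrow\mathcal{L}_{\kappa_1}^{\kappa_2}(\mat{P},\mat{Q})
\end{equation*}
with $H_0=\mathrm{id}$ and $H_1\bigl(\mathcal{L}_{\kappa_1}^{\kappa_2}(\mat{P},\mat{Q})\bigr)\subset\mathcal{C}^\infty_{\kappa_1,\kappa_2}(\mat{P},\mat{Q})\subset \mathcal{C}_{\kappa_1}^{\kappa_2}(\mat{P},\mat{Q})$. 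Moreover $H$ restricts to a self-homotopy of $\mathcal{C}_{\kappa_1}^{\kappa_2}(\mat{P},\mat{Q})$, because mollifying a $C^r$ function keeps it $C^r$ and the correction is $C^\infty$. This exhibits $\boldsymbol{i}$ as a homotopy equivalence with inverse $H_1$, and in fact shows $\boldsymbol{i}$ induces isomorphisms on all homotopy groups.

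Finally, to obtain the homeomorphism assertion, observe that $\mathcal{L}_{\kappa_1}^{\kappa_2}(\mat{P},\mat{Q})$ is a smooth separable Hilbert manifold (cut out of $\mathbf{E}$ by a submersion onto $\SO$), while $\mathcal{C}_{\kappa_1}^{\kappa_2}(\mat{P},\mat{Q})$ with the $C^r$ topology is a separable Banach (for $r$ finite) or Fréchet (for $r=\infty$) manifold. Since both spaces are separable metrizable ANRs with the same homotopy type, I would invoke the theorems of Henderson and Burghelea--Kuiper classifying separable infinite-dimensional Hilbert/Fréchet manifolds up to homeomorphism by homotopy type to conclude that they are homeomorphic.

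The main obstacle will be the correction step: one must verify that the perturbation keeping $\Phi(1)=\mat{Q}$ can be done \emph{continuously} in $\gamma$ and \emph{uniformly} preserves the strict bounds $\kappa_1<w<\kappa_2$. The continuity relies on choosing $[a,b]$ as a continuous function of $\gamma$, which is done using a partition of unity subordinate to a cover of $\mathcal{L}_{\kappa_1}^{\kappa_2}(\mat{P},\mat{Q})$ by open sets on each of which a single interval $[a,b]$ works; the preservation of the curvature bound is then automatic because on such a cover element $w$ is uniformly bounded away from $\kappa_1$ and $\kappa_2$, and the correction has size going to $0$ as $\tau\to 0$. Once these technicalities are settled, the argument above completes the proof.
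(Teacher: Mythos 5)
Your proposal reconstructs, in detail, the argument that the paper only cites: the paper's proof is a single line, ``The proof is identical to the proof of Lemma (1.14) in \cite{salzuh}.'' The approach in \cite{salzuh} is indeed mollification of the control data, followed by an endpoint correction using the fact that the end-frame map is a submersion, followed by the Henderson/Burghelea--Kuiper classification of separable infinite-dimensional manifolds. So your overall skeleton matches the reference.

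There is, however, a genuine subtlety you should not gloss over: you explicitly pass to the identification $\mathcal{L}_{\kappa_1}^{\kappa_2}(\mat{P},\mat{Q})=\mathfrak{L}_{\kappa_1}^{\kappa_2}(\mat{P},\mat{Q})$ and then treat $\gamma$ via $(\hat v,\hat w)\in L^\infty\times L^\infty$. But mollification does \emph{not} converge in $L^\infty$: for $\hat w\in L^\infty$ with a jump, $\phi_\tau*\hat w\not\to\hat w$ uniformly as $\tau\to 0$. Hence your homotopy $H$ is \emph{not} continuous at $\tau=0$ if the topology on $\mathcal{L}_{\kappa_1}^{\kappa_2}(\mat{P},\mat{Q})$ is the $L^\infty$ (Banach manifold) topology of $\mathbf{E}_\infty$. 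The result in \cite{salzuh} is stated for $\boldsymbol{L}_{\kappa_1}^{\kappa_2}(\mat{P},\mat{Q})$, which carries the $L^2$ (Hilbert) topology of $\mathbf{E}$, and there mollification does converge, which is why the SZ proof goes through without incident. To make your version work in the present paper's setting you must either (a) work throughout in the $C^1$ topology that $\mathcal{L}_{\kappa_1}^{\kappa_2}(\mat{P},\mat{Q})$ inherits as a subspace of $\mathcal{I}(\mat{P},\mat{Q})$ — in that topology $\Lambda^\tau\to\Lambda$ in $L^1$ suffices, giving $\Phi^\tau\to\Phi$ and hence $\gamma^\tau\to\gamma$ in $C^1$, so your homotopy is continuous — or (b) argue separately that the $L^\infty$ and $C^1$ topologies give homotopy-equivalent spaces before invoking the classification theorem. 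As it stands, you conflate the two: you mollify in the $L^\infty$ chart but then, in the final paragraph, call $\mathcal{L}_{\kappa_1}^{\kappa_2}(\mat{P},\mat{Q})$ a ``Hilbert manifold cut out of $\mathbf{E}$,'' which is the description of $\boldsymbol{L}_{\kappa_1}^{\kappa_2}(\mat{P},\mat{Q})$, not of $\mathfrak{L}_{\kappa_1}^{\kappa_2}(\mat{P},\mat{Q})$. The conclusion is still reachable because separable infinite-dimensional Banach spaces (such as $L^\infty$ and $C^r$) are all homeomorphic to $\ell^2$ by Anderson--Bessaga--Pelczynski, so the Henderson classification still applies; but you need to say which topology you are proving the homotopy equivalence in and then be careful that the classification theorem is applied to the right model space.
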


\begin{proof} The proof is identical to the proof of Lemma (1.14) in \cite{salzuh}.
\end{proof}

\newpage

\printindex

\end{document}